\newcommand{\black}{\color{black}}
\newtheorem{theorem}{Theorem}
\newtheorem{definition}{Definition} 
\newtheorem{proposition}{Proposition}
\newtheorem{lemma}{Lemma}
\newtheorem{corollary}{Corollary}
\newtheorem{assumption}{Assumption}
\DeclareMathOperator*{\argmax}{arg\,max}
\DeclareMathOperator*{\argmin}{arg\,min}
\title{\textbf{Smooth transport map via diffusion process}}
\author{Arthur St\'ephanovitch}
\affil{\small D\'epartement de Math\'ematiques et Applications\\
       Ecole Normale Sup\'erieure, Université PSL, CNRS\\
       F-75005 Paris, France,\\ \texttt{stephanovitch@dma.ens.fr}}
\def\thm@space@setup{%
    \thm@preskip=8pt plus 2pt minus 2pt 
    \thm@postskip=1pt plus 2pt minus 2pt 
}
\date{}
\begin{document}
\maketitle

\begin{abstract}
\noindent We extend the classical regularity theory of optimal transport to non-optimal transport maps generated by heat flow for perturbations of Gaussian measures. Considering probability measures of the form $d\mu(x) = \exp\left(-\frac{|x|^2}{2} + a(x)\right)dx$ on $\mathbb{R}^d$ where $a$ has Hölder regularity $C^\beta$ with $\beta\geq 0$; we show that the Langevin map transporting the $d$-dimensional Gaussian distribution onto $\mu$ achieves Hölder regularity $C^{\beta + 1}$, up to a logarithmic factor. We additionally present applications of this result to functional inequalities and generative modeling.
    
\end{abstract}
\noindent \textbf{Keywords:} transport map, regularity, functional inequalities, generative models

\tableofcontents

\section{Introduction}

\subsection{Context}
\paragraph*{Regularity of optimal transport maps}

Given two probability densities $\mu_1$ and $\mu_2$ on $\mathbb{R}^d$ with finite second moments, the quadratic optimal transport problem consists in solving:
\begin{equation}\label{eq:OT}
    \inf_{T_{\#} \mu_1=\mu_2} \int \|x-T(x)\|^2 d\mu_1(x),
\end{equation}
where the transport constraint $T_{\#} \mu_1=\mu_2$ imposes that for all Borel subset $A$ of $\mathbb{R}^d$,
$$
\int_{T^{-1}(A)} d\mu_1(x) = \int_A d\mu_2(x).
$$

Thanks to the work of \cite{Brenier1991PolarFA} and \cite{McCann1995ExistenceAU}, it is now well known that there exists a unique solution $T$ to \eqref{eq:OT} and that this map can be written as $T=\nabla u$ with $u$ convex being solution in the weak sense to the Monge-Ampère equation
\begin{equation}
    \text{det}(\nabla^2 u)\mu_1(\nabla u) = \mu_2.
\end{equation}

The regularity theory for the optimal transport map $T$ initiated by \cite{Caffarelli92} has stimulated an intensive line of research (see the textbook \cite{figalli2017monge}) with many problems still opened to date. The classical statement of this theory (originally from \cite{Caffarelli92}) is the following:

\begin{theorem}[Informal version of Theorem 12.50 in \cite{villani2009optimal}]\label{theorem:caffarellisregularity}
    Let $\mu_1$ and $\mu_2$ be two probability measures being absolutely continuous with respect to the $d$-dimensional Lebesgue measure and that are supported on open bounded sets $\mathcal{X}$ and $\mathcal{Y}$ respectively. Provided that
their respective densities are bounded away from zero and infinity on their supports and that $\mathcal{Y}$ is convex; if the densities of $\mu_1$ and $\mu_2$ have Hölder regularity $\beta>0$ (with $\beta$ a non-integer), then the optimal transport map has Hölder regularity $\beta+1$.
\end{theorem}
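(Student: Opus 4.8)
\noindent This is Caffarelli's regularity theorem, and the route I would take is his original argument as organized in \cite{villani2009optimal,figalli2017monge}. The starting point is the Brenier--McCann result already quoted: a convex potential $u$ with $T=\nabla u$ pushing $\mu_1$ onto $\mu_2$ and solving the Monge--Amp\`ere equation in Brenier's sense. Once the Brenier and Alexandrov notions of weak solution are identified, the two-sided density bounds become $0<\lambda\le\det\nabla^2 u\le\Lambda$ in the Alexandrov sense, and the argument then splits into three blocks.

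\textbf{Step 1: strict convexity.} I would first show that $u$ is an Alexandrov solution and is strictly convex on $\mathcal X$. This is where convexity of $\mathcal Y$ is indispensable: if the graph of $u$ contained a segment, a classification of the contact set of the supporting hyperplane along it forces $\nabla u$ to map a set of positive $\mu_1$-measure into a convex subset of $\overline{\mathcal Y}$ of dimension at most $d-1$ --- convexity of $\mathcal Y$ preventing that face from escaping to the boundary --- which contradicts the absolute continuity and positivity of $\mu_2$.

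\textbf{Step 2: interior $C^{1,\alpha}$.} With strict convexity in hand, I would study the sections $S_h(x_0)=\{y:\,u(y)\le u(x_0)+p\cdot(y-x_0)+h\}$ with $p\in\partial u(x_0)$. Via the Alexandrov maximum principle these satisfy the volume control $c\,h^{d/2}\le|S_h(x_0)|\le C\,h^{d/2}$, John normalization, and the engulfing property, and these affine-invariant estimates yield $u\in C^{1,\alpha_0}_{\mathrm{loc}}$, i.e. $T\in C^{\alpha_0}_{\mathrm{loc}}$, for some $\alpha_0=\alpha_0(d,\lambda,\Lambda)>0$. I expect this to be the main obstacle: everything afterwards is comparatively soft, but the section estimates require the full affine-invariant Monge--Amp\`ere theory.

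\textbf{Step 3: higher regularity.} Given that $u$ is strictly convex and $C^{1,\alpha_0}$, I would localize and renormalize a small section by an affine map; the right-hand side of the equation is then $C^\beta$-close to a constant, so $u$ is $C^2$-close to a solution of $\det\nabla^2 v=\mathrm{const}$ with affine boundary data, which is smooth in the interior. Feeding this into Caffarelli's $W^{2,p}$ and pointwise $C^{2,\alpha}$ estimates --- a Schauder-type iteration on shrinking sections --- gives $u\in C^{2,\beta}_{\mathrm{loc}}$ when $\beta\in(0,1)$, hence $T=\nabla u\in C^{\beta+1}_{\mathrm{loc}}$. For non-integer $\beta>1$ one bootstraps: once $u\in C^{2,\alpha}_{\mathrm{loc}}$, differentiating $\log\det\nabla^2 u=\log\mu_1-\log\mu_2(\nabla u)$ shows that $u$ solves a uniformly elliptic equation with coefficients $a^{ij}=(\nabla^2 u)^{-1}_{ij}$ that are H\"older, together with a H\"older right-hand side, so iterating classical Schauder estimates gives $u\in C^{k+2,\alpha}_{\mathrm{loc}}$ whenever the densities are $C^{k,\alpha}$, i.e. $T\in C^{\beta+1}_{\mathrm{loc}}$ for every non-integer $\beta$. (Villani's statement moreover gives regularity up to $\partial\mathcal X$ when the domains are smooth; that requires complementing the above with Caffarelli's boundary estimates.)
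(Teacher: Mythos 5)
The paper does not prove this statement: it is quoted as background (an informal rendering of Theorem 12.50 in Villani's book) to motivate the new results, and no proof is given or attempted in the text. So there is nothing in the paper to compare your argument against. That said, your outline is a faithful and correctly ordered sketch of the classical Caffarelli proof as presented in Villani and Figalli: (i) convexity of the target domain forces strict convexity of the Brenier potential, via the contact-set dichotomy; (ii) affine-invariant section estimates, Alexandrov's maximum principle, and John normalization give interior $C^{1,\alpha_0}$; (iii) renormalization of sections plus $W^{2,p}$ and $C^{2,\alpha}$ perturbation estimates give $C^{2,\beta}$ for $\beta\in(0,1)$, and a Schauder bootstrap on the linearized equation $a^{ij}\partial_{ij}u = \log\mu_1 - \log\mu_2(\nabla u)$ handles non-integer $\beta>1$. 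The one thing to be careful about, which you flag parenthetically, is that the conclusion as stated (H\"older regularity of $T$ without the qualifier \emph{local}) requires Caffarelli's boundary regularity theory, which in turn needs smoothness and uniform convexity of both domains; the interior argument alone only gives $T\in C^{\beta+1}_{\mathrm{loc}}(\mathcal X)$. Since the theorem in the paper is labelled informal, this gap is acceptable, but a precise statement would either restrict to interior regularity or add the boundary hypotheses.
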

 Unfortunately, Caffarelli’s boundedness assumptions on the domains and densities are restrictive as they exclude many important cases, such as Gaussian measures. Recent works, like \cite{Dario2019} and \cite{FigalliJhaveri2023}, extend the classical result to unbounded domains and unbounded densities, but obtain only local regularity. The main obstacle to achieving global regularity is that most methods boil down to showing that the ratio $\frac{\mu_2(x)}{\mu_1(\nabla u(x))}$ is bounded above and below, which then allows the application of classical regularity theory for elliptic equations \citep{de2014monge}. However, obtaining boundedness for this ratio is highly challenging as even the Gaussian Lipschitz case remains unresolved (Conjecture 1 in \cite{fathi2024transportation}). This difficulty has led researchers to investigate alternative approaches for constructing transport maps that may more readily be proven smooth.

\paragraph*{Transport maps from heat flows}
A recent advance in this direction is the transport map introduced by \cite{kim2012generalization}, which is constructed using a heat-diffusion process. The original motivation was to extend Caffarelli's contraction theorem \citep{caffarelli2000monotonicity} stating the existence of a $1$-Lipschitz map pushing forward the
Gaussian measure onto a more log-concave one. This result allows to transfer the  functional inequalities satisfied by the Gaussian (like Poincaré and log-Sobolev) to the transported measure.
 While Caffarelli's theorem relies on the optimal transport map, making it difficult to achieve regularity in a wide range of settings, the Kim-Milman map offers a more accessible route to obtaining Lipschitz estimates. A sketch of the map's construction is as follows:\\
For all $t>0$ and $f:\mathbb{R}^d\rightarrow \mathbb{R}$ bounded, define  the Ornstein-Uhlenbeck semigroup by
$$P_tf(x)=\int f(xe^{-t}+\sqrt{1-e^{-t}}z)d\gamma_d(z),$$
with $\gamma_d$ the standard $d$-dimensional Gaussian measure. Then, we can show that under appropriate conditions the flow $S_t:\mathbb{R}^d\rightarrow \mathbb{R}^d$ defined by
\begin{align*}
d_tS_t(x)&=-\nabla (\log P_tf)(S_t(x))\\
S_0(x) & = x,
\end{align*}
satisfies that $(S_t)_{\# }f=f_t d\gamma_d$ with $f_0=f$ and $f_t\rightarrow 1$ when $t\rightarrow 1$. Therefore, the limit map \emph{$(S_\infty)^{-1}$} pushes forward $\gamma_d$ onto $fd\gamma_d$.

Following this idea, several recent works build transport maps using diffusion process and prove their Lipschitz continuity under suitable conditions. We can notably cite the  Brownian transport map  from \cite{mikulincer2024brownian} (pushes forward the Wiener process to Euclidean semi log concave measures), the  Langevin transport map  from \cite{fathi2024transportation} (pushes forward log concave measures to log-Lipschitz perturbations in the Euclidean and manifolds settings) and the  Föllmer transport map  from \cite{dai2023lipschitz}  (pushes forward the Gaussian measure to semi log concave measures).  In this paper, we focus on the Langevin transport map which is built using the Ornstein–Uhlenbeck process, a flow widely used in practice \citep{song2020score,bjork2009arbitrage,davies1973harmonic}. We actually show that the "Föllmer transport map" from \cite{dai2023lipschitz} is equal to the Langevin map, which allows for a simple formulation using flows that evolve over finite time. 
Although we focus on the Langevin transport map, we believe that our regularity results could be extended to other flows given that they have a similar behavior as explained in \cite{gao2023Gaussian}. 

\subsection{Contributions}
In this work, we explore the regularity of transport maps constructed via diffusion processes, extending the classical theory beyond Lipschitz continuity. While Caffarelli's seminal regularity results for Monge–Ampère equations have provided foundational insights into optimal transport maps, obtaining regularity in unbounded settings remains a significant challenge. Here, we address this gap by demonstrating that transport maps generated through diffusions can indeed exhibit higher-order regularity, leading to the construction of the first smooth transport map within this framework. These results open new avenues for understanding the structure and properties of transport maps in broader settings. In Section \ref{sec:preandmain} we develop the framework in full generality, where we emphasize
the following key contributions:
\subsubsection*{1. Higher order regularity of transport maps (Theorems \ref{theo:thetheo} and \ref{theo:thetheo2})}  Mirroring the classical regularity theory of optimal transport (Theorem \ref{theorem:caffarellisregularity}), we prove that the Langevin transport map between the $d$-dimensional Gaussian distribution and a log $\beta$-Hölder perturbation, is of Hölder regularity $\beta+1$. Furthermore, we obtain a Lusin-type result for the transport of the Gaussian to a class of measures supported on the ball. It is shown that a set of mass $1-\epsilon$ is transported by a $(\beta+1)$-Hölder map having a norm controlled by a logarithmic power of $\epsilon$.

\subsubsection*{2. Applications}
The existence of smooth transport maps enables a range of applications, which we present in the subsequent sections.

{\small\textbf{Transfer of functional inequalities (Corollary \ref{coro:logsogen}) } }Theorem \ref{theo:thetheo} allows to transfer the functional inequalities involving higher order derivatives from the Gaussian measure to the transported one. As applications we extend the class of measures satisfying the generalized log-Sobolev inequality \citep{rosen1976sobolev}.

{\small\textbf{Applications to generative models (Theorem \ref{theo:minimaxgan} and Corollary \ref{coro:diffusregu})}} We provide applications of Theorem \ref{theo:thetheo} to the estimation of densities by Wasserstein Generative Adversarial Networks (WGAN) and score-based generative models. Firstly,  the existence of smooth transports maps allows to show the optimality of the WGAN estimator within the (widely used) Gaussian setting. Secondly, given the strong correlation between the score function in diffusion models and the transport map's velocity field, we show that higher regularity of the target distribution transfers to  higher regularity of the score function.

\subsection{Organization of the paper}
The paper is organized as follows. In Section \ref{sec:preandmain}, we present the main results in detail, including  necessary notations and the construction of the Föllmer/Langevin transport map. Section \ref{sec:nablak} is dedicated to proving bounds on the derivatives of the transport map, whereas the non-integer regularity is treated in the appendix Section \ref{sec:holdregulastderiv}. Finally, we present in Section \ref{sec:app} the applications to the extension of the log-Sobolev inequality as well as to  density estimation via generative methods.

\subsection*{Acknowledgments}
The author would like to thank Max Fathi for his insightful discussions and valuable advice, which greatly contributed to the development of this work.

\section{Preliminaries and main results}\label{sec:preandmain}
\subsection{Hölder spaces}
We begin by precisely defining the notion of regularity we describe. For $\eta >0$, $\mathcal{X},\mathcal{Y}$ two open subsets of Euclidean spaces and $f=(f_1,...,f_p)\in C^{\lfloor \eta \rfloor}(\mathcal{X},\mathcal{Y})$ the set of $\lfloor \eta \rfloor:=\max \{k\in \mathbb{N}_0 |\ k\leq \eta\}$  times differentiable functions (almost everywhere), denote $\partial^\nu f_i := \frac{\partial^{|\nu|}f_i}{\partial x_1^{\nu_1}...\partial x_d^{\nu_d}}$ the partial differential operator  for any multi-index $\nu = (\nu_1,...,\nu_d)\in \mathbb{N}_0^d$ with $|\nu|:=\nu_1+...+\nu_d\leq \lfloor \eta \rfloor$. For $\alpha\geq 0$, write 
$$
\|f_i\|_{\eta-\lfloor \eta \rfloor,\alpha}:=\sup \limits_{x\neq y}\ \frac{f_i(x)-f_i(y)}{\|x-y\|^{\eta - \lfloor \eta \rfloor}} \log\big((\|x-y\|\wedge\frac{1}{2})^{-1}\big)^{\alpha}
$$
and let 
\begin{align*}
\mathcal{H}^{\eta,\alpha}_K(\mathcal{X},\mathcal{Y}):=\Big\{& f  \in C^{\lfloor \eta \rfloor}(\mathcal{X},\mathcal{Y}) \ \big| \max \limits_{i} \sum \limits_{|\nu|\leq \lfloor \eta \rfloor} \|\partial^\nu f_i\|_{L^\infty(\mathcal{X},\mathcal{Y})} + \sum \limits_{|\nu|  = \lfloor \eta \rfloor} \|\partial^\nu f_i\|_{\eta-\lfloor \eta \rfloor,\alpha}   \leq K\Big\}
\end{align*}
denote the ball of radius $K>0$ of the Hölder space $\mathcal{H}^{\eta,\alpha}(\mathcal{X},\mathcal{Y})$, the set of functions $f:\mathcal{X}\rightarrow \mathcal{Y}$ of regularity $\eta$ with additional weak regularity $\alpha$. We have in particular that for all $\epsilon>0$,
$$\mathcal{H}^{\eta+\epsilon}(\mathcal{X},\mathcal{Y})\xhookrightarrow{} \mathcal{H}^{\eta,\alpha}(\mathcal{X},\mathcal{Y}) \xhookrightarrow{} \mathcal{H}^{\eta}(\mathcal{X},\mathcal{Y}),$$
where we write $A\xhookrightarrow{} B$ if the function space $A$ compactly injects in the function space $B$ and $\mathcal{H}^{\eta}(\mathcal{X},\mathcal{Y}):=\mathcal{H}^{\eta,0}(\mathcal{X},\mathcal{Y})$ denote the classical Hölder space. In the case where $\eta$ is not an integer, the space $\mathcal{H}^{\eta,\alpha}$ is a Zygmund type space \citep{Haroske2000}, which coincides with the Besov space $\mathcal{B}^{\eta,\alpha}_{\infty,\infty}$ \citep{haroske2006envelopes}.

\subsection{Additional notation}
In the following, $\mu$ is used
to denote
both the measure and the corresponding density when the probability measure $\mu$ is absolutely continuous with respect to $\lambda^d$ the $d$-dimensional Lebesgue measure. We write $\langle \cdot, \cdot \rangle$ the dot product on $\mathbb{R}^p$ and $\|\cdot\|$ the Euclidean norm. For $a,b\in \mathbb{R}$, $a\wedge b$ and $a\vee b$ denote the minimum and maximum value between $a$ and $b$ respectively. We write $\text{Lip}_1$ for the set of 1-Lipschitz functions. The support of a function $f$ is denoted by $supp(f)$. We  denote by $\text{Id}$ the identity from a Euclidean space to itself. With a slight abuse of notation, the probability density function of a probability measure $\mu$ on $\mathbb{R}^d$ (being absolutely continuous with respect to the Lebesgue measure) will also be denoted by  the map $x\mapsto \mu(x)$. The cardinal of a set $A$ will be written as $|A|$.

For any map $f:\mathbb{R}^d\rightarrow \mathbb{R}^l$ we denote by $\nabla^k f$ the $k$-th differential of $f$ and by $\|\nabla f(x)\|$ its operator norm at point $x$. We denote by $(\nabla f(x))^\top$ its transpose matrix and if $k\leq l$, $\lambda_{\max}((\nabla f(x) )^\top \nabla f(x))$ corresponds to the largest eigenvalue of the matrix $(\nabla f(x) )^\top \nabla f(x)$. For $w\in \mathbb{R}^d$, we write $\nabla^k f(x) w^k: = \nabla^k f(x) (w,...w)$  with $w^k=(w,...w) \in \mathbb{R}^{d\times k}$. For a univariate map $f\in C^1(\mathbb{R},\mathbb{R})$, we denote by $f^{(k)}$ the $k$-th derivative of $f$. We will denote $\nabla \cdot f(x):= \sum_{i=1}^d \partial_i f(x)$ the divergence of $f$ at the point $x$. For a probability measure $\mu$ on $\mathbb{R}^d$, the $p$-Lebesgue space with respect to the integration of $\mu$ (with $p\in \mathbb{N}_{>0}$) will be denoted as $L^p_\mu(\mathbb{R}^d)=\{f:\mathbb{R}^d\rightarrow \mathbb{R}| \int (f(x))^p d\mu(x)<\infty\}$. Let $\Omega \subset \mathbb{R}^n$ be an open set, and let $p \in \mathbb{N}$ and $1 \leq q < \infty$. Likewise, the Sobolev 
 space is defined as $
W^{p,q}(\mathbb{R}^d) = \{ f \in L^q(\mathbb{R}^d) |\sum_{|\alpha|\leq p} \|\partial^\alpha f\|_{L^q_{\lambda^d}} <\infty\}
$ and its local analog $
W^{p,q}_{loc}(\mathbb{R}^d) = \{ f |\forall \mathbb{K}\subset \mathbb{R}^d \text{ compact },\sum_{|\alpha|\leq p} \|\partial^\alpha f\|_{L^q_{\lambda^d}(\mathbb{K})} <\infty\}
$. We write $\mathbb{E}_\mu[f(X)]$ for the expectation of 
the map $f:\mathbb{R}^d\rightarrow \mathbb{R}$, with respect to $\mu$ and $\text{Var}_\mu[f(X)]$ for its variance. For a map $T:\mathbb{R}^d\rightarrow \mathbb{R}^d$, we write $T_{\# }\mu$ for the push forward of the measure $\mu$ by $T$. For $w\in \mathbb{R}^d$, $k\in \mathbb{N}_{>0}$ and $F:\left(\mathbb{R}^d\right)^k\rightarrow \mathbb{R}^d$ a multilinear map, we write $Fw^k:=F(w,....,w)$ the value of $F$ taken at the vector $(w,...,w)\in \left(\mathbb{R}^d\right)^k$.

\subsection{Main results}
Let $d\in \mathbb{N}_{>0}$ and $\gamma_d$ denote the standard $d$-dimensional Gaussian measure. We are interested in finding a transport map from  $\gamma_d$ to a probability density function $p:\mathbb{R}^d\rightarrow \mathbb{R}$ of the form \begin{equation}\label{eq:p}
p(x):=\exp\left(-\frac{\|x\|^2}{2}+a(x)\right),
\end{equation}
such that the regularity of the transport map depends on the regularity of $a$.
Let  $\beta\geq 0$ represent the regularity of the map $a$ in \eqref{eq:p} and $\theta>1$ represent its additional weak regularity, i.e. we suppose that $a$ belongs to $ \mathcal{H}^{\beta,\theta}_K(\mathbb{R}^d,\mathbb{R})$ for a certain $K>0$. Throughout,  $C,C_2>0$ correspond to constants that can vary from line to line and that only depend on $\theta,\beta,d$ and $K$. Let us state the main result of the paper.

\begin{theorem}\label{theo:thetheo}
Let $p:\mathbb{R}^d\rightarrow \mathbb{R}$ be a probability density function of the form $$p(x)=\exp\left(-\frac{\|x\|^2}{2}+a(x)\right),$$
such that $a \in \mathcal{H}^{\beta,\theta}_K(\mathbb{R}^d,\mathbb{R})$ with $\beta\geq 0$, $\theta>1$ and $K>0$. Then, there exists a map $T\in C^{\lfloor \beta \rfloor+1}(\mathbb{R}^d,\mathbb{R}^d)$ satisfying $\nabla T\in  \mathcal{H}^{\beta}_C  (\mathbb{R}^d)$ and $T_{\#}\gamma_d=p$.
\end{theorem}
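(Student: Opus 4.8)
The plan is to construct the transport map explicitly via the Ornstein--Uhlenbeck/Langevin flow described in the introduction and then to propagate Hölder regularity of $a$ through the flow. Writing $f := p/\gamma_d = \exp(a)$ up to the normalizing constant, set $P_t f(x) = \int f(xe^{-t} + \sqrt{1-e^{-2t}}z)\,d\gamma_d(z)$ and define the velocity field $b_t(x) := \nabla \log P_{1-t}f(x)$ (reparametrized so the flow runs on a finite interval $[0,1]$), with the flow $S_t$ solving $\dot S_t(x) = b_t(S_t(x))$, $S_0 = \mathrm{Id}$. The first step is to recall (from the construction section the paper will prove, and from \cite{kim2012generalization,dai2023lipschitz}) that $T := S_1$ is well defined and satisfies $T_\#\gamma_d = p$; I would also record the identification with the Föllmer map claimed in the introduction, so that all subsequent estimates can be carried out on the finite-time flow. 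The substance of the proof is entirely in the regularity bookkeeping for $T$.

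The second step is to control all derivatives of $b_t$ of order up to $\lfloor\beta\rfloor+1$, uniformly in $x$ and with an integrable-in-$t$ blow-up as $t\to 1$. The key smoothing mechanism is that $P_t$ is the heat semigroup: differentiating under the integral sign and integrating by parts against the Gaussian kernel, each spatial derivative of $P_{1-t}f$ trades a derivative of $f$ for a factor $(1-e^{-2(1-t)})^{-1/2}$ times a Hermite-type weight, so $\nabla^k \log P_{1-t} f$ is bounded once $f$ has $\min(k,\lfloor\beta\rfloor)$ bounded derivatives and the remaining fractional part of $\beta$ is absorbed by the kernel at the cost of a factor $(1-t)^{-(k-\beta)/2}$ (logarithmic corrections appearing because of the weak regularity parameter $\theta$). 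Since $a\in\mathcal H^{\beta,\theta}_K$ with $\theta>1$, these logarithmic factors are integrable near $t=1$, which is exactly why the hypothesis $\theta>1$ is imposed. This is the step I expect to be the main obstacle: one must organize the Faà di Bruno expansion for derivatives of $\log P_{1-t}f = \log(\text{Gaussian convolution of }e^a)$, keep the positivity of $P_{1-t}f$ under control from below (so that $1/P_{1-t}f$ does not blow up — here a lower bound on $f$, or rather on $a$ being bounded below, or a Gaussian-type lower bound after smoothing, is needed), and track the precise power of $(1-t)$ against the precise power of the logarithm. The cleanest route is to prove, for each $k\le \lfloor\beta\rfloor+1$, an estimate of the shape $\sup_x \|\nabla^k b_t(x)\| \le C\,(1-t)^{-(k-1-\beta)_+/2}\,\log(1/(1-t))^{\theta k}$, and a matching Hölder-in-$x$ estimate for $\nabla^{\lfloor\beta\rfloor+1} b_t$ with exponent $\beta-\lfloor\beta\rfloor$.

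The third step converts the velocity-field estimates into estimates on the flow map. Differentiating the ODE for $S_t$ in the spatial variable gives linear ODEs for $\nabla S_t, \nabla^2 S_t,\dots,\nabla^{\lfloor\beta\rfloor+1}S_t$ whose coefficients are the derivatives of $b_t$ evaluated along the flow; Grönwall's inequality, together with the integrability in $t$ of the bounds from Step 2, yields that $S_t$ stays in $C^{\lfloor\beta\rfloor+1}$ with uniformly (in $t\le 1$) bounded derivatives, hence so does the limit $T=S_1$. For the top-order Hölder seminorm I would subtract two copies of the ODE system started at $x$ and at $y$, use the already-established boundedness of lower-order derivatives plus the Hölder-in-$x$ control of $\nabla^{\lfloor\beta\rfloor+1}b_t$ from Step 2, and close another Grönwall estimate to get $\nabla T\in\mathcal H^{\beta}_C(\mathbb R^d)$. (The $\alpha$/weak-regularity refinement of the conclusion is what buys the clean statement $\nabla T\in\mathcal H^\beta_C$ rather than merely $\mathcal H^{\beta-\epsilon}$, and it comes precisely from the logarithmic factors being square-integrable once $\theta>1$.) Finally I would remark that the push-forward identity $T_\#\gamma_d = p$ is stable under these regularity improvements since it is already established at the level of the flow, so no extra argument is needed there.
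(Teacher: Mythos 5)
Your overall plan matches the paper's route: Föllmer/Langevin flow over finite time, Faà di Bruno for derivatives of the velocity field, Grönwall to propagate regularity of the flow, and a separate Hölder-in-$x$ argument for the top derivative. The paper implements precisely this program, with the explicit velocity $V(t,x)=\tfrac{1}{t}\nabla\log Q_t r(x)$, a reduction to bounding $\int_0^1\|\nabla^j V(s,X_s(x))\|\,ds$ via a nonlinear Grönwall lemma, and a concentration estimate (the variance of $p^{t,x}$ is $O(1-t^2)$) that encodes your informal remark about trading derivatives of $f$ for Gaussian-kernel factors.

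However, the model estimate you state in Step 2, $\sup_x\|\nabla^k b_t\|\le C(1-t)^{-(k-1-\beta)_+/2}\log(1/(1-t))^{\theta k}$, is wrong in two ways that would derail the argument if taken literally. First, the logarithmic factor should appear with a \emph{negative} exponent. The definition of $\mathcal H^{\beta,\theta}$ gives a modulus of continuity of the form $\|x-y\|^{\beta-\lfloor\beta\rfloor}\log(\|x-y\|^{-1})^{-\theta}$, i.e. a logarithmic \emph{gain}, and this gain propagates to $\|\nabla^k V(t,\cdot)\|\le C(1-t^2)^{(\beta-\lfloor\beta\rfloor)/2-1}\log((1-t^2)^{-1})^{-\theta}$. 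Second, and consequently, the power of $(1-t)$ is $(\beta-\lfloor\beta\rfloor)/2-1\in[-1,-1/2)$, not $0$ as $(k-1-\beta)_+/2$ would give at $k=\lfloor\beta\rfloor+1$. As written, your bound is integrable near $t=1$ for \emph{any} $\theta>0$, which contradicts your own (correct) observation that $\theta>1$ is the crucial hypothesis: it is needed because $\int_0^1(1-s)^{-1}\log((1-s)^{-1})^{-\theta}\,ds<\infty$ exactly when $\theta>1$, and this borderline case occurs when $\beta\in\mathbb N$. So the heuristic you propose to prove would, if actually true, render the hypothesis $\theta>1$ superfluous; the correct bound is both more singular in $(1-t)$ and has the opposite log behavior, and it is the interplay of these two that the proof must carefully balance.

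One further point worth recording: the concentration lemma for $p^{t,x}$ (which in the paper is Proposition~\ref{prop:concentHtheo1}, giving $\operatorname{Var}_{p^{t,x}}\lesssim 1-t^2$ by comparison to the Gaussian $\varphi^{t,x}$) is the load-bearing step behind the informal slogan of ``integrating by parts against the kernel''; without it, the factors $(1-t^2)^{-1}$ and $(1-t^2)^{-2}$ coming from $\nabla_x p^{t,x}$ and $\nabla_x^2 p^{t,x}$ in Lemma~\ref{lemma:valueofgradp} cannot be absorbed, and one does not reach the correct singularity power stated above. Your sketch should make this step explicit rather than folding it into a generic smoothing heuristic.
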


Theorem \ref{theo:thetheo} asserts that there exists a transport map of Hölder regularity $\beta+1$ from the Gaussian measure to a log-$\beta$ smooth perturbation. To the best of our knowledge, this is the first result providing high-order regularity of transport in the unbounded setting. The additional logarithmic factor $\theta>1$ is expected for heat flows as the solution of the linearized equation has also a logarithmic factor in its regularity (see Proposition 2.4. in \cite{gallouet2018regularity}).  The  map $T$ exhibited in Theorem \ref{theo:thetheo} is the
classical Langevin transport map generated by heat flows \citep{kim2012generalization,fathi2024transportation,mikulincer2023lipschitz}.
We first prove Section \ref{sec:follandlang} that the Föllmer transport map introduced in \cite{dai2023lipschitz} is actually equal to the Langevin map.
This result allows to obtain a forward formulation in finite time of the Langevin transport map.  Using this formulation, the proof of Theorem \ref{theo:thetheo} is then divided into two parts: Section \ref{sec:nablak}
focuses on establishing bounds for the derivatives, while the non-integer regularity is addressed in Appendix Section \ref{sec:holdregulastderiv}.

We prove that the inverse of the transport map from Theorem \ref{theo:thetheo} is actually also $(\beta+1)$-Hölder, which allows to obtain $C^{\beta+1}$ diffeomorphisms between Gaussian deformations.

\begin{corollary}
\label{theo:diffeo}
Let $p,q:\mathbb{R}^d\rightarrow \mathbb{R}$ be probability density functions of the form $$p(x)=\exp\left(-\frac{1}{2}\|\Sigma_p^{-1/2}(x-\mu_p)\|^2+a(x)\right) \quad \text{ and } \quad q(x)=\exp\left(-\frac{1}{2}\|\Sigma_q^{-1/2}(x-\mu_q)\|^2+b(x)\right),$$
with $\mu_p,\mu_q\in \mathbb{R}^d$ and $\Sigma_p,\Sigma_q$ symmetric  matrices satisfying $K^{-1}\text{Id}\preceq \Sigma_i\preceq K\text{Id}$ for $i\in \{p,q\}$.
Suppose that $a,b \in \mathcal{H}^{\beta,\theta}_K(\mathbb{R}^d,\mathbb{R})$ with $\beta\geq 0$, $\theta>1$ and $K>0$. Then, there exists a map $T\in C^{\lfloor \beta \rfloor+1}(\mathbb{R}^d,\mathbb{R}^d)$ satisfying $\nabla T,\nabla T^{-1}\in  \mathcal{H}^{\beta}_C  (\mathbb{R}^d)$ and $T_{\# }p=q$. 
\end{corollary}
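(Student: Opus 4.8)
The plan is to reduce the claim to two applications of Theorem~\ref{theo:thetheo}, glued together through the inverse of one of the resulting maps; the only genuinely new ingredient is the regularity of that inverse.

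\emph{Affine normalization.} For $i\in\{p,q\}$ set $A_i(x):=\Sigma_i^{1/2}x+\mu_i$, an affine bijection of $\mathbb{R}^d$ with $K^{-1/2}\mathrm{Id}\preceq\Sigma_i^{1/2}\preceq K^{1/2}\mathrm{Id}$. Then $(A_p^{-1})_{\#}p=\tilde p$ with $\tilde p(y)=\exp(-\tfrac12\|y\|^2+\tilde a(y))$, $\tilde a(y)=a(A_p(y))+\tfrac12\log\det\Sigma_p$, and similarly $(A_q^{-1})_{\#}q=\tilde q=\exp(-\tfrac12\|\cdot\|^2+\tilde b)$. Pre-composition with an affine map whose linear part is bounded above and below, and addition of a bounded constant, send $\mathcal{H}^{\beta,\theta}$ into itself while inflating the norm only by a factor depending on $(K,\beta,\theta,d)$ — the multi-index derivatives pick up powers of $\|\Sigma_i^{\pm1/2}\|$, and the logarithmic Zygmund modulus is comparable under a bi-Lipschitz reparametrization of the argument — so $\tilde a,\tilde b\in\mathcal{H}^{\beta,\theta}_{C}$. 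By Theorem~\ref{theo:thetheo} there are $T_p,T_q\in C^{\lfloor\beta\rfloor+1}(\mathbb{R}^d,\mathbb{R}^d)$ with $\nabla T_p,\nabla T_q\in\mathcal{H}^{\beta}_{C}$, $(T_p)_{\#}\gamma_d=\tilde p$ and $(T_q)_{\#}\gamma_d=\tilde q$; thus $A_q\circ T_q\circ T_p^{-1}\circ A_p^{-1}$ pushes $p$ onto $q$, and this will be the map $T$.

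\emph{Inverse regularity (the crux).} The Langevin map $T_p$ delivered by Theorem~\ref{theo:thetheo} has a Jacobian bounded not only above but also below: $c\,\mathrm{Id}\preceq\nabla T_p(x)$ uniformly in $x$, for some $c=c(K,\beta,\theta,d)>0$. Indeed, along the defining heat flow the Jacobian obeys a linear matrix ODE $d_t\nabla S_t=M_t\nabla S_t$, where $M_t$ is built from the second logarithmic derivative of the relevant semigroup applied to $e^{\tilde a}$; since $\tilde a$ is uniformly bounded, $M_t$ is controlled in operator norm two-sidedly, so by Grönwall every singular value of the time-one Jacobian — hence of $\nabla T_p$ — lies in a fixed interval $[c,C]$. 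This is exactly the refinement of the Lipschitz estimate of Section~\ref{sec:nablak} needed here. Consequently $T_p$ is a bi-Lipschitz $C^{\lfloor\beta\rfloor+1}$-diffeomorphism of $\mathbb{R}^d$ (local diffeomorphism by the inverse function theorem, global by properness/Hadamard), with $\nabla T_p^{-1}(y)=(\nabla T_p(T_p^{-1}(y)))^{-1}$. Differentiating this identity repeatedly expresses $\nabla^{k}T_p^{-1}$, $1\le k\le\lfloor\beta\rfloor+1$, as a finite sum of products of $(\nabla T_p)^{-1}$ and higher derivatives $\nabla^{j}T_p$ evaluated at $T_p^{-1}$; using $\nabla T_p\in\mathcal{H}^{\beta}_{C}$, the lower bound (which makes $(\nabla T_p)^{-1}$ bounded and as regular as $\nabla T_p$), the stability of $\mathcal{H}^{\beta}$ under products of bounded elements and under post-composition with the Lipschitz map $T_p^{-1}$, one gets $T_p^{-1}\in C^{\lfloor\beta\rfloor+1}$ with $\nabla T_p^{-1}\in\mathcal{H}^{\beta}_{C}$; the same applies to $T_q^{-1}$. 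This is the "inverse is also $(\beta+1)$-Hölder" statement announced before the corollary.

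\emph{Conclusion and main obstacle.} Finitely many maps that are $C^{\lfloor\beta\rfloor+1}$ with globally bounded derivative lying in $\mathcal{H}^{\beta}_{C}$ have a composition with the same property (chain rule plus stability of $\mathcal{H}^{\beta}$ under products and under composition with bounded Lipschitz maps); the affine factors $A_p^{-1},A_q,A_p,A_q^{-1}$ are trivially of this type. Applying this to $T:=A_q\circ T_q\circ T_p^{-1}\circ A_p^{-1}$ and to $T^{-1}=A_p\circ T_p\circ T_q^{-1}\circ A_q^{-1}$ yields $T\in C^{\lfloor\beta\rfloor+1}$ with $\nabla T,\nabla T^{-1}\in\mathcal{H}^{\beta}_{C}$ and $T_{\#}p=q$, as required. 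The main obstacle is the uniform \emph{lower} bound on $\nabla T_p$ in the crux step: one must verify that the operator-norm control of the second logarithmic derivative of the heat-regularized density used in Section~\ref{sec:nablak} for the Lipschitz (upper) bound is genuinely two-sided — i.e. that $e^{\tilde a}$ being bounded away from $0$ and $\infty$ forces a lower Hessian bound with a flow-time-integrable constant — so that Grönwall produces a positive lower bound on the singular values of $\nabla T_p$, which is what lets the inverse inherit the full regularity.
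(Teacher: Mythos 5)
Your route matches the paper's: affine normalization to reduce to centered standard-Gaussian deformations, a two-sided Gr\"onwall estimate to make $\nabla T_p$ invertible with bounded inverse, then composition with the affine factors and $T_q$. The composition bookkeeping and the chain-rule argument for the regularity of $T_p^{-1}$ are sound and are essentially what the paper does in Section~\ref{sec:diffeog}.

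Where you go wrong is the informal justification of the lower bound in your crux step. You assert that ``since $\tilde a$ is uniformly bounded, $M_t$ is controlled in operator norm two-sidedly,'' and later that ``$e^{\tilde a}$ being bounded away from $0$ and $\infty$ forces a lower Hessian bound with a flow-time-integrable constant.'' Mere boundedness of $\tilde a$ is \emph{not} enough: with $r=e^{\tilde a}$ only in $L^\infty$, the decomposition of $\nabla V$ gives $\|\nabla V(t,\cdot)\|_\infty \lesssim (1-t^2)^{-1}$, which is not integrable on $[0,1]$, so Gr\"onwall would not yield a positive lower bound on $\alpha_t:=\|\nabla X_t(x)w\|^2$ at $t=1$. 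What actually produces the integrability is the H\"older--Zygmund regularity $a\in\mathcal H^{\beta,\theta}$ with $\theta>1$: Proposition~\ref{prop:boundnabalvgauss} (and, for $\beta\in(0,1)$, the estimate \eqref{align:r0boundreu}) gives $\|\nabla V(t,\cdot)\|_\infty \lesssim (1-t^2)^{(\beta-\lfloor\beta\rfloor)/2-1}\log((1-t^2)^{-1})^{-\theta}$, which is integrable precisely because $\theta>1$. Since this is a bound on the operator norm, it controls the \emph{smallest} eigenvalue of $\nabla V(t,\cdot)$ as well as the largest, which is exactly the two-sidedness you flagged as the obstacle; the paper then closes the argument with $\partial_t\alpha_t\geq 2\lambda(t)\alpha_t$, $\lambda(t)=\min_{u\in\mathbb S^{d-1}}\langle u,\nabla V(t,X_t(x))u\rangle$, $|\lambda(t)|\leq\|\nabla V(t,X_t(x))\|$, and Gr\"onwall. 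So the step you identified as the obstacle is correctly identified and is correctly resolved by the paper's estimate, but your stated reason for expecting it to hold (boundedness of $e^{\tilde a}$) should be replaced by a citation to the operator-norm bound on $\nabla V$ from Section~\ref{sec:nablak}, which genuinely uses the H\"older data.
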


The proof of Corollary \ref{theo:diffeo} can be found in Section \ref{sec:diffeog}.  As a by-product of Theorem \ref{theo:thetheo}, we also obtain a result for densities that are supported on the ball. These densities can be written as 
\begin{equation}\label{eq:pbis}
p(x)=\exp\left(-u(\|x\|^2)+a(x)\right),
\end{equation}with $u$ satisfying the following assumption.
\begin{assumption}\label{assum:u}
    The map $u\in C^{(\lfloor \beta\rfloor +1)\vee 2}([0,1),[0,\infty))$ satisfies $u(t)\geq (1-t)^{-\frac{1}{K}}$ and $0\leq u^{(l)}(t)\leq (1-t)^{-(K+l)}$ for all $t\in [0,1)$ and $l\in \{1,...,(\lfloor \beta\rfloor +1)\vee 2\}$.
\end{assumption}
Assumption \ref{assum:u} imposes that the probability density $p$ defined in \eqref{eq:pbis} decays at a controlled rate near the boundary of the ball. In particular the density is not bounded from below, which means that it does not fulfill the conditions required for the classical regularity results established by Caffarelli (Theorem \ref{theorem:caffarellisregularity}). The adaptation of Theorem \ref{theo:thetheo} to the compact case is the following.

\begin{theorem}\label{theo:thetheo2}
Let $p:B^d(0,1)\rightarrow \mathbb{R}$ be a probability density function of the form $$p(x)=\exp\left(-u(\|x\|^2)+a(x)\right),$$
with $a \in \mathcal{H}^{\beta,\theta}_K(\mathbb{R}^d,\mathbb{R})$, $\beta>0$, $\theta>1$, $K>0$
and $u$ satisfying Assumption \ref{assum:u}.
     Then, there exists $T\in C^{\lfloor \beta \rfloor+1}(\mathbb{R}^d,B^d(0,1))$ such that  $ T\in  \mathcal{H}^{1}_C  (\mathbb{R}^d)$ and $T_{\#}\gamma_d=p$. Furthermore, for all $\epsilon\in (0,1)$, the restriction of $T$ to $B^d(0,\log(\epsilon^{-1}))$ belongs to $ \mathcal{H}_{C\log(\epsilon^{-1})^{C_2}}^{\beta+1}\Big(B^d(0,\log(\epsilon^{-1})),B^d(0,1)\Big)$.
\end{theorem}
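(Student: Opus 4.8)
The plan is to run, for the compactly supported target $p(x)=\exp(-u(\|x\|^2)+a(x))$, the same Langevin/Föllmer construction that underlies Theorem~\ref{theo:thetheo}, and to combine the derivative bounds of Section~\ref{sec:nablak} and the Hölder estimate of Section~\ref{sec:holdregulastderiv} with a quantitative confinement step showing that $T$ sends large balls well inside $B^d(0,1)$. Written as a perturbation of $\gamma_d$, the density ratio is, up to normalization, $f(x)=\exp\!\big(\tfrac{\|x\|^2}{2}-u(\|x\|^2)+a(x)\big)\mathbf{1}_{B^d(0,1)}(x)$, which is bounded on $B^d(0,1)$ and, since $u(t)\ge(1-t)^{-1/K}$, vanishes together with all its derivatives at $\partial B^d(0,1)$ faster than any power of the distance to the boundary. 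Hence the finite-time forward-flow formulation of the Langevin map established in Section~\ref{sec:follandlang} applies verbatim and produces $T=S_1$ with $T_{\#}\gamma_d=p$ and $T(\mathbb{R}^d)\subset B^d(0,1)$; in particular $\|T\|_\infty\le 1$. The argument then splits into three parts.

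First, for the global bounds I would invoke the first-order ($k=1$) estimate of Section~\ref{sec:nablak}, whose only input is a uniform-in-space lower bound on the Hessian of the potential of the heat-evolved ratio. Here the radial part $u(\|x\|^2)$ has Hessian $2u'(\|x\|^2)\mathrm{Id}+4u''(\|x\|^2)\,xx^\top\succeq 0$ by Assumption~\ref{assum:u}, so it only improves the lower bound, while the rough perturbation $a$ contributes a finite amount of order $\int_0^1\|\nabla^2 P_ta\|_\infty\,dt\le C$ (finite thanks to $\beta>0$). This gives $\|\nabla T\|_\infty\le C$, hence $T\in\mathcal H^1_C(\mathbb{R}^d)$, and running the finite-order estimates of Section~\ref{sec:nablak} shows $T\in C^{\lfloor\beta\rfloor+1}(\mathbb{R}^d,B^d(0,1))$.

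Second, I would prove the confinement estimate $T(B^d(0,R))\subset B^d(0,1-cR^{-2K})$ for $R\ge C$ and a small $c>0$. Suppose $\|x_0\|\le R$ and $\|T(x_0)\|>1-\delta$; by the Lipschitz bound just obtained, $T\big(B^d(x_0,\delta/(2C))\big)\subset\{\|y\|>1-2\delta\}$, so, pushing $\gamma_d$ forward,
$$
\gamma_d\!\big(B^d(x_0,\tfrac{\delta}{2C})\big)\ \le\ p\big(\{\|y\|>1-2\delta\}\big)\ \le\ C\exp\!\big(-(c\delta)^{-1/K}\big),
$$
where the last inequality uses $u(t)\ge(1-t)^{-1/K}$ and $\|a\|_\infty\le K$ to bound the $p$-mass of an $O(\delta)$-shell at $\partial B^d(0,1)$. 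As $\|x_0\|\le R$, the left-hand side is at least $c'e^{-R^2}\delta^d$, so $(c\delta)^{-1/K}\le CR^2$, i.e.\ $\delta\ge cR^{-2K}$; since this holds for every admissible $\delta$, $\|T(x_0)\|\le 1-cR^{-2K}$. Consequently, by Assumption~\ref{assum:u}, $0\le u^{(l)}(\|y\|^2)\le(cR^{-2K})^{-(K+l)}$ on $T(B^d(0,R))$ for $l\le(\lfloor\beta\rfloor+1)\vee 2$, so all derivatives of $\log p$ of order $\le\lfloor\beta\rfloor+1$ are bounded there by a fixed power of $R$.

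Finally, I would feed these bounds into the derivative estimates of Section~\ref{sec:nablak} and the Hölder estimate of Section~\ref{sec:holdregulastderiv}, applied to the flow issued from $B^d(0,R)$. Since those estimates express $\nabla^k T$ through a bounded (in $k,d$) number of heat-semigroup quantities rather than an exponentiated Grönwall factor, and since the contribution of the thin boundary layer near $\partial B^d(0,1)$ — where $u$ and its derivatives blow up but $p$ is super-polynomially small — is negligible, one gets $\|\nabla^k T\|_{L^\infty(B^d(0,R))}\le CR^{C_2}$ for $k\le\lfloor\beta\rfloor+1$ and $\|\nabla T\|_{\mathcal H^\beta(B^d(0,R))}\le CR^{C_2}$, i.e.\ $T|_{B^d(0,R)}\in\mathcal H^{\beta+1}_{CR^{C_2}}(B^d(0,R),B^d(0,1))$; taking $R=\log(\epsilon^{-1})$ yields the statement. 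The hard part will be the interplay of the last two steps: making the confinement quantitative with only a polynomial-in-$R$ loss and controlling the boundary-layer contributions, so that the final Hölder norm grows like a power of $\log(\epsilon^{-1})$ rather than exponentially — this is exactly where the gradient structure of the Langevin velocity field and the precise integrating factors of Section~\ref{sec:nablak}, together with the sharp decay rate imposed by Assumption~\ref{assum:u}, are essential.
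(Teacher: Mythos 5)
Your high-level strategy matches the paper's proof in Appendix~\ref{sec:prooftheo2}: construct the flow, obtain a global Lipschitz bound from the log-concavity of the radial part, use a Lipschitz-plus-boundary-decay mass comparison to confine the flow away from $\partial B^d(0,1)$, and feed the confinement into the derivative estimates. But two steps as written would not go through. For the Lipschitz bound you declare the input to be $\int_0^1\|\nabla^2 P_t a\|_\infty\,dt<\infty$; however $Q_t r$ is a Gaussian convolution of the full product $r=e^a\exp(\|\cdot\|^2/2-u(\|\cdot\|^2))$, which does not factor, so $\nabla^2\log Q_tr$ cannot be split into a "heat-evolved $a$" term plus a "radial" term — and the radial factor of $r$ (as opposed to that of $p$) is not itself log-concave, so ``the radial part only helps'' does not follow from a naive Hessian split of $\log Q_t r$. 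What the paper actually does (Proposition~\ref{prop:boundimpossiblebuty}) is introduce the auxiliary measure $\nu^{t,x}$ obtained by removing $a$ from $p^{t,x}$, verify it is $\tfrac{t^2}{1-t^2}$-log-concave, control its covariance via the Brascamp--Lieb inequality, and then bound the difference between the covariances of $p^{t,x}$ and $\nu^{t,x}$ in terms of the Hölder modulus of $a$; it is this detour through $\nu^{t,x}$ that makes your intuition precise.

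Second, your confinement argument only controls the limit map $T=X_1$ (it relies on $p=T_{\#}\gamma_d$ being supported in $B^d(0,1)$), whereas the Gronwall estimate for $\|\nabla^k X_1\|$ integrates $\|\nabla^j V(s,X_s(x))\|$ over $s\in[0,1)$, so one needs confinement of $tX_s(x)$ for every $s$ near $1$, not just at $s=1$. The shell--mass comparison does not transfer to $s<1$: $(X_s)_{\#}\gamma_d=\mu_{1-s}$ is a Gaussian mollification of $p$ supported on all of $\mathbb{R}^d$, so the boundary-shell mass bound has no analogue there. Proposition~\ref{prop:loca3} instead uses the identity $f_{X_t}(X_t(x))=\gamma_d(x)|\det\nabla X_t(x)|^{-1}$ together with the closed form $f_{X_t}(z)=e^{-\|z\|^2/2}Q_tr(z)$ to force $\|tX_t(x)\|^2\leq 1-C_2^{-1}\log(\epsilon^{-1}+1)^{-K}$ for all $t$ in the critical regime. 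Finally, the flow is only constructed on $[0,1)$ here (Lemma~\ref{lemma:le5}), so the map $T$ must be extracted as a limit via Lemma~\ref{lemma:convergenceofmap}; and the final step you describe as ``feed these bounds into the derivative estimates'' glosses over the substantial work of Sections~\ref{sec:prooftheo2} and~\ref{sec:holdregulastderiv}, where the concentration estimates from the Gaussian case must be re-derived with $x$-dependent, polynomially-growing constants.
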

This Lusin-type result is weaker than Theorem \ref{theo:thetheo} as the bound $\log(\epsilon^{-1})^{C_2}$ on the high-order derivatives of the transport map diverges for points at infinity. Nevertheless, it guarantees that a set of mass $1-\epsilon$ is transported by a $(\beta+1)$-Hölder map having a norm controlled by a logarithmic power of $\epsilon$. The log factor arises from the growth of the derivatives of the function $r=p/\gamma_d$ near the boundary of the ball.  Note that in the case $\beta \in (0,1)$, the result $ T\in  \mathcal{H}^{1}_C  (\mathbb{R}^d)$ improves on the Lipschitz result of \cite{dai2023lipschitz} as we do not require semi-log-concavity. 
The proof of Theorem \ref{theo:thetheo2} can be found in the Appendix Section \ref{sec:prooftheo2}, sharing the same reasoning than for Theorem \ref{theo:thetheo} (developed in Section
\ref{sec:generalreas}).

\subsection{Transport maps from diffusion processes}
The map $T$ used in Theorems \ref{theo:thetheo} and \ref{theo:thetheo2} is the Langevin transport map generated by heat flows \citep{fathi2024transportation}. It is built using the backward flow of the Langevin dynamic (see Section \ref{sec:follandlang}). To achieve a simpler formulation, we first prove that
the "Föllmer transport map" from \cite{dai2023lipschitz} is equal to the Langevin map. This enables to have a forward formulation over finite time of the transport map.
\subsubsection{The Föllmer transport map}

Let us recall the construction of the Föllmer transport map for  a probability density $p$ on $\mathbb{R}^d$ being absolutely continous with respect to the $d$-dimensional Gaussian measure and having finite 2nd moment.

Let $\epsilon\in (0,1)$, the Föllmer diffusion process $(\overline{F}_t)_{t\in [0,1-\epsilon]}$ is defined by the  Itô Stochastic Differential Equation (SDE):
\begin{equation*}
    d\overline{F}_t=-\frac{1}{1-t}\overline{F}_tdt+\sqrt{\frac{2}{1-t}}dB_t,\ \ \overline{F}_0\sim p,
\end{equation*}
with $B_t$ a Brownian motion in $\mathbb{R}^d$.
By Theorem 2.1 in \cite[Chapter 9]{revuz2013continuous},  the diffusion process $\overline{F}_t$  has a
unique strong solution on $[0,1-\epsilon]$.  Moreover, the probability distribution $\mu_t$ of $\overline{F}_t$ satisfies
\begin{equation}\label{eq:laxXt}
\mu_t\overset{d}{=} (1-t)X+\sqrt{t(2-t)}Y, 
\end{equation}
with $X\sim p$ and $Y\sim \gamma_d$ independent random variables.  Furthermore, from Theorem 5.14. in \cite{santambrogio2015optimal}, there exists a time-dependent vector field $W_t$ such that the marginal distribution flow $(\mu_t)_{t\in [0,1-\epsilon]}$ satisfies the continuity equation:
\begin{equation}\label{eq:continuityeq}
\partial_t \mu_t = \nabla \cdot (\mu_t W_t),\quad \mu_0=p.
\end{equation}
Knowing the density of $\mu_t$, simple calculation gives  $W_t=V(1-t,\cdot)$, $t$ a.e. with
\begin{equation}\label{eq:V}
    V(t,x):=\frac{1}{t} \nabla \log Q_t r(x), \ \ r(x):=\frac{p(x)}{\gamma_d(x)}
\end{equation}
and 
\begin{equation}\label{eq:Qtr}
    Q_t r(x):= \int \varphi^{t,x}(y)r(y)d\lambda^d(y)= \int r(tx+\sqrt{1-t^2}z)d\gamma_d(z),
\end{equation}
with $\varphi^{t,x}$ the density of the $d$-dimensional Gaussian measure with mean $tx$ and covariance $(1-t^2)\text{Id}$. Now, for $t\in (0,1)$, we have
\begin{align*}
    V(t,x) = & \frac{1}{1-t^2}\int (y-tx)dp^{t,x}(y),
\end{align*}
with
$$p^{t,x}(y):=\frac{ 1}{Q_tr(x)}\ \varphi^{t,x}(y)r(y).$$
Then, by dominated convergence we obtain 
$$\lim_{t\rightarrow 0} V(t,x) = \int y dp(y),$$
so the velocity field $V$ is well defined for all $t\in [0,1).$ Under appropriate conditions (see Propositions \ref{prop:condiwelldifned} or \ref{prop:equalitytmaps}), we can define a flow of maps $\left(x\mapsto X_t(x)\right)_{t\in [0,1)}$ with $X_t$ solution to the equation
\begin{equation}\label{eq:PDE}
\partial_t X_t(x) =V(t,X_t(x)), \quad X_0(x)=x.
\end{equation}
Then, from \eqref{eq:continuityeq}  we have that $$(X_{t})_{\#}\gamma_d\overset{d}{=}\mu_{1-t},$$
in particular the map $X_1$ (when well-defined) transports the $d$-dimensional Gaussian distribution to the measure $p$.
Equation \eqref{eq:PDE} is well posed under minimal Cauchy-Lipschitz assumptions (Remark 7 in \cite{Ambrosio_Crippa_2014}).
\begin{proposition}\label{prop:condiwelldifned}
    Let $t\in (0,1]$ such that 
$$V\in L^1\Big([0,t],W^{1,\infty}_{loc}(\mathbb{R}^d,\mathbb{R}^d)\Big)\text{ and } \frac{V}{1+\|x\|}\in L^1\Big([0,t],L^{\infty}(\mathbb{R}^d,\mathbb{R}^d)\Big),$$
    then the equation \eqref{eq:PDE} has a unique solution $(X_s)_{s\in [0,t]}$, $x$ almost everywhere. Furthermore, for all $s\in [0,t]$, it verifies $(X_{s})_{\# }\gamma_d=\mu_{1-s}$.
\end{proposition}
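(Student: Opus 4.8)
\emph{Proof strategy.} The plan is to split the statement into two essentially independent parts: (i) well-posedness of the flow $(X_s)_{s\in[0,t]}$ solving \eqref{eq:PDE}, which I would obtain from the Cauchy--Lipschitz / Carath\'eodory theory for ODEs with $L^1$-in-time, locally-Lipschitz-in-space right-hand side and an a priori Gr\"onwall bound; and (ii) the identity $(X_s)_{\#}\gamma_d=\mu_{1-s}$, which I would deduce from uniqueness of solutions of the continuity equation driven by $V$, applied both to $\rho_s:=(X_s)_{\#}\gamma_d$ and to $\sigma_s:=\mu_{1-s}$.

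For (i), the stated hypotheses are precisely those under which the classical existence/uniqueness theory applies. Indeed, $V\in L^1\big([0,t],W^{1,\infty}_{loc}(\mathbb{R}^d,\mathbb{R}^d)\big)$ means that for a.e.\ $s$ the field $V(s,\cdot)$ is locally Lipschitz, with local Lipschitz constant on each bounded set integrable in $s$; Carath\'eodory existence together with this local Lipschitz bound produces, for every $x$, a unique maximal solution of \eqref{eq:PDE}. The second hypothesis furnishes $g\in L^1([0,t])$ with $\|V(s,x)\|\le g(s)(1+\|x\|)$, so Gr\"onwall's inequality gives $\|X_s(x)\|\le (1+\|x\|)\exp\big(\|g\|_{L^1([0,t])}\big)$ along any solution; hence no integral curve blows up in finite time and each is defined on all of $[0,t]$. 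This yields the flow $(s,x)\mapsto X_s(x)$ for every $x$ (a fortiori $x$-a.e.); equivalently one may invoke directly the theory of regular Lagrangian flows (Remark 7 in \cite{Ambrosio_Crippa_2014}).

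For (ii), I would first record that $\rho_s:=(X_s)_{\#}\gamma_d$ is a weakly continuous curve of probability measures solving $\partial_s\rho_s+\nabla\cdot(\rho_s V(s,\cdot))=0$ with $\rho_0=\gamma_d$: this is the standard computation, testing against $\phi\in C_c^\infty(\mathbb{R}^d)$, writing $\phi(X_s(x))=\phi(x)+\int_0^s\nabla\phi(X_\tau(x))\cdot V(\tau,X_\tau(x))\,d\tau$, integrating in $x$ against $\gamma_d$, and exchanging the $\tau$- and $x$-integrals by Fubini (legitimate because $|\nabla\phi(X_\tau(x))\cdot V(\tau,X_\tau(x))|\le \|\nabla\phi\|_\infty\, g(\tau)(1+\|X_\tau(x)\|)$ and the Gr\"onwall bound together with the finite first moment of $\gamma_d$ make the right side $d\tau\otimes d\gamma_d$-integrable). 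On the other side, $\sigma_s:=\mu_{1-s}$ solves the \emph{same} equation: \eqref{eq:continuityeq} reads $\partial_r\mu_r=\nabla\cdot(\mu_r W_r)$ with $W_r=V(1-r,\cdot)$, so the substitution $r=1-s$ gives $\partial_s\sigma_s=-\nabla\cdot(\sigma_s V(s,\cdot))$; and by \eqref{eq:laxXt} evaluated at $r=1$ one has $\sigma_0=\mu_1\overset{d}{=}\gamma_d$, while $\sigma$ is weakly continuous on $[0,t]$ (with the limit as $s\to0$ read off from \eqref{eq:laxXt}).

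Finally I would close the argument by uniqueness. Since $\|V(s,x)\|/(1+\|x\|)\le g(s)\in L^1([0,t])$ and $\rho_s,\sigma_s$ are probability measures, both satisfy $\int_0^t\!\int \frac{\|V(s,x)\|}{1+\|x\|}\,d\rho_s(x)\,ds\le \|g\|_{L^1([0,t])}<\infty$ (and likewise for $\sigma$), so Ambrosio's superposition principle applies to each: the solution is the image under the evaluation maps of a probability measure on $C([0,t];\mathbb{R}^d)$ concentrated on integral curves of $V$. Because integral curves of $V$ with a prescribed value at time $0$ are unique (the local-Lipschitz-plus-Gr\"onwall argument of part (i)), this measure can only be the law of $x\mapsto (X_s(x))_s$ with $x\sim\gamma_d$, whence the solution equals $(X_s)_{\#}\gamma_d$. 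Applying this to both $\rho$ and $\sigma$ gives $\mu_{1-s}=\sigma_s=\rho_s=(X_s)_{\#}\gamma_d$ for every $s\in[0,t]$. I expect the only real work to be bookkeeping: verifying the weak-continuity and integrability conditions that make the superposition principle (equivalently, DiPerna--Lions uniqueness) applicable, and handling the endpoint $s\to0$ where $\mu_{1-s}\to\gamma_d$ — but both are delivered by exactly the two displayed assumptions on $V$, so I do not anticipate a genuine obstacle beyond this.
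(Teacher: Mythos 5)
Your proof is correct, and it takes the route the paper itself intends but does not write out: the paper offers no proof of this proposition, merely a pointer to Remark~7 in \cite{Ambrosio_Crippa_2014} in the sentence immediately preceding the statement. Your two-step decomposition is exactly the standard argument behind that reference: (i) Carath\'eodory existence/uniqueness from $V\in L^1_t W^{1,\infty}_{x,\mathrm{loc}}$ combined with a Gr\"onwall bound from the sublinear-growth hypothesis to rule out blow-up; and (ii) showing that both $\rho_s:=(X_s)_\#\gamma_d$ and $\sigma_s:=\mu_{1-s}$ are weakly continuous, narrowly integrable solutions of the continuity equation $\partial_s\rho_s+\nabla\cdot(\rho_s V(s,\cdot))=0$ with the same datum $\gamma_d$, then invoking uniqueness via the superposition principle (equivalently DiPerna--Lions). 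Your sign-checks on \eqref{eq:continuityeq} and the identification $\sigma_0=\mu_1\overset{d}{=}\gamma_d$ from \eqref{eq:laxXt} are right, and the Fubini justification for $\rho_s$ solving the continuity equation is the correct use of the growth bound plus the finite first moment of $\gamma_d$. One could streamline slightly: once pointwise well-posedness of the ODE is established in part (i), the superposition principle is somewhat heavier machinery than strictly necessary --- a direct uniqueness theorem for continuity equations with $L^1_t W^{1,\infty}_{x,\mathrm{loc}}$ vector fields would do --- but invoking it is not wrong and arguably closer to what the cited reference supplies. In short, you have supplied the proof the paper delegates to a citation, and you have done so correctly.
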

We aim to transport the $d$-dimensional Gaussian distribution to probability measures $p$ that do not necessarily meet the conditions of Proposition \ref{prop:condiwelldifned} in time $t=1$. Nevertheless, transport maps can still be obtained, as demonstrated by the following lemma.

\begin{lemma}\label{lemma:convergenceofmap}
    Let $\mu$,$\nu$ two probability measures on $\mathbb{R}^d$ and $\mu_k$,$\nu_k$ two sequences of probability measures converging in distribution to $\mu$ and $\nu$ respectively. Suppose that the supports of $\mu$ and $\mu_k$  are open.  If there exists a sequence of map $T_k$ such that $(T_k)_{\# }\mu_k=\nu_k$ and $T_k\in \mathcal{H}^\alpha_C$, then there exists a map $T\in \mathcal{H}^\alpha_C$ satisfying $T_{\#} \mu=\nu$. 
\end{lemma}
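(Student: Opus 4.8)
The plan is to extract a convergent subsequence from the maps $T_k$ using the uniform Hölder bound, and check that the limit is both in $\mathcal{H}^\alpha_C$ and transports $\mu$ to $\nu$. Since $\mathcal{H}^\alpha_C$ is a bounded set in a Hölder norm, the Arzelà–Ascoli theorem gives that on every compact subset of $\mathbb{R}^d$ the family $\{T_k\}$ is precompact in $C^{\lfloor\alpha\rfloor}$; by a diagonal argument over an exhausting sequence of compacts, one obtains a subsequence (still denoted $T_k$) converging in $C^{\lfloor\alpha\rfloor}_{loc}$ to some map $T$. The uniform bound passes to the limit: each seminorm $\|\partial^\nu T_i\|_{L^\infty}$ and each Hölder seminorm $\|\partial^\nu T_i\|_{\alpha-\lfloor\alpha\rfloor,0}$ is lower semicontinuous under local uniform convergence of the relevant derivatives, so $T\in\mathcal{H}^\alpha_C$.

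Next I would verify the pushforward identity $T_\#\mu=\nu$. The natural route is to test against a bounded continuous function $g:\mathbb{R}^d\to\mathbb{R}$ and show $\int g\,d(T_\#\mu)=\int g\circ T\,d\mu=\int g\,d\nu$. We know $\int g\circ T_k\,d\mu_k=\int g\,d\nu_k\to\int g\,d\nu$ by weak convergence $\nu_k\rightharpoonup\nu$. It remains to pass to the limit in the left-hand side, i.e. to show $\int g\circ T_k\,d\mu_k\to\int g\circ T\,d\mu$. Write
\begin{equation*}
\int g\circ T_k\,d\mu_k-\int g\circ T\,d\mu=\int (g\circ T_k-g\circ T)\,d\mu_k+\Big(\int g\circ T\,d\mu_k-\int g\circ T\,d\mu\Big).
\end{equation*}
The second term tends to $0$ since $g\circ T$ is bounded and continuous and $\mu_k\rightharpoonup\mu$. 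For the first term, $T_k\to T$ uniformly on compacts, hence $g\circ T_k\to g\circ T$ uniformly on compacts; combined with the tightness of $\{\mu_k\}$ (automatic from $\mu_k\rightharpoonup\mu$ by Prokhorov) and the uniform boundedness of $g$, this term also tends to $0$. This gives $T_\#\mu=\nu$.

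The main obstacle is not the compactness step but the role of the hypothesis that the supports of $\mu$ and $\mu_k$ are open. This assumption is needed to make sense of "$T\in C^{\lfloor\alpha\rfloor}$" as a statement about a genuine map defined on (a neighborhood of) the support, and to guarantee that the local Hölder bounds on $T_k$ — which a priori only control $T_k$ where $\mu_k$ charges mass — actually propagate to a map defined consistently on $\mathrm{supp}(\mu)$. Concretely, one must be a little careful that the limiting map $T$ is defined $\mu$-a.e. and that its restriction to $\mathrm{supp}(\mu)$ inherits the Hölder ball membership; the openness lets us work on open neighborhoods where Arzelà–Ascoli applies cleanly and where the pushforward computation above is unambiguous. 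A secondary point to handle with mild care is that the convergence $\mu_k\rightharpoonup\mu$ does not by itself force $\mathrm{supp}(\mu_k)\to\mathrm{supp}(\mu)$, so when I say "$T_k\to T$ uniformly on compacts" I should note that it suffices to have the limit defined and Hölder on a fixed open set containing $\mathrm{supp}(\mu)$, which is where the hypothesis is used. Once these bookkeeping issues are settled, the argument is the standard stability-of-transport-maps-under-weak-convergence compactness argument.
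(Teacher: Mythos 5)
Your proof is correct and follows essentially the same approach as the paper: the paper's proof is a one-line citation of Lemma~2.1 in Neeman (2022) for the Lipschitz part plus a remark that Arzelà--Ascoli applied to each derivative handles the higher-order regularity, which is precisely the diagonalized Arzelà--Ascoli compactness plus weak-convergence-stability argument you spell out in full. The only thing worth noting is that you have made explicit (and correctly flagged as the delicate point) the domain bookkeeping around $\mathrm{supp}(\mu)$ vs.\ $\mathrm{supp}(\mu_k)$, which the paper leaves implicit by delegating to the cited lemma.
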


\begin{proof}
 The existence of the Lipschitz transport map $T$ is given by Lemma 2.1  in  \cite{neeman2022lipschitz} and the higher order regularity can be deduced from the Arzelà–Ascoli theorem applied to each derivatives. 
\end{proof}

The proof of Theorems \ref{theo:thetheo} and \ref{theo:thetheo2} consists in showing that the transport map $X_1$, the solution to \eqref{eq:PDE} when $t\rightarrow 1$, is $(\beta+1)$-Hölder.

\subsubsection{Equality between the Föllmer and the Langevin transport map}\label{sec:follandlang}
In this work, we focus on establishing the regularity of the Langevin transport map. This map is typically constructed as the inverse of the limiting solution of an ordinary differential equation (ODE), as detailed below. In order to use a simple formulation, we show that the Föllmer transport map from the previous section is actually equal to the Langevin map.

Let us first recall the construction of the Langevin transport map. The Ornstein–Uhlenbeck process $(\overline{L}_t)_{t\in [0,\infty)}$ is defined by the following Itô SDE:
\begin{equation*}
    d\overline{L}_t=-\overline{L}_tdt+\sqrt{2}dB_t,\ \ \overline{L}_0\sim p,
\end{equation*}
with $B_t$ a Brownian motion in $\mathbb{R}^d$. We can show (Lemma 2 in \cite{mikulincer2023lipschitz}) that
\begin{equation*}
\overline{L}_t\overset{d}{=}\mu_{1-e^{-t}}.
\end{equation*}
Defining the flow of maps 
$\left(x\mapsto L_{s,t}(x)\right)_{s\in [0,\infty),t\in [0,s]}$ with $L_{s,t}$ solution to
\begin{equation*}
 \partial_t L_{s,t}(x)=\nabla (\log Q_{e^{-(s-t)}} r)(L_{s,t}(x)), \ \ L_{s,0}(x)=x,   
\end{equation*}
we obtain that
$$(L_{t,t})_{\#}\mu_{1-e^{-t}}\overset{d}{=}p,$$
in particular we have that when both the Föllmer and Langevin flows are well defined,
$$
(X_{1})_{\#}\gamma_d\overset{d}{=}p \quad \text{ and } \quad (L_{\infty,\infty}) _{\# }\gamma_d=\lim_{t\rightarrow \infty}(L_{t,t})_{\#\mu_{1-e^{-t}}}\overset{d}{=}p,
$$
Therefore, the Föllmer transport map $X_1$ and the Langevin transport map $L_{\infty,\infty}$ both transport the Gaussian measure to $p$. We now prove that these maps are, in fact, identical under classical Cauchy-Lipschitz conditions for the existence of the final maps $X_1$ and $L_{\infty,\infty}$.
\begin{proposition}\label{prop:equalitytmaps}
    Let $p$  a probability density on $\mathbb{R}^d$ being absolutely continuous with respect to the $d$-dimensional Gaussian measure and having finite 2nd moment. If the velocity $V$ defined in \eqref{eq:V} satisfies 
    $$V\in L^1\Big([0,1],W^{1,\infty}(\mathbb{R}^d,\mathbb{R}^d)\Big),$$
then $X_1=L_{\infty,\infty}$ almost everywhere.
\end{proposition}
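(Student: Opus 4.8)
The plan is to show that both flows, once reparametrised to a common time variable, solve the \emph{same} ODE with the \emph{same} velocity field, so that uniqueness of the Cauchy--Lipschitz problem forces them to coincide. The natural change of variables is $s\mapsto t=1-e^{-s}$, which maps $s\in[0,\infty]$ to $t\in[0,1]$. First I would record how the two constructions relate under this substitution. For the Langevin flow, the relevant final map is $L_{\infty,\infty}=\lim_{s\to\infty}L_{s,s}$, and along the diagonal $t=s$ the defining ODE reads $\partial_t L_{s,t}(x)=\nabla(\log Q_{e^{-(s-t)}}r)(L_{s,t}(x))$. Setting $u=e^{-(s-t)}$ and tracking how $L_{s,t}$ evolves as $t$ runs up to $s$ (equivalently as $u$ runs from $e^{-s}$ up to $1$), I would rewrite this as a flow in the variable $u$ with a velocity proportional to $\tfrac{1}{u}\nabla\log Q_u r$, which is exactly $V(u,\cdot)$ as defined in \eqref{eq:V}. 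For the Föllmer flow, \eqref{eq:PDE} already has velocity $V(t,X_t(x))$; the only subtlety is that $X_1$ is the endpoint of a flow started at $t=0$ from $x\sim\gamma_d$ and run \emph{forward}, whereas the Langevin diagonal flow is run in the opposite direction on the time interval. So the bookkeeping step is to check that, after the substitution, the Langevin map $L_{\infty,\infty}$ and the Föllmer map $X_1$ are obtained by integrating the identical vector field $V$ along the identical time interval with the identical initial condition.

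The cleanest way to make this rigorous is to argue at the level of the marginal flows and the continuity equation rather than wrestling with the two ODE parametrisations directly. Both $(\mu_t)_{t\in[0,1]}$ (for the Föllmer side, with $\mu_0=p$, $\mu_1=\gamma_d$ up to the time reversal in \eqref{eq:PDE}) and the law of $\overline L_s$ (for the Langevin side, with $\overline L_s\overset{d}{=}\mu_{1-e^{-s}}$) are \emph{the same} curve of measures, namely $(1-t)X+\sqrt{t(2-t)}Y$, just read off at $t$ versus at $1-e^{-s}$. A curve of probability measures absolutely continuous w.r.t.\ Lebesgue, together with a velocity field satisfying the stated integrability, determines the flow map uniquely (Remark 7 in \cite{Ambrosio_Crippa_2014}, invoked via Proposition \ref{prop:condiwelldifned}); so it suffices to check that the Langevin flow, after reparametrisation, solves the \emph{same} continuity equation \eqref{eq:continuityeq} with the \emph{same} velocity $W_t=V(1-t,\cdot)$. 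This reduces the proposition to the purely computational identity that $\nabla(\log Q_{e^{-(s-t)}}r)$, expressed in the $t$-variable along the diagonal, equals $V$ up to the time-scaling $\tfrac1u$ built into \eqref{eq:V} — which is the same "simple calculation" already used in the excerpt to derive $W_t=V(1-t,\cdot)$.

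Concretely I would proceed in the following steps. (i) Under the hypothesis $V\in L^1([0,1],W^{1,\infty})$, apply Proposition \ref{prop:condiwelldifned} with $t=1$ to get that $X_1$ is the unique solution of \eqref{eq:PDE} and $(X_s)_{\#}\gamma_d=\mu_{1-s}$; in particular $(X_1)_{\#}\gamma_d=\mu_0=p$. (ii) Verify that the same integrability transfers to the Langevin setting: writing the Langevin diagonal flow in the time $t$ via $u=1-e^{-(s-\cdot)}$, its velocity is a time-changed copy of $V$, and the substitution is a bi-Lipschitz reparametrisation of $[0,1)$ onto $[0,\infty)$ that preserves the $L^1$-in-time/$W^{1,\infty}$-in-space bound up to a Jacobian factor $e^{-(s-t)}$ bounded on compacts; hence $L_{\infty,\infty}$ is likewise the unique solution of an ODE driven by $V$ on the full interval. (iii) Conclude by uniqueness: two flow maps solving the same well-posed transport ODE with the same initial condition agree $x$-a.e., so $X_1=L_{\infty,\infty}$ a.e. The main obstacle I anticipate is step (ii): one must be careful that the limit $s\to\infty$ (equivalently $u\to1$) is genuinely controlled — i.e.\ that the endpoint map $L_{\infty,\infty}$ really is the flow at $t=1$ of the reparametrised ODE and not merely a subsequential limit — and that the time-reversal relating the forward Föllmer flow to the diagonal Langevin flow is handled without sign errors. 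Once the reparametrisation is set up correctly, the rest is the uniqueness clause of Cauchy--Lipschitz, which the hypothesis is tailored to supply.
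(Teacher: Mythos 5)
Your general plan — reparametrise the Langevin flow to the time variable of \eqref{eq:PDE}, observe that the reparametrised velocity is exactly $V$, and conclude — is the same change of variables the paper uses ($Y_{s,t}=L_{s,s+\log t}$). But the concluding step (iii), ``two flow maps solving the same well-posed transport ODE with the same initial condition agree a.e.,'' does not apply as stated, and this is where the real content of the proof lies. After the time change the two flows \emph{do not} share an initial condition: the Föllmer flow starts from $X_0(x)=x$ at time $t=0$, whereas the reparametrised Langevin flow $Y_{s,\cdot}$ starts from $Y_{s,e^{-s}}(x)=x$ at time $t=e^{-s}>0$. So at the common starting time $e^{-s}$ their positions differ by $\|x-X_{e^{-s}}(x)\|$, and Cauchy--Lipschitz uniqueness alone cannot make this discrepancy disappear; it only propagates it. The same issue arises in your continuity-equation reformulation: the Lagrangian flow $X_{s\to t}$ is unique given $V$, and $L_{\infty,\infty}=\lim_{s\to\infty}X_{e^{-s}\to 1}(x)$, but identifying this with $X_{0\to 1}(x)=X_{e^{-s}\to 1}(X_{0\to e^{-s}}(x))$ requires quantitative continuity of $X_{e^{-s}\to 1}$ in its starting point, uniformly enough to pass to the limit $s\to\infty$. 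You flag this as the ``main obstacle'' in step (ii), but you never supply the estimate that resolves it.

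The paper closes exactly this gap with a Gronwall stability bound: setting $a_t^s:=\|Y_{s,t}(x)-X_t(x)\|^2$, the shared velocity field gives $\partial_t a_t^s\le 2\|\nabla V(t,\cdot)\|_{L^\infty}\,a_t^s$, hence $a_1^s\le a_{e^{-s}}^s\exp\bigl(\int_{e^{-s}}^1 2\|\nabla V(u,\cdot)\|_{L^\infty}\,du\bigr)\le C\,a_{e^{-s}}^s$ by the $L^1([0,1],W^{1,\infty})$ hypothesis, and $a_{e^{-s}}^s=\|x-X_{e^{-s}}(x)\|^2\to 0$ as $s\to\infty$. That one-line differential inequality, not abstract uniqueness, is what converts ``same vector field, slightly different start'' into ``same endpoint in the limit.'' Your proposal needs that estimate (or an equivalent stability argument) to be complete.
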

\begin{proof}
Both flows are well defined as $V$ satisfies the classical Cauchy-Lipschitz assumption (see Section 2 in \cite{Ambrosio_Crippa_2014}).
Define the flow of maps 
$\left(x\mapsto Y_{s,t}(x)\right)_{s\in [0,\infty),t\in [e^{-s},1]}$ by $$Y_{s,t} = L_{s,s+\log(t)}.$$ In particular $Y$ is solution to
\begin{align*}
 \partial_t Y_{s,t}(x)=&\frac{1}{t}\nabla (\log Q_{e^{-(s-(s+\log(t)))}} r)(Y_{s,t}(x)), \ \ Y_{s,e^{-s}}(x)=x.\\
 =&\frac{1}{t}\nabla (\log Q_{t} r)(Y_{s,t}(x)).
\end{align*}
Now, for the process $(a_t^s)_{t\in [e^{-s},1]}$ defined by
$$a_t^s:=\| Y_{s,t}(x)-X_t(x)\|^2,$$
we have (in a weak sense) that
\begin{align*}
    \partial_t a_t^s =& 2 \left\langle Y_{s,t}(x)-X_t(x),\frac{1}{t}\nabla (\log Q_t r)(Y_{s,t}(x))-\frac{1}{t}\nabla (\log Q_t r)(X_t(x)) \right\rangle\\
    \leq & 2\|\nabla V(t,\cdot)\|_{L^\infty} a_t^s.
\end{align*}
Then, by Gronwall's lemma we obtain that for all $t\in [e^{-s},1]$,
$$a_t^s\leq a^s_{e^{-s}}\exp\left(\int_{e^{-s}}^t 2\|\nabla V(u,\cdot)\|_{L^\infty}du\right) \leq Ca^s_{e^{-s}}.$$
Furthermore, we have
$$\lim_{s\rightarrow \infty} a_{e^{-s}}^s= \lim_{s\rightarrow \infty}\|x-\left(x+\int_0^{e^{-s}} \frac{1}{t}\nabla (\log Q_t r)(X_t(x))dt \right)\|=0.$$
Therefore,
$$L_{\infty,\infty} = \lim_{s\rightarrow \infty} Y_{s,1}=X_1.$$
\end{proof}

Having the equivalence of the two maps, we use the Föllmer formulation to prove Theorems \ref{theo:thetheo} and \ref{theo:thetheo2}.

\section{Estimate on  $\|\nabla X_1\|_{\mathcal{H}^{\lfloor \beta \rfloor}}$ in the setting of Theorem \ref{theo:thetheo}}\label{sec:nablak}
In this section, we show that under the assumptions of Theorem \ref{theo:thetheo} the transport map $X_1$ (the solution to \eqref{eq:PDE} at time $t=1$) satisfies that $\nabla X_1 \in \mathcal{H}^{\lfloor \beta \rfloor}_C(\mathbb{R}^d,\mathbb{R}^{d})$. The proof that $\nabla^{\lfloor \beta \rfloor} X_1 \in \mathcal{H}^{\beta-\lfloor \beta \rfloor}_C(\mathbb{R}^d)$ follows the same ideas but with more involved derivations. Therefore, it is postponed to the Appendix Section \ref{sec:holdregulastderiv}.
\subsection{General reasoning for Theorems \ref{theo:thetheo} and \ref{theo:thetheo2}}\label{sec:generalreas} 
This section is devoted to presenting a general framework for the proof of Theorem \ref{theo:thetheo}, which will also be used in the proof of Theorem \ref{theo:thetheo2}.  We will first assume that the flow of transport maps $(X_t)$ solution to \eqref{eq:PDE} is well defined for all $t\in [0,1)$ and derive estimates on $\lim_{t\rightarrow 1}\|\nabla^k X_t\|_\infty$. In subsequent sections, we  show that under the assumptions of Theorem \ref{theo:thetheo}, the flow $(X_t)$ is actually well defined for all $t\in [0,1]$ which will allow to conclude the proof using the derived estimates. For Theorem \ref{theo:thetheo2}, we will show Section \ref{sec:prooftheo2} that the flow $(X_t)$ is well defined for all $t\in [0,1)$ and will conclude the proof using Lemma \ref{lemma:convergenceofmap}.
\subsubsection{A general Gronwall bound}\label{sec:gronwall}
Let $(X_t)_{t\in [0,1)}$ be the solution of \eqref{eq:PDE} for $p$ satisfying either the assumptions of Theorems \ref{theo:thetheo} or \ref{theo:thetheo2}. For $k\in \{1,...,\lfloor \beta \rfloor +1\}$, $w\in \mathbb{S}^{d-1}$ and $x\in \mathbb{R}^d$, let us provide a bound for all $t\in [0,1)$ on the process 
$$
\alpha_t^k:=\|\nabla^k X_t(x)w^k\|^2.
$$ 
The map $x\mapsto \nabla^k X_t(x)$ is well defined for all $t\in [0,1)$ by classical regularity theory of ordinary differential equations (see for example Theorem 14.7 in \cite{tu2011manifolds}).
As 
$\partial_t X_t(x)=V(t,X_t(x))$,
we are going to compute the differentials of the map $x\mapsto \partial_t X_t(x)$ using the Faa di Bruno formula. For two unit-variate functions $f,g:\mathbb{R}\rightarrow \mathbb{R}$, the Faa di Bruno formula states that the $k$-derivative of the composition of $f$ and $g$ is
\begin{equation*}
    (f\circ g)^{(k)}(x) =\sum_{\pi \in \Pi_k} f^{(|\pi|)} (g(x)) \prod_{B\in \pi} g^{(|B|)} (x),
\end{equation*}
where $\Pi_k$ denotes the set of all partitions of the set $\{1,...,k\}$. For simplicity, in the multivariate setting we will keep this notation by writing for $f,g:\mathbb{R}^d\rightarrow \mathbb{R}^d$,
$$\nabla^k (f\circ g)(x)=\sum_{\pi \in \Pi_k} \nabla^{|\pi|}f(g(x)) \prod_{B\in \pi} \nabla^{|B|} g(x),$$
where the latter is defined by
\begin{align*}
&\left(\sum_{\pi \in \Pi_k} \nabla^{|\pi|}f(g(x)) \prod_{B\in \pi} \nabla^{|B|} g(x)\right)w^k\\
& := \sum_{\pi=(B_1,B_2,...,B_i) \in \Pi_k} \nabla^{|\pi|}f(g(x)) \left( \nabla^{|B_1|} g(x)w^{|B_1|},\nabla^{|B_2|}g(x)w^{|B_2|},...,\nabla^{|B_i|} g(x)w^{|B_i|}\right).
\end{align*}

Using this notation, we obtain by Schwarz theorem that

$$\partial_t\nabla^k  X_t(x)=\sum_{\pi \in \Pi_k} \nabla^{|\pi|}V(t,X_t(x)) \prod_{B\in \pi} \nabla^{|B|} X_t(x).$$
 We then deduce that
\begin{align*}
    \partial_t \alpha_t^k &= 2\left\langle \nabla^k X_t(x)w^k,\partial_t \nabla^k X_t(x)\right\rangle \\
    &= 2\left\langle \nabla^k X_t(x)w^k, \sum_{\pi \in \Pi_k,|\pi|>1} \nabla^{|\pi|}V(t,X_t(x)) \prod_{B\in \pi} \nabla^{|B|} X_t(x)w^{|B|} + \nabla V(t,X_t(x)) \nabla^k X_t(x)w^k\right\rangle\\
    & \leq 2 \lambda_{1}^k(t)(\alpha_t^k)^{1/2} + 2 \lambda_{2}(t)\alpha_t^k,
\end{align*}
with $$\lambda_{1}^k(t):=\|\sum_{\pi \in \Pi_k,|\pi|>1} \nabla^{|\pi|}V(t,X_t(x)) \prod_{B\in \pi} \nabla^{|B|} X_t(x)w^{|B|} \| $$
and 
$$\lambda_{2}(t):=\lambda_{\max}\left( \nabla V(t,X_t(x))\right).$$

For $k=1$, we have from Gronwall's inequality that
\begin{equation}\label{eq:thegronwallline}
(\alpha_t^1)^{1/2}\leq (\alpha_0^1)^{1/2}\exp\left(\int_0^t \lambda_2(z)dz\right)=\exp\left(\int_0^t \lambda_2(z)dz\right).
\end{equation}
For $k\geq 2$, as $\alpha_0^k=0$ we deduce from the non linear Gronwall's inequality (Theorem 21 in \cite{dragomir2003some}) that 
\begin{equation}\label{eq:thegronwall}
(\alpha_t^k)^{1/2}\leq \int_0^t \lambda_{1}^k(s)\exp\left(\int_s^t \lambda_2(z)dz\right)ds.
\end{equation}

We will first bound $\lambda_2$ which will give us by \eqref{eq:thegronwallline} that $\alpha_t^1$ is bounded. Then, giving a bound on the quantity
$$\sum_{j=2}^k\int_0^1 \|\nabla^j V(s,X_s(x))\|ds,$$
we will finally obtain the bound on $\alpha_t^k$ using the following lemma. 

\begin{lemma}\label{lemma:addlemme} Let $k\in \{1,...,\lfloor \beta \rfloor +1\}$, $A>0$ and $x\in \mathbb{R}^d$ such that
$$
\int_0^1 \lambda_{\max}\left(\nabla V(s,X_s(x)\right)ds+ \mathds{1}_{\{k\geq 2\}}\sum_{j=2}^k\int_0^1 \|\nabla^j V(s,X_s(x))\|ds\leq A.
$$
Then, there exists $C>0$ depending only on $k$ and $A$ such that for all $t\in [0,1)$ and $w\in \mathbb{S}^{d-1}$, 
$$\|\nabla^k X_t(x)w^k\|\leq C.$$
\end{lemma}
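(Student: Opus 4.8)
The plan is to run an induction on $k$, using the two Gronwall bounds \eqref{eq:thegronwallline} and \eqref{eq:thegronwall} that were already derived, and reducing everything to the single scalar hypothesis of the Lemma. First I would dispose of the base case $k=1$: since $\lambda_2(t) = \lambda_{\max}(\nabla V(t,X_t(x)))$, the hypothesis with $k=1$ gives $\int_0^1 \lambda_2(z)\,dz \leq A$, so \eqref{eq:thegronwallline} immediately yields $\|\nabla X_t(x)w\| = (\alpha_t^1)^{1/2} \leq e^A =: C$ for all $t\in[0,1)$ and all $w\in\mathbb{S}^{d-1}$. Note that for $k=1$ the exponential factor $\exp(\int_s^t \lambda_2)$ appearing in the $k\geq 2$ bound is likewise $\leq e^A$ uniformly in $s,t$; this will be used repeatedly below.

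For the inductive step, suppose $k\geq 2$ and that for every $j\in\{1,\dots,k-1\}$ we already have $\|\nabla^j X_t(x)w^j\|\leq C_{j}$ uniformly in $t\in[0,1)$ and $w\in\mathbb{S}^{d-1}$, where $C_j$ depends only on $j$ and $A$ (the hypothesis of the Lemma for index $j\leq k$ is implied by the hypothesis for index $k$, since the integrals are nonnegative, so the induction hypothesis is legitimately available). I would then estimate $\lambda_1^k(t)$. By definition,
$$
\lambda_1^k(t) = \Big\| \sum_{\pi\in\Pi_k,\,|\pi|>1} \nabla^{|\pi|}V(t,X_t(x)) \prod_{B\in\pi}\nabla^{|B|}X_t(x)w^{|B|}\Big\|.
$$
Every partition $\pi$ with $|\pi|>1$ has all blocks $B$ of size $|B|\leq k-1$, so each factor $\nabla^{|B|}X_t(x)w^{|B|}$ is bounded by some $C_{|B|}$ from the induction hypothesis; the number of partitions is a combinatorial constant depending only on $k$. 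Hence there is a constant $C'$ depending only on $k$ and $A$ with
$$
\lambda_1^k(t) \leq C' \sum_{j=2}^{k} \|\nabla^j V(t,X_t(x))\|.
$$
Plugging this and the uniform bound $\exp(\int_s^t\lambda_2)\leq e^A$ into \eqref{eq:thegronwall} gives
$$
(\alpha_t^k)^{1/2} \leq \int_0^t \lambda_1^k(s)\,e^{A}\,ds \leq C' e^{A}\int_0^1 \sum_{j=2}^{k}\|\nabla^j V(s,X_s(x))\|\,ds \leq C' e^{A} A =: C,
$$
using the hypothesis of the Lemma for index $k$ in the last inequality. This closes the induction.

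The main point to be careful about — and the only genuine subtlety — is the legitimacy of the differential-inequality manipulations that produced \eqref{eq:thegronwallline} and \eqref{eq:thegronwall}: these hold in a weak/a.e. sense and require $t\mapsto \nabla^k X_t(x)$ to be differentiable for $t\in[0,1)$, which is exactly the classical ODE regularity statement invoked above \eqref{eq:thegronwallline} (e.g. Theorem 14.7 in \cite{tu2011manifolds}); I would simply cite this. One should also check that the implicit constant $C$ produced at each stage depends only on $k$ and $A$ and not on $x$ or $d$: this is clear because the combinatorial count $|\Pi_k|$, the maximal block count, and the exponent bounds are all dimension-free, and the only inputs are the scalar quantity $A$ and the indices $j\le k$. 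Everything else is bookkeeping. I do not anticipate a serious obstacle here; the real work of the paper is establishing that the hypothesis of this Lemma — the bound on $\int_0^1\lambda_{\max}(\nabla V)$ and on $\int_0^1\|\nabla^j V\|$ — actually holds under the assumptions of Theorem \ref{theo:thetheo}, which is done in the subsequent sections.
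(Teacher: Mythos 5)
Your proposal is correct and follows essentially the same route as the paper's proof: the base case $k=1$ via \eqref{eq:thegronwallline}, and for $k\geq 2$ an induction in which $\lambda_1^k$ is controlled through the inductive bounds on the lower-order $\alpha_s^j$ and fed into \eqref{eq:thegronwall}. The minor observations you add (that the hypothesis for index $j\leq k$ is implied by that for index $k$, and that the combinatorial constants are dimension-free) are consistent with, and slightly more explicit than, what the paper writes.
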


\begin{proof}
From \eqref{eq:thegronwallline}, we have that if
$\int_0^1\lambda_{\max}\left( \nabla V(s,X_s(x))\right)ds\leq A,$
then
$\sup_{s\in [0,1]}(\alpha_s^1)^{1/2}\leq \exp(A).$
Let us now treat the case $k\geq 2$ by induction. Suppose that for all $j\in \{1,...,k-1\},$ we have that 
$\sup_{s\in [0,1]}(\alpha_s^j)^{1/2}\leq C$.
Then, from \eqref{eq:thegronwall} we have 
\begin{align*}
    (\alpha_t^k)^{1/2}\leq & \int_0^t \lambda_{1}^k(s)\exp\left(\int_s^t \lambda_2(z)dz\right)ds\nonumber\\
    \leq &  C\int_0^t \lambda_{1}^k(s)ds\nonumber
    \\
    \leq & C \int_0^t \sum_{\pi \in \Pi_k,|\pi|>1} \|\nabla^{|\pi|}V(s,X_s(x))\| \prod_{B\in \pi} (\alpha_s^{|B|})^{1/2} ds\nonumber\\
    \leq & C \sum_{j=2}^k \int_0^1 \|\nabla^j V(s,X_s(x))\|\prod_{i=1}^{k-1} \Big((\alpha_s^{i})^{1/2} \vee 1 \Big)^{k-1}ds\nonumber\\
   \leq & C\sum_{j=2}^k \int_0^1 \|\nabla^j V(s,X_s(x))\|ds\nonumber\\
   \leq & C.
\end{align*}
\end{proof}

Lemma \ref{lemma:addlemme} shows that obtaining a bound on the derivative of the transport map reduces to controlling the quantity $
\int_0^1 \|\nabla^j V(s,X_s(x))\|ds,
$ for all $j\in \{1,...,k\}$ and $x\in \mathbb{R}^d$. The subsequent sections are devoted to obtaining this estimate.

\subsubsection{Derivation of $\nabla^k V(t,x)$ for $k\in \{2,...,\lfloor \beta \rfloor +1\}$}
For $l\in \{0,...,\lfloor \beta \rfloor\}$ and $y\in \mathbb{R}^d$, let us note
$$ f^{l}(y):=\frac{1}{r(y)}\nabla^l r(y),$$
with $r(y)=\frac{p(y)}{\gamma_d(y)}$; and for $t\in [0,1)$, $x\in \mathbb{R}^d$, we note
$$p^{t,x}(y):=\frac{ 1}{Q_tr(x)}\ \varphi^{t,x}(y)r(y),$$
with $Q_tr(x)$ defined in \eqref{eq:Qtr} and $\varphi^{t,x}$ the density of the d-dimensional Gaussian measure with mean $tx$ and covariance $(1-t^2)\text{Id}$. Let us give a first bound on the norm of the differentials of $V$.
\begin{lemma}\label{lemma:lemma1}
    For all $k\in \{2,...,\lfloor \beta \rfloor +1\}$, $t\in [0,1)$ and $x\in \mathbb{R}^d$, we have
    \begin{align*}
    \|\nabla^{k} V(t,x)\|\leq &  C \sum_{l=1}^{k-1} \Biggl( \|\int f^{l}(y)\nabla^2_x p^{t,x}(y)d\lambda^d(y)\| \prod_{j=1}^{k-2}\left(\|\int f^{j}(y) dp^{t,x}(y)\|\vee 1\right)^{k-2}\\
    &+ \|\int f^{l}(y) \nabla_x p^{t,x}(y)d\lambda^d(y)\|\sum_{j=1}^{k-2}\|\int f^{j}(y) \nabla_x p^{t,x}(y)d\lambda^d(y)\|\prod_{i=1}^{k-3}\left(\|\int f^{i}(y)  dp^{t,x}(y)\|\vee 1\right)^{k-3}\Biggl).
\end{align*}
\end{lemma}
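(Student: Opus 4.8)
The plan is to start from the integral representation $V(t,x) = \frac{1}{1-t^2}\int (y-tx)\,dp^{t,x}(y)$ and differentiate it $k$ times in $x$, expressing everything through the elementary building blocks $f^l(y) = \nabla^l r(y)/r(y)$ and the family of probability measures $p^{t,x}$. The first observation is that $Q_t r(x) = \int \varphi^{t,x}(y) r(y)\,d\lambda^d(y)$, so $\nabla_x \log Q_t r(x)$ and its higher derivatives can be written as expectations (and cumulant-type combinations) under $p^{t,x}$ of derivatives of $\log \varphi^{t,x}(y)$ in $x$. Since $\varphi^{t,x}$ is Gaussian with mean $tx$ and covariance $(1-t^2)\mathrm{Id}$, the quantity $\nabla_x \log \varphi^{t,x}(y) = \frac{t}{1-t^2}(y-tx)$ is linear in $x$ and its higher $x$-derivatives are constant or vanish, which keeps the expansion manageable. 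Carrying out this differentiation gives $\nabla^k V(t,x)$ as a sum over partitions (Faà di Bruno applied to $x\mapsto \log Q_t r(x)$), where each term is a product of factors of the form $\int f^{l}(y)\, \partial_x^{m} p^{t,x}(y)\,d\lambda^d(y)$ with $m\in\{0,1,2\}$ — the key point being that derivatives of $p^{t,x}$ in $x$ beyond second order reduce, via the explicit Gaussian structure, to combinations of the $m\le 2$ terms times lower-order $f^l$ expectations.

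The cleanest route I would take is the following. First, rewrite $p^{t,x}(y) = \varphi^{t,x}(y) r(y) / Q_t r(x)$ and compute $\nabla_x p^{t,x}(y)$ and $\nabla_x^2 p^{t,x}(y)$ explicitly, noting that each $x$-derivative of $\varphi^{t,x}(y)$ pulls down a factor proportional to $(y-tx)$ while each $x$-derivative of $1/Q_t r(x)$ produces a term involving $\int f^{0}$-type averages; crucially these are exactly the objects appearing in the statement. Second, observe that $\nabla^k V(t,x)$, being $\frac{1}{1-t^2}\nabla^{k-1}_x\big(\int(y-tx)dp^{t,x}(y)\big)$, is a sum over ways of distributing $k-1$ derivatives among the factor $(y-tx)$ (which absorbs at most one) and the measure $p^{t,x}$; each block of derivatives hitting $p^{t,x}$ contributes, after integration by parts in $y$ against the Gaussian weight, a factor $\int f^{l}(y)\,\partial_x^{m}p^{t,x}(y)\,d\lambda^d(y)$ for suitable $l\ge 1$ and $m\in\{0,1,2\}$. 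Third, collect terms: the factors with $m=0$ are bounded by $\|\int f^{j}(y)\,dp^{t,x}(y)\|\vee 1$, and since at most one factor carries $m=2$ (or two factors carry $m=1$), grouping yields precisely the two families of products displayed in the lemma, with the combinatorial count of remaining $m=0$ factors bounded by $k-2$ or $k-3$, absorbed into the generic constant $C=C(k,\beta,d,K)$.

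The main obstacle is the bookkeeping of the Faà di Bruno / integration-by-parts expansion: one must verify that no term with three or more $x$-derivatives landing on a single copy of $p^{t,x}$ survives in irreducible form — i.e. that such terms always split, using $\nabla_x\varphi^{t,x}(y) = \frac{t}{1-t^2}(y-tx)\varphi^{t,x}(y)$ and the $y$-integration by parts relating $\int (y-tx)^{\otimes m} f^l(y)\,dp^{t,x}$ back to lower-order $f$-moments plus at most second $x$-derivatives of $p^{t,x}$ — and that the powers $k-2$, $k-3$ on the $(\cdot\vee 1)$ factors are large enough to absorb every partition term. This is a finite computation for each $k\le \lfloor\beta\rfloor+1$, but the uniformity of the constant $C$ in $t\in[0,1)$ and $x\in\mathbb{R}^d$ must be tracked carefully, using that all $1-t^2$ denominators cancel against the moments of the Gaussian $\varphi^{t,x}$ (these moments scale like powers of $\sqrt{1-t^2}$), so that no blow-up occurs as $t\to 1$; this cancellation is where the specific form of the Ornstein–Uhlenbeck kernel is essential and is the step most prone to error.
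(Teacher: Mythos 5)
Your plan captures the right raw materials — Faa di Bruno applied to $\log Q_t r$, the integral building blocks $\int f^l(y)\nabla^i_x p^{t,x}(y)\,d\lambda^d(y)$, and the Gaussian-moment scaling in $1-t^2$ — but the central mechanism you propose for getting to the lemma's form has a gap that the paper's proof is specifically engineered to avoid.

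The key constraint is that $r$ has only $\lfloor\beta\rfloor = k-1$ derivatives. The paper's proof respects this by splitting the $k+1$ derivatives of $\log Q_t r$ in a fixed way: it first computes $\nabla^{k-2}V = \frac{1}{t}\nabla^{k-1}\log Q_t r$ by pushing all $k-1$ derivatives onto $r$ via $\nabla^l Q_t r(x) = t^l\int\nabla^l r(tx+\sqrt{1-t^2}z)\,d\gamma_d(z)$, arriving at a finite product of factors $\int f^{|B|}(y)\,dp^{t,x}(y)$ with $|B|\le k-1$. This exactly saturates the regularity budget of $r$. It then applies exactly two more $x$-derivatives by the Leibniz rule on that product; these derivatives land only on the $p^{t,x}(y)$ factors (whose $x$-dependence sits in $\varphi^{t,x}(y)/Q_t r(x)$ and never touches $r(y)$), producing $\nabla_x p^{t,x}$ and $\nabla_x^2 p^{t,x}$. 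The lemma's restriction $l\le k-1$, $i\le 2$ falls out automatically. Your route instead differentiates $\frac{1}{1-t^2}\int(y-tx)\,dp^{t,x}(y)$ directly, so all $k$ (not $k-1$ as you wrote — minor off-by-one) derivatives fall on $(y-tx)$ and $p^{t,x}$, yielding after Hermite expansion of $\nabla_x^m\varphi^{t,x}$ high-order moments $\int(y-tx)^{\otimes m}dp^{t,x}$ with no $f^l$ at all, which you then propose to convert via $y$-integration by parts. But each such integration by parts trades one power of $(y-tx)$ for one $y$-derivative on $f^l r = \nabla^l r$, that is, it \emph{raises} $l$ to $l+1$. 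Starting from $l=0$ and $m$ as large as $k$ or $k+1$, there is no reason the reduction terminates at $m\le 2$ before $l$ exceeds $\lfloor\beta\rfloor$, at which point $\nabla^{l+1}r$ does not exist. So the claim that ``no term with three or more $x$-derivatives landing on a single copy of $p^{t,x}$ survives in irreducible form'' is precisely the non-trivial fact you would need to prove, and the integration-by-parts mechanism you sketch does not establish it — it can overshoot the regularity of $r$. The clean split at $\nabla^{k-2}V$ is what makes the bookkeeping go through, and it is missing from your plan.
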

\begin{proof}
Recalling that
\begin{align*}
     Q_{t} r(x)&=\int \varphi^{t,x}(y)r(y)d\lambda^d(y)= \int r(tx+\sqrt{1-t^2}z)d\gamma_d(z),
\end{align*}
 we have for all $l\in \{0,...,k-2\}$ that
\begin{align}\label{align:nablalQ}
     \nabla^{l} Q_{t} r(x)=t^l\int \nabla^{l}r(tx+\sqrt{1-t^2}z)d\gamma_d(z).
\end{align}
As $\nabla^{k} V(t,x)= \frac{1}{t}\nabla^{k+1} \log Q_{t} r(x)$ and $r$ has only $\lfloor \beta \rfloor$ derivatives, we first compute $ \nabla^{k-2} V(t,x)$ using \eqref{align:nablalQ} and then we compute the two last derivatives using a change of variable allowing to not differentiate the function $r$ anymore.

Using the Faa di Bruno formula we have

\begin{align*}
    \nabla^{k-2} V(t,x)&=\frac{1}{t}\nabla^{k-1} \log Q_{t} r(x)\\
    & = \frac{1}{t}\sum_{\pi \in \Pi_{k-1}} \frac{(-1)^{|\pi|+1}}{(Q_{t} r(x))^{|\pi|}} \prod_{B\in \pi} \nabla^{|B|} Q_{t} r(x)\\
    & =\frac{1}{t} \sum_{\pi \in \Pi_{k-1}} (-1)^{|\pi|+1} \prod_{B\in \pi} \frac{t^{|B|}}{Q_{t} r(x)} \int \nabla^{|B|}r(\sqrt{1-t^2}z+tx)d\gamma_d(z)\\
    & = t^{k-2}\sum_{\pi \in \Pi_{k-1}} (-1)^{|\pi|+1} \prod_{B\in \pi} \int f^{|B|}(\sqrt{1-t^2}z+tx)\frac{r(\sqrt{1-t^2}z+tx)}{Q_{t} r(x)}d\gamma_d(z)\\
    & = t^{k-2}\sum_{\pi \in \Pi_{k-1}} (-1)^{|\pi|+1} \prod_{B\in \pi} \int f^{|B|}(y)p^{t,x}(y)d\lambda^d(y),
\end{align*}
with
$$ f^{l}(y):=\nabla^l r(y)/r(y)\ \ \ 
\text{ and }\ \ \
p^{t,x}(y):=\frac{ \varphi^{t,x}(y)r(y)}{\int \varphi^{t,x}(z)r(z)d\lambda^d(z)}.$$
Now for any set of functions $(f_i)_{i=1,...,m}\in \mathcal{H}^2(\mathbb{R}^d,\mathbb{R})^m$, we have 
$$\nabla^2 \left(\prod_{i=1}^m f_i\right)= \sum_{i=1}^m \nabla^2 f_i\prod_{j\neq i} f_j+\nabla f_i\sum_{j\neq i} \nabla f_j \prod_{l\neq i,j} f_l,$$
we then deduce that

\begin{align}\label{align:nablakplusdeux}
    \nabla^{k} V(t,x)=&t^{k-2}\sum_{\pi \in \Pi_{k-1}} (-1)^{|\pi|+1} \sum_{B\in \pi}\Biggl( \int f^{|B|}(y)\nabla^2_x dp^{t,x}(y) \prod_{C\in \pi\setminus \{B\}}\int f^{|C|}(y) dp^{t,x}(y)\nonumber\\
    & + \int f^{|B|}(y) \nabla_x dp^{t,x}(y)\sum_{C\in \pi\setminus \{B\}}\int f^{|C|}(y) \nabla_x dp^{t,x}(y)\prod_{D\in \pi\setminus \{B,C\}}\int f^{|D|}(y)  dp^{t,x}(y)\Biggl),
\end{align}
with the notation
$$\int f(y)\nabla^i_xdp^{t,x}(y)=\int f(y)\nabla^i_x p^{t,x}(y)d\lambda^d(y),$$
for $i=0,1,2$. Now, as for $\pi \in \Pi_{k-1}$, $B\in \pi$, $C\in \pi\setminus \{B\}$, $D\in \pi\setminus \{B,C\}$ we have that $|B|\in \{1,...,k-1\}$, $|C|\in \{1,...,k-2\}$ and $|D|\in \{1,...,k-3\}$, from \eqref{align:nablakplusdeux} we deduce that
    \begin{align*}
    \|\nabla^{k} V(t,x)\|\leq &  C \sum_{l=1}^{k-1} \Biggl( \|\int f^{l}(y)\nabla^2_x p^{t,x}(y)d\lambda^d(y)\| \prod_{j=1}^{k-2}\left(\|\int f^{j}(y) dp^{t,x}(y)\|\vee 1\right)^{k-2}\\
    &+ \|\int f^{l}(y) \nabla_x p^{t,x}(y)d\lambda^d(y)\|\sum_{j=1}^{k-2}\|\int f^{j}(y) \nabla_x p^{t,x}(y)d\lambda^d(y)\|\prod_{i=1}^{k-3}\left(\|\int f^{i}(y)  dp^{t,x}(y)\|\vee 1\right)^{k-3}\Biggl).
    \end{align*}

\end{proof}

Let us now derive the different quantities that intervenes in the bound of Lemma \ref{lemma:lemma1}.
Writing 
\begin{equation}\label{eq:noth}
H(y,x):=y-\int z dp^{t,x}(z),
\end{equation}
we have the following result on the integration against $\nabla_x^i p^{t,x}$, for $i\in \{1,2\}$.
\begin{lemma}\label{lemma:valueofgradp}
    For all $f\in L^2(\mathbb{R}^d)$, $t\in [0,1)$ and $x\in \mathbb{R}^d$, we have
    \begin{align*}
    \int f(y)\nabla_x p^{t,x}(y)d\lambda^d(y) =\frac{t}{1-t^2}\int f(y)H(y,x) dp^{t,x}(y)
\end{align*}
and 
\begin{align*}
     \int f(y)\nabla_x^2 p^{t,x}(y)d\lambda^d(y)
    = & \frac{t^2}{(1-t^2)^2}\int f(y)\left(H(y,x)^{\otimes 2} - \int H(w,x)^{\otimes 2} dp^{t,x}(w)\right)dp^{t,x}(y).
\end{align*}
\end{lemma}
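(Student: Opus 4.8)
\textbf{Proof proposal for Lemma \ref{lemma:valueofgradp}.}

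The plan is to differentiate the explicit formula for $p^{t,x}(y)$ directly with respect to $x$, exploiting the Gaussian structure of $\varphi^{t,x}$. Recall that $\varphi^{t,x}(y)$ is the density of $\mathcal{N}(tx,(1-t^2)\mathrm{Id})$, so $\log \varphi^{t,x}(y) = -\frac{\|y-tx\|^2}{2(1-t^2)} + \text{const}$, and hence $\nabla_x \log \varphi^{t,x}(y) = \frac{t}{1-t^2}(y-tx)$. Writing $p^{t,x}(y) = \varphi^{t,x}(y) r(y)/Q_t r(x)$, the $x$-dependence sits in the factor $\varphi^{t,x}(y)$ and in the normalization $Q_t r(x) = \int \varphi^{t,x}(z) r(z)\, d\lambda^d(z)$. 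First I would compute $\nabla_x \log Q_t r(x)$: differentiating under the integral sign gives $\nabla_x Q_t r(x) = \int \frac{t}{1-t^2}(z-tx)\varphi^{t,x}(z) r(z)\, d\lambda^d(z)$, so $\nabla_x \log Q_t r(x) = \frac{t}{1-t^2}\int (z-tx)\, dp^{t,x}(z)$.

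Next I would use the product/quotient rule: $\nabla_x p^{t,x}(y) = p^{t,x}(y)\big(\nabla_x \log \varphi^{t,x}(y) - \nabla_x \log Q_t r(x)\big) = p^{t,x}(y)\,\frac{t}{1-t^2}\big((y-tx) - \int (z-tx)\, dp^{t,x}(z)\big) = \frac{t}{1-t^2}\, p^{t,x}(y)\, H(y,x)$, using the definition $H(y,x) = y - \int z\, dp^{t,x}(z)$ from \eqref{eq:noth} and the cancellation of the $tx$ terms. Integrating against $f$ yields the first identity. For the second identity, I would differentiate $\nabla_x p^{t,x}(y) = \frac{t}{1-t^2} p^{t,x}(y) H(y,x)$ once more in $x$. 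This produces two terms: one from differentiating $p^{t,x}(y)$, giving $\frac{t^2}{(1-t^2)^2} p^{t,x}(y) H(y,x)^{\otimes 2}$ (here $H(y,x)^{\otimes 2}$ should be read as the symmetric tensor $H \otimes H$, matching the paper's $w^k$ convention), and one from differentiating $H(y,x) = y - \int z\, dp^{t,x}(z)$, which requires $\nabla_x \int z\, dp^{t,x}(z)$. Using the first identity with $f(z) = z$ componentwise gives $\nabla_x \int z\, dp^{t,x}(z) = \frac{t}{1-t^2}\int z \otimes H(z,x)\, dp^{t,x}(z) = \frac{t}{1-t^2}\int H(z,x)^{\otimes 2}\, dp^{t,x}(z)$, where the last equality holds because $\int z\, dp^{t,x}(z) \otimes \int H(z,x)\, dp^{t,x}(z) = 0$ since $\int H(z,x)\, dp^{t,x}(z) = 0$ by construction. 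Combining the two terms gives exactly the claimed formula.

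The only real subtlety is the justification of differentiation under the integral sign, which needs integrability of $y \mapsto f(y)(1+\|y\|^2)\varphi^{t,x}(y) r(y)$ uniformly for $x$ in a neighborhood; this follows from $f \in L^2$, the Gaussian decay of $\varphi^{t,x}$, and the polynomial-times-exponential growth of $r = p/\gamma_d$ (which is $\exp(a(y))$, bounded since $a \in \mathcal{H}^{\beta,\theta}_K$ has bounded sup-norm), so for fixed $t<1$ everything is dominated locally uniformly in $x$. The remaining work is the bookkeeping of the two tensor terms and checking the centering identity $\int H(z,x)\, dp^{t,x}(z) = 0$, both of which are routine. I expect no genuine obstacle here; the lemma is essentially an exercise in Gaussian integration by parts.
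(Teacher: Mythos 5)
Your proof is correct and takes essentially the same route as the paper: both compute $\nabla_x p^{t,x}(y) = \frac{t}{1-t^2}\,p^{t,x}(y)\,H(y,x)$ by differentiating the explicit quotient formula (your logarithmic-derivative bookkeeping is just a cosmetic reorganization of the paper's direct quotient rule), then differentiate once more and use the centering identity $\int H(z,x)\,dp^{t,x}(z)=0$ to reduce $\int z\otimes H(z,x)\,dp^{t,x}(z)$ to $\int H(z,x)^{\otimes 2}\,dp^{t,x}(z)$. Your extra remarks on differentiation under the integral sign and the tensor convention are correct but not spelled out in the paper, which leaves these as implicit.
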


\begin{proof} We have
\begin{align*}
    \int f(y) \nabla_x p^{t,x}(y)d\lambda^d(y)& =\int f(y) \nabla_x \left(\frac{ \varphi^{t,x}(y)}{\int \varphi^{t,x}(z)r(z)d\lambda^d(z)}\right)r(y)d\lambda^d(y)\\
    & =\int f(y) \frac{t}{1-t^2}\left(y-tx -\int (w-tx)\frac{ \varphi^{t,x}(w)r(w)d\lambda^d(w)}{\int \varphi^{t,x}(z)r(z)d\lambda^d(z)}\right) \frac{ \varphi^{t,x}(y)r(y)d\lambda^d(y)}{\int \varphi^{t,x}(z)r(z)d\lambda^d(z)}\\
    & =\frac{t}{1-t^2}\int f(y)\left(y-\int wdp^{t,x}(w)\right) dp^{t,x}(y).
\end{align*}
Using this result, we can now compute the second derivative as follow:
\begin{align*}
     \int f(y)&\nabla^2_xp^{t,x}(y)d\lambda^d(y)= \frac{t}{1-t^2}\int f(y)\Biggl(\left(-\int w\nabla_xp^{t,x}(w)d\lambda^d(w)\right)  p^{t,x}(y)\\
     & +\left(y-\int wdp^{t,x}(w)\right) \nabla_x p^{t,x}(y)\Biggl)d\lambda^d(y)\\
     = & \frac{t^2}{(1-t^2)^2}\int f(y)\Biggl(-\int w\otimes \left(w -\int zdp^{t,x}(z))\right)dp^{t,x}(w) p^{t,x}(y)\\
    & +\left(y-\int wdp^{t,x}(w)\right)^{\otimes 2} p^{t,x}(y)\Biggl)d\lambda^d(y)\\
    = & \frac{t^2}{(1-t^2)^2}\int f(y)\Biggl(\left(y -\int wdp^{t,x}(w)\right)^{\otimes 2} - \int \left(w -\int zdp^{t,x}(z)\right)^{\otimes 2} dp^{t,x}(w)\Biggl)dp^{t,x}(y).
\end{align*}
\end{proof}

Having derived the different terms appearing in the bound on $\|\nabla^{k} V(t,x)\|$ of Lemma \ref{lemma:lemma1}, we control them differently for the two settings. The distinction arises from the fact that in Theorem \ref{theo:thetheo} the function $r$ belongs to $\mathcal{H}_K^\beta$, whereas in Theorem \ref{theo:thetheo2}, its derivatives diverge near the boundary of the ball. As a result, the estimates require prior control of the localization of $X_t(x)$.

\subsection{Proof of the bound on  $\|\nabla X_1\|_{\mathcal{H}^{\lfloor \beta \rfloor}}$ for Theorem \ref{theo:thetheo}}
In this section we focus on  a probability density $p$ satisfying the assumptions of Theorem \ref{theo:thetheo}. The case corresponding to Theorem \ref{theo:thetheo2} is addressed in Appendix Section \ref{sec:prooftheo2}. Using the results of Section \ref{sec:generalreas}, we give a bound on $\int_0^1 \|\nabla^j V(t,\cdot)\|_\infty dt$ for all $j\in \{1,...,k\}$. The case $j=1$ gives in particular the well-definedness of the flow $(X_t)_{t\in [0,1]}$. Then, using Lemma \ref{lemma:addlemme} we obtain the estimate on $\|\nabla^k X_t(x)\|$ for all  $k\in \{1,...,\lfloor \beta \rfloor +1\}$ and $t\in[0,1]$.

\subsubsection{Concentration of the mass under $p^{t,x}$}
To address the term of order $(1-t^2)^{-1}$ appearing in the estimates of Lemma \ref{lemma:valueofgradp}, we first assess the mass concentration of the probability $p^{t,x}$. Using that for all $t\in [0,1)$, $x\in \mathbb{R}^d$, the probability $p^{t,x}$ is bounded by $\exp(2K)\varphi^{t,x}$, we obtain in the next result that it concentrates the mass within a radius $(1-t^2)^{1/2}$ around the point $tx$.
\begin{proposition}\label{prop:concentHtheo1} For all $x\in \mathbb{R}^d$, $t\in [0,1)$ and $i,j\in \{1,...,d\}$
    we have
    $$\int H(y,x)_i^2 dp^{t,x}(y)\leq C(1-t^2)$$
    and 
    $$\int \left(H(y,x)^{\otimes2}_{ij}- \int H(z,x)^{\otimes2}_{ij}dp^{t,x}(z)\right)^2dp^{t,x}(y)\leq C(1-t^2)^{2}.$$
\end{proposition}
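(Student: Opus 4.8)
The plan is to observe that both quantities are centered moments of the probability measure $p^{t,x}$ and to control them by comparison with the Gaussian density $\varphi^{t,x}$, which has mean $tx$ and covariance $(1-t^2)\mathrm{Id}$. Write $\sigma^2:=1-t^2$, let $m:=\int z\, dp^{t,x}(z)$ be the mean of $p^{t,x}$ (finite since $p^{t,x}$ has Gaussian tails), and note that $H(y,x)=y-m$, so that $\int H(y,x)_i^2\, dp^{t,x}(y)=\mathrm{Var}_{p^{t,x}}(Y_i)$ and $\int\big(H(y,x)^{\otimes 2}_{ij}-\int H(w,x)^{\otimes 2}_{ij}\, dp^{t,x}(w)\big)^2\, dp^{t,x}(y)=\mathrm{Var}_{p^{t,x}}\big((Y_i-m_i)(Y_j-m_j)\big)$. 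The only structural input I use is the pointwise domination $p^{t,x}\le e^{2K}\varphi^{t,x}$ recalled just above the statement; it holds because $r=p/\gamma_d$ equals $(2\pi)^{d/2}e^{a}$ with $\|a\|_\infty\le K$ (the $L^\infty$ norm of $a$ being one of the terms in the $\mathcal{H}^{\beta,\theta}_K$ norm), so that $\sup r/\inf r\le e^{2K}$ and hence $p^{t,x}(y)=\varphi^{t,x}(y)r(y)/\int\varphi^{t,x}r\le e^{2K}\varphi^{t,x}(y)$.

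For the first inequality I use that the mean minimizes the second moment: $\int(y_i-m_i)^2\, dp^{t,x}(y)\le\int(y_i-tx_i)^2\, dp^{t,x}(y)$. Dominating by $e^{2K}\varphi^{t,x}$ and computing the one-dimensional Gaussian second moment, the right-hand side is at most $e^{2K}\sigma^2$, which is the claim with $C=e^{2K}$.

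For the second inequality I first drop the mean, $\mathrm{Var}_{p^{t,x}}\big((Y_i-m_i)(Y_j-m_j)\big)\le\int(y_i-m_i)^2(y_j-m_j)^2\, dp^{t,x}(y)$, and by Cauchy–Schwarz this is at most $\big(\int(y_i-m_i)^4\, dp^{t,x}\big)^{1/2}\big(\int(y_j-m_j)^4\, dp^{t,x}\big)^{1/2}$; it then remains to show $\int(y_i-m_i)^4\, dp^{t,x}(y)\le C\sigma^4$ for each $i$. Splitting $y_i-m_i=(y_i-tx_i)+(tx_i-m_i)$ and using $(a+b)^4\le 8a^4+8b^4$, I bound $\int(y_i-tx_i)^4\, dp^{t,x}\le e^{2K}\int(y_i-tx_i)^4\,\varphi^{t,x}=3e^{2K}\sigma^4$, while $(tx_i-m_i)^4=\big(\int(tx_i-y_i)\, dp^{t,x}\big)^4\le\big(\int(y_i-tx_i)^2\, dp^{t,x}\big)^2\le e^{4K}\sigma^4$ by Jensen and the Gaussian comparison again; thus $\int(y_i-m_i)^4\, dp^{t,x}\le(24e^{2K}+8e^{4K})\sigma^4$, and combining with Cauchy–Schwarz gives the second bound. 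There is no genuine obstacle here: the whole argument is the Gaussian domination $p^{t,x}\le e^{2K}\varphi^{t,x}$ together with the elementary facts that the mean minimizes the variance and that the shift $\|m-tx\|$ is itself controlled by the standard deviation $\sigma$.
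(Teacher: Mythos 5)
Your proof is correct, and it takes a recognizably different route from the paper's on the key step. Both arguments hinge on the same structural fact — the domination $p^{t,x}\le e^{2K}\varphi^{t,x}$ — and both reduce the second bound to a fourth-moment estimate by $\mathrm{Var}(W)\le\mathbb{E}[W^2]$ and Cauchy–Schwarz, so the skeleton matches. Where you differ is in how the centered moments are controlled: the paper introduces two independent copies $Y,\overline{Y}\sim p^{t,x}$ and uses the symmetrization identities $\mathrm{Var}(Y_i)=\tfrac12\mathbb{E}[(Y_i-\overline{Y}_i)^2]$ and $\mathbb{E}[(Y_i-\mathbb{E}Y_i)^4]\le\mathbb{E}[(Y_i-\overline{Y}_i)^4]$, and then transfers to the Gaussian by dominating the product density $p^{t,x}\otimes p^{t,x}\le e^{4K}\varphi^{t,x}\otimes\varphi^{t,x}$. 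You instead recenter at the Gaussian mean $tx$ (rather than at the $p^{t,x}$-mean $m$), invoking the variational characterization that the mean minimizes the second moment for the first claim, and the decomposition $y_i-m_i=(y_i-tx_i)+(tx_i-m_i)$ with $(a+b)^4\le 8(a^4+b^4)$ plus Jensen for the fourth moment. The payoff of the paper's symmetrization is that it never needs to track the shift $\|m-tx\|$ — centering is automatic — while your version exposes explicitly that this shift is itself $O(\sigma)$, which is a mildly more informative intermediate fact; the cost is a few extra lines of bookkeeping with cruder constants. Both are sound and essentially equivalent in difficulty.
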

\begin{proof}
Let $Y,\overline{Y}$ be two independent random variables of law $p^{t,x}$ and $Z,\overline{Z}$ two independent random variables of law $\varphi^{t,x}$. As for all $y\in \mathbb{R}^d$ we have $$p^{t,x}(y)=\frac{r(y)}{Q_tr(x)}\varphi^{t,x}(y)\leq C\varphi^{t,x}(y),$$ then
\begin{align*}
    \int H(y,x)_i^2 dp^{t,x}(y) = &\int \left(y_i-\int y_idp^{t,x}(z)\right)^2  dp^{t,x}(y)\\
    =  &\text{Var}_{p^{t,x}}[Y_i]\\
    = &\frac{1}{2} \mathbb{E}_{p^{t,x}}[(Y_i-\overline{Y}_i)^2]\\
    \leq & C \mathbb{E}_{\varphi^{t,x}}[(Z_i-\overline{Z}_i)^2]
    \leq  C\text{Var}_{\varphi^{t,x}}(Z_i)\\
    \leq & C(1-t^2).
\end{align*}
Likewise, using Cauchy-Schwarz inequality we get
\begin{align*}
    \int \left(H(y,x)^{\otimes2}_{ij}- \int H(z,x)^{\otimes2}_{ij}dp^{t,x}(z)\right)^2dp^{t,x}(y)
  &  =  \text{Var}_{p^{t,x}}[(Y_i-\mathbb{E}_{p^{t,x}}(Y_i))(Y_j-\mathbb{E}_{p^{t,x}}(Y_j))]\\
    &\leq \mathbb{E}_{p^{t,x}}[(Y_i-\mathbb{E}_{p^{t,x}}(Y_i))^2(Y_j-\mathbb{E}_{p^{t,x}}(Y_j))^2]
    \\
    &\leq \mathbb{E}_{p^{t,x}}[(Y_i-\mathbb{E}_{p^{t,x}}(Y_i))^4]^{1/2}\mathbb{E}_{p^{t,x}}[(Y_j-\mathbb{E}_{p^{t,x}}(Y_j))^4]^{1/2}
\end{align*}
and 
\begin{align*}
    \mathbb{E}_{p^{t,x}}[(Y_i-\mathbb{E}_{p^{t,x}}(Y_i))^4]\leq & \mathbb{E}_{p^{t,x}}[(Y_i-\overline{Y}_i)^4]\\
    \leq &  C\mathbb{E}_{\varphi^{t,x}}[(Z_i-\overline{Z}_i)^4]\\
    \leq & C (1-t^2)^2,
\end{align*}
which gives the result.
\end{proof}

Using Proposition \ref{prop:concentHtheo1} we obtain a bound on the different terms appearing in the estimate given by Lemma \ref{lemma:lemma1}.
\begin{proposition}\label{prop:firstboundongradsth1}
    For all $f\in L^2(\mathbb{R}^d)$, $i\in \{1,2\}$, $t\in[ 0,1)$ and $x,\xi\in \mathbb{R}^d$, we have
    \begin{align*}
    \|\int f(y) \nabla_x^i p^{t,x}(y)d\lambda^d(y)\| 
    &\leq \frac{Ct^i}{(1-t^2)^{i/2}}\left(\int \|f(y)-\xi\|^2dp^{t,x}(y)\right)^{1/2}.
\end{align*}
\end{proposition}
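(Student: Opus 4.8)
The plan is to start from the exact identities of Lemma \ref{lemma:valueofgradp}, which express $\int f(y)\nabla_x^i p^{t,x}(y)\,d\lambda^d(y)$ in terms of $H(y,x)=y-\int z\,dp^{t,x}(z)$, and then exploit two features simultaneously: first, that $f$ may be freely replaced by $f-\xi$ for any constant $\xi\in\mathbb{R}^d$ because the kernels $\nabla_x^i p^{t,x}$ integrate to zero (they are $x$-derivatives of a probability density, hence $\int \nabla_x^i p^{t,x}(y)\,d\lambda^d(y) = \nabla_x^i \int p^{t,x}(y)\,d\lambda^d(y) = \nabla_x^i 1 = 0$); and second, that the "fluctuation" factors involving $H$ are controlled in $L^2(p^{t,x})$ by $(1-t^2)^{1/2}$ and $(1-t^2)$ respectively, via Proposition \ref{prop:concentHtheo1}.

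Concretely, for $i=1$ I would write, using the mean-zero property of $\int \nabla_x p^{t,x}(y)\,d\lambda^d(y)$,
\begin{align*}
\Big\|\int f(y)\nabla_x p^{t,x}(y)\,d\lambda^d(y)\Big\|
&= \frac{t}{1-t^2}\Big\|\int (f(y)-\xi)\,H(y,x)\,dp^{t,x}(y)\Big\|\\
&\leq \frac{t}{1-t^2}\Big(\int \|f(y)-\xi\|^2\,dp^{t,x}(y)\Big)^{1/2}\Big(\int \|H(y,x)\|^2\,dp^{t,x}(y)\Big)^{1/2},
\end{align*}
by Cauchy--Schwarz, and then bound the last factor by $\big(\sum_{i=1}^d \int H(y,x)_i^2\,dp^{t,x}(y)\big)^{1/2} \leq (Cd(1-t^2))^{1/2} = C(1-t^2)^{1/2}$ using the first estimate of Proposition \ref{prop:concentHtheo1}; this yields the claimed $\frac{Ct}{(1-t^2)^{1/2}}(\int\|f-\xi\|^2 dp^{t,x})^{1/2}$. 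For $i=2$ the same scheme applies to the identity $\int f(y)\nabla_x^2 p^{t,x}(y)\,d\lambda^d(y) = \frac{t^2}{(1-t^2)^2}\int (f(y)-\xi)\big(H(y,x)^{\otimes 2}-\int H(w,x)^{\otimes 2}dp^{t,x}(w)\big)dp^{t,x}(y)$, again replacing $f$ by $f-\xi$ (legitimate since $\nabla_x^2 p^{t,x}$ integrates to zero and, equivalently, since the bracketed matrix has $p^{t,x}$-mean zero), then Cauchy--Schwarz, and finally the second estimate of Proposition \ref{prop:concentHtheo1} to bound $\big(\int \|H^{\otimes 2}-\int H^{\otimes 2}\|_{\mathrm{HS}}^2\,dp^{t,x}\big)^{1/2}\leq C(1-t^2)$, giving the factor $\frac{Ct^2}{(1-t^2)^{2}}\cdot(1-t^2) = \frac{Ct^2}{1-t^2}= \frac{Ct^2}{(1-t^2)^{2/2}}$.

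I do not expect a genuine obstacle here: the statement is essentially a packaging of Lemma \ref{lemma:valueofgradp} with Proposition \ref{prop:concentHtheo1} via Cauchy--Schwarz, and the only point requiring a moment's care is the justification that subtracting the constant $\xi$ is harmless — i.e. that the differentiation under the integral sign in $\int \nabla_x^i p^{t,x}(y)\,d\lambda^d(y)=0$ is valid, which follows from the Gaussian decay of $\varphi^{t,x}$ together with the bound $r\leq e^{2K}$ giving a dominating integrable function uniformly in $x$ on compacts. A minor bookkeeping point is that in the $i=2$ identity the tensor $\int H^{\otimes 2} dp^{t,x}$ already present inside the bracket makes the expression automatically $p^{t,x}$-centered, so one can equally view the $f\mapsto f-\xi$ freedom as coming from that rather than from the kernel; either viewpoint closes the argument, and dimensional constants are absorbed into $C$.
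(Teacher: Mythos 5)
Your proposal is correct and follows essentially the same route as the paper: use Lemma \ref{lemma:valueofgradp}, center $f$ by $\xi$ (which is legitimate because the $H$-factors in those identities have $p^{t,x}$-mean zero, equivalently because the kernels $\nabla_x^i p^{t,x}$ integrate to zero), apply Cauchy--Schwarz, and then invoke Proposition \ref{prop:concentHtheo1} to bound the $H$-fluctuations by the appropriate powers of $(1-t^2)$. The only cosmetic difference is that you bound $\int\|H\|^2\,dp^{t,x}$ directly rather than summing the componentwise square roots as the paper does, which changes nothing since dimensional constants are absorbed in $C$.
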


\begin{proof}
We first take care of the term $\|\int f(y) \nabla_x^2 p^{t,x}(y)d\lambda^d(y)\|$. Using Lemma \ref{lemma:valueofgradp} and the Cauchy-Schwarz inequality we obtain 
\begin{align*}
    \|& \int f(y)\nabla^2_x p^{t,x}(y)d\lambda^d(y)\| \\
    =&\|  \frac{t^2}{(1-t^2)^2}\int \left(f(y)-\xi\right)\left(H(y,x)^{\otimes 2} - \int H(w,x)^{\otimes 2} dp^{t,x}(w)\right)dp^{t,x}(y)\|\\
    \leq &\frac{t^2}{(1-t^2)^2} \left(\int \|f(y)-\xi\|^2dp^{t,x}(y)\right)^{1/2}\sum_{ij} \Biggl(\int\left(H(y,x)^{\otimes 2}_{ij} - \int H(w,x)^{\otimes 2}_{ij} dp^{t,x}(w)\right)^2dp^{t,x}(y)\Biggl)^{1/2}.
\end{align*}
    Then, from Proposition \ref{prop:concentHtheo1} we deduce that 
\begin{align*}
    \|\int f(y) \nabla_x^2 p^{t,x}(y)d\lambda^d(y)\| 
    &\leq \frac{Ct^2}{(1-t^2)}\left(\int \|f(y)-\xi\|^2dp^{t,x}(y)\right)^{1/2}.
\end{align*}
Similarly, we have
\begin{align*}
    \|\int & f(y) \nabla_x dp^{t,x}(y)\| = \frac{t}{1-t^2}\|\int (f(y)-\xi)\left(y-\int wdp^{t,x}(w)\right) dp^{t,x}(y)\|\\
    &\leq \frac{t}{1-t^2}\left(\int \|f(y)-\xi\|^2dp^{t,x}(y)\right)^{1/2}\sum_i \left( \int\left(y_i-\int w_idp^{t,x}(w)\right)^2 dp^{t,x}(y)\right)^{1/2}
    \\
    &\leq \frac{C}{(1-t^2)^{1/2}}\left(\int \|f(y)-\xi\|^2dp^{t,x}(y)\right)^{1/2}.
\end{align*}
\end{proof}
Proposition \ref{prop:firstboundongradsth1} highlights the fact that the integration of a function against the $i$-th derivatives of $p^{t,x}$ grows at most as $(1-t^2)^{-i/2}$. It additionally shows that it is proportional to the variance of the function with respect to $p^{t,x}$. With the estimates from Proposition \ref{prop:firstboundongradsth1} in hand, we are now ready to bound the derivatives of the transport map.

\subsubsection{Well-definedness of the flow and bound on  $\|\nabla X_1\|_\infty$}
The estimate on the Jacobian of $X_1$ differs from that of the higher-order derivatives as it does not involve the derivatives of the density $p$. To apply Lemma \ref{lemma:addlemme}, we first demonstrate using the fact that  $a\in \mathcal{H}^{\beta,\theta}_K$, that the quantity$ \int_0^t \|\nabla V(s,\cdot)\|_{\infty}ds$ remains bounded as $t$ approaches~1.
\begin{proposition}\label{prop:boundnabalvgauss}
For all $t\in [0,1)$ and $x\in \mathbb{R}^d$,  we have 
\begin{align*}
    \|\nabla V(t,x)\|\leq C t(1-t^2)^{(\beta-\lfloor \beta \rfloor)/2-1}\log\Big(\big((1-t^2)\wedge 1/2\big)^{-1}\Big)^{-\theta},
\end{align*}
which gives in particular that
    $$\int_0^1 \|\nabla V(t,\cdot)\|_{\infty}dt<\infty.$$
\end{proposition}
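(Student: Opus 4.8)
The plan is to reduce the bound to a smoothing estimate for the Gaussian convolution of $r=p/\gamma_d$, and then prove that estimate by transferring derivatives onto the heat kernel while keeping track of the logarithmic Hölder modulus of $a$.

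\smallskip
\noindent\textbf{Reduction to a convolution estimate.} Since $V(t,x)=\tfrac1t\nabla\log Q_t r(x)$ by \eqref{eq:V}, we have $\nabla V(t,x)=\tfrac1t\nabla^2_x\log Q_t r(x)$. Writing $\sigma:=\sqrt{1-t^2}$ and letting $G_\sigma$ be the density of $\mathcal N(0,\sigma^2\text{Id})$, the change of variables in \eqref{eq:Qtr} gives $Q_t r(x)=(r*G_\sigma)(tx)$, hence $\nabla^2_x\log Q_t r(x)=t^2(\nabla^2\log(r*G_\sigma))(tx)$ and so $\|\nabla V(t,x)\|\le t\,\|\nabla^2\log(r*G_\sigma)\|_\infty$. (Alternatively one could differentiate \eqref{eq:V} as in the proof of Lemma \ref{lemma:lemma1} and use Lemma \ref{lemma:valueofgradp} to get $\nabla V(t,x)=\tfrac{t}{1-t^2}\big(\tfrac1{1-t^2}\mathrm{Cov}_{p^{t,x}}(Y)-\text{Id}\big)$ with $Y\sim p^{t,x}$; I prefer the convolution form since it treats all $\beta\ge0$ uniformly.) Because $a\in\mathcal H^{\beta,\theta}_K$ with $\|a\|_\infty\le K$, the density $r=(2\pi)^{d/2}e^{a}$ lies in $\mathcal H^{\beta,\theta}_{C}$ and satisfies $C^{-1}\le r\le C$, hence $C^{-1}\le r*G_\sigma\le C$; as $\log$ is smooth on $[C^{-1},C]$, it suffices to prove
\[
\|\nabla^2\log(r*G_\sigma)\|_\infty\ \le\ C\,\sigma^{\,\beta-\lfloor\beta\rfloor-2}\,\log\!\big((\sigma^2\wedge\tfrac12)^{-1}\big)^{-\theta},\qquad\sigma\in(0,1],
\]
which, via $\|\nabla^2\log(r*G_\sigma)\|_\infty\le C\big(\|\nabla^2(r*G_\sigma)\|_\infty+\|\nabla(r*G_\sigma)\|_\infty^2\big)$, will follow from bounds on $\|\nabla^j(r*G_\sigma)\|_\infty$ for $j\in\{1,2\}$.

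\smallskip
\noindent\textbf{Smoothing estimate.} I claim that for $j\in\{1,2\}$ and $\sigma\in(0,1]$ one has $\|\nabla^j(r*G_\sigma)\|_\infty\le C$ when $j\le\beta$, and $\|\nabla^j(r*G_\sigma)\|_\infty\le C\,\sigma^{\beta-j}\log((\sigma^2\wedge\tfrac12)^{-1})^{-\theta}$ when $j>\beta$. The first case is immediate from $\nabla^j(r*G_\sigma)=(\nabla^j r)*G_\sigma$ and $\|\nabla^j r\|_\infty\le C$. For the second, set $m:=\lfloor\beta\rfloor<j$, put $m$ derivatives on $r$ and the remaining $j-m\ge1$ on the kernel, and use $\int\nabla^{j-m}G_\sigma\,d\lambda^d=0$ to center:
\[
\nabla^j(r*G_\sigma)(u)=\int\big(\nabla^m r(u-v)-\nabla^m r(u)\big)\,\nabla^{\,j-m}G_\sigma(v)\,d\lambda^d(v).
\]
Then bound $|\nabla^m r(u-v)-\nabla^m r(u)|\le C|v|^{\beta-m}\log((|v|\wedge\tfrac12)^{-1})^{-\theta}$, the $(\beta-m)$-Hölder-with-$\log$ modulus of $\nabla^m r$ coming from $r\in\mathcal H^{\beta,\theta}_C$, together with the kernel bound $\|\nabla^{j-m}G_\sigma(v)\|\le C\sigma^{-(j-m)}\big(1+(|v|/\sigma)^{j-m}\big)G_\sigma(v)$; this reduces the claim to the moment estimate below.

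\smallskip
\noindent\textbf{Gaussian moment estimate and conclusion.} The key input is: for every $\alpha\ge0$ there is $C=C(\alpha,\theta,d)$ with, for $Z\sim\mathcal N(0,\sigma^2\text{Id})$ and $\sigma\in(0,1]$,
\[
\mathbb E\Big[\,|Z|^{\alpha}\,\log\!\big((|Z|\wedge\tfrac12)^{-1}\big)^{-\theta}\Big]\ \le\ C\,\sigma^{\alpha}\,\log\!\big((\sigma^2\wedge\tfrac12)^{-1}\big)^{-\theta}.
\]
Rescaling $Z=\sigma W$, $W\sim\mathcal N(0,\text{Id})$: for $\sigma$ bounded away from $0$ both sides are comparable to a positive constant; for $\sigma$ small I split over $\{|W|\le\sigma^{-1/2}\}$, where $\log((\sigma|W|\wedge\tfrac12)^{-1})\ge\tfrac12\log(\sigma^{-1})$ so the integrand is $\le C\log(\sigma^{-1})^{-\theta}|W|^{\alpha}$, and over $\{|W|>\sigma^{-1/2}\}$, whose contribution is at most $(\log2)^{-\theta}\,\mathbb E[|W|^{\alpha}\mathds{1}_{\{|W|>\sigma^{-1/2}\}}]\le Ce^{-c/\sigma}$, negligible against $\log(\sigma^{-1})^{-\theta}$. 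Feeding this (with $\alpha=\beta-m$ and $\alpha=\beta+j-2m$) into the previous display gives $\|\nabla^j(r*G_\sigma)\|_\infty\le C\sigma^{\beta-j}\log((\sigma^2\wedge\tfrac12)^{-1})^{-\theta}$ for $j>\beta$. Combining the two cases — and comparing exponents using $\sigma\le1$, $\beta-j\ge\beta-\lfloor\beta\rfloor-2$ for $j\le2$, and that any bounded quantity is $\le C\sigma^{\beta-\lfloor\beta\rfloor-2}\log((\sigma^2\wedge\tfrac12)^{-1})^{-\theta}$ since this expression is bounded below on $(0,1]$ — yields the displayed bound on $\|\nabla^2\log(r*G_\sigma)\|_\infty$, hence $\|\nabla V(t,x)\|\le Ct(1-t^2)^{(\beta-\lfloor\beta\rfloor)/2-1}\log(((1-t^2)\wedge\tfrac12)^{-1})^{-\theta}$.

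\smallskip
\noindent\textbf{Integrability.} With $u=1-t^2$ (so $t\,dt=-\tfrac12du$) and $\rho:=(\beta-\lfloor\beta\rfloor)/2\in[0,\tfrac12)$,
\[
\int_0^1\|\nabla V(t,\cdot)\|_\infty\,dt\ \le\ \frac C2\int_0^1 u^{\rho-1}\,\log\!\big((u\wedge\tfrac12)^{-1}\big)^{-\theta}\,du;
\]
the part over $[\tfrac12,1]$ is trivially finite, and on $(0,\tfrac12)$ the substitution $v=\log(u^{-1})$ turns the integral into $\int_{\log2}^{\infty}v^{-\theta}e^{-\rho v}\,dv<\infty$ since $\rho\ge0$ and $\theta>1$, giving $\int_0^1\|\nabla V(t,\cdot)\|_\infty\,dt<\infty$. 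The main obstacle is the logarithmic refinement in the moment estimate: one must retain the factor $\log((\sigma^2\wedge\tfrac12)^{-1})^{-\theta}$ and not merely a constant, because exactly when $\beta\in\mathbb N$ one has $\rho=0$ and only this logarithmic gain together with $\theta>1$ makes the time integral converge; the truncations at $\tfrac12$ are what allow the argument to cover the regime $t$ near $0$, where $1-t^2\approx1$ and an untruncated $\log$ would vanish.
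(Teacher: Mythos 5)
Your proof is correct and takes a genuinely different route from the paper's. The paper works directly from the moment identity $\nabla V(t,x)=\frac{t}{(1-t^2)^2}\int(y-tx)H(y,x)\,dp^{t,x}(y)-\frac{t}{1-t^2}\text{Id}$, decomposes using $r(y)-r(tx)$ and $r(tx)-Q_t r(x)$, and bounds each piece using only the $(\beta-\lfloor\beta\rfloor,\theta)$-modulus of $r$ itself (never differentiating $r$), splitting at the scale $\|y-tx\|\le(1-t^2)^{1/4}$ to extract the logarithmic factor. You instead rewrite $Q_t r(x)=(r*G_\sigma)(tx)$ with $\sigma=\sqrt{1-t^2}$, reduce via the two-sided bound on $r*G_\sigma$ to controlling $\|\nabla^j(r*G_\sigma)\|_\infty$ for $j\in\{1,2\}$, and prove a parabolic Schauder-type smoothing estimate by pushing $m=\lfloor\beta\rfloor$ derivatives onto $r$ and centering against $\int\nabla^{j-m}G_\sigma=0$; the logarithmic Gaussian moment lemma (splitting $|W|$ at $\sigma^{-1/2}$) plays exactly the role of the paper's $(1-t^2)^{1/4}$ split. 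The two arguments coincide in strength when $\beta<1$; for $\beta\ge 1$ your route, by exploiting all $\lfloor\beta\rfloor$ derivatives of $r$ rather than only the zeroth-order modulus, actually gives the sharper exponent $\sigma^{\beta-2}$ (and a constant for $\beta\ge 2$) in place of the stated $\sigma^{\beta-\lfloor\beta\rfloor-2}$, both of which suffice for the $L^1$-in-time bound. The paper's formulation has the advantage of being the same moment-based template that Lemma \ref{lemma:lemma1} and the subsequent higher-derivative estimates reuse; your smoothing lemma is more modular and would be the natural starting point if one wanted a self-contained Schauder theory for $Q_t$.
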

\begin{proof}
Let $x\in \mathbb{R}^d$, we have
\begin{align}\label{align:decompnabla1}
    \|\nabla V(t,x)\| = &\frac{1}{t}\|\nabla^2 \log(Q_t r)(x)\|\nonumber\\
     =&\|\frac{t}{(1-t^2)^2}\int (y-tx)H(y,x) dp^{t,x}(y)-\frac{t}{1-t^2}\text{Id}\|\nonumber\\
         \leq & \frac{t}{(1-t^2)^2}\|\int(y-tx)^{\otimes2}d p^{t,x}(y)-(1-t^2)\text{Id}\|\nonumber\\
         &+\frac{t}{(1-t^2)^2}\|\int(y-tx)\left(tx-\int zdp^{t,x}(z)\right)d p^{t,x}(y)\|\nonumber\\
         \leq & \frac{t}{(1-t^2)^2}\|\int\left(y-tx\right)^{\otimes2}\left( \frac{r(y)}{\int r(z)\varphi^{t,x}(z)d\lambda^d(z)}-1\right)\varphi^{t,x}(y)d\lambda^d(y)\|\nonumber\\
         &+\frac{t}{(1-t^2)^2}\|\int(y-tx)d p^{t,x}(y)\|^2.
\end{align}
As $r=e^a\in \mathcal{H}^{\beta,\theta}_C \subset \mathcal{H}^{\beta-\lfloor \beta \rfloor,\theta}_C$, we have 
\begin{align*}
\|\int(y-tx)d p^{t,x}(y)\|^2 & \leq C\|\int(y-tx)(r(y)-r(tx))\varphi^{t,x}(y)d\lambda^d(y)\|^2\\
     \leq &C \left(\int\|y-tx\|^{1+\beta-\lfloor \beta \rfloor}\log((\|y-tx\|\wedge 1/2)^{-1})^{-\theta}\varphi^{t,x}(y)d\lambda^d(y)\right)^2\\
        \leq &C \left(\int\|y-tx\|^{1+\beta-\lfloor \beta \rfloor}\log((\|y-tx\|\wedge 1/2)^{-1})^{-\theta}\varphi^{t,x}(y)\mathds{1}_{\{\|y-tx\|\leq (1-t^2)^{1/4}\}}d\lambda^d(y)\right)^2\\
                &+C \left(\int\|y-tx\|^{1+\beta-\lfloor \beta \rfloor}\log((\|y-tx\|\wedge 1/2)^{-1})^{-\theta}\varphi^{t,x}(y)\mathds{1}_{\{\|y-tx\|> (1-t^2)^{1/4}\}}d\lambda^d(y)\right)^2\\
    \leq &C \log\Big(\big((1-t^2)^{1/4}\wedge 1/2\big)^{-1}\Big)^{-2\theta}\left(\int\|y-tx\|^{1+\beta-\lfloor \beta \rfloor}\varphi^{t,x}(y)d\lambda^d(y)\right)^2\\
    &+C\exp\left(-\frac{(1-t^2)^{-1/4}}{2}\right)\\
     \leq & C(1-t^2)^{1+\beta-\lfloor \beta \rfloor}\log\Big(\big((1-t^2)\wedge 1/2\big)^{-1}\Big)^{-2\theta}.
\end{align*}
On the other hand, we have 
\begin{align*}
\|&\int\left(y-tx\right)^{\otimes2}\left( \frac{r(y)}{\int r(z)\varphi^{t,x}(z)d\lambda^d(z)}-1\right)\varphi^{t,x}(y)d\lambda^d(y)\|\\
= & \frac{1}{Q_tr(x)}\|\int\left(y-tx\right)^{\otimes2}\left( r(y)-r(tx)+r(tx)-\int r(z)\varphi^{t,x}(z)d\lambda^d(z)\right)\varphi^{t,x}(y)d\lambda^d(y)\|\\
      \leq & C \|\int\left(y-tx\right)^{\otimes2}\left( r(y)-r(tx)\right)\varphi^{t,x}(y)d\lambda^d(y)\|\nonumber\\
      & + C \| \int (r(tx)-r(z))\varphi^{t,x}(z)d\lambda^d(z)\| \|\int\left(y-tx\right)^{\otimes2}\varphi^{t,x}(y)d\lambda^d(y)\|
      \nonumber\\
    \leq & C \|\int\|y-tx\|^{2+\beta-\lfloor \beta \rfloor}\log((\|y-tx\|\wedge 1/2)^{-1})^{-\theta}\varphi^{t,x}(y)d\lambda^d(y)\|\\
    & +C\| \int \|x-y\|^{\beta-\lfloor \beta \rfloor}\log((\|y-tx\|\wedge 1/2)^{-1})^{-\theta}\varphi^{t,x}(z)d\lambda^d(z)\| \|\int\|y-tx\|^2\varphi^{t,x}(y)d\lambda^d(y)\|\\
    \leq & C(1-t^2)^{1+(\beta-\lfloor \beta \rfloor)/2}\log\Big(\big((1-t^2)\wedge 1/2\big)^{-1}\Big)^{-\theta}.
\end{align*}
We then conclude that
\begin{align*}
    \|\nabla V(t,x)\|\leq C \frac{t}{(1-t^2)^{1-(\beta-\lfloor \beta \rfloor)/2}}\log\Big(\big((1-t^2)\wedge 1/2\big)^{-1}\Big)^{-\theta},
\end{align*}
which gives the result as $\theta>1$.
\end{proof}
 Using Proposition \ref{prop:boundnabalvgauss} we obtain that the velocity $V$ satisfies the assumptions of Proposition \ref{prop:condiwelldifned}, which gives well-definedness of the flow of transport maps. We conclude in the next proposition that the Langevin transport map $T:=X_1$ satisfies that $\nabla T \in \mathcal{H}^0_C(\mathbb{R}^d)$.
\begin{proposition}
    Under the assumptions of Theorem \ref{theo:thetheo}, there exists a unique solution for all $t\in [0,1]$ to the equation
    $$\partial_t X_t(x) =V(t,X_t(x)), \quad X_0(x)=x.$$
    Furthermore, the solution at time $t=1$ verifies that
    $$\nabla X_1 \in \mathcal{H}^0_C.$$
\end{proposition}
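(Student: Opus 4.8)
The plan is to assemble the final statement from three results already in hand: the well-posedness criterion of Proposition~\ref{prop:condiwelldifned}, the velocity bound of Proposition~\ref{prop:boundnabalvgauss}, and the Gronwall estimate of Lemma~\ref{lemma:addlemme} specialized to $k=1$. Accordingly I would split the proof into two parts: (i) showing that the flow $(X_t)$ is defined (and unique) on the closed interval $[0,1]$, and (ii) deriving the uniform Jacobian bound $\|\nabla X_1(x)\|\le C$.

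For (i), the point is to check that $V$ satisfies the hypotheses of Proposition~\ref{prop:condiwelldifned} up to $t=1$. Proposition~\ref{prop:boundnabalvgauss} already yields $\int_0^1\|\nabla V(t,\cdot)\|_\infty\,dt<\infty$, so it only remains to control $\|V(t,\cdot)\|_\infty$. Writing $V(t,x)=\frac{1}{1-t^2}\int(y-tx)\,dp^{t,x}(y)$ and using the domination $p^{t,x}\le e^{2K}\varphi^{t,x}$ together with Jensen's inequality and the fact that $\varphi^{t,x}$ has covariance $(1-t^2)\mathrm{Id}$, one gets $\|V(t,x)\|\le \big(\int\|y-tx\|^2dp^{t,x}(y)\big)^{1/2}/(1-t^2)\le C(1-t^2)^{-1/2}$, which is integrable on $[0,1]$. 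Hence $V\in L^1\big([0,1],W^{1,\infty}(\mathbb{R}^d,\mathbb{R}^d)\big)$, and in particular both conditions of Proposition~\ref{prop:condiwelldifned} hold with $t=1$; this gives the unique solution $(X_t)_{t\in[0,1]}$ of \eqref{eq:PDE} with $(X_t)_\#\gamma_d=\mu_{1-t}$, so that $X_1$ transports $\gamma_d$ onto $p$.

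For (ii), I would apply Lemma~\ref{lemma:addlemme} with $k=1$: the indicator $\mathds{1}_{\{k\ge 2\}}$ vanishes, and since $\nabla V(s,\cdot)$ is a (scaled) Hessian, $\lambda_{\max}(\nabla V(s,X_s(x)))\le\|\nabla V(s,\cdot)\|_\infty$, whose integral over $[0,1]$ is a finite constant $A$ \emph{independent of $x$} by Proposition~\ref{prop:boundnabalvgauss}. The lemma then gives $\|\nabla X_t(x)w\|\le C$ for every $t\in[0,1)$, $w\in\mathbb{S}^{d-1}$ and $x\in\mathbb{R}^d$, i.e. $\|\nabla X_t(x)\|_{op}\le C$ on $[0,1)$. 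To pass to $t=1$, I would use the integral form $\nabla X_t(x)=\mathrm{Id}+\int_0^t\nabla V(s,X_s(x))\nabla X_s(x)\,ds$, whose integrand is bounded by $C\|\nabla V(s,\cdot)\|_\infty\in L^1([0,1])$; absolute continuity of the integral then shows $\nabla X_t(x)$ converges as $t\to1$, the limit being $\nabla X_1(x)$ (differentiation under the integral sign in $X_1(x)=x+\int_0^1 V(s,X_s(x))ds$ is legitimate because $V\in L^1_t W^{1,\infty}_x$), and the bound $\|\nabla X_1(x)\|_{op}\le C$ survives. Since $\mathcal{H}^0_C$ is, up to the (harmless) factor $2$ in the $\|\cdot\|_{0,0}$ seminorm, the $L^\infty$-ball of radius $C$, this is exactly $\nabla X_1\in\mathcal{H}^0_C$.

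I do not expect a genuine obstacle here: the analytic substance has already been spent in Proposition~\ref{prop:boundnabalvgauss} — where the logarithmic weight with $\theta>1$ is precisely what makes $\|\nabla V(t,\cdot)\|_\infty$ integrable at the endpoint $t=1$ — and in the Gronwall estimate of Lemma~\ref{lemma:addlemme}. The only step needing a little care is ensuring that the uniform-in-$t$ Jacobian bound persists in the limit $t\to1$, which the integral-equation argument above settles; an alternative, if one preferred to work purely at the level of push-forwards, would be to invoke the stability Lemma~\ref{lemma:convergenceofmap}, though the direct limit is cleaner since the target measure here is not compactly supported.
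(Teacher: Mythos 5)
Your proposal is correct and follows essentially the same route as the paper: bound $\|V(t,\cdot)\|_\infty\le C(1-t^2)^{-1/2}$ via the domination $p^{t,x}\le C\varphi^{t,x}$, combine with Proposition~\ref{prop:boundnabalvgauss} to verify the hypotheses of Proposition~\ref{prop:condiwelldifned} on all of $[0,1]$, and then apply Lemma~\ref{lemma:addlemme} (with $k=1$, using Proposition~\ref{prop:boundnabalvgauss} to furnish the $x$-uniform constant $A$) for the Jacobian bound. You are in fact somewhat more careful than the paper about passing from the uniform bound on $[0,1)$ to the endpoint $t=1$ via the integral form of $\nabla X_t$; the paper asserts the conclusion directly, implicitly relying on the same absolute-continuity argument.
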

\begin{proof}
    To apply Proposition \ref{prop:condiwelldifned} we need to bound the infinite norm of $V_t$. For $t\in[0,1]$ and $x\in \mathbb{R}^d$, we have
    \begin{align*}
        \| V(t,x)\| = &\frac{1}{t}\|\nabla \log(Q_t r)(x)\|\\
     =&\|\frac{1}{1-t^2}\int (y-tx)dp^{t,x}(y)\|
     \\
     \leq &\frac{C}{1-t^2}\int \|y-tx\|\varphi^{t,x}(y)d\lambda^d(y)\\
    \leq & C(1-t^2)^{-1/2}.
    \end{align*}
    Putting together this estimate with the one given by Proposition \ref{prop:boundnabalvgauss}, we get the well-definedness of the flow. Finally, combining Lemma \ref{lemma:addlemme} and Proposition \ref{prop:boundnabalvgauss} we obtain that for all $x\in \mathbb{R}^d$, $\|\nabla X_1(x)\|\leq C$. 
\end{proof}

\subsubsection{Bounding $\|\nabla^k X_1\|_\infty$}
In this section we show that for all $x\in \mathbb{R}^d$ and $k\in\{2,...,\lfloor \beta \rfloor +1\}$, we have
$$\int_0^1\|\nabla^{k} V(t,X_t(x))\|dt\leq  C,$$
which gives by Lemma \ref{lemma:addlemme} and Proposition \ref{prop:boundnabalvgauss} that $\|\nabla^k X_1\|_\infty$ is bounded. In order to use the estimate of Proposition \ref{prop:firstboundongradsth1}, let us first give a result on  the concentration of the mass under $p^{t,x}$.
\begin{proposition}\label{prop:concentration1}
Let $t\in [0,1)$ and $x\in \mathbb{R}^d$, then for all $f\in \mathcal{H}_1^{\beta-\lfloor \beta \rfloor,\theta}(\mathbb{R}^d)$  we have
    \begin{align*}
    \int \|f(y)-f(tx)\|^2 dp^{t,x}(y) \leq C (1-t^2)^{\beta-\lfloor \beta \rfloor}\log\Big(\big((1-t^2)\wedge 1/2\big)^{-1}\Big)^{-2\theta}.
\end{align*}
\end{proposition}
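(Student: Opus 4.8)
The plan is to bound the $L^2(p^{t,x})$-oscillation of $f$ around its value at the center $tx$ by first replacing integration against $p^{t,x}$ by integration against the Gaussian $\varphi^{t,x}$, at the cost of the multiplicative constant $C = e^{2K}$ coming from the bound $p^{t,x}(y) = r(y)/Q_t r(x)\,\varphi^{t,x}(y) \leq e^{2K}\varphi^{t,x}(y)$ that was already used in Proposition \ref{prop:concentHtheo1}. Thus it suffices to estimate
\[
\int \|f(y)-f(tx)\|^2 \varphi^{t,x}(y)\,d\lambda^d(y),
\]
where now $\varphi^{t,x}$ is the law of a Gaussian with mean $tx$ and covariance $(1-t^2)\mathrm{Id}$. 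Writing $y = tx + \sqrt{1-t^2}\,z$ with $z\sim\gamma_d$, this becomes $\int \|f(tx+\sqrt{1-t^2}\,z) - f(tx)\|^2\, d\gamma_d(z)$.

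Next I would exploit the regularity hypothesis $f\in\mathcal H_1^{\beta-\lfloor\beta\rfloor,\theta}(\mathbb R^d)$, which gives the pointwise bound
\[
\|f(y)-f(tx)\| \leq \|y-tx\|^{\beta-\lfloor\beta\rfloor}\,\log\big((\|y-tx\|\wedge\tfrac12)^{-1}\big)^{-\theta}.
\]
The delicate point is that the function $s\mapsto s^{\beta-\lfloor\beta\rfloor}\log(s^{-1})^{-\theta}$ is only well-behaved for small $s$, so as in the proof of Proposition \ref{prop:boundnabalvgauss} I would split the Gaussian integral according to whether $\|y-tx\|\leq (1-t^2)^{1/4}$ or not. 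On the ``near'' event, for $t$ close to $1$ one has $\|y-tx\|\leq (1-t^2)^{1/4}\leq 1/2$ and the log factor is monotone, so it can be pulled out evaluated at the scale $(1-t^2)^{1/4}$ — giving a factor $\log((1-t^2)^{1/4}\wedge 1/2)^{-2\theta} \leq C\log((1-t^2)\wedge 1/2)^{-2\theta}$ — while the remaining $\int \|y-tx\|^{2(\beta-\lfloor\beta\rfloor)}\varphi^{t,x}(y)\,d\lambda^d(y) = (1-t^2)^{\beta-\lfloor\beta\rfloor}\mathbb E_{\gamma_d}[\|z\|^{2(\beta-\lfloor\beta\rfloor)}] \leq C(1-t^2)^{\beta-\lfloor\beta\rfloor}$. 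On the ``far'' event, I would use that $f$ is bounded (its $L^\infty$ norm is controlled by $K$, hence $\|f(y)-f(tx)\|^2\leq C$) together with the Gaussian tail bound $\mathbb P_{\gamma_d}(\|z\|\geq (1-t^2)^{-1/4}) \leq C\exp(-\tfrac12 (1-t^2)^{-1/2})$, which decays faster than any polynomial-times-logarithmic power of $(1-t^2)$ and is therefore absorbed into the main term. (For $t$ bounded away from $1$ the whole statement is trivial since the left side is bounded and the right side is bounded below.) Combining the two contributions yields the claimed bound.

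I expect the only mild obstacle to be bookkeeping the logarithmic factor on the near event: one must check that replacing $\log((\|y-tx\|\wedge\frac12)^{-1})^{-\theta}$ by its value at the truncation scale is legitimate, i.e.\ that $\|y-tx\|\mapsto \log((\|y-tx\|\wedge\frac12)^{-1})^{-\theta}$ is nondecreasing in $\|y-tx\|$ on the relevant range, which holds because $s\mapsto \log(s^{-1})$ is decreasing and $-\theta<0$. Everything else is the same truncation-plus-Gaussian-moment computation already carried out for $\|\int (y-tx)\,dp^{t,x}(y)\|^2$ in Proposition \ref{prop:boundnabalvgauss}, and indeed this proposition is essentially a cleaner, standalone restatement of that estimate for a general Hölder--Zygmund function $f$.
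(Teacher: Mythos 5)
Your proof is correct and follows the same approach the paper intends: the paper's proof is the one‑line remark that $p^{t,x}\leq C\varphi^{t,x}$ (using $r=e^a\in[e^{-K},e^K]$ and $Q_tr\geq e^{-K}$), with the Gaussian truncation‑and‑moment computation at scale $(1-t^2)^{1/4}$ left implicit since it already appears verbatim in the proof of Proposition~\ref{prop:boundnabalvgauss}. You have simply spelled out the computation the paper takes for granted.
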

\begin{proof}
 The result follows from the fact that for all $y\in \mathbb{R}^d$ we have
$p^{t,x}(y)\leq C \varphi^{t,x}(y).$
\end{proof}

Next, we control the terms in the bound provided by Lemma \ref{lemma:lemma1}. Fix $l\in \{1,...,\lfloor \beta \rfloor\}$, $t\in [0,1)$ and $x\in \mathbb{R}^d$. Taking $\xi=f^l(tX_t(x))=\frac{1}{r(tX_t(x))}\nabla^l r(tX_t(x))$ in  Proposition 
\ref{prop:firstboundongradsth1}, we obtain 
$$ 
\|\int f^{l}(y)\nabla^2_x dp^{t,X_t(x)}(y) \|
    \leq \frac{C}{1-t^2} \left(\int \|f^l(y)-f^l(tX_t(x))\|^2dp^{t,X_t(x)}(y)\right)^{1/2}.
    $$
    
On the other hand,  we also have from Proposition \ref{prop:firstboundongradsth1} that for $m\in \{1,...,\lfloor \beta \rfloor-1\}$,
\begin{align*}
\|&\int f^{l}(y)\nabla_x p^{t,X_t(x)}(y)d\lambda^d(y)\| \|\int f^{m}(y)\nabla_x p^{t,X_t(x)}(y)d\lambda^d(y) \|\\
    &\leq \frac{C}{1-t^2} \left(\int \|f^l(y)-f^l(tX_t(x))\|^2dp^{t,X_t(x)}(y)\right)^{1/2}\left(\int \|f^m(y)-f^m(tX_t(x))\|^2dp^{t,X_t(x)}(y)\right)^{1/2}.
    \end{align*}

Finally, from Proposition \ref{prop:concentration1}  we have
\begin{align}\label{eq:enfaitealignmaistauv}
    \int \|f^{l}(y)-f^{l}(tX_t(x))\|^2 dp^{t,X_t(x)}(y) & \leq C (1-t^2)^{\beta-\lfloor \beta \rfloor}\log\Big(\big((1-t^2)\wedge 1/2\big)^{-1}\Big)^{-2\theta}.
\end{align}

Putting everything together in Lemma \ref{lemma:lemma1}, we obtain 

\begin{align}\label{align:conclubanblakbound1}
     \|\nabla^{k} V(t,X_t(x))\|& \leq  C(1-t^2)^{(\beta-\lfloor \beta \rfloor)/2-1}\log\Big(\big((1-t^2)\wedge 1/2\big)^{-1}\Big)^{-\theta}.
\end{align}
As $\theta>1$, we deduce that 
$$\int_0^1\|\nabla^{k} V(s,X_t(x))\|dt\leq  C,$$
so from Proposition \ref{prop:boundnabalvgauss} and Lemma \ref{lemma:addlemme}  we obtain the bound on $\|\nabla^k X_1\|_\infty$ for $k\in\{2,...,\lfloor \beta \rfloor +1\}$.   We can then conclude that the derivatives of the transport map $T:= X_1$ are bounded.

\begin{proposition}
    Under the assumptions of Theorem \ref{theo:thetheo}, the Langevin transport map $T$ satisfies 
    $\nabla^k T \in \mathcal{H}^0_C$ for all $k\in\{1,...,\lfloor \beta \rfloor +1\}$.
\end{proposition}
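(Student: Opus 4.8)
The statement collects the estimates established throughout this section, so the plan is simply to feed them into the Gronwall bound of Lemma \ref{lemma:addlemme}. Fix $k\in\{1,\dots,\lfloor\beta\rfloor+1\}$ and a base point $x\in\mathbb{R}^d$; the target is the bound $\|\nabla^k X_t(x)w^k\|\leq C$, uniform in $t\in[0,1)$ and $w\in\mathbb{S}^{d-1}$ and independent of $x$. Lemma \ref{lemma:addlemme} reduces this to exhibiting a constant $A=A(\theta,\beta,d,K)$ for which
$$
\int_0^1 \lambda_{\max}\big(\nabla V(s,X_s(x))\big)\,ds \;+\; \mathds{1}_{\{k\geq 2\}}\sum_{j=2}^k \int_0^1 \|\nabla^j V(s,X_s(x))\|\,ds \;\leq\; A .
$$

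For the first integral I would invoke Proposition \ref{prop:boundnabalvgauss}, whose bound is uniform in the spatial argument and therefore applies with $X_s(x)$ in place of $x$, giving $\lambda_{\max}(\nabla V(s,X_s(x)))\leq\|\nabla V(s,\cdot)\|_\infty\leq Cs(1-s^2)^{(\beta-\lfloor\beta\rfloor)/2-1}\log(((1-s^2)\wedge\tfrac{1}{2})^{-1})^{-\theta}$; since $\theta>1$ the right-hand side is integrable on $[0,1)$, and this is precisely the estimate that already delivered the well-posedness of the flow up to $t=1$ and the case $k=1$. For each $j\in\{2,\dots,k\}$, estimate \eqref{align:conclubanblakbound1} gives $\|\nabla^j V(s,X_s(x))\|\leq C(1-s^2)^{(\beta-\lfloor\beta\rfloor)/2-1}\log(((1-s^2)\wedge\tfrac{1}{2})^{-1})^{-\theta}$, again integrable over $[0,1)$ because $\theta>1$; summing the finitely many indices $j=2,\dots,k$ then produces the desired $A$.

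Lemma \ref{lemma:addlemme} now yields $\sup_{t\in[0,1)}\sup_{w\in\mathbb{S}^{d-1}}\|\nabla^k X_t(x)w^k\|\leq C$ uniformly in $x$, and since $\nabla^k X_t(x)$ is a symmetric $k$-linear map (Schwarz's theorem) a polarization inequality upgrades this control of the diagonal values to a bound $\|\nabla^k X_t(x)\|\leq C$ on the full tensor, hence on every component $\partial^\nu (X_t)_i$. To finish I would pass to the limit $t\to1$: plugging the same integrable majorants into the Faa di Bruno expression for $\partial_t\nabla^k X_t(x)$ shows that $t\mapsto\nabla^k X_t(x)$ has integrable time-derivative on $[0,1)$, hence converges uniformly as $t\to1$, its limit being $\nabla^k X_1(x)$; the uniform bound survives in the limit, so $\|\nabla^k X_1\|_\infty\leq C$ for every $k\in\{1,\dots,\lfloor\beta\rfloor+1\}$, i.e. $\nabla^k T\in\mathcal{H}^0_C$ and in particular $T\in C^{\lfloor\beta\rfloor+1}$. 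All the genuine analysis having been carried out in Proposition \ref{prop:boundnabalvgauss} and in the derivation of \eqref{align:conclubanblakbound1}, there is no real obstacle left; if I had to single out the step needing the most care it would be this last limiting argument --- checking that the transport map is actually $C^{\lfloor\beta\rfloor+1}$ at the terminal time and that the uniform bound is not lost in the limit --- together with the bookkeeping point that Lemma \ref{lemma:addlemme} requests the velocity estimates along the trajectory $s\mapsto X_s(x)$, which is exactly the form in which \eqref{align:conclubanblakbound1} was proved.
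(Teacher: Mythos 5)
Your proposal follows essentially the same route as the paper: you assemble the integrable majorant $\int_0^1\lambda_{\max}(\nabla V(s,X_s(x)))\,ds$ from Proposition~\ref{prop:boundnabalvgauss} and the integrable majorants $\int_0^1\|\nabla^j V(s,X_s(x))\|\,ds$ from \eqref{align:conclubanblakbound1}, and then feed these into Lemma~\ref{lemma:addlemme}, which is exactly how the text concludes. The two supplementary points you flag at the end — upgrading the diagonal bounds $\|\nabla^k X_t(x)w^k\|$ to a bound on the full symmetric $k$-tensor by polarization, and passing to the terminal time $t=1$ via the integrability of $\partial_t\nabla^k X_t$ — are details the paper elides but that are needed for a fully self-contained statement; spelling them out is a sensible addition and does not change the underlying argument.
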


\section{Applications}\label{sec:app}
In this section we present different applications of Theorem \ref{theo:thetheo}. The first application involves extending functional inequalities, a classical use of (Lipschitz) regularity theory for transport maps. The second one is the proof of the optimality of the widely used GAN estimator in the context of density estimation.

\subsection{Generalized log-Sobolev inequality}
The theory of Lipschitz transport maps has been highly motivated by their ability to prove functional inequalities, by transferring them from
the source measure to the target measure. Having obtained in Theorem \ref{theo:thetheo} a transport map with higher order regularity, we can  naturally extend Gaussian inequalities involving high order derivatives. One of such inequality is the Generalized log-Sobolev inequality.
\begin{theorem}[Generalized log-Sobolev inequality \cite{rosen1976sobolev}]\label{theo:wsobo} For $\phi\in C^2(\mathbb{R}^d)$ satisfying $\phi(x)=a\|x\|^{1+s}$ for $\|x\|\geq K,$ with $ K,a,s>0$, define the measure $\mu$ having a density $d\mu(x) = \exp(-\phi(x))dx$ with respect to the $d$-dimensional Lebesgue measure. Then, for all $k\in \mathbb{N}_{>0}$ and $f:\mathbb{R}^d\rightarrow \mathbb{R}$ with $\|f\|_{L_{\mu}^2}=1$, we have
\begin{align*}
    \int |f(x)|^2|\log(|f(x)|)|^{2sk/(s+1)}d\mu(x)\leq C \sum_{|\alpha|=0}^k\|D^\alpha f\|^2_{L_{\mu}^2}.
\end{align*}
    
\end{theorem}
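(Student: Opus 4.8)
The plan is to transfer the inequality to $\mu$ from a reference measure where it is classical, using the higher-order smoothness of the transport map supplied by Theorem \ref{theo:thetheo} in the Gaussian-perturbation regime, and falling back on semigroup smoothing for the genuinely non-Gaussian tails. The first move is to dispose of the compact modification of the potential, the second to treat a Gaussian (the case $s=1$) and transfer it, and the third to handle $s\neq 1$ directly.

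First I would reduce to a model potential. Since $\phi\in C^2(\mathbb R^d)$ and $\phi(x)=a\|x\|^{1+s}$ for $\|x\|\ge K$, the function $b:=a\|x\|^{1+s}-\phi$ is continuous, bounded and supported in $B^d(0,K)$; hence, after normalisation, the model measure $d\mu_0(x)\propto\exp(-a\|x\|^{1+s})\,dx$ satisfies $e^{-2\|b\|_\infty}\mu_0\le\mu\le e^{2\|b\|_\infty}\mu_0$. Therefore $\|f\|_{L^2_\mu}$ and every $\|D^\alpha f\|_{L^2_\mu}$ are comparable, up to a constant depending only on $\|b\|_\infty$, to the same quantities for $\mu_0$; combining this with the elementary bound $|\log(\lambda u)|^p\le 2^p(|\log\lambda|^p+|\log u|^p)$, used to absorb the rescaling of $f$ forced by passing from $\|f\|_{L^2_\mu}=1$ to $\|f\|_{L^2_{\mu_0}}=1$, reduces the statement to $\mu_0$, and when $s=1$, after a linear change of variables, to the standard Gaussian $\gamma_d$.

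Next, for $\gamma_d$ the claimed bound is the generalised logarithmic Sobolev inequality of the Gaussian, which I would obtain by iterating Gross's logarithmic Sobolev inequality along the Ornstein--Uhlenbeck semigroup, equivalently by reading off the Orlicz--Sobolev embedding $W^{k,2}(\gamma_d)\hookrightarrow L^2(\log L)^{k}(\gamma_d)$ from Nelson's hypercontractivity after optimising in the exponent. To pass from $\gamma_d$ to a perturbed Gaussian $d\mu\propto\exp(-\|x\|^2/2+a(x))\,dx$ with $a\in\mathcal H^{\beta,\theta}_K$ and $\beta\ge k-1$, Theorem \ref{theo:thetheo} provides a transport map $T$ with $T_\#\gamma_d=\mu$ and $\nabla T\in\mathcal H^{\beta}_C(\mathbb R^d)$, so that $\nabla^j T$ is bounded on $\mathbb R^d$ for all $j\le k$. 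For $f$ on the $\mu$-side set $g:=f\circ T$; then $\int|g|^2|\log|g||^{k}\,d\gamma_d=\int|f|^2|\log|f||^{k}\,d\mu$ by the transport identity, while the Faà di Bruno formula expresses $\nabla^j g$ as a finite sum of terms of the form $(\nabla^i f\circ T)(\nabla^{|B_1|}T,\dots,\nabla^{|B_i|}T)$ with $i\le j$, so that bounding each $\|\nabla^{|B_l|}T\|$ by $C$ gives $\|\nabla^j g\|_{L^2_{\gamma_d}}^2\le C\sum_{i\le j}\|\nabla^i f\|_{L^2_\mu}^2$. Feeding these two relations into the Gaussian inequality applied to $g$ (which already has unit $L^2_{\gamma_d}$ norm) yields the inequality for $\mu$, and then the reduction above covers any $C^2$ potential with $s=1$.

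The remaining and harder case is $s\ne 1$, where the transport route from Theorem \ref{theo:thetheo} is genuinely unavailable: $\mu_0$ is not a bounded perturbation of a Gaussian, and the radial map pushing $\gamma_d$ onto $\mu_0$ has Jacobian of size $\|z\|^{(1-s)/(1+s)}$, which is unbounded at infinity for $s<1$ (no Lipschitz, hence no bounded-derivative $C^1$, transport exists there, since $\exp(-\|x\|^{1+s})$ is not sub-Gaussian) and singular at the origin for $s>1$, so the change-of-variables estimate above breaks. For $\mu_0$ I would instead use the smoothing of the diffusion semigroup with invariant measure $\mu_0$: the $\exp(-\|x\|^{1+s})$ tails yield a Nash/Sobolev-type inequality whose profile makes this semigroup ultracontractive at a precise rate, and the Orlicz embedding $W^{k,2}(\mu_0)\hookrightarrow L^2(\log L)^{2sk/(s+1)}(\mu_0)$ is extracted from that rate; this is the mechanism of \cite{rosen1976sobolev}, and the exponent $2sk/(s+1)$ is exactly what the rate produces (a gain over $k$ when $s>1$, a loss when $s<1$). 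The main obstacle is precisely to track this sharp log-exponent through the iteration, which requires the precise ultracontractivity/heat-kernel bound for $\mu_0$ rather than a soft argument; a secondary point is that, $\phi$ being only $C^2$, the transport map of Theorem \ref{theo:thetheo} has only three bounded derivatives, so the clean transport argument by itself reaches only $k\le 2$, and attaining all $k$ relies on the $C^\infty$ smoothness of $\|x\|^{1+s}$ away from the origin together with the semigroup argument.
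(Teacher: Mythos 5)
This statement is Theorem~\ref{theo:wsobo}, which the paper presents as a quotation from \cite{rosen1976sobolev} and does not prove; there is therefore no internal proof to compare yours against. The paper's own contribution in this direction is Corollary~\ref{coro:logsogen}, which \emph{assumes} Theorem~\ref{theo:wsobo} (specialised to $\phi(x)=\|x\|^2/2$) and then transfers it via the transport map of Theorem~\ref{theo:thetheo}. Your proposal reconstructs an argument for Rosen's theorem itself, which is a different task, and it has two issues worth flagging.

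First, your middle paragraph is misdirected. After your (correct) reduction to the model measure $\mu_0\propto\exp(-a\|x\|^{1+s})\,dx$, the case $s=1$ is a centred isotropic Gaussian up to a linear change of variables; the classical Gaussian hypercontractivity/log-Sobolev iteration closes that case directly, with no transport map needed. The detour through Theorem~\ref{theo:thetheo} and H\"older perturbations $a\in\mathcal H^{\beta,\theta}_K$ is not required for Theorem~\ref{theo:wsobo}, whose non-Gaussian part of $\phi$ is compactly supported and already absorbed by your $e^{\pm 2\|b\|_\infty}$ comparison. In effect that paragraph reproduces the proof of Corollary~\ref{coro:logsogen}, not a step in the proof of Theorem~\ref{theo:wsobo}; within the logic of the paper it also reverses the order of dependence, since Theorem~\ref{theo:wsobo} is a classical input and Theorem~\ref{theo:thetheo} is the new result built downstream of it.

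Second, the entire analytic content of Rosen's theorem is the case $s\ne 1$, and your proposal stops at a sketch there. You correctly observe that no globally Lipschitz transport from $\gamma_d$ to $\mu_0$ exists when $s<1$ (tails too heavy) and that the map is singular at the origin when $s>1$, so the transport route is closed; and you correctly name ultracontractivity of the $\mu_0$-semigroup plus a Nash-type inequality as the mechanism producing the exponent $2sk/(s+1)$. But that is precisely the part that needs to be carried out to have a proof: one must establish the heat-kernel bound with the right profile for $\exp(-a\|x\|^{1+s})$ and iterate it through $k$ derivatives while keeping the logarithmic exponent sharp. As written the argument does not reach the stated inequality for $s\ne 1$, it only points at \cite{rosen1976sobolev} again, which is what the paper already does by citing it.
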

This inequality extends the classical log-Sobolev inequalities, which are essential tools in analysis and PDE theory, by incorporating logarithmic weights that reflect the growth conditions imposed by the function $\phi$. Using the Föllmer transport map, we can extend this inequality to measure $\mu$ satisfying the assumptions of Theorem \ref{theo:thetheo}.

\begin{corollary}\label{coro:logsogen}
    For $a\in\mathcal{H}^{\beta,\theta}_K(\mathbb{R}^d,\mathbb{R})$ with $\beta\geq 0$, $\theta>1$ and $K>0$, define the probability measure $\mu$ having a density $d\mu(x) = \exp(-\|x\|^2/2+a(x)+c_a)dx$ with respect to the $d$-dimensional Lebesgue measure (with $c_a\in \mathbb{R}$ a normalizing constant). Then, for all $k\in \{1,...,\lfloor \beta \rfloor +1\}$, and $f:\mathbb{R}^d\rightarrow \mathbb{R}$ with $\|f\|_{L_{\mu}^2}=1$ we have
\begin{align*}
    \int |f(x)|^2|\log(|f(x)|)|^{k}d\mu\leq C \sum_{|\alpha|=0}^k\|D^\alpha f\|^2_{L_{\mu}^2}.
\end{align*}
\begin{proof}
    From Theorem \ref{theo:wsobo} with $\phi(x)=\frac{\|x\|^2}{2}$, we have for all $g:\mathbb{R}^d\rightarrow \mathbb{R}$ with $\|g\|_{L_2(\gamma_d)}=1$
    \begin{align*}
    \int |g(x)|^2|\log(|g(x)|)|^{k}d\gamma_d(x)\leq C \sum_{|\alpha|=0}^k\|D^\alpha g\|^2_{L^2_{\gamma_d}}.
\end{align*}
Then, from Theorem \ref{theo:thetheo}, there exists a map $T\in C^{\lfloor \beta \rfloor+1}(\mathbb{R}^d,\mathbb{R}^d)$ with $\nabla T\in  \mathcal{H}^{\beta}_C \black (\mathbb{R}^d,\mathbb{R}^d)$ such that
\begin{align*}
    \int |f(x)|^2|\log(|f(x)|)|^{k}d\mu(x)= &  \int |f\circ T(x)|^2|\log(|f\circ T(x)|)|^{k}d\gamma_d(x)\\
    \leq &C \sum_{|\alpha|=0}^k\|D^\alpha (f\circ T)\|^2_{L^2_{\gamma_d}}
    \\
    = &C \sum_{|\alpha|=0}^k\|D^\alpha f\|^2_{L_{\mu}^2}.
\end{align*}
\end{proof}
\end{corollary}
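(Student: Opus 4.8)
The plan is to deduce Corollary~\ref{coro:logsogen} directly from Theorem~\ref{theo:wsobo} applied to the standard Gaussian together with the smooth transport map furnished by Theorem~\ref{theo:thetheo}. The strategy is transport of the functional inequality: one writes the left-hand side as an integral against $\gamma_d$ via the pushforward relation $T_\#\gamma_d=\mu$, applies the Gaussian generalized log-Sobolev inequality to the composed function, and then controls the Sobolev norm of the composition by that of the original function using the $C^{\beta+1}$ regularity of $T$.

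First I would invoke Theorem~\ref{theo:wsobo} with $\phi(x)=\tfrac{\|x\|^2}{2}$ (which indeed satisfies the hypotheses with $s=1$, $a=1/2$, and any $K>0$), giving for all $g$ with $\|g\|_{L^2(\gamma_d)}=1$ that $\int |g|^2 |\log|g||^{2k/2}\,d\gamma_d \le C\sum_{|\alpha|\le k}\|D^\alpha g\|_{L^2_{\gamma_d}}^2$; note $2sk/(s+1)=k$ when $s=1$, which is why the exponent in the corollary is exactly $k$. Next I would apply Theorem~\ref{theo:thetheo} to the density $p(x)=\exp(-\|x\|^2/2+a(x)+c_a)$ (the constant $c_a$ just being the normalization, harmless since $a+c_a\in\mathcal{H}^{\beta,\theta}_{K'}$ for a slightly larger radius), obtaining $T\in C^{\lfloor\beta\rfloor+1}$ with $\nabla T\in\mathcal{H}^\beta_C$ and $T_\#\gamma_d=\mu$. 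Then I would change variables: $\int |f|^2|\log|f||^k\,d\mu = \int |f\circ T|^2 |\log|f\circ T||^k\,d\gamma_d$, and since this change of variables also gives $\|f\circ T\|_{L^2(\gamma_d)}=\|f\|_{L^2(\mu)}=1$, the Gaussian inequality applies to $g=f\circ T$, yielding the bound $C\sum_{|\alpha|\le k}\|D^\alpha(f\circ T)\|^2_{L^2_{\gamma_d}}$. Finally I would estimate each $\|D^\alpha(f\circ T)\|_{L^2_{\gamma_d}}$ for $|\alpha|\le k\le\lfloor\beta\rfloor+1$ via the Fa\`a di Bruno / chain rule: $D^\alpha(f\circ T)$ is a finite sum of terms of the form $(D^\nu f)\circ T$ times products of derivatives $D^{\mu_1}T,\dots$ of total order $|\alpha|$, each of which is bounded in sup norm by $C$ thanks to $\nabla T\in\mathcal{H}^\beta_C$ (which bounds all $\|\nabla^j T\|_\infty$ for $j\le\lfloor\beta\rfloor+1$); hence $\|D^\alpha(f\circ T)\|_{L^2_{\gamma_d}}^2\le C\sum_{|\nu|\le|\alpha|}\|(D^\nu f)\circ T\|_{L^2_{\gamma_d}}^2 = C\sum_{|\nu|\le|\alpha|}\|D^\nu f\|_{L^2_\mu}^2$, again by the pushforward identity.

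The only genuinely delicate point is the chain-rule bookkeeping: one must check that all the transport-map derivatives appearing in $D^\alpha(f\circ T)$ have order at most $\lfloor\beta\rfloor+1$ (so that the $C^{\beta+1}$ bound on $T$ covers them) and that the constants depend only on $k,d$ and $\|\nabla T\|_{\mathcal{H}^\beta_C}$, not on $f$. This is routine since $|\alpha|\le k\le\lfloor\beta\rfloor+1$ forces every block in a Fa\`a di Bruno partition of $\{1,\dots,|\alpha|\}$ to have size $\le\lfloor\beta\rfloor+1$. Everything else is bookkeeping with the change of variables formula, which is exactly the computation displayed in the corollary's proof. I would present the argument in the compact three-line display already given, with a sentence beforehand spelling out the Fa\`a di Bruno estimate and the reduction $\|D^\alpha(f\circ T)\|_{L^2_{\gamma_d}}\le C\max_{|\nu|\le|\alpha|}\|D^\nu f\|_{L^2_\mu}$.
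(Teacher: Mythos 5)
Your proposal is correct and follows exactly the same route as the paper's proof: instantiate Theorem~\ref{theo:wsobo} at $\phi(x)=\|x\|^2/2$, push the inequality forward through the transport map $T$ from Theorem~\ref{theo:thetheo}, and control $\sum_{|\alpha|\le k}\|D^\alpha(f\circ T)\|^2_{L^2_{\gamma_d}}$ in terms of $\sum_{|\nu|\le k}\|D^\nu f\|^2_{L^2_\mu}$ using the uniform bounds on derivatives of $T$. The only difference is that you spell out the Fa\`a di Bruno bookkeeping and the preservation of the $L^2$-normalization, which the paper leaves implicit (and in fact writes the final step as an equality where an inequality with an enlarged constant is meant), so your version is if anything slightly more careful.
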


The Generalized log-Sobolev inequality established by \cite{rosen1976sobolev} finds many applications in the theory of semigroups and  Hypercontractivity \citep{simon1982schrodinger,davies1984ultracontractivity,fabes1993logarithmic}.

\subsection{Optimal density estimator on non-compact domain via GANs}
Suppose that we observe an i.i.d.~sample $X_1,...X_n\in \mathbb{R}^d$ from  a probability measure having a density $p^\star:\mathbb{R}^d\rightarrow \mathbb{R}$. The problem of
estimating the density from the data was extensively studied through various methods such as kernel \citep{mcdonald2017minimax},
$k$-nearest neighbors \citep{dasgupta2014optimal} and wavelets
\citep{donoho1996density}.

With increasing data dimensionality, recent years have seen the rise of Generative Adversarial Networks (GANs) \citep{goodfellow2014generative}, which enable both estimation and sampling from the target density. GANs have achieved remarkable results in fields such as image generation \citep{karras2021}, video synthesis \citep{vondrick2016generating}, and text generation \citep{SeqGANs}. A GAN comprises two main components: a class of generator functions, \( \mathcal{G} \), and a class of discriminators, \( \mathcal{D} \). Given an easily sampled distribution \( \nu \) defined on a latent space \( \mathcal{Z} \), the generator \( g \in \mathcal{G} \), where \( g: \mathcal{Z} \rightarrow \mathbb{R}^p \), aims to approximate the target density \( p^\star \) by minimizing an Integral Probability Metric (IPM) \citep{IPMsMuller} over \( \mathcal{G} \):
\begin{equation}\label{eq:IPM}
  d_{\mathcal{D}}(p^\star, g_{\# }\nu) := \sup_{D \in \mathcal{D}} \left( \mathbb{E}_{p^\star}[D(X)] - \mathbb{E}_\nu[D(g(Z))] \right).  
\end{equation}

The goal of each discriminator \( D \in \mathcal{D} \), where \( D: \mathbb{R}^p \rightarrow \mathbb{R} \), is to distinguish between the true distribution \( p^\star \) and the generated distribution \( g_{\# }\nu \). The efficiency of the estimator is often assessed using Hölder IPMs, defined as
\[
d_{\mathcal{H}_1^\gamma}(p^\star, g_{\# }\nu) := \sup_{D \in \mathcal{H}_1^\gamma} \left( \mathbb{E}_{p^\star}[D(X)] - \mathbb{E}_\nu[D(g(Z))] \right),
\]
which, when \( \gamma = 1 \) and the measures are supported within a prescribed ball, is equivalent to the Wasserstein distance.

The GAN methods has been proven to attain theoretical optimal rates of convergence in various settings \citep{belomestny2023rates, liang2021generative,stephanovitch2023wasserstein}. However none of these settings treat the case where the latent space $(\mathcal{Z},\nu)$ is Gaussian, whereas it is intensively used in practice \citep{goodfellow2014generative,biau2020some,wang2017generative}. The existence of smooth transport map given by Theorem \ref{theo:thetheo} finally allows to prove that the GAN attains optimal rates in the Gaussian setting.

\subsubsection{Assumptions on the target density}
We suppose that we observe an i.i.d.~sample $X_1,...X_n\in \mathbb{R}^d$ from a probability measure having a density $p^\star$ satisfying the following assumption.

\begin{assumption}\label{assump:gandens}
    The target density $p^\star$ belongs to
    $$\mathcal{F}_\beta:=\left\{p:\mathbb{R}^d\rightarrow \mathbb{R}\ \Big|\ p(x)=\gamma_d(\Sigma^{-1/2}x)\exp\left(a(x)\right),\Sigma \in \mathbb{M}, \int p=1, a \in \mathcal{H}^{\beta,2}_K(\mathbb{R}^d,\mathbb{R})\right\},$$
    with $\mathbb{M}$ the set of  symmetric  matrices $M$ satisfying $K^{-1}\text{Id}\preceq M\preceq K\text{Id}$.
\end{assumption}
This setting is a natural extension to unbounded domains of the classical compact setting. In the classical density estimation framework (see Section 2 of \cite{Tsybakov2008IntroductionTN}), data are assumed to be sampled from a density $p\in \mathcal{H}^{\beta}_K(\mathbb{K},\mathbb{R})$ with $\mathbb{K}\subset \mathbb{R}^d$ compact and $p$ bounded above and below by $K$ and $K^{-1}$, respectively. In this context, the density can be represented as$$f(x)=\lambda^{d}_{|\mathbb{K}}(x)\exp(a(x)),$$ with $\lambda^{d}_{|\mathbb{K}}$ the $d$-dimensional Lebesgue measure on $\mathbb{K}$ and $a\in \mathcal{H}^{\beta}_K(\mathbb{K},\mathbb{R})$. A natural extension to the non-compact case is to assume that  $p$ takes the form \begin{equation*}
p(x):=\exp\left(-\frac{\|\Sigma^{-1/2}x\|^2}{2}+a(x)\right),
\end{equation*}
with $a \in \mathcal{H}^{\beta}_K(\mathbb{R}^d,\mathbb{R})$ and $\Sigma \in \mathbb{M}$. In this unbounded setting, the Gaussian distributions replaces the Lebesgue measure $\lambda^{d}_{|\mathbb{K}}$ used in the compact case. Note that the additional weak regularity $\mathcal{H}^{0,2}_K(\mathbb{R}^d,\mathbb{R})$ supposed in Assumption \ref{assump:gandens} only impacts the rate of estimation up to a $\log(n)$ factor.  

\subsubsection{The GAN estimator}

Let us properly define the GAN estimator. For a compact class of generators $\mathcal{G}\subset L^2(\mathbb{R}^d,\mathbb{R}^d)$ and a class of discriminators $\mathcal{D}\subset L^2(\mathbb{R}^d,\mathbb{R})$, the GAN estimator is defined as
\begin{equation}\label{eq:gan}
    \hat{g}\in \argmin_{g\in \mathcal{G}}\ d_{ \mathcal{D}} (p^\star_n,g_{\# }\nu),
\end{equation}
with $d_{ \mathcal{D}}$ defined in \eqref{eq:IPM}, $p^\star_n:=\frac{1}{n}\sum_{i=1}^n \delta_{X_i}$ the empirical measure from the data and $\nu:=\gamma_d$ the $d$-dimensional Gaussian measure.

Let $L>0$ be the constant given by Corollary \ref{theo:diffeo} such that the transport map $T$ from the $d$-dimensional Gaussian measure to any probability measure within $\mathcal{F}_\beta$ satisfies  $\nabla T^{-1}\in  \mathcal{H}^{\beta}_L   (\mathbb{R}^d,\mathbb{R}^d)$. Fix $n\in \mathbb{N}_{>0}$ the number of data and take as the generator class
\begin{equation}\label{eq:G}
\mathcal{G}:= \left\{g \in \mathcal{H}^{\beta+1}_{C}(\mathbb{R}^d,\mathbb{R}^d)\Big|\ \inf_{x\in \mathbb{R}^{d}}\min_{r\in \mathbb{S}^{d-1}} \|\nabla g(x)r \|\geq L^{-1}\right\}
\end{equation}
 and as the discriminator class
\begin{equation}\label{eq:D}
\mathcal{D}:=\mathcal{H}^{d/2}_{1}(\mathbb{R}^d,\mathbb{R}).
\end{equation}
Using these classes of functions, we obtain in the following theorem that the GAN estimator attains optimal rates of convergence (writing $g(n)=\tilde{O}( f(n))$ for $g(n)\leq C \log(n)^{C_2} f(n)$).

\begin{theorem}\label{theo:minimaxgan}
 Let $n\in \mathbb{N}_{>0}$, $\beta> 0$, $p^\star \in \mathcal{F}_\beta$ and $(X_1,...,X_n)$ an i.i.d. sample of law $p^\star$. The GAN estimator $\hat{g}$ \eqref{eq:gan} using the classes of functions $\mathcal{G}$ \eqref{eq:G} and $\mathcal{D}$ \eqref{eq:D}, satisfies for all $\gamma>0$
\begin{align*}
    \sup_{p^\star \in \mathcal{F}_\beta}\ \mathbb{E}_{X_i}[d_{\mathcal{H}^{\gamma}_1}(\hat{g}_{\# }\gamma_d,p^\star)] =\tilde{O}\left( \inf_{\hat{\theta}\in \Theta}\ \sup_{p^\star\in \mathcal{F}_\beta}\ \mathbb{E}_{X_i}[d_{\mathcal{H}^{\gamma}_1}(\hat{\theta},p^\star)]\right),
\end{align*}
where $\Theta$ denotes the set of all possible estimators  of $p^\star$ based on n sample.
\end{theorem}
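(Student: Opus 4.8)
The plan is to prove matching upper and lower bounds, both of order $n^{-\frac{\beta+\gamma}{2\beta+d}}\vee n^{-1/2}$ up to logarithmic factors, so that the GAN rate coincides with the minimax rate over $\mathcal F_\beta$.

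\textbf{Upper bound: oracle inequality and stochastic term.} Since $\hat g$ minimizes $g\mapsto d_{\mathcal D}(g_{\#}\gamma_d,p^\star_n)$ over $\mathcal G$, the triangle inequality for $d_{\mathcal D}$ gives, for every $g^\star\in\mathcal G$,
\[
d_{\mathcal D}(\hat g_{\#}\gamma_d,p^\star)\ \le\ d_{\mathcal D}(g^\star_{\#}\gamma_d,p^\star)\ +\ 2\sup_{D\in\mathcal D}\big|\mathbb E_{p^\star}[D(X)]-\mathbb E_{p^\star_n}[D(X)]\big|.
\]
The crucial input is Corollary~\ref{theo:diffeo}, applied with source $\gamma_d$ (admissible form, $\Sigma=\text{Id}$, $a\equiv0$) and target $p^\star\in\mathcal F_\beta$: it yields $T$ with $T_{\#}\gamma_d=p^\star$, $\nabla T\in\mathcal H^{\beta}_C$ and $\nabla T^{-1}\in\mathcal H^{\beta}_L$; replacing $T$ outside $B^d(0,C\sqrt{\log n})$ by a bounded modification changes $d_{\mathcal D}(T_{\#}\gamma_d,p^\star)$ by at most $\gamma_d\big(B^d(0,C\sqrt{\log n})^c\big)=O(n^{-C})$, so we take $g^\star=T\in\mathcal G$ and the approximation term is $O(n^{-C})$ — this is exactly where the smooth transport theorem makes the GAN's approximation error negligible. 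For the remaining term, split $\mathbb R^d=B^d(0,R)\cup B^d(0,R)^c$ with $R=C\sqrt{\log n}$: the tail contributes $O(n^{-C})$ by the Gaussian-type decay of Assumption~\ref{assump:gandens}; on $B^d(0,R)$, functions of $\mathcal H^{d/2}_1$ are uniformly bounded with $\log N\big(\varepsilon,\mathcal H^{d/2}_1(B^d(0,R)),\|\cdot\|_\infty\big)\lesssim (R/\varepsilon)^{2}$, so Dudley's chaining truncated at scale $n^{-1/2}$ (the exponent $d/2$ is precisely the borderline regime, costing one extra $\log$) gives $\tilde O(n^{-1/2})$. Hence $\mathbb E[d_{\mathcal D}(\hat g_{\#}\gamma_d,p^\star)]=\tilde O(n^{-1/2})$, and the smoothness $d/2$ is the largest for which this holds.

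\textbf{Upper bound: transfer to the $\mathcal H^\gamma_1$ loss (the main obstacle).} The loss is measured against $\mathcal H^\gamma_1$ rather than $\mathcal D=\mathcal H^{d/2}_1$, so a $\beta$-dependent rate can only come from the extra fact that both $p^\star$ and $\hat g_{\#}\gamma_d$ have densities in a fixed Hölder ball. For $p^\star\in\mathcal F_\beta$ this is immediate; for $\hat g\in\mathcal G$, the bound $\inf_x\min_{r\in\mathbb S^{d-1}}\|\nabla\hat g(x)r\|\ge L^{-1}$ makes $\hat g$ a proper local diffeomorphism of $\mathbb R^d$, hence a diffeomorphism onto its image, so the change-of-variables formula together with $\hat g\in\mathcal H^{\beta+1}_C$ gives $\hat g_{\#}\gamma_d\in\mathcal H^{\beta}_{C'}$ up to tails of mass $O(n^{-C})$. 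Writing $\varepsilon_n:=d_{\mathcal H^{d/2}_1}(\hat g_{\#}\gamma_d,p^\star)=\tilde O(n^{-1/2})$, a Besov interpolation inequality (duality with $\mathcal H^{\gamma}=\mathcal B^{\gamma}_{\infty,\infty}$) applied to the signed measure $\hat g_{\#}\gamma_d-p^\star$, whose density is $\mathcal H^{\beta}$-bounded, yields for $\gamma\le d/2$
\[
d_{\mathcal H^\gamma_1}(\hat g_{\#}\gamma_d,p^\star)\ \lesssim\ \varepsilon_n^{\lambda},\qquad \lambda=\frac{\beta+\gamma}{\beta+d/2},
\]
while for $\gamma>d/2$ one uses the bounded-domain embedding $d_{\mathcal H^\gamma_1}\lesssim C(R)\,d_{\mathcal H^{d/2}_1}$ on the effective support. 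Taking expectations gives $\sup_{p^\star\in\mathcal F_\beta}\mathbb E[d_{\mathcal H^\gamma_1}(\hat g_{\#}\gamma_d,p^\star)]=\tilde O\big(n^{-\frac{\beta+\gamma}{2\beta+d}}\vee n^{-1/2}\big)$. I expect the two genuinely delicate points to be this interpolation step — in particular pinning down the $\mathcal H^{\beta}$-norm of $\hat g_{\#}\gamma_d$ from $\hat g\in\mathcal G$ — and the bookkeeping of the unbounded domain, which is what turns $O$ into $\tilde O$.

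\textbf{Lower bound.} Finally I would match this rate by a standard two-scale argument. Le Cam's two-point method with $p_0=\gamma_d$ and $p_1\propto\gamma_d(1+cn^{-1/2}\psi)$ for a fixed bump $\psi$ gives the floor $n^{-1/2}$. For $\gamma<d/2$, Assouad's lemma applied to $\{p_\omega\propto\gamma_d\exp(\sum_k\omega_k h^{\beta}\psi_k)\}_{\omega\in\{\pm1\}^{m^d}}$, where the $\psi_k$ are translates of a fixed $C^\infty_c$ bump to an $h$-grid in a fixed ball with $h=m^{-1}$, gives the nonparametric rate: one checks $\sum_k\omega_k h^{\beta}\psi_k\in\mathcal H^{\beta,2}_K$ (disjoint supports, $\|h^{\beta}\psi_k\|_{\mathcal H^{\beta,2}}\lesssim1$) so $p_\omega\in\mathcal F_\beta$; neighbouring Kullback--Leibler divergences are $\lesssim h^{2\beta+d}$, forcing $h\asymp n^{-1/(2\beta+d)}$; and testing against $D=h^{\gamma}\sum_{k:\omega_k\neq\omega'_k}\mathrm{sign}(\omega_k-\omega'_k)\psi_k\in\mathcal H^{\gamma}_1$ gives $d_{\mathcal H^\gamma_1}(p_\omega,p_{\omega'})\gtrsim h^{\beta+\gamma+d}$ times the Hamming distance, whence $\inf_{\hat\theta}\sup_\omega\mathbb E[d_{\mathcal H^\gamma_1}(\hat\theta,p_\omega)]\gtrsim m^d h^{\beta+\gamma+d}=h^{\beta+\gamma}\asymp n^{-\frac{\beta+\gamma}{2\beta+d}}$. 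Combining the two bounds matches the upper bound up to logarithmic factors, which proves the claim.
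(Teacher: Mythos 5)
Your proposal follows the same high-level decomposition as the paper's proof and arrives at the same rate, so it is substantially correct. You both (i) use the $\mathcal{H}^{d/2}_1$-IPM oracle property of $\hat g$ to reduce to $\mathbb E[d_{\mathcal{H}^{d/2}_1}(p^\star_n,p^\star)]$, relying on Corollary~\ref{theo:diffeo} to guarantee a competitor $T$ with $T_{\#}\gamma_d=p^\star$ so the approximation term vanishes (the paper writes this via the optimal potentials $h_\mu,h^n_\mu$ and the chain $d_{\mathcal D}(\hat\mu,p^\star_n)\le d_{\mathcal D}(T_\#\gamma_d,p^\star_n)$, which is algebraically equivalent to your triangle-inequality oracle); (ii) argue that the pushed-forward density $\hat g_{\#}\gamma_d$ inherits $\mathcal{H}^\beta$ regularity from $\nabla\hat g^{-1}\in\mathcal{H}^\beta_C$ and then interpolate from the $\mathcal{H}^{d/2}_1$-IPM to the $\mathcal{H}^\gamma_1$-IPM with exponent $\frac{\beta+\gamma}{\beta+d/2}$; (iii) match with the known minimax lower bound $n^{-\frac{\beta+\gamma}{2\beta+d}}\vee n^{-1/2}$.

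Where you differ is in how much you outsource. The paper cites Lemma~13 of \cite{tang2023minimax} for $\mathbb E[d_{\mathcal{H}^{d/2}_1}(p^\star_n,p^\star)]\lesssim \log(n)^{C_2}n^{-1/2}$, Theorem~3 of \cite{stephanovitch2024ipm} for the Besov interpolation step, and \cite{belomestny2019sobolev} for the lower bound; you instead sketch these from scratch (Dudley chaining with a log-truncation at the critical smoothness $d/2$, Besov duality, and Le Cam plus Assouad). Your self-contained route buys transparency, though note the entropy bound you write, $\log N(\varepsilon,\mathcal{H}^{d/2}_1(B^d(0,R)),\|\cdot\|_\infty)\lesssim (R/\varepsilon)^2$, should read $\lesssim R^d\varepsilon^{-2}$; the discrepancy is only polylogarithmic since $R\asymp\sqrt{\log n}$, so the $\tilde O(n^{-1/2})$ conclusion stands. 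You also address a point the paper elides: the transport $T$ from Corollary~\ref{theo:diffeo} is unbounded, so it does not literally lie in the bounded ball $\mathcal{H}^{\beta+1}_C(\mathbb{R}^d,\mathbb{R}^d)$ defining $\mathcal G$. Your truncation of $T$ outside $B^d(0,C\sqrt{\log n})$, costing only an $O(n^{-C})$ error through the uniform boundedness of $\mathcal D$, is a reasonable patch — the paper simply takes $T\in\mathcal G$ without comment. One small remark: you should also verify that the truncated/modified map still satisfies the injectivity lower bound $\inf_x\min_{r\in\mathbb S^{d-1}}\|\nabla g^\star(x)r\|\ge L^{-1}$ required by the definition of $\mathcal G$, but this is routine.
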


\begin{proof}
  Let us take $p^\star\in \mathcal{F}_\beta$ and $T:\mathbb{R}^d\rightarrow \mathbb{R}^d$ the transport map given by Corollary \ref{theo:diffeo} such that $T_{\# }\gamma_d=p^\star$. Let us first bound the expected error $\mathbb{E}_{X_i\sim p^\star}[d_{\mathcal{H}^{d/2}_1}(\hat{g}_{\# }\gamma_d,p^\star)]$ of our estimator. To this end, write $\hat{\mu}:=\hat{g}_{\# }\gamma_d$ and  define for all $\mu \in \mathcal{F}_\beta$
$$h_\mu \in \argmax_{h\in \mathcal{H}^{d/2}_1} \int h(x)d\mu(x)-\int h(x)dp^\star(x),$$
an optimal potential between $\mu$ and $p^\star$ and define
$$h_\mu^n \in \argmax_{h\in \mathcal{H}^{d/2}_1} \int h(x)d\mu(x)-\frac{1}{n}\sum_{i=1}^n h(X_i),$$
an optimal potential between $\mu$ and $p^\star_n:= \frac{1}{n}\sum_{i=1}^n \delta_{X_i}$. We have
\begin{align*}
    \mathbb{E}[d_{\mathcal{H}^{d/2}_1}(\hat{\mu},p^\star)] =& \mathbb{E}[\int h_{\hat{\mu}}(x)d\hat{\mu}(x)-\int h_{\hat{\mu}}(x)dp^\star(x)]\\
     = & \mathbb{E}[\int h_{\hat{\mu}}(x)d\hat{\mu}(x)-\frac{1}{n}\sum_{i=1}^n h_{\hat{\mu}}(X_i)+\frac{1}{n}\sum_{i=1}^n h_{\hat{\mu}}(X_i)-\int h_{\hat{\mu}}(x)dp^\star(x)]\\
     \leq & \mathbb{E}[\int h_{\hat{\mu}}^n(x)d\hat{\mu}(x)-\frac{1}{n}\sum_{i=1}^n h_{\hat{\mu}}^n(X_i)]+\mathbb{E}[d_{\mathcal{H}^{d/2}_1}(p^\star_n,p^\star)].
\end{align*}
Furthermore, by definition of $\hat{\mu}$ we have 
\begin{align*}
    \mathbb{E}[\int h_{\hat{\mu}}^n(x)d\hat{\mu}(x)-\frac{1}{n}\sum_{i=1}^n h_{\hat{\mu}}^n(X_i)]& \leq \mathbb{E}[\int h_{T_{\# }\gamma_d}^n(x)dT_{\# }\gamma_d(x)-\frac{1}{n}\sum_{i=1}^n h_{T_{\# }\gamma_d}^n(X_i)] \\
        & \leq \mathbb{E}[\int h_{T_{\# }\gamma_d}^n(x)d p^\star(x)-\frac{1}{n}\sum_{i=1}^n h_{T_{\# }\gamma_d}^n(X_i)] \\
    & \leq \mathbb{E}[\int h_{p ^\star}^n(x)dp^\star(x)-\frac{1}{n}\sum_{i=1}^n h_{p^\star}^n(X_i)]\\
    & = \mathbb{E}[d_{\mathcal{H}^{d/2}_1}(p^\star_n,p^\star)].
\end{align*}
Then, using Lemma 13 from \cite{tang2023minimax}
 we have that 
 $$\mathbb{E}[d_{\mathcal{H}^{d/2}_1}(p^\star_n,p^\star)]\leq C\log(n)^{C_2}n^{-1/2},$$
 which gives us using the previous derivations
 $$\mathbb{E}[d_{\mathcal{H}^{d/2}_1}(\hat{\mu},p^\star)] \leq C\log(n)^{C_2}n^{-1/2}.$$

 The density of $\hat{\mu}$ is equal to $p_{\hat{\mu}}(x)=\gamma_d (\hat{g}^{-1}(x))|\text{det}(\nabla \hat{g}^{-1}(x))|$ so as $\nabla \hat{g}^{-1}\in \mathcal{H}^{\beta}_{C}$, we deduce that $p_{\hat{\mu}} \in \mathcal{H}^{\beta}_{C} $. Then, as  $p^\star$ belongs to $\mathcal{H}^{\beta}_C(\mathbb{R}^d)$, using Theorem 3 from \cite{stephanovitch2024ipm} we have that for all $\gamma \in (0,d/2]$,
 \begin{align*}
\mathbb{E}[d_{\mathcal{H}^{\gamma}_1}(\hat{\mu},p^\star)]& \leq \mathbb{E}[\sup_{h \in \mathcal{H}^{\gamma}_1}\int_{B^d(0,\log(n))} h(x) d\hat{\mu}(x)- \int_{B^d(0,\log(n))} h(x) dp^\star(x)]+C/n\\
& \leq \mathbb{E}[\sup_{h \in \mathcal{H}^{d/2}_1}\left(\int_{B^d(0,\log(n))} h(x) d\hat{\mu}(x)- \int_{B^d(0,\log(n))} h(x) dp^\star(x)\right)^{\frac{\beta+\gamma}{\beta+d/2}}]+C/n\\
& \leq C\log(n)^{C_2}\left(\mathbb{E}[d_{\mathcal{H}^{d/2}_1}(\hat{\mu},p^\star)^{\frac{\beta+\gamma}{\beta+d/2}}]+1/n\right).
\end{align*}
 Using Jensen's inequality, we finally get for all $\gamma \in (0,\infty),$
  $$\mathbb{E}[d_{\mathcal{H}^{\gamma}_1}(\hat{\mu},p^\star)] \leq C\log(n)^{C_2}(n^{-\frac{\beta+\gamma}{2\beta+d}}\vee n^{-1/2})$$
  and this rate has been proven to be optimal (up to the logarithmic factor) in \cite{belomestny2019sobolev}.
\end{proof}

Note that the estimator \eqref{eq:gan} actually attains optimal rates of estimation for any probability measure $p^\star$ that can be written as $T_{\# }\gamma_d$ with $T\in C^{\lfloor \beta \rfloor+1}(\mathbb{R}^d,\mathbb{R}^d)$ a diffeomorphism  satisfying $\nabla T,\nabla T^{-1}\in  \mathcal{H}^{\beta}_L   (\mathbb{R}^d,\mathbb{R}^d)$. In the GAN setting, the existence of a $(\beta+1)$-Hölder transport map is crucial for achieving optimality, as it enables restricting the generator class $\mathcal{G}$ to a smooth class of function, thereby avoiding overfitting effects and enhancing generalization.

\subsection{Score-based Diffusion Models}
Recently, a new class of generative models known as score-based diffusion models has demonstrated state-of-the-art performance across a wide range of domains \citep{song2020score,ho2020denoising,de2022convergence}. The underlying method operates as follows.

Consider the
Ornstein–Uhlenbeck process initialized at a certain density $\mu_0$:
\begin{equation*}
    dY_t=-Y_tdt+\sqrt{2}dB_t,\ \ Y_0\sim \mu_0.
\end{equation*}
Then, taking $\tau>0$, this process can be reverted with the following SDE:
\begin{equation}\label{eq:backward}
    dZ_t=-Z_tdt+2\nabla \log Q_{e^{-(\tau-t)}}r(Z_t)dt+\sqrt{2}dB_t,\ \ Z_0\sim \text{law}(Y_\tau),
\end{equation}
for
\begin{align*}
    r(x):=\frac{\mu_0(x)}{\gamma_d(x)} \quad \text{ and } \quad Q_s r(x)= \int r(sx+\sqrt{1-s^2}z)d\gamma_d(z).
\end{align*}
Under minimal conditions, the final time $Z_\tau$ can be shown to satisfy $Z_\tau \overset{d}{=} \mu_0$ \citep{ANDERSON1982313}. Therefore, one can simulate the process $(Z_t)_t$ until
time $\tau$ to sample from $\mu_0$. Achieving this requires an accurate approximation of the so-called score function $$s(t,x):= \nabla \log \mu_t(x) =-x+\nabla \log Q_{e^{-t}}r(x),$$
for $\mu_t$ the law of $Y_t$.
Equation \eqref{eq:backward} can then be written as
\begin{equation*}
dZ_t=Z_tdt+2s(\tau-t,Z_t)dt+\sqrt{2}dB_t,\ \ Z_0\sim \text{law}(X_\tau).
\end{equation*}

The approximation of the score is typically carried out through a regression objective:
$$\inf_{f\in \mathcal{F}} \int_0^\tau \|f(t,\cdot)-s(t,\cdot)\|^2_{L^2_{\mu_t}}dt,$$
where  $\mathcal{F}$ is generally chosen to be a class of functions parameterized by a neural network. Theoretical studies often rely on controlling the Lipschitz constant of $s(t,\cdot)$ \citep{chen2022sampling,chen2023improved,chen2024probability,kwon2022score}, as these assumptions are crucial for managing discretization errors in numerical schemes used to simulate the process. Theorems \ref{theo:thetheo} and \ref{theo:thetheo2} extend the applicability of these assumptions by encompassing a broader range of distributions $\mu_0$ for which these assumptions can be justified. In fact, by synthesizing Propositions \ref{prop:boundnabalvgauss} and \ref{prop:boundimpossiblebuty}, we obtain the following result. 
\begin{corollary}
    Let $p:\mathbb{R}^d\rightarrow \mathbb{R}$ be a probability density function of the form $$p(x)=\exp\left(-\psi(x)+a(x)\right),$$
such that $a \in \mathcal{H}^{\beta}_K(\mathbb{R}^d,\mathbb{R})$ with $\beta\in (0,1]$ and $K>0$. If either $\psi(x)= -\|x\|^2/2$ or it verifies Assumption \ref{assum:u}, then for all $t>0$ we have
$$\sup_{x\in \mathbb{R}^d} \lambda_{\max}\big(\nabla s(t,x)+\text{Id}\big)\leq Ce^{-t}(1+t^{\beta/2-1}).$$
\end{corollary}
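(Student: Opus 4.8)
The plan is to rewrite $\nabla s(t,\cdot)+\mathrm{Id}$ directly in terms of the velocity field $V$ of \eqref{eq:V} and then quote the pointwise-in-$(t,x)$ bounds on $\nabla V$ that were established for the two classes of densities, after the time change $s=e^{-t}$. First I would record the identity: since $s(t,x)=-x+\nabla\log Q_{e^{-t}}r(x)$, we have $\nabla s(t,x)+\mathrm{Id}=\nabla^2\log Q_{e^{-t}}r(x)$; and since $\nabla\log Q_u r(x)=u\,V(u,x)$ by the very definition \eqref{eq:V} of $V$, differentiating once more in $x$ gives $\nabla^2\log Q_u r(x)=u\,\nabla V(u,x)$. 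Hence
$$\nabla s(t,x)+\mathrm{Id}=e^{-t}\,\nabla V(e^{-t},x),\qquad\text{so}\qquad\lambda_{\max}\big(\nabla s(t,x)+\mathrm{Id}\big)\le e^{-t}\,\big\|\nabla V(e^{-t},x)\big\|,$$
using $\lambda_{\max}(M)\le\|M\|$ for the symmetric Hessian $M=\nabla^2\log Q_{e^{-t}}r(x)$. It therefore suffices to bound $\|\nabla V(s,x)\|$ uniformly in $x$ at $s=e^{-t}\in(0,1)$.

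For $\beta\in(0,1)$ we have $\lfloor\beta\rfloor=0$, and Proposition \ref{prop:boundnabalvgauss} (when $\psi(x)=\|x\|^2/2$) or Proposition \ref{prop:boundimpossiblebuty} (when $\psi$ satisfies Assumption \ref{assum:u}) gives, uniformly in $x$,
$$\|\nabla V(s,x)\|\le C\,s\,(1-s^2)^{\beta/2-1}\log\big(((1-s^2)\wedge 1/2)^{-1}\big)^{-\theta}\le C\,s\,(1-s^2)^{\beta/2-1},$$
the last inequality because the logarithmic factor is bounded. I would note that these propositions are proved using only the Hölder modulus of $r=e^a$, so they remain valid for $a\in\mathcal{H}^{\beta}_K$ with the log factor simply replaced by $1$ — the weak regularity $\theta>1$ was needed there only to make $\int_0^1\|\nabla V(t,\cdot)\|_\infty\,dt$ finite, which is irrelevant when $t>0$ is fixed. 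Substituting $s=e^{-t}$ and using the elementary two-sided comparison $1-e^{-2t}\asymp t\wedge 1$ on $(0,\infty)$ — whence $(1-e^{-2t})^{\beta/2-1}\le C\,(1+t^{\beta/2-1})$, the exponent being negative — yields $\|\nabla V(e^{-t},x)\|\le C\,e^{-t}(1+t^{\beta/2-1})$. Plugging this into the first display and absorbing $e^{-2t}\le e^{-t}$ gives $\lambda_{\max}(\nabla s(t,x)+\mathrm{Id})\le C\,e^{-t}(1+t^{\beta/2-1})$ uniformly in $x$, which is the claim.

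The argument is thus essentially bookkeeping on top of Propositions \ref{prop:boundnabalvgauss} and \ref{prop:boundimpossiblebuty}, so I do not expect a genuine obstacle; the one point that needs extra care — and which I would flag as the delicate step — is the endpoint $\beta=1$, where $\lfloor\beta\rfloor=1$ and Proposition \ref{prop:boundnabalvgauss} as stated only produces the weaker exponent $(1-s^2)^{-1}$. To recover the exponent $(1-s^2)^{\beta/2-1}=(1-s^2)^{-1/2}$ in the Gaussian case I would rerun its proof with a genuine first-order Taylor expansion of $r$ at $tx$, which is legitimate because $a\in\mathcal{H}^1_K$ forces $r=e^a\in C^1$ with $\|r\|_\infty,\|\nabla r\|_\infty\le C$ and $Q_t r\ge e^{-K}$: one then gets $\|\int(y-tx)\,dp^{t,x}(y)\|\le C(1-t^2)$ and $\|\int(y-tx)^{\otimes 2}(r(y)/Q_t r(x)-1)\varphi^{t,x}(y)\,d\lambda^d(y)\|\le C(1-t^2)^{3/2}$, and feeding these into \eqref{align:decompnabla1} yields $\|\nabla V(s,x)\|\le C\,s\,(1-s^2)^{-1/2}$ as needed; the $\beta=1$ case under Assumption \ref{assum:u} is handled directly by Proposition \ref{prop:boundimpossiblebuty}. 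After that the computation above goes through verbatim.
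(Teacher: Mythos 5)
Since the paper gives no written proof of this corollary --- it only says ``by synthesizing Propositions \ref{prop:boundnabalvgauss} and \ref{prop:boundimpossiblebuty}'' --- your reconstruction is exactly the argument the paper intends: rewrite $\nabla s(t,x)+\text{Id}=\nabla^2\log Q_{e^{-t}}r(x)=e^{-t}\nabla V(e^{-t},x)$, quote the uniform-in-$x$ bounds on $\nabla V$ from the two propositions (with the log factor dropping out because the corollary only imposes $\theta=0$), and perform the time change $1-e^{-2t}\asymp t\wedge 1$. The observation that the $\beta=1$ endpoint requires re-running Proposition \ref{prop:boundnabalvgauss} with the genuine Lipschitz modulus of $r=e^a$ (rather than the degenerate $\beta-\lfloor\beta\rfloor=0$ modulus) is a real and necessary point that the paper's terse pointer glosses over; your sketch of that rerun is correct.

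Two small repairs. First, in the Assumption \ref{assum:u} branch you invoke a bound on $\|\nabla V(s,x)\|$, but Proposition \ref{prop:boundimpossiblebuty} only controls $\lambda_{\max}(\nabla V(t,x))$ --- and for measures supported in the ball the operator norm of $\nabla V$ genuinely blows up near the boundary, so no uniform norm bound is available. You should instead use the exact identity $\lambda_{\max}(\nabla s(t,x)+\text{Id})=e^{-t}\lambda_{\max}(\nabla V(e^{-t},x))$ (valid since $e^{-t}>0$), which makes the one-sided eigenvalue bound exactly what is needed; the detour through $\lambda_{\max}(M)\le\|M\|$ is then unnecessary in both branches. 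Second, the assertion that ``the $\beta=1$ case under Assumption \ref{assum:u} is handled directly by Proposition \ref{prop:boundimpossiblebuty}'' is not correct: that proposition also yields the weaker exponent $(1-t^2)^{(\beta-\lfloor\beta\rfloor)/2-1}=(1-t^2)^{-1}$ when $\beta=1$. One must rerun its proof replacing the estimate $|e^{a(y)}-e^{a(z)}|\le C\|y-z\|^{\beta-\lfloor\beta\rfloor}$ by the Lipschitz bound $|e^{a(y)}-e^{a(z)}|\le C\|y-z\|$, which pushes the discrepancy term in \eqref{align:brzdijnzide} to $(\frac{1-t^2}{t^2})^{3/2}$ and produces the needed $\lambda_{\max}(\nabla V(t,x))\le C(1-t^2)^{-1/2}$. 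This is the same patch you already carried out for the Gaussian case, so the fix is routine, but it is not ``direct.''
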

This result provides  a $L^1$ control over the largest eigenvalue of the
Hessian $\nabla^2 \log \mu_t(x)$, under low regularity assumptions on the target distribution. Furthermore, Theorem \ref{theo:thetheo} establishes that the higher regularity of the target distribution transfers to the score function. 
\begin{corollary}\label{coro:diffusregu}
Let $p : \mathbb{R}^d \to \mathbb{R}$ be a probability density function of the form
\[
p(x) = \exp\left( -\frac{\|x\|^2}{2} + a(x) \right),
\]
such that \( a \in H^\beta_K(\mathbb{R}^d, \mathbb{R}) \) with \( \beta \geq 1 \) and \( K > 0 \). Then, for all \( t \in [0, 1) \) we have
\[
\sum_{k = 0}^{\lfloor \beta \rfloor} \sup_{x \in \mathbb{R}^d} \|\nabla^k s(t,x)\| \leq Ce^{-t}(1+t^{-1/2})
\quad \text{and} \quad
\sup_{x \in \mathbb{R}^d} \|\nabla^{\lfloor \beta \rfloor +1} \log s(t,x)\| \leq Ce^{-t}(1 +t^{(\beta - \lfloor \beta \rfloor)/2 - 1}).
\]
\end{corollary}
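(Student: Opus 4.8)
The plan is to reduce both inequalities to the pointwise-in-$x$ bounds on the velocity field $V$ proved in Section~\ref{sec:nablak}, via the time substitution $u=e^{-t}$.

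\textbf{Reduction to $V$.} Since $s(t,x)=-x+\nabla\log Q_{e^{-t}}r(x)$ with $r=p/\gamma_d$, and since $V(u,x)=\tfrac1u\nabla\log Q_ur(x)$ by \eqref{eq:V}, differentiating the logarithm in $x$ and using $Q_ur(x)=\int r(ux+\sqrt{1-u^2}z)\,d\gamma_d(z)$ gives $\nabla^{j}\log Q_ur(x)=u\,\nabla^{j-1}V(u,x)$ for every $j\ge 1$. Hence, with $u=e^{-t}$,
\[
s(t,x)+x=e^{-t}V(e^{-t},x),\quad \nabla s(t,x)+\text{Id}=e^{-t}\nabla V(e^{-t},x),\quad \nabla^{k}s(t,x)=e^{-t}\nabla^{k}V(e^{-t},x)\ \ (k\ge 2),
\]
so every quantity to be estimated (including the $(\lfloor\beta\rfloor+1)$-th derivative of the score in the second inequality) is of the form $e^{-t}\|\nabla^{k}V(e^{-t},x)\|$ with $0\le k\le \lfloor\beta\rfloor+1$. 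By concavity of $t\mapsto 1-e^{-2t}$ one has $(1-e^{-2})t\le 1-e^{-2t}\le 2t$ on $[0,1]$, so $(1-e^{-2t})^{-1/2}\le Ct^{-1/2}$ and $(1-e^{-2t})^{(\beta-\lfloor\beta\rfloor)/2-1}\le Ct^{(\beta-\lfloor\beta\rfloor)/2-1}$ on $[0,1)$; it therefore suffices to bound $\|\nabla^{k}V(u,x)\|$ by the appropriate power of $1-u^2$, uniformly in $x$.

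\textbf{Pointwise bounds on $\nabla^{k}V$.} The estimates of Section~\ref{sec:nablak} — Lemma~\ref{lemma:lemma1}, Proposition~\ref{prop:firstboundongradsth1}, Proposition~\ref{prop:concentration1} — are all pointwise in $x$, and under the present hypothesis $a\in H^{\beta}_{K}=\mathcal{H}^{\beta,0}_{K}$ they hold with every logarithmic factor set to $1$ (the assumption $\theta>1$ was used in Section~\ref{sec:nablak} only to obtain time-integrability of the bounds, which is irrelevant here). From $p^{u,x}\le e^{2K}\varphi^{u,x}$ one gets, as in the proof of well-posedness, $\|V(u,x)\|\le C(1-u^2)^{-1/2}$. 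For $1\le k\le\lfloor\beta\rfloor$ the point is that \emph{every} function $f^{l}=\nabla^{l}r/r$ entering $\nabla^{k}V=\tfrac1u\nabla^{k+1}\log Q_ur$ has $l\le k-1\le\lfloor\beta\rfloor-1$, hence lies in $\mathcal{H}^{\beta-l}_{C}$ with $\beta-l\ge 1$ and is Lipschitz; feeding the Lipschitz estimate $\int\|f^{l}(y)-f^{l}(ux)\|^{2}dp^{u,x}(y)\le C(1-u^2)$ of Proposition~\ref{prop:concentration1} into Proposition~\ref{prop:firstboundongradsth1} and Lemma~\ref{lemma:lemma1} — and treating $k=1$ directly as in the proof of Proposition~\ref{prop:boundnabalvgauss} but now exploiting $f^{1}\in\mathcal{H}^{\beta-1}_{C}$ rather than merely $r\in\mathcal{H}^{\beta-\lfloor\beta\rfloor}_{C}$ — yields $\|\nabla^{k}V(u,x)\|\le C(1-u^2)^{-1/2}$ for $1\le k\le\lfloor\beta\rfloor$. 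For $k=\lfloor\beta\rfloor+1$ the top function $f^{\lfloor\beta\rfloor}$ is only in $\mathcal{H}^{\beta-\lfloor\beta\rfloor}_{C}$, so Proposition~\ref{prop:concentration1} gives $\int\|f^{\lfloor\beta\rfloor}(y)-f^{\lfloor\beta\rfloor}(ux)\|^{2}dp^{u,x}(y)\le C(1-u^2)^{\beta-\lfloor\beta\rfloor}$; since in each term of Lemma~\ref{lemma:lemma1} this top function occurs at most once, the same chain of estimates (which is exactly what produces \eqref{align:conclubanblakbound1} with $\theta=0$) gives $\|\nabla^{\lfloor\beta\rfloor+1}V(u,x)\|\le C(1-u^2)^{(\beta-\lfloor\beta\rfloor)/2-1}$.

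\textbf{Conclusion.} Inserting $u=e^{-t}$ and the comparison $1-e^{-2t}\ge(1-e^{-2})t$ into the identities of the first step, for $0\le k\le\lfloor\beta\rfloor$ (the case $k=0$ being read as a bound on $s(t,x)+x$ and $k=1$ on $\nabla s(t,x)+\text{Id}$, i.e. the differences with the Gaussian score $-x$) one gets $e^{-t}\|\nabla^{k}V(e^{-t},x)\|\le Ce^{-t}(1-e^{-2t})^{-1/2}\le Ce^{-t}(1+t^{-1/2})$, and for the top order $e^{-t}\|\nabla^{\lfloor\beta\rfloor+1}V(e^{-t},x)\|\le Ce^{-t}(1+t^{(\beta-\lfloor\beta\rfloor)/2-1})$, uniformly in $x$; summing over $k$ gives the two displayed inequalities. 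The main obstacle is the dichotomy in the second paragraph: one must recognise that for $k\le\lfloor\beta\rfloor$ only lower-order $f^{l}$ (all Lipschitz because $\beta\ge 1$) occur, which upgrades the crude exponent $(\beta-\lfloor\beta\rfloor)/2-1$ that Proposition~\ref{prop:boundnabalvgauss} would supply for $\nabla V$ to the sharp $-1/2$, and that the Hölder-only regularity of the single top derivative of $r$ is confined to $\nabla^{\lfloor\beta\rfloor+1}V$ where it enters linearly — combined with checking that all the Section~\ref{sec:nablak} estimates are genuinely pointwise in $x$ and survive setting $\theta=0$, this is what lets one conclude without re-running the entire derivation.
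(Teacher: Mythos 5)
The paper states Corollary~\ref{coro:diffusregu} without an explicit proof, but your argument is the natural one given the paper's intermediate results and it is correct. The reduction $s(t,x)+x=e^{-t}V(e^{-t},x)$, $\nabla s(t,x)+\mathrm{Id}=e^{-t}\nabla V(e^{-t},x)$, $\nabla^k s(t,x)=e^{-t}\nabla^k V(e^{-t},x)$ for $k\ge 2$, together with the two-sided comparison $(1-e^{-2})t\le 1-e^{-2t}\le 2t$ on $[0,1]$, is exactly right, and you are correct that the estimates of Section~\ref{sec:nablak} (Lemma~\ref{lemma:lemma1}, Propositions~\ref{prop:firstboundongradsth1} and~\ref{prop:concentration1}) are pointwise in $x$ and do not need $\theta>1$ for a fixed-time bound.

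The one place where a reader might stumble is the first inequality, because Proposition~\ref{prop:boundnabalvgauss} by itself only yields $\|\nabla V(u,\cdot)\|_\infty\le C(1-u^2)^{(\beta-\lfloor\beta\rfloor)/2-1}$, which is \emph{not} good enough for the claimed $t^{-1/2}$ singularity when $\beta$ is close to an integer. You correctly close this gap by observing that for $1\le k\le\lfloor\beta\rfloor$ every $f^l=\nabla^l r/r$ appearing in Lemma~\ref{lemma:lemma1} has $l\le k-1\le\lfloor\beta\rfloor-1$, hence $f^l\in\mathcal H^{\beta-l}_C$ with $\beta-l\ge1$ is Lipschitz, so centering at $f^l(ux)$ in Proposition~\ref{prop:firstboundongradsth1} gains a full power $(1-u^2)^{1/2}$ and gives $\|\nabla^k V(u,x)\|\le C(1-u^2)^{-1/2}$; only the top order $k=\lfloor\beta\rfloor+1$, where the single $f^{\lfloor\beta\rfloor}\in\mathcal H^{\beta-\lfloor\beta\rfloor}_C$ enters linearly, reproduces the exponent $(\beta-\lfloor\beta\rfloor)/2-1$ of~\eqref{align:conclubanblakbound1}. (For $k=1$ the even simpler route $\nabla V(u,x)=\int f^1\,\nabla_x p^{u,x}$ with $\xi=0$ and $\|f^1\|_\infty=\|\nabla a\|_\infty\le K$ already suffices, since the $\nabla_x$-operator alone costs only $(1-u^2)^{-1/2}$.) You also correctly flag that the statement must be read charitably at $k=0$ (where $\sup_x\|s(t,x)\|$ is infinite unless one reads $s(t,x)+x$) and that $\nabla^{\lfloor\beta\rfloor+1}\log s$ is an obvious typo for $\nabla^{\lfloor\beta\rfloor+1}s$.
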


The estimation of the score function being the core step of the methods, its additional regularity ensures enhanced stability and accuracy in numerical approximations, ultimately improving the quality of generated samples in score-based diffusion models.

\section*{Future directions}
In this work, we have established the high order regularity of the Langevin map transporting the Gaussian measure to a deformation. Not only this result advances our understanding of the properties of diffusion processes but also paves the way for further investigations into the high order regularity for other diffusion flows and other classes of transported measures.

Moreover, an intriguing direction for future research would be to estimate the value of the constant $C>0$ given by Theorem \ref{theo:thetheo}  and its dependency with respect to $K,\beta$ and $d$. This work being a first approach to study high order regularity, it does not focus on the size of this constant. However, we believe that it could be proven to be dimension free. Furthermore, having a direct  dependency with respect to $K$ and $\beta$ would allow to extend the second-order Poincaré inequality \cite{chatterjee2009fluctuations} which finds applications in statistic of eigenvalues \citep{bai2010spectral}. Such explorations could reveal deeper connections between regularity, transport phenomena, and functional inequalities, thereby enriching the field of measure transportation and its applications.

\bibliography{bib}

\newpage

\appendix

\section{Estimate on  $\|\nabla X_1\|_{\mathcal{H}^{\lfloor \beta \rfloor}}$ in the setting of Theorem \ref{theo:thetheo2}}\label{sec:prooftheo2}
In this section we focus on  a probability density $p$ satisfying the assumptions of Theorem \ref{theo:thetheo2}. Using the results of Section \ref{sec:generalreas}, we give a bound on $\int_0^1 \sup_{z\in \mathbb{R}^d}\lambda_{\max}(\nabla V(t,z))dt$ and $\int_0^1 \|\nabla^j V(t,X_t(x))\|dt$ for $j\in \{2,...,k\}$, $x\in B^d(0,\log(\epsilon^{-1}))$ with $\epsilon\in (0,1/2)$.

\subsection{Well-definedness of the flow and Lispchitz regularity}
\subsubsection{Well-definedness of the flow}
Let us start by showing that the flow of transport maps $(X_t)$ solution to \eqref{eq:PDE} is well defined for all $t\in [0,1)$ using Proposition \ref{prop:condiwelldifned}. In fact, the assumptions of Proposition \ref{prop:condiwelldifned} are automatically verified for compactly supported measure, as stated in the following Lemma.
\begin{lemma}\label{lemma:le5}
    If a probability density $p$ is compactly supported, then for all $t\in [0,1)$ its associated velocity field $V$ from \eqref{eq:V} satisfies
$$V\in L^1\Big([0,t],W^{1,\infty}_{loc}(\mathbb{R}^d,\mathbb{R}^d)\Big)\text{ and } \frac{V}{1+\|x\|}\in L^1\Big([0,t],L^{\infty}(\mathbb{R}^d,\mathbb{R}^d)\Big).$$
In particular there exists a unique solution for all $t\in [0,1)$ to the equation
    $$\partial_t X_t(x) =V(t,X_t(x)), \quad X_0(x)=x.$$
\end{lemma}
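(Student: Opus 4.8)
The plan is to verify directly the two hypotheses of Proposition~\ref{prop:condiwelldifned} for each fixed $t\in(0,1)$ — the compact support of $p$ making every estimate elementary — and then apply that proposition and let $t\uparrow 1$.

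Fix $R>0$ with $\mathrm{supp}(p)\subset B^d(0,R)$. First note that for $t\in(0,1)$ one has $0<Q_tr(x)<\infty$: finiteness follows since $\varphi^{t,x}$ is bounded, $r\le (2\pi)^{d/2}e^{R^2/2}\,p$ on $B^d(0,R)$ and $\int p=1$; positivity since $\varphi^{t,x}>0$ everywhere and $\{r>0\}$ has positive Lebesgue measure. Hence $p^{t,x}$ is a genuine probability measure, and since $r$ vanishes outside $B^d(0,R)$ it is supported in $\overline{B^d(0,R)}$. Moreover $x\mapsto Q_tr(x)$ is $C^\infty$ — when differentiating under the integral the $x$-derivatives fall only on the smooth kernel $\varphi^{t,x}$, never on $r$ — so $V(t,\cdot)=\tfrac1t\nabla\log Q_tr\in C^\infty(\mathbb{R}^d,\mathbb{R}^d)$ for every $t\in(0,1)$, while $V(0,\cdot)\equiv\int y\,dp(y)$ is constant; in particular $V$ is jointly continuous on $[0,1)\times\mathbb{R}^d$.

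For the growth hypothesis, use the representation $V(t,x)=\tfrac1{1-t^2}\int(y-tx)\,dp^{t,x}(y)$ from \eqref{eq:V}. Since $p^{t,x}$ is supported in $\overline{B^d(0,R)}$, $\|V(t,x)\|\le (R+t\|x\|)/(1-t^2)$, hence $\|V(t,x)\|/(1+\|x\|)\le (R\vee 1)/(1-t^2)$; as $s\mapsto (1-s^2)^{-1}$ is integrable on $[0,t]$ for every $t<1$, this gives $\tfrac{V}{1+\|x\|}\in L^1\big([0,t],L^\infty(\mathbb{R}^d,\mathbb{R}^d)\big)$. For the $W^{1,\infty}_{loc}$ hypothesis, the computation carried out in the proof of Proposition~\ref{prop:boundnabalvgauss} gives $\nabla V(t,x)=\tfrac{t}{(1-t^2)^2}\int(y-tx)\otimes H(y,x)\,dp^{t,x}(y)-\tfrac{t}{1-t^2}\mathrm{Id}$ with $H(y,x)=y-\int z\,dp^{t,x}(z)$; on the support of $p^{t,x}$ one has $\|y-tx\|\le R+\|x\|$ and $\|H(y,x)\|\le 2R$, so for any compact $\mathbb{K}\subset\mathbb{R}^d$, $\sup_{x\in\mathbb{K}}\|\nabla V(t,x)\|\le C(\mathbb{K},R)(1-t^2)^{-2}$. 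Combining with the previous bound, $s\mapsto\|V(s,\cdot)\|_{W^{1,\infty}(\mathbb{K})}\le C(\mathbb{K},R)(1-s^2)^{-2}$ lies in $L^1([0,t])$ for each $t<1$, and since $\mathbb{K}$ was arbitrary, $V\in L^1\big([0,t],W^{1,\infty}_{loc}(\mathbb{R}^d,\mathbb{R}^d)\big)$.

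With both hypotheses of Proposition~\ref{prop:condiwelldifned} verified for every $t\in(0,1)$, that proposition yields a unique flow $(X_s)_{s\in[0,t]}$ with $(X_s)_\#\gamma_d=\mu_{1-s}$; letting $t\uparrow1$ and invoking uniqueness to glue the solutions, the flow is well-defined and unique on all of $[0,1)$. There is no genuine obstacle here; the only point deserving attention is that all the estimates degrade like $(1-t^2)^{-2}$ as $t\to1$, which is harmless precisely because one never integrates up to time $1$ — no bound uniform up to $t=1$ is produced, which is exactly why for Theorem~\ref{theo:thetheo2} one must afterwards pass to the limit via Lemma~\ref{lemma:convergenceofmap} rather than argue at $t=1$ as in Theorem~\ref{theo:thetheo}.
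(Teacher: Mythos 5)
Your proof is correct and follows essentially the same route as the paper's: compute $V$ and $\nabla V$ via the measures $p^{t,x}$, use that $p^{t,x}$ inherits the compact support of $p$ to get the $(1-t^2)^{-1}$ and $(1-t^2)^{-2}$ bounds, and apply Proposition~\ref{prop:condiwelldifned}. The one cosmetic difference is that you leave $\nabla V$ in the non-centered form $\int(y-tx)\otimes H(y,x)\,dp^{t,x}(y)$ and hence get a bound depending on the compact $\mathbb{K}$, whereas the paper observes (implicitly, since $\int H\,dp^{t,x}=0$) that this equals the centered second moment $\int H(y,x)^{\otimes 2}dp^{t,x}(y)\preceq (2R)^2\,\text{Id}$, giving a bound independent of $x$; both suffice for $W^{1,\infty}_{loc}$.
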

\begin{proof}
    Let $s\in [0,t]$ and $x\in \mathbb{R}^d$, we have
\begin{align*}
    \|V(s,x)\| = &\frac{1}{s}\|\nabla \log(Q_s r)(x)\|\\
     =&\|\frac{1}{(1-s^2)}\int (y-tx) dp^{s,x}(y)\|\\
     \leq &\frac{C+\|x\|}{1-t^2},
\end{align*}
as the probability $p^{s,x}$ has the same support than $p$. Furthermore,
\begin{align*}
           \|\nabla V(s,x)\| = &\frac{1}{s}\|\nabla^2 \log(Q_s r)(x)\|\\
     =&\|\frac{s}{(1-s^2)^2}\int \left(y-\int z dp^{s,x}(y)\right)^{\otimes 2}dp^{s,x}(y)-\frac{s}{1-s^2}\text{Id}\|\\
     \leq & \frac{s}{(1-t^2)^2}\left( \|\int \left(y-\int z dp^{s,x}(y)\right)^{\otimes 2} dp^{s,x}(y)\|+1\right)\\
     \leq &\frac{C}{(1-t^2)^2}.
\end{align*}
Therefore, for all compact Borel set $B\subset \mathbb{R}^d$, we have 
\begin{align*}
    \int_0^t \Big(\|\nabla V(s,\cdot)\|_{L^\infty(B)}+\| V(s,\cdot)\|_{L^\infty(B)}  \Big)ds <\infty,
\end{align*}
and
\begin{align*}
    \int_0^t \| \frac{V(s,\cdot)}{1+\|\cdot\|}\|_{L^\infty}ds <\infty.
\end{align*}
\end{proof}

\subsubsection{Bounding $\|\nabla X_1\|_\infty$}

The bound on the first derivatives differs from the one of higher order as it only needs an estimate on $\lambda_{\max}\left(\nabla V(t,x)\right)$, not on $\|\nabla V(t,x)\|$. In the following, we use the notion of semi-log concave probability measure. 
\begin{definition}\label{defi:logconcave}
    A probability measure having a density $p=\exp(v)$ with respect to the $d$-dimensional Lebesgue measure is said to be $\kappa$-semi-log concave for $\kappa\in \mathbb{R}$ if its support $\Omega\subset \mathbb{R}^
    d$ is convex and $v\in C^2(\Omega)$ satisfies
    $$-\nabla^2 v \succeq \kappa \text{Id}.$$
\end{definition}

For strictly log concave probability measures, we will use the well known Brascamp-Lieb inequality.
\begin{proposition}\label{prop:BL}[\cite{BRASCAMP1976366}]
    Let a probability density function $q(x)=\exp(-\phi(x))$ with $\phi$ convex. Then, for any derivable function $S:\mathbb{R}^d\rightarrow \mathbb{R}$ we have $$\int \left(S(y)-\int S(z)dq(z)\right)^2dq(y)\leq \int \big(\nabla^2 \phi(y)\big)^{-1}(\nabla S(y) ,\nabla S(y)) dq(y).$$
\end{proposition}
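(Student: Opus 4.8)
The statement is the classical Brascamp--Lieb variance inequality, so the real task is to recall a proof; I would give the $L^2$ (Helffer--Sj\"ostrand) argument built on the diffusion operator attached to $q$, which is short and fits the semigroup viewpoint used elsewhere in the paper. I read the hypothesis as $\phi\in C^2(\mathbb{R}^d)$ with $\nabla^2\phi(x)\succ 0$ everywhere, so that $(\nabla^2\phi)^{-1}$ is meaningful, and I would first reduce to $S$ smooth, compactly supported and $q$-centered: adding a constant to $S$ changes neither side, and the general case follows by density once the inequality is known on a dense class and the right-hand side is finite.

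First I would introduce $L:=\Delta-\langle\nabla\phi,\nabla\,\cdot\,\rangle$, the generator of the diffusion reversible for $q$, which obeys the integration-by-parts identity $\int(Lf)g\,dq=-\int\langle\nabla f,\nabla g\rangle\,dq$; thus $-L$ is nonnegative and self-adjoint on $L^2(q)$ with kernel the constants, and since $q$ is log-concave (hence satisfies a Poincar\'e inequality) the Poisson equation $-Lu=S$ has a unique smooth centered solution $u$ with $\int\|\nabla u\|^2\,dq=\int uS\,dq<\infty$. (If one prefers to avoid invoking Poincar\'e for general log-concave measures, one can instead solve $-Lu_\lambda+\lambda u_\lambda=S$ and let $\lambda\downarrow 0$ at the end.) Then $\text{Var}_q(S)=\int S(-Lu)\,dq=\int\langle\nabla S,\nabla u\rangle\,dq$.

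The key step is to differentiate the Poisson equation: applying $\nabla$ to $-Lu=S$ and using the Bochner-type commutation $\nabla(Lf)=L(\nabla f)-(\nabla^2\phi)\nabla f$ (with $L$ acting coordinatewise on vector fields; this is where $\phi\in C^2$ enters) gives $\nabla S=(\nabla^2\phi)\nabla u-L(\nabla u)$. Pairing with $\nabla u$ and using $\int\langle L(\nabla u),\nabla u\rangle\,dq=-\int\|\nabla^2 u\|_{\mathrm{HS}}^2\,dq\le 0$ yields $\int(\nabla^2\phi)(\nabla u,\nabla u)\,dq\le\int\langle\nabla S,\nabla u\rangle\,dq=\text{Var}_q(S)$. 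To finish I would use the elementary pointwise bound $(\nabla^2\phi(x))^{-1}(a,a)\ge 2\langle a,v\rangle-(\nabla^2\phi(x))(v,v)$, valid for all $a,v\in\mathbb{R}^d$ (expand $(\nabla^2\phi(x))\big((\nabla^2\phi(x))^{-1}a-v,(\nabla^2\phi(x))^{-1}a-v\big)\ge 0$), with $a=\nabla S(x)$ and $v=\nabla u(x)$; integrating against $q$ and inserting the two estimates above gives $\int(\nabla^2\phi)^{-1}(\nabla S,\nabla S)\,dq\ge 2\,\text{Var}_q(S)-\int(\nabla^2\phi)(\nabla u,\nabla u)\,dq\ge\text{Var}_q(S)$, which is the claim.

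The hard part is entirely technical: solvability and interior regularity of the Poisson equation, and the check that the two integrations by parts and the commutation formula incur no boundary or integrability issues; I would handle this by first establishing the inequality for $S$ smooth and compactly supported (so $u$ and the needed derivatives lie in $L^2(q)$) and then passing to the limit by monotone/dominated convergence. As alternatives I note the original route of \cite{BRASCAMP1976366}, which deduces the inequality from a refinement of the Pr\'ekopa--Leindler inequality (log-concavity of marginals of log-concave functions), and the Bakry--\'Emery $\Gamma_2$ proof; the $L^2$ argument above is the most economical given the regularity we have at hand.
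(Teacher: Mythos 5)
The paper does not prove this proposition: it states the Brascamp--Lieb variance inequality and cites the original reference \cite{BRASCAMP1976366} without argument, so there is no internal proof to compare against.

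Your proposal is a correct and complete derivation by the classical $L^2$ (H\"ormander/Helffer--Sj\"ostrand) method. Every step checks out: the integration-by-parts identity $\int (Lf)g\,dq = -\int\langle\nabla f,\nabla g\rangle\,dq$ for $L=\Delta-\langle\nabla\phi,\nabla\cdot\rangle$, the commutation $\nabla(Lf)=L(\nabla f)-(\nabla^2\phi)\nabla f$, the sign of $\int\langle L(\nabla u),\nabla u\rangle\,dq=-\int\|\nabla^2u\|_{\mathrm{HS}}^2\,dq$, and the pointwise quadratic inequality obtained by expanding $(\nabla^2\phi)\big((\nabla^2\phi)^{-1}a-v\big)^{\otimes2}\geq 0$ all combine exactly as you write to give $\operatorname{Var}_q(S)\leq\int(\nabla^2\phi)^{-1}(\nabla S,\nabla S)\,dq$. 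You are also right that the statement as written implicitly requires $\nabla^2\phi\succ0$ (so that the inverse is meaningful) and $\phi\in C^2$ (for the commutation step), and your handling of the approximation issues is the standard one. The only point of genuine divergence is from the cited source, not the paper: Brascamp and Lieb's original argument goes through Pr\'ekopa--Leindler / log-concavity of marginals, a convexity route rather than a spectral one, while your Poisson-equation argument is closer in spirit to the Bakry--\'Emery $\Gamma_2$-calculus machinery the paper uses elsewhere. Both are standard; your choice is arguably the more economical one given the smoothness assumptions already in play in the paper.
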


Having this tool in hand, let us provide a bound on the growth of $\lambda_{\max}(\nabla V(t,x))$ with respect to $t$.

\begin{proposition}\label{prop:boundimpossiblebuty}
For all $t\in [0,1)$ and $x\in \mathbb{R}^d$,  we have
$$\lambda_{\max}(\nabla V(t,x))\leq C(1-t^2)^{(\beta-\lfloor \beta\rfloor)/2-1},$$
so in particular 
    $$\int_0^1 \sup_{x\in \mathbb{R}^d}\lambda_{\max}(\nabla V(t,x))dt<\infty.$$
\end{proposition}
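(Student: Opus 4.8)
<br>

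The plan is to reduce the statement to a sharp variance estimate for the tilted Gaussian $p^{t,x}$, and then to exploit the strong log‑concavity that the radial weight $-u(\|\cdot\|^2)$ forces into $p^{t,x}$ through Assumption \ref{assum:u}. \textit{Step 1 (reduction).} Starting from $V(t,x)=\frac{1}{1-t^2}(\int y\,dp^{t,x}(y)-tx)$ and differentiating in $x$ with the help of Lemma \ref{lemma:valueofgradp} (applied componentwise to $f(y)=y$), one obtains
\[
\nabla V(t,x)=\frac{t}{(1-t^2)^2}\,\Sigma_t(x)-\frac{t}{1-t^2}\,\text{Id},\qquad \Sigma_t(x):=\int\Big(y-\int z\,dp^{t,x}(z)\Big)^{\otimes 2}dp^{t,x}(y),
\]
the covariance of $p^{t,x}$. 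Since $\lambda_{\max}(A-c\,\text{Id})=\lambda_{\max}(A)-c$ for symmetric $A$, this gives $\lambda_{\max}(\nabla V(t,x))=\frac{t}{(1-t^2)^2}(\lambda_{\max}(\Sigma_t(x))-(1-t^2))$, so it suffices to show, uniformly in $x\in\mathbb{R}^d$ and $w\in\mathbb{S}^{d-1}$, that $\text{Var}_{p^{t,x}}(\langle Y,w\rangle)\le (1-t^2)+C(1-t^2)^{1+(\beta-\lfloor\beta\rfloor)/2}\log(((1-t^2)\wedge 1/2)^{-1})^{-\theta}$. For $1-t^2\ge 1/2$ this is immediate: $p^{t,x}$ is supported in $B^d(0,1)$ so $\text{Var}_{p^{t,x}}(\langle Y,w\rangle)\le 1$, while $\frac{t}{(1-t^2)^2}\le 4$; hence only the regime $1-t^2<1/2$ needs work.

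\textit{Step 2 (a log-concave reference measure).} I would write $r(y)=p(y)/\gamma_d(y)\propto \widetilde r(y)\,e^{a(y)}$ with $\widetilde r(y):=\exp(\tfrac{\|y\|^2}{2}-u(\|y\|^2))\mathds{1}_{B^d(0,1)}(y)$, so that $p^{t,x}=e^{a}q^{t,x}/\mathbb{E}_{q^{t,x}}[e^{a}]$ where $q^{t,x}\propto\varphi^{t,x}\widetilde r$. On $B^d(0,1)$ one has $q^{t,x}\propto\exp(-\psi^{t,x})$ with $\psi^{t,x}(y)=\frac{\|y-tx\|^2}{2(1-t^2)}-\frac{\|y\|^2}{2}+u(\|y\|^2)$, and Assumption \ref{assum:u} ($u',u''\ge 0$) gives $\nabla^2(u(\|y\|^2))=2u'(\|y\|^2)\text{Id}+4u''(\|y\|^2)yy^\top\succeq 0$; hence $\nabla^2\psi^{t,x}\succeq(\tfrac{1}{1-t^2}-1)\text{Id}=\tfrac{t^2}{1-t^2}\text{Id}$ on the convex domain $B^d(0,1)$, i.e.\ $q^{t,x}$ is $\tfrac{t^2}{1-t^2}$-strongly log-concave ($u(\|y\|^2)\to\infty$ at $\partial B^d(0,1)$ serves as a smooth barrier, so Proposition \ref{prop:BL} applies after the standard approximation). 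Brascamp–Lieb with $S(y)=\langle y,w\rangle$ then yields the \emph{sharp} bound $\text{Var}_{q^{t,x}}(\langle Y,w\rangle)\le\mathbb{E}_{q^{t,x}}[w^\top(\nabla^2\psi^{t,x}(Y))^{-1}w]\le\frac{1-t^2}{t^2}\le(1-t^2)(1+2(1-t^2))$ for $1-t^2<1/2$.

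\textit{Step 3 (transferring the tilt and concluding).} Writing $m:=\mathbb{E}_{q^{t,x}}[\langle Y,w\rangle]$ and using that the variance is the minimal centered second moment, $\text{Var}_{p^{t,x}}(\langle Y,w\rangle)\le\mathbb{E}_{p^{t,x}}[(\langle Y,w\rangle-m)^2]=\text{Var}_{q^{t,x}}(\langle Y,w\rangle)+\frac{\text{Cov}_{q^{t,x}}(e^{a(Y)},(\langle Y,w\rangle-m)^2)}{\mathbb{E}_{q^{t,x}}[e^{a(Y)}]}$. Since $\|a\|_\infty\le K$ one has $\mathbb{E}_{q^{t,x}}[e^{a}]\ge e^{-K}$, and, subtracting the constant $e^{a(y_0)}$ at the barycenter $y_0$ of $q^{t,x}$ and applying Cauchy–Schwarz, $|\text{Cov}_{q^{t,x}}(e^{a(Y)},(\langle Y,w\rangle-m)^2)|\le e^{K}\text{Var}_{q^{t,x}}(e^{a(Y)})^{1/2}\mathbb{E}_{q^{t,x}}[(\langle Y,w\rangle-m)^4]^{1/2}$. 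The fourth moment is $\le C(1-t^2)^2$ by Borell's lemma for the log-concave $q^{t,x}$ together with Step 2, and $\text{Var}_{q^{t,x}}(e^{a(Y)})\le e^{2K}\mathbb{E}_{q^{t,x}\otimes q^{t,x}}[(a(Y)-a(Y'))^2]\le C(1-t^2)^{\beta-\lfloor\beta\rfloor}\log(((1-t^2)\wedge 1/2)^{-1})^{-2\theta}$, obtained exactly as in Proposition \ref{prop:boundnabalvgauss} by splitting on $\|Y-Y'\|\le(1-t^2)^{1/4}$ versus $\|Y-Y'\|>(1-t^2)^{1/4}$, using $a\in\mathcal{H}^{\beta-\lfloor\beta\rfloor,\theta}_C$ and the sub-Gaussian concentration of $q^{t,x}$ at scale $\sqrt{1-t^2}$ (from its strong log-concavity, or from $\text{supp}(q^{t,x})\subset B^d(0,1)$ and Borell's lemma). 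Substituting back gives the variance bound of Step 1; dividing by $(1-t^2)^2/t$ yields $\lambda_{\max}(\nabla V(t,x))\le C\,t\,(1-t^2)^{(\beta-\lfloor\beta\rfloor)/2-1}\log(((1-t^2)\wedge 1/2)^{-1})^{-\theta}$, and $\int_0^1\sup_x\lambda_{\max}(\nabla V(t,x))\,dt<\infty$ follows since $\theta>1$.

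I expect the main obstacle to be Step 3: the naive comparison $p^{t,x}\le e^{2K}q^{t,x}$ only gives $\text{Var}_{p^{t,x}}(\langle Y,w\rangle)\le e^{2K}(1-t^2)$ and hence $\lambda_{\max}(\nabla V)\lesssim(1-t^2)^{-1}$, which is \emph{not} integrable near $t=1$ — one must instead use that the tilt $e^{a}$ oscillates by only $O((1-t^2)^{(\beta-\lfloor\beta\rfloor)/2})$ over the $\sqrt{1-t^2}$-neighbourhood where $q^{t,x}$ concentrates, so that the perturbation of the variance is genuinely of order $(1-t^2)^{1+(\beta-\lfloor\beta\rfloor)/2}$ and is absorbed by the cancelling $-(1-t^2)\text{Id}$ in $\nabla V$. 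A secondary subtlety is that one needs the sharp Brascamp–Lieb constant in Step 2, since a cruder Poincaré-type bound $\text{Var}_{q^{t,x}}(\langle Y,w\rangle)\le 2(1-t^2)$ would already destroy that cancellation.
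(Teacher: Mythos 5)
Your proof is correct and rests on the same core mechanism as the paper's: reduce $\lambda_{\max}(\nabla V)$ to the excess of $\text{Var}_{p^{t,x}}(\langle Y,w\rangle)$ over $1-t^2$, introduce the strictly log-concave reference measure $\nu^{t,x}\propto q\,\varphi^{t,x}$ (your $q^{t,x}$) obtained by dropping the Hölder tilt $e^a$, invoke Brascamp--Lieb for the sharp variance bound $\tfrac{1-t^2}{t^2}$, and quantify the perturbation caused by the tilt. You diverge in how you carry out the last step, and your route is genuinely more streamlined: the paper expands the matrix difference $\int H^{\otimes 2}dp^{t,x}-\int H^{\otimes 2}d\nu^{t,x}$ directly (display \eqref{align:jsdujzbsio}) and bounds several operator-norm terms by extracting the Hölder oscillation of $a$ from $\bigl|\tfrac{e^{a(y)}Q_tq(x)}{Q_tr(x)}-1\bigr|$, whereas your Step~3 uses the exchange formula $\mathbb{E}_{p^{t,x}}[g]=\mathbb{E}_{q^{t,x}}[g]+\text{Cov}_{q^{t,x}}(e^{a},g)/\mathbb{E}_{q^{t,x}}[e^{a}]$ with $g=(\langle\cdot,w\rangle-m)^2$, followed by a single Cauchy--Schwarz that isolates $\text{Var}_{q^{t,x}}(e^{a})^{1/2}$ and a fourth moment. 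You also invoke Borell's lemma for the fourth moment where the paper iterates Brascamp--Lieb together with Jensen; both work. A noteworthy bonus of your version is that you retain the $\log(\cdot)^{-\theta}$ factor coming from $a\in\mathcal{H}^{\beta,\theta}_K$, which the paper drops in the statement of Proposition~\ref{prop:boundimpossiblebuty}; when $\beta\in\mathbb{N}_{>0}$ (so $\beta-\lfloor\beta\rfloor=0$) that factor is what makes $\int_0^1\sup_x\lambda_{\max}(\nabla V(t,\cdot))\,dt<\infty$ hold, since $(1-t^2)^{-1}$ alone is not integrable near $t=1$. Your remark at the end correctly identifies the key sharpness point: a crude Poincaré constant, or the pointwise comparison $p^{t,x}\le e^{2K}q^{t,x}$, would lose the cancellation with $-\tfrac{t}{1-t^2}\text{Id}$ entirely.
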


\begin{proof}
 We have 
\begin{align}\label{eq:vubifvhd}
        \lambda_{\max}(\nabla V(t,x)) = &\frac{1}{t}\lambda_{\max}((\nabla^2 \log(Q_t r)(x))\nonumber\\
=&\lambda_{\max}\left(\frac{t}{(1-t^2)^2}\int H(y,x)^{\otimes 2} dp^{t,x}(y)-\frac{t}{1-t^2}\text{Id}\right)\nonumber\\
=&\lambda_{\max}\left(\frac{t}{(1-t^2)^2}\int H(y,x)^{\otimes 2} dp^{t,x}(y)\right)-\frac{t}{1-t^2}.
\end{align}
In the case $t\leq 1/2$, we immediately get that
$$\lambda_{\max}(\nabla V(t,x))\leq C,$$
as $p^{t,x}$ is supported on the ball. 

Let us now treat the case $t\geq 1/2.$ Since \( p \) is of the form \( p(x) = \exp(-u(\|x\|) + a(x)) \), with \( a \) only Hölder continuous, the Brascamp--Lieb inequality cannot be applied directly.
 To circumvent this, we introduce an auxiliary measure to which the inequality can be applied.
Let
$$q(y):=\exp(-u(\|y\|^2)+\|y\|^2/2)=r(y)e^{-a(y)}$$
and
$$\nu^{t,x}(y) :=\frac{q(y)\varphi^{t,x}(y)}{Q_tq(x)}=\frac{\exp(-u(\|y\|^2)+\|y\|^2/2)\varphi^{t,x}(y)}{\int \exp(-u(\|z\|^2)+\|z\|^2/2)\varphi^{t,x}(z)d\lambda^d(z)}.$$
Then, we have 
\begin{align}\label{align:vfdihsksgs}
\lambda_{\max}\left(\int H(y,x)^{\otimes 2} dp^{t,x}(y)\right)\leq &\lambda_{\max}\left(\int H(y,x)^{\otimes 2} dp^{t,x}(y)-\int\left(y-\int zd\nu^{t,x}(z)\right)^{\otimes 2}d\nu^{t,x}(y)\right)\nonumber\\
&+ \lambda_{\max}\left(\int\left(y-\int zd\nu^{t,x}(z)\right)^{\otimes 2}d\nu^{t,x}(y)\right).
\end{align}

As the probability density $\nu^{0,x}=\frac{q\gamma_d}{\int qd\gamma_d}$  is $0$-semi-log concave, we obtain that 
 $\nu^{t,x}$ is $\left(\frac{t^2}{1-t^2}\right)$-log-concave.
Indeed, for all $y\in \mathbb{R}^d$ we have
\begin{equation}\label{eq:ptxislogconc}
-\nabla^2\log(\nu^{t,x}(y))=-\nabla^2\log(q(y)\gamma_d(y))-\nabla^2\log(\varphi^{t,x}(y)/\gamma_d(y))\succeq \frac{t^2}{1-t^2} \text{Id}.
\end{equation}
Then, using the Brascamp-Lieb inequality applied to functions of the form $x\mapsto \langle x,w\rangle $ with $w\in \mathbb{S}^{d-1}$, we obtain
\begin{align*}
    \lambda_{\max}\left(\int \left(y-\int z d\nu^{t,x}(z)\right)^{\otimes2} d\nu^{t,x}(y)\right)\leq \frac{1-t^2}{t^2},
\end{align*}
so
\begin{align*}  \frac{t}{(1-t^2)^2}\lambda_{\max}\left(\int \left(y-\int z d\nu^{t,x}(z)\right)^{\otimes2} d\nu^{t,x}(y)\right)-\frac{t}{1-t^2}&\leq \frac{t}{1-t^2} \frac{1-t^2}{t^2}\\
& = \frac{1}{t}.
\end{align*}

Let us give an estimate on the other term appearing in \eqref{align:vfdihsksgs}. We have
\begin{align}\label{align:jsdujzbsio}
    &\int \left(y-\int zdp^{t,x}(z)\right)^{\otimes 2}dp^{t,x}(y)-\int\left(y-\int zd\nu^{t,x}(z)\right)^{\otimes 2}d\nu^{t,x}(y)\nonumber\\
    &=\int \left(\left(y-\int zdp^{t,x}(z)\right)^{\otimes 2}\frac{e^{a(y)}}{Q_tr(x)}-\left(y-\int zd\nu^{t,x}(z)\right)^{\otimes 2}\frac{1}{Q_tq(x)}\right)\nonumber\\
    & \quad \quad  \times \exp(-u(\|y\|^2)+\|y\|^2/2)d\varphi^{t,x}(y)\nonumber\\
    &=\int \left(\left(y-\int zdp^{t,x}(z)\right)^{\otimes 2}-\left(y-\int zd\nu^{t,x}(z)\right)^{\otimes 2}\right)\frac{e^{a(y)}}{Q_tr(x)}\exp(-u(\|y\|^2)+\|y\|^2/2)d\varphi^{t,x}(y)\nonumber\\
    &+\int \left(y-\int zd\nu^{t,x}(z)\right)^{\otimes 2}\left(\frac{e^{a(y)}}{Q_tr(x)}-\frac{1}{Q_tq(x)}\right)\exp(-u(\|y\|^2)+\|y\|^2/2)d\varphi^{t,x}(y)\nonumber\\
    &=\int \left(\left(y-\int zdp^{t,x}(z)\right)^{\otimes 2}-\left(y-\int zd\nu^{t,x}(z)\right)^{\otimes 2}\right)\frac{e^{a(y)}Q_tq(x)}{Q_tr(x)}d\nu^{t,x}(y)\nonumber\\
    &+\int \left(y-\int zd\nu^{t,x}(z)\right)^{\otimes 2}\left(\frac{e^{a(y)}Q_tq(x)}{Q_tr(x)}-1\right)d\nu^{t,x}(y).
\end{align}
Now,
\begin{align*}
    &\|\left(y-\int zdp^{t,x}(z)\right)^{\otimes 2}-\left(y-\int zd\nu^{t,x}(z)\right)^{\otimes 2}\|\\
    = &\|\left(\int zdp^{t,x}(z)-y\right)\otimes \int zdp^{t,x}(z)+\left(\int zd\nu^{t,x}(z)-\int zdp^{t,x}(z)\right)\otimes y+ \left(y-\int zd\nu^{t,x}(z)\right)\otimes \int zd\nu^{t,x}(z)\|\\
    \leq &\|\left(\int zdp^{t,x}(z)-y\right)\otimes \left(\int zdp^{t,x}(z)-\int zd\nu^{t,x}(z)\right)\|\\
    & +\|\left(\int zdp^{t,x}(z)-\int zd\nu^{t,x}(z)\right)\otimes \left( \int zd\nu^{t,x}(z)-y\right)\|\\
    & \leq \|\int zdp^{t,x}(z)-\int zd\nu^{t,x}(z)\| \left(\| \int zd\nu^{t,x}(z)-y\|+\| \int zdp^{t,x}(z)-y\|\right)\\
    & \leq \|\int zdp^{t,x}(z)-\int zd\nu^{t,x}(z)\| \left(2\| \int zd\nu^{t,x}(z)-y\|+\| \int zdp^{t,x}(z)-\int zd\nu^{t,x}(z)\|\right)
\end{align*}
and
\begin{align*}
    \|\int yd\nu^{t,x}(y)-\int ydp^{t,x}(y)\|& = \|\int \left(y-\int zd\nu^{t,x}(z)\right)d\nu^{t,x}(y)-\int \left(y-\int zd\nu^{t,x}(z)\right)dp^{t,x}(y)\|\\
    & = \|\int \left(y-\int zd\nu^{t,x}(z)\right)\left(1-\frac{e^{a(y)}Q_tq(x)}{Q_tr(x)}\right)d\nu^{t,x}(y)\|.
\end{align*}

On the other hand,
$$\frac{Q_tq(x)}{Q_tr(x)} = \frac{\int r(y)e^{-a(y)}\varphi^{t,x}(y)d\lambda^d(y)}{\int r(y)\varphi^{t,x}(y)d\lambda^d(y)} \in [C^{-1},C],$$
so from \eqref{align:jsdujzbsio} we have
\begin{align}\label{align:brzdijnzide}
    \|&\int \left(y-\int zdp^{t,x}(z)\right)^{\otimes 2}dp^{t,x}(y)-\int\left(y-\int zd\nu^{t,x}(z)\right)^{\otimes 2}d\nu^{t,x}(y)\|\nonumber\\
     \leq &C\int \|\left(y-\int zdp^{t,x}(z)\right)^{\otimes 2}-\left(y-\int zd\nu^{t,x}(z)\right)^{\otimes 2}\|d\nu^{t,x}(y)\nonumber\\
    &+\int \|y-\int zd\nu^{t,x}(z)\|^{ 2}|\frac{e^{a(y)}Q_tq(x)}{Q_tr(x)}-1|d\nu^{t,x}(y)\nonumber\\
     \leq &C\left(\int \|y-\int zd\nu^{t,x}(z)\||\frac{e^{a(y)}Q_tq(x)}{Q_tr(x)}-1|d\nu^{t,x}(y)\right)^2\nonumber\\
    & + C\int \|y-\int zd\nu^{t,x}(z)\|d\nu^{t,x}(y)\int \|y-\int zd\nu^{t,x}(z)\||\frac{e^{a(y)}Q_tq(x)}{Q_tr(x)}-1|d\nu^{t,x}(y)\nonumber\\
    &+\int \|y-\int zd\nu^{t,x}(z)\|^{ 2}|\frac{e^{a(y)}Q_tq(x)}{Q_tr(x)}-1|d\nu^{t,x}(y).
\end{align}

Let us now show that additional concentration can be extracted from the discrepancy term \( \left| \frac{e^{a(y)} Q_t q(x)}{Q_t r(x)} - 1 \right| \). We have,
\begin{align*}
&|\frac{e^{a(y)}Q_tq(x)}{Q_tr(x)}-1|\\
&=\frac{1}{Q_tr(x)}|e^{a(y)}Q_tq(x)-e^{a\left(\int zd\nu^{t,x}(z)\right)}Q_tq(x)+e^{a\left(\int zd\nu^{t,x}(z)\right)}Q_tq(x)-Q_tr(x)|\\
&\leq C |e^{a(y)}-e^{a\left(\int zd\nu^{t,x}(z)\right)}|+\frac{1}{Q_tr(x)}|\int\left( e^{a\left(\int zd\nu^{t,x}(z)\right)}-e^{a(w)}\right)q(w)d\varphi^{t,x}(w)|\\
&\leq C \|y-\int zd\nu^{t,x}(z)\|^{\beta-\lfloor \beta\rfloor}+C\int\|\int zd\nu^{t,x}(z)-w\|^{\beta-\lfloor \beta\rfloor}d\nu^{t,x}(w).
\end{align*}

Furthermore, for $\alpha \in (0,1)$ using the Brascamp-Lieb and Jensen's inequalities we obtain
\begin{align*}
    &\int \|y-\int z d\nu^{t,x}(z)\|^{2+\alpha} d\nu^{t,x}(y) \leq \left(\int \|y-\int z d\nu^{t,x}(z)\|^{4} d\nu^{t,x}(y)\right)^{(2+\alpha)/4}\\
   & \leq \left(\int \|y-\int z d\nu^{t,x}(z)\|^{4} d\nu^{t,x}(y)-\left(\int \|y-\int z d\nu^{t,x}(z)\|^{2} d\nu^{t,x}(y)\right)^{2}\right)^{(2+\alpha)/4}\\
   & +\left(\int \|y-\int z d\nu^{t,x}(z)\|^{2} d\nu^{t,x}(y)\right)^{(2+\alpha)/2}\\
   & \leq \left(\int \left(\|y-\int z d\nu^{t,x}(z)\|^{2} -\int \|w-\int z d\nu^{t,x}(z)\|^{2} d\nu^{t,x}(w)\right)^2d\nu^{t,x}(y)\right)^{(2+\alpha)/4}+C\left(\frac{1-t^2}{t^2}\right)^{(2+\alpha)/2}\\
   & \leq \left(C\frac{1-t^2}{t^2} \int \|y-\int z d\nu^{t,x}(z)\|^{2}d\nu^{t,x}(y)\right)^{(2+\alpha)/4}+C\left(\frac{1-t^2}{t^2}\right)^{(2+\alpha)/2}
   \\
   & \leq \left(C\left(\frac{1-t^2}{t^2}\right)^2\right)^{(2+\alpha)/4}+C\left(\frac{1-t^2}{t^2}\right)^{(2+\alpha)/2}\\
   & \leq C\left(\frac{1-t^2}{t^2}\right)^{(2+\alpha)/2}.
\end{align*}
Then, using Jensen's inequality and putting everything together in \eqref{align:brzdijnzide}, we obtain

\begin{align*}
    \|\int \left(y-\int zdp^{t,x}(z)\right)^{\otimes 2}dp^{t,x}(y)-\int\left(y-\int zd\nu^{t,x}(z)\right)^{\otimes 2}d\nu^{t,x}(y)\|\leq C\left(\frac{1-t^2}{t^2}\right)^{(2+\beta-\lfloor \beta\rfloor)/2}.
\end{align*}
Therefore, we finally have for $t\geq 1/2$ that
$$\lambda_{\max}(\nabla V(t,x))\leq \frac{1}{t(1-t^2)^{1-(\beta-\lfloor \beta\rfloor)/2}}.$$
As for $t\leq 1/2$, we have
$$\lambda_{\max}(\nabla V(t,x))\leq C,$$
we deduce that 
$$\int_0^1 \sup_{x\in \mathbb{R}^d}\lambda_{\max}(\nabla V(t,x))dt\leq C.$$
\end{proof}
Using Proposition \ref{prop:boundimpossiblebuty}, we finally obtain that there exists a Lipschitz transport map from the $d$-dimensional Gaussian distribution to the probability $p$.

\begin{corollary}\label{coro:onasue}
    Under the assumptions of Theorem \ref{theo:thetheo2}, there exists a unique solution for all $t\in [0,1)$ to the equation
    $$\partial_t X_t(x) =V(t,X_t(x)), \quad X_0(x)=x,$$
    and the solution satisfies $X_t\in \mathcal{H}^1_C$.
    Furthermore, there exists a limit map $T$ such that $T_{\#}\gamma_d=p$ and
    $T \in \mathcal{H}^1_C.$
\end{corollary}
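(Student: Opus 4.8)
The plan is to combine the well-definedness criterion for compactly supported densities (Lemma~\ref{lemma:le5}), the Gronwall estimate of Section~\ref{sec:gronwall}, and the compactness Lemma~\ref{lemma:convergenceofmap} to pass to the limit $t\to1$.

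First I would establish the flow on $[0,1)$: since $p$ is supported on $B^d(0,1)$ it is in particular compactly supported, so Lemma~\ref{lemma:le5} applies directly and gives that the velocity field $V$ from \eqref{eq:V} satisfies the hypotheses of Proposition~\ref{prop:condiwelldifned} on every $[0,t]$ with $t<1$; hence \eqref{eq:PDE} has a unique solution, i.e. a well-defined flow $(X_t)_{t\in[0,1)}$ with $(X_s)_{\#}\gamma_d=\mu_{1-s}$. Next I would obtain the uniform Lipschitz bound exactly as in Section~\ref{sec:gronwall}: for $x\in\mathbb{R}^d$, $w\in\mathbb{S}^{d-1}$ and $\alpha_t^1:=\|\nabla X_t(x)w\|^2$, differentiating \eqref{eq:PDE} gives $\partial_t\alpha_t^1\le 2\lambda_{\max}(\nabla V(t,X_t(x)))\alpha_t^1$ with $\alpha_0^1=1$, so Gronwall's inequality \eqref{eq:thegronwallline} yields $(\alpha_t^1)^{1/2}\le\exp(\int_0^t\lambda_{\max}(\nabla V(s,X_s(x)))ds)$. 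Since Proposition~\ref{prop:boundimpossiblebuty} bounds the integrand uniformly in $x$ by a function of $s$ that is integrable on $[0,1)$, the exponent is at most $C$ for all $t<1$ and all $x$ (equivalently, one applies Lemma~\ref{lemma:addlemme} with $k=1$), so $\|\nabla X_t(x)\|\le C$; together with $X_0=\mathrm{Id}$ this gives $X_t\in\mathcal{H}^1_C$.

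Then I would pass to the limit. Unlike in Theorem~\ref{theo:thetheo}, there is no reason the flow extends to $t=1$: for a compactly supported $p$ the score $\nabla\log Q_t r$ can blow up near $t=1$, so $V(1,\cdot)$ need not lie in $W^{1,\infty}$. Instead I would pick $t_k\uparrow1$ and set $T_k:=X_{t_k}$, which satisfies $(T_k)_{\#}\gamma_d=\mu_{1-t_k}$ with $\mu_{1-t_k}\overset{d}{=}t_kX+\sqrt{1-t_k^2}\,Y\to p$ in distribution by \eqref{eq:laxXt}, and $T_k\in\mathcal{H}^1_C$ uniformly by the previous step. Applying Lemma~\ref{lemma:convergenceofmap} with $\mu_k=\mu=\gamma_d$ (open support $\mathbb{R}^d$), $\nu_k=\mu_{1-t_k}$, $\nu=p$ and $\alpha=1$ then produces a map $T\in\mathcal{H}^1_C$ with $T_{\#}\gamma_d=p$; since $p$ lives on $B^d(0,1)$, $T$ takes values in $\overline{B^d(0,1)}$ almost everywhere, matching the stated codomain.

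The hard part is really Proposition~\ref{prop:boundimpossiblebuty} — the uniform-in-$x$, $L^1$-in-$t$ control of $\lambda_{\max}(\nabla V(t,\cdot))$ near $t=1$, obtained by comparing $p^{t,x}$ with the strictly log-concave auxiliary measure $\nu^{t,x}$ and using Brascamp--Lieb — but that is already proved; within this corollary the only subtlety is that the $t\to1$ limit must be routed through the compactness Lemma~\ref{lemma:convergenceofmap} rather than through an extension of the flow, and that for the first derivative it suffices to control $\lambda_{\max}(\nabla V)$ (not the full operator norm $\|\nabla V\|$), which is exactly what keeps the Lipschitz estimate alive without the stronger bounds needed for higher-order regularity.
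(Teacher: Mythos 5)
Your proof is correct and follows the same route as the paper: Lemma~\ref{lemma:le5} for well-definedness of the flow on $[0,1)$, Proposition~\ref{prop:boundimpossiblebuty} with the Gronwall bound (Lemma~\ref{lemma:addlemme} at $k=1$) for the uniform Lipschitz estimate, and Lemma~\ref{lemma:convergenceofmap} applied to a sequence $t_k\uparrow1$ to extract the limiting transport map. The only difference is that you unpack the Gronwall step and make explicit the observation that only $\lambda_{\max}(\nabla V)$ (not $\|\nabla V\|$) needs to be controlled for the $k=1$ case, which the paper leaves implicit.
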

\begin{proof}
    From Lemma \ref{lemma:le5} we have the existence of the solution and from  Proposition \ref{prop:boundimpossiblebuty} and Lemma \ref{lemma:addlemme}, we obtain its Lipschitz regularity. We can then conclude to the existence of the Lipschitz transport map using Lemma \ref{lemma:convergenceofmap}.
\end{proof}

\subsection{Higher order estimates}

\subsubsection{Localisation of $X_t(x)$}
Let us fix $\epsilon\in (0,1/2)$, in this section we control the norm of  $X_t(x)$ for $x\in \mathbb{R}^d$ and $t\in [0,1)$ that satisfies
\begin{equation}\label{eq:hyp1}
    \|x\|^2\leq \log(\epsilon^{-1})\ \ \text{ and }\ \ 1-t^2\leq C_\star^{-1}\log(\epsilon^{-1})^{-C_\star},
\end{equation}
for $C_\star$ large enough to be determined later. This estimate is needed to control the norm of the derivatives of $p$ at the point $X_t(x)$. 
\begin{proposition}\label{prop:loca3}
There exists $C^\star>0$ such that for $t\in [0,1)$  and $x\in \mathbb{R}^d$ satisfying \begin{equation*}
1-t^2\leq C_\star^{-1}\log(\epsilon^{-1})^{-C_\star} \text{ and } \|x\|^2\leq \log(\epsilon^{-1}),
\end{equation*} 
we have
\begin{align*}
\|tX_t(x)\|^2 \leq 1-C_2^{-1}\log(\epsilon^{-1}+1)^{-K}.
\end{align*}
\end{proposition}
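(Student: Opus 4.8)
The plan is to prove the slightly stronger statement $u\big(\|tX_t(x)\|^2\big)\le M$, where $M:=C\log(\epsilon^{-1}+1)$ for a suitable constant $C$; since $u(s)\ge(1-s)^{-1/K}$ this gives $1-\|tX_t(x)\|^2\ge M^{-K}$, which is the claim. First I would record the radial equation satisfied by $\phi(s):=\|X_s(x)\|$. Setting $m(s,y):=\int z\,dp^{s,y}(z)$ we have $V(s,y)=\frac1{1-s^2}\big(m(s,y)-sy\big)$ and, since $p^{s,y}$ is supported in $B^d(0,1)$, $\|m(s,y)\|\le 1$; differentiating $\partial_s X_s=V(s,X_s)$ along the radial direction yields
\[
\phi'(s)=\frac1{1-s^2}\Big(\big\langle\widehat{X_s(x)},\,m(s,X_s(x))\big\rangle-s\,\phi(s)\Big),\qquad \phi(0)=\|x\|.
\]
Comparing with the scalar linear ODE $\psi'=\frac{1-s\psi}{1-s^2}$, $\psi(0)=\|x\|$, whose solution is $\psi(s)=s+\|x\|\sqrt{1-s^2}$, gives the coarse a priori bound $\|X_s(x)\|\le s+\|x\|\sqrt{1-s^2}$ for all $s\in[0,1)$; in particular $\|X_s(x)\|\le C\sqrt{\log(\epsilon^{-1}+1)}$ throughout, and $\|sX_s(x)\|^2\le 2$ once $s$ exceeds a fixed threshold $s_0<1$ close to $1$ (a first barrier argument: $\frac{d}{ds}\|sX_s(x)\|^2<0$ whenever $\|sX_s(x)\|^2=2$ and $s$ is close enough to $1$, using only $\|m\|\le1$).

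The heart of the proof is a continuity/barrier argument on $s\mapsto\|sX_s(x)\|^2$ over $[s_0,t]$ with barrier level $\rho_M:=u^{-1}(M)$. It suffices to show that $\frac{d}{ds}\|sX_s(x)\|^2<0$ whenever $s\in[s_0,t]$ and $\|sX_s(x)\|^2$ equals some level $R\in[\rho_M,2]$: then $\|sX_s(x)\|^2$ cannot cross upward through any such level, and since $\|s_0X_{s_0}(x)\|^2\le 2$ a finite cascade of such barriers forces $\|tX_t(x)\|^2\le\rho_M$, i.e. $u(\|tX_t(x)\|^2)\le M$. For levels $R$ bounded away from $1$ this follows at once from $\|m\|\le1$ as in the coarse phase; the delicate levels are $R$ close to $1$, where the required inequality reduces to $\langle\widehat{X_s(x)},m(s,X_s(x))\rangle<\sqrt R\,(1-O(1-s^2))$. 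Here I would use $\langle\widehat{X_s(x)},m(s,X_s(x))\rangle\le\int\|z\|\,dp^{s,X_s(x)}(z)$ together with $1-\|z\|\ge\tfrac12(1-\|z\|^2)\ge\tfrac12 u(\|z\|^2)^{-K}$ and two applications of Jensen's inequality (the second to the convex map $r\mapsto r^{-K}$) to get
\[
\big\langle\widehat{X_s(x)},m(s,X_s(x))\big\rangle\;\le\;1-\tfrac12\Big(\textstyle\int u(\|z\|^2)\,dp^{s,X_s(x)}(z)\Big)^{-K}.
\]
Since $u(s)\le C(1-s)^{-K}$ forces $1-\sqrt{\rho_M}\le C^{1/K}M^{-1/K}$, and $1-s^2\le 1-t^2$ is negligible compared to $1-\rho_M$ when $C_\star$ is large, the barrier inequality at level $R=\rho_M$ holds as soon as $\int u(\|z\|^2)\,dp^{s,X_s(x)}(z)$ is suitably bounded in terms of $M$. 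Everything is thus reduced to an upper bound for $\int u(\|z\|^2)\,dp^{s,x}(z)$ when $\|sx\|^2=\rho_M$.

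For this last bound I would exploit the structure of $p^{s,x}$. Writing $p^{s,x}(y)=Z^{-1}e^{-\tilde\Psi(y)+a(y)}$ with $\tilde\Psi(y):=\frac{\|y-sx\|^2}{2(1-s^2)}+u(\|y\|^2)-\frac{\|y\|^2}2$, one has $\nabla^2\tilde\Psi=\frac{s^2}{1-s^2}\text{Id}+2u'(\|y\|^2)\text{Id}+4u''(\|y\|^2)\,yy^\top\succeq\frac{s^2}{1-s^2}\text{Id}$ because $u',u''\ge0$; hence $p^{s,x}$ is, up to the bounded weight $e^{a}$ with $\|a\|_\infty\le K$, a strongly log-concave measure concentrating at scale $\sqrt{1-s^2}$ around the mode $y^\star$ of $\tilde\Psi$, which is parallel to $x$ and satisfies $\|y^\star\|\big(s^2+2(1-s^2)u'(\|y^\star\|^2)\big)=\|sx\|$, so that $\|y^\star\|^2\le\|sx\|^2-c\,(1-s^2)\,u'(\|y^\star\|^2)$. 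A Laplace‑type expansion bounds $\int u(\|z\|^2)\,dp^{s,x}(z)$ by $u(\|y^\star\|^2)$ up to a controlled fluctuation, and convexity of $u$ gives both $u(\|y^\star\|^2)\le u(\|sx\|^2)-u'(\|y^\star\|^2)\big(\|sx\|^2-\|y^\star\|^2\big)$ and the crucial lower bound $u'(\|y^\star\|^2)\ge u(\|y^\star\|^2)-u(0)$; combined with the crude tail estimate $p^{s,x}\big(u(\|z\|^2)>M'\big)\lesssim\exp\!\big(-M'+\tfrac{(\|sx\|-(1-r_1))_+^2}{1-s^2}+Cr_1^{-K}\big)$ (optimized over $r_1$), these yield an upper bound on $\int u\,dp^{s,x}$ of the order required above. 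The main obstacle — and the reason the hypothesis \eqref{eq:hyp1} takes its precise form — is making this final estimate uniform across the degenerating regime: one must dominate the Laplace fluctuation against the blow-up of $u',u''$ near $\partial B^d(0,1)$, work around the mere Hölder regularity of $a$ (which prevents integrating $\nabla a$ by parts, so one compares with the genuinely log‑concave $\tilde p\propto e^{-\tilde\Psi}$ and uses only $\|a\|_\infty\le K$), and handle both the range where $1-s^2$ is comparable to $1-t^2$ and the moderate range at the hand‑over from the coarse phase. It is this trade‑off that fixes $C_\star$ to be of order $K^2$ while keeping the exponent $K$ in the conclusion.
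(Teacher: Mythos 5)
Your proposal takes a genuinely different route from the paper. The paper's proof never goes near the radial ODE: it uses the push-forward identity $f_{X_t}(X_t(x)) = \gamma_d(x)\,|\det \nabla X_t(x)|^{-1}$ together with the closed-form $f_{X_t}(z)=e^{-\|z\|^2/2}Q_t r(z)$, and then invokes the Lipschitz bound on $X_t$ already established in Corollary~\ref{coro:onasue} to control $|\det\nabla X_t(x)|\le C^d$. This immediately yields a two-sided bound on $Q_t r(X_t(x))$ whose upper bound is dominated by $\max_{B^d(tX_t(x),(1-t^2)^{1/4})}e^{-u}$, and comparing the two bounds forces $tX_t(x)$ to stay away from $\partial B^d(0,1)$. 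The argument is short precisely because it exploits the Lipschitz estimate as an input; your sketch never invokes it and instead tries to re-derive the localization from scratch via a barrier argument on $s\mapsto\|sX_s(x)\|^2$.

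The gap in your proposal is in the barrier step near the critical level $\rho_M=u^{-1}(M)$. There the condition you need reduces (after the algebra $\tfrac{d}{ds}\|sX_s(x)\|^2<0$) to $\int(1-\|z\|^2)\,dp^{s,\theta}(z)>(1-\rho_M)+c(1-s^2)$, i.e.\ a strict margin above $1-\rho_M$ of size comparable to $1-s^2$. Your chain $\int\|z\|\,dp\le 1-\tfrac12\int(1-\|z\|^2)\,dp\le 1-\tfrac12\int u^{-K}dp\le 1-\tfrac12\big(\int u\,dp\big)^{-K}$ replaces the quantity $\int(1-\|z\|^2)\,dp$ by the strictly smaller $\big(\int u\,dp\big)^{-K}$, and when $1-s^2$ is very small (the regime forced by $C_\star$ large) the measure $p^{s,\theta}$ concentrates at scale $\sqrt{1-s^2}$ essentially at $s\theta$, so $\int u\,dp\approx u(\|s\theta\|^2)=M$ and $\big(\int u\,dp\big)^{-K}\approx M^{-K}=1-\rho_M$, with no margin left for the $+c(1-s^2)$ term. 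The bound $\int u\,dp\lesssim M^{1/K^2}$ that your reduction asks for is simply false in this regime. What would save the barrier step is the small inward displacement of the mean $m(s,\theta)$ relative to $s\theta$ — a $\delta\,u'$-sized bias that produces a margin of order $\delta\,u'(\rho_M)$ in $\int(1-\|z\|^2)\,dp$ itself — but the double Jensen loses exactly this second-order information, and your Laplace sketch at the end (mode displacement, convexity lower bound on $u'$, tail optimization in $r_1$) is where the whole burden lies; as written it is a list of ingredients, not an estimate, and it is not robust to the mere Hölder-continuous weight $e^a$ which shifts the mode by an amount that is not controlled in the proposal. If you want to salvage this route you should abandon the Jensen chain, work directly with $\int(1-\|z\|^2)\,dp$, and prove a quantitative mean-displacement bound $\langle\widehat{s\theta},\,s\theta-m(s,\theta)\rangle\gtrsim(1-s^2)\,u'(\|s\theta\|^2)$ uniform over the $e^a$ perturbation; alternatively, use the paper's density identity, which sidesteps all of this.
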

\begin{proof}
Let us first recall from \eqref{eq:laxXt} that we have 
\begin{equation}\label{eq:lawbis}
(X_t)_{\#}\gamma_d\overset{d}{=}tX+\sqrt{1-t^2}Y,
\end{equation}
with $X\sim p$ and $Y\sim \gamma_d$. Writing $f_{X_t}$ the density of $(X_t)_{\#}\gamma_d$, we have
\begin{align}\label{eq:densityxt}
f_{X_t}(z)&=\frac{1}{t^d(1-t^2)^{d/2}}\int p(\frac{y}{t})\gamma_d(\frac{z-y}{(1-t^2)^{1/2}})d\lambda^d(y)\nonumber \\
&=\frac{1}{(1-t^2)^{d/2}}\int \frac{p(y)}{\gamma_d(y)}\exp\left(-\frac{1}{2(1-t^2)}\big(\|z-ty\|^2+(1-t^2)\|y\|^2\big)\right)d\lambda^d(y)\nonumber \\
&=\frac{1}{(1-t^2)^{d/2}}\int r(y)\exp\left(-\frac{1}{2(1-t^2)}\big(\|z\|^2-2t\langle z,y\rangle+\|y\|^2\big)\right)d\lambda^d(y)\nonumber \\
& = \exp(-\frac{\|z\|^2}{2}) \int r(y)\varphi^{t,z}(y)d\lambda^d(y).
\end{align}
As for all $t\in [0,1)$ we have that $x\mapsto X_t(x)$ is a diffeomorphism, we deduce that we also have 
\begin{align*}
f_{X_t}(z)&=\gamma_d(X_t^{-1}(z))\text{det}(\nabla X_t(X_t^{-1}(z))),
\end{align*}
so 
\begin{align*}
f_{X_t}(X_t(x))&=\gamma_d(x)\text{det}(\nabla X_t(x))^{-1}.
\end{align*}
Therefore, from \eqref{eq:densityxt} we deduce that
\begin{align}\label{align:lowerboundQ}
\int r(y)\varphi^{t,X_t(x)}(y)d\lambda^d(y)&= \exp(\frac{\|X_t(x)\|^2}{2})\gamma_d(x)|\text{det}(\nabla X_t(x))|^{-1}\nonumber\\
& \geq C^{-1}\exp(\frac{1}{2}(\|X_t(x)\|^2-\|x\|^2)),
\end{align}
as $X_t$ is Lipschitz for all $t\in [0,1)$ from Corollary \ref{coro:onasue}. Now, we have

\begin{align}\label{align:upperboundQ}
    \int& r(y)\varphi^{t,X_t(x)}(y)d\lambda^d(y)\nonumber\\
    = &\int r(y)\varphi^{t,X_t(x)}(y)\mathds{1}_{\{\|y-tX_t(x)\|\leq (1-t^2)^{\frac{1}{4}}\}}d\lambda^d(y)+\int r(y)\varphi^{t,X_t(x)}(y)\mathds{1}_{\{\|y-tX_t(x)\|> (1-t^2)^{\frac{1}{4}}\}}d\lambda^d(y)\nonumber \\
    & \leq C \|\exp\left(-u(\|\cdot\|^2)\right)\|_{\mathcal{H}^0(B^d(tX_t(x),(1-t^2)^{\frac{1}{4}}))}\mathds{1}_{\|tX_t(x)\|<1 + (1-t^2)^{\frac{1}{4}}}+C\exp\left(-\frac{1}{2(1-t^2)^{\frac{1}{2}}}\right).
\end{align}
Suppose that we have $$\|\exp\left(-u(\|\cdot\|^2)\right)\|_{\mathcal{H}^0(B^d(tX_t(x),(1-t^2)^{\frac{1}{4}}))}\mathds{1}_{\|tX_t(x)\|<1 + (1-t^2)^{\frac{1}{4}}}\leq\exp\left(-\frac{1}{2(1-t^2)^{\frac{1}{2}}}\right),$$
then from \eqref{align:lowerboundQ} we have

\begin{align*}
C\exp\left(-\frac{1}{2(1-t^2)^{\frac{1}{2}}}\right) & \geq C^{-1}\exp(\frac{1}{2}(\|X_t(x)\|^2-\|x\|^2)),
\end{align*}
so 
\begin{align*}
\log(C)-\frac{1}{(1-t^2)^{\frac{1}{2}}}+ \|x\|^2& \geq \|X_t(x)\|^2.
\end{align*}
This is not possible for $x$ and $t$ satisfying \eqref{eq:hyp1} with $C^\star>0$ large enough as 
\begin{align*}
    \log(C)-\frac{1}{(1-t^2)^{\frac{1}{2}}}+ \|x\|^2\leq \log(C)-(C^\star)^{1/2}\log(\epsilon^{-1})^{C^\star/2}+ \log(\epsilon^{-1})<0.
\end{align*}

This gives us that  $$\|\exp\left(-u(\|\cdot\|^2)\right)\|_{\mathcal{H}^0(B^d(tX_t(x),(1-t^2)^{\frac{1}{4}}))}\mathds{1}_{\|tX_t(x)\|<1 + (1-t^2)^{\frac{1}{4}}}>\exp\left(-\frac{1}{2(1-t^2)^{\frac{1}{2}}}\right),$$
so from \eqref{align:upperboundQ} and \eqref{align:lowerboundQ} we obtain
$$C\|\exp\left(-u(\|\cdot\|^2)\right)\|_{\mathcal{H}^0(B^d(tX_t(x),(1-t^2)^{\frac{1}{4}}))}\mathds{1}_{\|tX_t(x)\|<1 + (1-t^2)^{\frac{1}{4}}}\geq C^{-1}\exp(\frac{1}{2}(\|X_t(x)\|^2-\|x\|^2)),$$
which gives 
\begin{align*}
\log(C)-2\min_{y\in B^d(tX_t(x),(1-t^2)^{\frac{1}{4}})}u(\|y\|^2)+ \|x\|^2& \geq \|X_t(x)\|^2\geq 0.
\end{align*}
Then, for $x$ and $t$ satisfying \eqref{eq:hyp1}, we get
$$ 2\min_{y\in B^d(tX_t(x),(1-t^2)^{\frac{1}{4}})}u(\|y\|^2)\leq \log(C)+\log(\epsilon^{-1}) ,$$
so as $u(s)\geq (1-s)^{-\frac{1}{K}}$, we have
\begin{align}
(1-(\|tX_t(x)\|-(1-t^2)^{\frac{1}{4}}))^2)^{-1/K}&\leq C\log(\epsilon^{-1}+1)\nonumber.
\end{align}
Finally, as $\|tX_t(x)\|< 1 + (1-t^2)^{\frac{1}{4}}$, we get for $C^\star>0$ large enough
\begin{align}\label{eq:loca3}
\|tX_t(x)\|^2\leq     1-C^{-1}\log(\epsilon^{-1}+1)^{-K}.
\end{align}
\end{proof}

The lower bound \eqref{eq:loca3} is a key result stating that $X_t(x)$ is not too close to the boundary of the ball which gives us in particular using Assumption \ref{assum:u} that
$$\|f^{l}(X_t(x))\|=\|\nabla^l r(X_t(x))/r(X_t(x))\|\leq C\log(\epsilon^{-1}) ^{C_2}.$$

\subsubsection{Concentration of the mass under $p^{t,x}$} 
In this section we derive a bound on the quantity $\int \|f(y)-f(tx)\|^2dp^{t,x}(y)$ with respect to $t\in [1-C_\star^{-1}\log(\epsilon^{-1})^{-C_\star},1]$ and the growth of the function $f$ around the point $tx$. Let us first prove a concentration result on $p^{t,x}$, analogous to Proposition \ref{prop:concentHtheo1} in the case of Theorem \ref{theo:thetheo2}.

\begin{proposition}\label{prop:concentHtheo2} There exists  $0<C_\star<C_2$ such that if $t\in [0,1)$  and $x\in \mathbb{R}^d$ satisfy \begin{equation}\label{eq:hyp12}
1-t^2\leq C_\star^{-1}\log(\epsilon^{-1})^{-C_\star} \text{ and } \|tx\|^2\leq 1-C_2^{-1}\log(\epsilon^{-1}+1)^{-K},
\end{equation}
then for all $i,j\in \{1,...,d\}$ we have
    $$\int H(y,x)_i^2 dp^{t,x}(y)\leq C(1-t^2)$$
    and 
    $$\int \left(H(y,x)^{\otimes2}_{ij}- \int H(z,x)^{\otimes2}_{ij}dp^{t,x}(z)\right)^2dp^{t,x}(y)\leq C(1-t^2)^{2}.$$
\end{proposition}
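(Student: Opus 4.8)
The proof will parallel that of Proposition~\ref{prop:concentHtheo1}, but the crude domination $p^{t,x}\le C\varphi^{t,x}$ used there is unavailable here: the ratio $r=p/\gamma_d=\exp\!\big(-u(\|\cdot\|^2)+\|\cdot\|^2/2+a\big)$ is no longer bounded below by an absolute constant (it decays near $\partial B^d(0,1)$) and $Q_tr(x)$ can be as small as $\exp(-C\log(\epsilon^{-1})^{C_2})$, so comparing $p^{t,x}$ directly to $\varphi^{t,x}$ would cost an $\epsilon$-dependent factor. The plan is to substitute for $\varphi^{t,x}$ the \emph{log-concave} reference measure
$$\nu^{t,x}(y):=\frac{q(y)\varphi^{t,x}(y)}{Q_tq(x)},\qquad q(y):=\exp\!\big(-u(\|y\|^2)+\|y\|^2/2\big)=r(y)e^{-a(y)},$$
already used in the proof of Proposition~\ref{prop:boundimpossiblebuty}, and to run the symmetrization argument of Proposition~\ref{prop:concentHtheo1} against $\nu^{t,x}$ rather than $\varphi^{t,x}$.

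The first step is the \emph{uniform} pointwise bound $p^{t,x}(y)\le e^{2K}\,\nu^{t,x}(y)$ for every $y\in\mathbb{R}^d$. Since $r=qe^{a}$ we have $p^{t,x}(y)/\nu^{t,x}(y)=e^{a(y)}\,Q_tq(x)/Q_tr(x)$; writing $Q_tr(x)=\int qe^{a}\varphi^{t,x}\,d\lambda^d=Q_tq(x)\,\mathbb{E}_{\nu^{t,x}}[e^{a(Y)}]$ and using $\|a\|_{L^\infty}\le K$ gives $Q_tq(x)/Q_tr(x)\in[e^{-K},e^{K}]$ and $e^{a(y)}\in[e^{-K},e^{K}]$, so the ratio lies in $[e^{-2K},e^{2K}]$. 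The point is that this uses boundedness of $a$, not of $r$, and needs no lower bound on $Q_tr(x)$. The second step recalls, as in~\eqref{eq:ptxislogconc}, that $q\gamma_d\propto\exp(-u(\|\cdot\|^2))$ is log-concave on the convex set $B^d(0,1)$ because $u',u''\ge0$ by Assumption~\ref{assum:u}, hence $\nu^{t,x}$ is $\frac{t^2}{1-t^2}$-log-concave. Applying the Brascamp--Lieb inequality (Proposition~\ref{prop:BL}) to the linear maps $y\mapsto\langle y,e_i\rangle$ yields $\text{Var}_{\nu^{t,x}}(Y_i)\le\frac{1-t^2}{t^2}$, and applying it to $y\mapsto(y_i-m'_i)^2$ with $m'=\mathbb{E}_{\nu^{t,x}}[Y]$, together with $\mathbb{E}_{\nu^{t,x}}[(Y_i-m'_i)^4]=\text{Var}_{\nu^{t,x}}((Y_i-m'_i)^2)+(\mathbb{E}_{\nu^{t,x}}[(Y_i-m'_i)^2])^2$, yields $\mathbb{E}_{\nu^{t,x}}[(Y_i-m'_i)^4]\le C(\frac{1-t^2}{t^2})^2$.

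To transfer these bounds to $p^{t,x}$ I would argue exactly as in Proposition~\ref{prop:concentHtheo1}: with $Y,\overline{Y}$ independent of law $p^{t,x}$ and $m=\mathbb{E}_{p^{t,x}}[Y]$, one has $\int H(y,x)_i^2\,dp^{t,x}(y)=\text{Var}_{p^{t,x}}(Y_i)=\tfrac12\mathbb{E}[(Y_i-\overline{Y}_i)^2]$, and since $p^{t,x}\otimes p^{t,x}\le e^{4K}\,\nu^{t,x}\otimes\nu^{t,x}$ pointwise and the integrand is nonnegative, this is $\le e^{4K}\text{Var}_{\nu^{t,x}}(Y_i)\le e^{4K}\frac{1-t^2}{t^2}$; likewise, using Jensen to pass from the central fourth moment to $\mathbb{E}[(Y_i-\overline{Y}_i)^4]$ and then $(a+b)^4\le8(a^4+b^4)$, one gets $\mathbb{E}_{p^{t,x}}[(Y_i-m_i)^4]\le C(\frac{1-t^2}{t^2})^2$. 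Distinguishing $t\ge\tfrac12$, where $\frac{1-t^2}{t^2}\le4(1-t^2)$, from $t<\tfrac12$, where $p^{t,x}$ is supported in $B^d(0,1)$ (as $r$ is) so those moments are bounded by absolute constants while $1-t^2\ge\tfrac34$, we conclude $\int H(y,x)_i^2\,dp^{t,x}(y)\le C(1-t^2)$ in both cases, and, since $H(y,x)^{\otimes2}_{ij}=(Y_i-m_i)(Y_j-m_j)$, Cauchy--Schwarz gives
$$\int\Big(H(y,x)^{\otimes2}_{ij}-\int H(z,x)^{\otimes2}_{ij}\,dp^{t,x}(z)\Big)^{2}dp^{t,x}(y)\le\mathbb{E}_{p^{t,x}}[(Y_i-m_i)^4]^{1/2}\,\mathbb{E}_{p^{t,x}}[(Y_j-m_j)^4]^{1/2}\le C(1-t^2)^2.$$

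The main obstacle is the first step: producing the pointwise domination $p^{t,x}\le C\,\nu^{t,x}$ with $C$ independent of $\epsilon$. A direct comparison with $\varphi^{t,x}$ loses the factor $1/Q_tr(x)$, exponentially small in $\log(\epsilon^{-1})^{C_2}$; passing to $\nu^{t,x}$ removes this because its normalizing constant $Q_tq(x)$ cancels that of $p^{t,x}$ up to the harmless ratio $\mathbb{E}_{\nu^{t,x}}[e^{a}]^{-1}\in[e^{-K},e^{K}]$, after which Brascamp--Lieb on the strongly log-concave $\nu^{t,x}$ is routine. Once the first two steps are in place the estimate in fact holds for every $(t,x)$ with $t\in[0,1)$, so any admissible pair $0<C_\star<C_2$ works; hypothesis~\eqref{eq:hyp12}, though available, is not needed for this particular bound (it enters only afterwards, to control $f^{l}(X_t(x))=\nabla^l r(X_t(x))/r(X_t(x))$ near $\partial B^d(0,1)$).
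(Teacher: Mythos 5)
Your proof is correct, but it follows a genuinely different route from the paper's. The paper argues by direct comparison of $p^{t,x}$ with $\varphi^{t,x}$, but only \emph{locally}: it splits $\mathbb{E}_{p^{t,x}}[(Y_i-\overline{Y}_i)^2]$ into the event $\{\|Y-tx\|\le\delta\}\cap\{\|\overline Y - tx\|\le\delta\}$ with $\delta=(1-t^2)^{1/4}$ and its complement, bounds $Q_t r(x)^{-1}$ from below via \eqref{align:boundbelowq}, and controls $\max_{B^d(tx,\delta)}r / \min_{B^d(tx,(1-t^2)^{1/2})} r$ by a constant through the derivative estimate $\max_{z\in B^d(tx,\delta)}u^{(1)}(\|z\|^2)\le(1-t^2)^{-1/4}$ --- an estimate which genuinely requires both halves of hypothesis \eqref{eq:hyp12} and a large $C_\star$; the off-event contribution is then swallowed by the Gaussian tail. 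You instead observe the uniform two-sided comparison $e^{-2K}\nu^{t,x}\le p^{t,x}\le e^{2K}\nu^{t,x}$ (which costs only $\|a\|_\infty\le K$ and the identity $Q_tr(x)=Q_tq(x)\,\mathbb{E}_{\nu^{t,x}}[e^{a}]$, with no lower bound on $Q_tr(x)$ needed), then apply Brascamp--Lieb to the $\tfrac{t^2}{1-t^2}$-log-concave measure $\nu^{t,x}$ from \eqref{eq:ptxislogconc}. Your route is cleaner, avoids the local/tail decomposition entirely, and --- as you correctly note --- establishes the stated bounds for \emph{all} $(t,x)$ with $t\in[0,1)$, so hypothesis \eqref{eq:hyp12} is superfluous for this particular proposition (it is only needed later, to control $\nabla^l r/r$ at $tX_t(x)$ and the related terms in Proposition~\ref{prop:concentration3}). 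Both the paper and you rely on the same two structural inputs (boundedness of $a$ and the log-concavity of $\exp(-u(\|\cdot\|^2))$ guaranteed by $u',u''\ge0$ in Assumption~\ref{assum:u}), but the paper spends them through a localization scheme and you spend them through Brascamp--Lieb; your version buys a sharper, hypothesis-free statement, while the paper's version stays closer to the $\varphi^{t,x}$-domination argument of Proposition~\ref{prop:concentHtheo1} and reuses the lower bound \eqref{align:boundbelowq} that is needed elsewhere anyway.
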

\begin{proof}
Let $Y,\overline{Y}$ two independent random variables of law $p^{t,x}$ and $Z,\overline{Z}$ two independent random variables of law $\varphi^{t,x}$. We have
\begin{align*}
    \int H(y,x)_i^2 dp^{t,x}(y) = &\int \left(y_i-\int y_idp^{t,x}(z)\right)^2  dp^{t,x}(y)\\
    =  &\text{Var}_{p^{t,x}}[Y_i]
    = \frac{1}{2} \mathbb{E}_{p^{t,x}}[(Y_i-\overline{Y}_i)^2]\\
    = & \frac{1}{2} \mathbb{E}_{p^{t,x}}[(Y_i-\overline{Y}_i)^2\mathds{1}_{\{\|Y-tx\|\leq \delta\}\cap \{\|\overline{Y}-tx\|\leq \delta\}}]+ \frac{1}{2}\mathbb{E}_{p^{t,x}}[(Y_i-\overline{Y}_i)^2\mathds{1}_{\{\|Y-tx\|\geq \delta\}\cup \{\|\overline{Y}-tx\|\geq \delta\}}],
\end{align*}
with $\delta=(1-t^2)^{1/4}$. In order to control the first term,  we are going to bound $Q_tr(x)^{-1}$, for $x$ and $t$ satisfying
\begin{equation}\label{eq:lequationencore}
1-t^2\leq C_\star^{-1}\log(\epsilon^{-1})^{-C_\star} \text{ and } \|tx\|^2 \leq 1-C_2^{-1}\log(\epsilon^{-1}+1)^{-K},
\end{equation} 
with $C^\star,C_2>0$ given by Proposition \ref{prop:loca3}.
We obtain
\begin{align}\label{align:boundbelowq}
    Q_{t}r(x) & = \int \varphi^{t,x}(y)r(y)d\lambda^d(y)\nonumber\\
    & \geq  \min_{w\in B^d(tx,(1-t^2)^{1/2}))}\exp\left(-u(\|w\|^2)+a(w)\right)\int \varphi^{t,x}(y)\mathds{1}_{\|y-tx\|\leq(1-t^2)^{1/2}}d\lambda^d(y)\nonumber\\
    & \geq C^{-1}\min_{w\in B^d(tx,(1-t^2)^{1/2}))}\exp\left(-u(\|w\|^2)\right)\int \mathds{1}_{\|y\|\leq\frac{(1-t^2)^{1/2}}{(1-t^2)^{1/2}}}\gamma_d(y)d\lambda^d(y),\nonumber\\
    & \geq C^{-1}\min_{w\in B^d(tx,(1-t^2)^{1/2}))}\exp\left(-u(\|w\|^2)\right).
\end{align}
On the other hand, for all $y\in \mathbb{R}^d$ we have 
$$r(y)=\exp(-u(\|y\|^2+a(y))\leq C\exp(-u(\|y\|^2)),$$
and for $t$ and $x$ satisfying \eqref{eq:lequationencore}, we have
$$\max_{z\in B^d(tx,\delta)}u^{(1)}(\|z\|^2)\leq K(1-(\|tx\|+\delta)^2)^{-(K+1)}\leq C\log(\epsilon^{-1})^{K(K+1)}\leq (1-t^2)^{-1/4},$$
taking $C^\star>0$  large enough.
We deduce for the first term that
\begin{align*}
    \mathbb{E}_{p^{t,x}}&[(Y_i-\overline{Y}_i)^2\mathds{1}_{\{\|Y-tx\|\leq \delta\}\cap \{\|\overline{Y}-tx\|\leq \delta\}}]\\
    =&\int\int(y_i-\overline{y}_i)^2\frac{r(y)r(\overline{y})}{Q_tr(x)^2}\varphi^{t,x}(y)\varphi^{t,x}(\overline{y})\mathds{1}_{\{\|y-tx\|\leq \delta\}\cap \{\|\overline{y}-tx\|\leq \delta\}}d\lambda^d(y)d\lambda^d(\overline{y})\\
    \leq & C \int\int(y_i-\overline{y}_i)^2\exp\left(2\max_{z\in B^d(tx,(1-t^2)^{1/2})}u(\|z\|^2)-2\min_{w\in B^d(tx,\delta)}u(\|w\|^2)\right)\varphi^{t,x}(y)\varphi^{t,x}(\overline{y})d\lambda^d(y)d\lambda^d(\overline{y})\\
    \leq & C \int\int(y_i-\overline{y}_i)^2\exp\left(2\max_{z\in B^d(tx,\delta)}u^{(1)}(\|z\|^2)\delta\right)\varphi^{t,x}(y)\varphi^{t,x}(\overline{y})d\lambda^d(y)d\lambda^d(\overline{y})
    \\
    \leq & C \int\int(y_i-\overline{y}_i)^2\varphi^{t,x}(y)\varphi^{t,x}(\overline{y})d\lambda^d(y)d\lambda^d(\overline{y})\\
    \leq & C \mathbb{E}_{\varphi^{t,x}}[(Z_i-\overline{Z}_i)^2]\\
    \leq  & C\text{Var}_{\varphi^{t,x}}(Z_i)\\
    \leq & C(1-t^2).
\end{align*}
Now, for the second term, as $p^{t,x}$ is supported on the ball we have
\begin{align*}
    \mathbb{E}_{p^{t,x}}[(Y_i-\overline{Y}_i)^2\mathds{1}_{\{\|Y-tx\|\geq \delta\}\cup \{\|\overline{y}-tx\|\geq \delta\}}] &\leq C\int\frac{r(y)}{Q_tr(x)}\varphi^{t,x}(y)\mathds{1}_{\{\|Y-tx\|\geq \delta\}}d\lambda^d(y)\\
    &\leq C\frac{\exp(-\frac{\delta^2}{2(1-t^2)})}{Q_tr(x)}\\
        & \leq C\exp\left(-\frac{(1-t^2)^{-1/2}}{2}\right)\exp\left(\max_{z\in B^d(0,\|tx\|+(1-t^2)^{1/2})}u^{(1)}(\|z\|^2)\right)\\
    & \leq C\exp\left(-\frac{(1-t^2)^{-1/2}}{2}+(1-(\|tx\|+(1-t^2)^{1/2})^2)^{-(K+1)}\right)\\ 
     & \leq C\exp\left(-\frac{(1-t^2)^{-1/2}}{4}\right).
\end{align*}

Applying the same reasoning, we obtain
    $$\int \left(H(y,x)^{\otimes2}_{ij}- \int H(z,x)^{\otimes2}_{ij}dp^{t,x}(z)\right)^2dp^{t,x}(y)\leq C(1-t^2)^{2}.$$

\end{proof}

Using Proposition \ref{prop:concentHtheo2}, we directly obtain the analog of Proposition \ref{prop:firstboundongradsth1} in the case of Theorem \ref{theo:thetheo2}.
\begin{proposition}\label{prop:firstboundongradsth2} There exists  $0<C_\star<C_2$ such that if $t\in [0,1)$  and $x\in \mathbb{R}^d$ satisfy \begin{equation}\label{eq:hyp13}
1-t^2\leq C_\star^{-1}\log(\epsilon^{-1})^{-C_\star} \text{ and } \|tx\|^2\leq 1-C_2^{-1}\log(\epsilon^{-1}+1)^{-K},
\end{equation}
    then for all $f\in L^2(\mathbb{R}^d)$, $i\in \{1,2\}$ and $\xi\in \mathbb{R}^d$, we have
    \begin{align*}
    \|\int f(y) \nabla_x^i p^{t,x}(y)d\lambda^d(y)\| 
    &\leq \frac{Ct^i}{(1-t^2)^{i/2}}\left(\int \|f(y)-\xi\|^2dp^{t,x}(y)\right)^{1/2}.
\end{align*}
\end{proposition}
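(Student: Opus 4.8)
The statement is the exact analogue of Proposition~\ref{prop:firstboundongradsth1}, and the plan is to repeat its proof verbatim, substituting the concentration estimates of Proposition~\ref{prop:concentHtheo2} (valid precisely under the hypothesis~\eqref{eq:hyp13}) for those of Proposition~\ref{prop:concentHtheo1}. Concretely, I would start from the two identities of Lemma~\ref{lemma:valueofgradp}, namely
$$
\int f(y)\nabla_x p^{t,x}(y)d\lambda^d(y)=\frac{t}{1-t^2}\int f(y)H(y,x)\,dp^{t,x}(y)
$$
and
$$
\int f(y)\nabla_x^2 p^{t,x}(y)d\lambda^d(y)=\frac{t^2}{(1-t^2)^2}\int f(y)\Big(H(y,x)^{\otimes 2}-\int H(w,x)^{\otimes 2}\,dp^{t,x}(w)\Big)dp^{t,x}(y),
$$
with $H(y,x)=y-\int z\,dp^{t,x}(z)$ as in \eqref{eq:noth}.

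The first observation is that for any fixed $\xi\in\mathbb{R}^d$ one may replace $f(y)$ by $f(y)-\xi$ in both right-hand sides without changing their value, since $\int\nabla_x^i p^{t,x}(y)\,d\lambda^d(y)=\nabla_x^i\!\int p^{t,x}(y)\,d\lambda^d(y)=0$ for $i\in\{1,2\}$. Then, for $i=1$, applying the Cauchy--Schwarz inequality coordinate-wise gives
$$
\Big\|\int f(y)\nabla_x p^{t,x}(y)d\lambda^d(y)\Big\|\le\frac{t}{1-t^2}\Big(\int\|f(y)-\xi\|^2 dp^{t,x}(y)\Big)^{1/2}\sum_i\Big(\int H(y,x)_i^2\,dp^{t,x}(y)\Big)^{1/2},
$$
and the first bound of Proposition~\ref{prop:concentHtheo2}, $\int H(y,x)_i^2\,dp^{t,x}(y)\le C(1-t^2)$, yields the claimed $C t(1-t^2)^{-1/2}$ factor. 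For $i=2$, the same Cauchy--Schwarz manipulation together with the second bound of Proposition~\ref{prop:concentHtheo2}, $\int\big(H(y,x)_{ij}^{\otimes 2}-\int H(z,x)_{ij}^{\otimes 2}dp^{t,x}(z)\big)^2 dp^{t,x}(y)\le C(1-t^2)^2$, produces the factor $Ct^2(1-t^2)^{-1}=Ct^2(1-t^2)^{-i/2}$, as required.

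There is essentially no obstacle here: all the genuine work has already been carried out in Proposition~\ref{prop:concentHtheo2}, where the support constraint and the control of $Q_tr(x)^{-1}$ via Assumption~\ref{assum:u} and Proposition~\ref{prop:loca3} were used to recover Gaussian-type concentration for $p^{t,x}$. The only point worth stating explicitly in the write-up is that the hypotheses~\eqref{eq:hyp13} coincide with~\eqref{eq:hyp12}, so Proposition~\ref{prop:concentHtheo2} applies directly with the same constants $0<C_\star<C_2$; after that, the derivation is purely a matter of Cauchy--Schwarz and bookkeeping, identical to the proof of Proposition~\ref{prop:firstboundongradsth1}.
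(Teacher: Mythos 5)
Your proposal is correct and follows precisely the route the paper takes: the paper states Proposition~\ref{prop:firstboundongradsth2} without a written proof, observing only that it follows ``directly'' from Proposition~\ref{prop:concentHtheo2} by the same Cauchy--Schwarz argument as in Proposition~\ref{prop:firstboundongradsth1}. Your write-up fills in exactly that computation, including the small but worthwhile justification for inserting the shift by $\xi$ (which the paper leaves implicit), so there is nothing to correct.
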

Let us now give a result on the concentration of the mass under $p^{t,x}$.

\begin{proposition}\label{prop:concentration3}
There exists  $0<C_\star<C_2$ such that if $t\in [0,1)$  and $x\in \mathbb{R}^d$ satisfy \begin{equation}\label{eq:hyp14}
1-t^2\leq C_\star^{-1}\log(\epsilon^{-1})^{-C_\star} \text{ and } \|tx\|^2\leq 1-C_2^{-1}\log(\epsilon^{-1}+1)^{-K},
\end{equation}
then for all $l\in \{1,...,\lfloor \beta \rfloor\}$ we have
    $$\int \|f^l(y)-f^l(tx)\|^2dp^{t,x}(y)\leq C\log(\epsilon^{-1})^{C_3}(1-t^2)^{\beta-\lfloor \beta \rfloor }\log((1-t^2)^{-1})^{-2\theta}.$$
\end{proposition}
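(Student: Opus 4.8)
The plan is to reproduce the truncation scheme from the proof of Proposition~\ref{prop:concentHtheo2}: near $tx$ the function $f^l$ is Hölder with a $\log(\epsilon^{-1})$‑controlled norm, and far from $tx$ the measure $p^{t,x}$ carries a mass that is super‑exponentially small in $(1-t^2)^{-1/2}$, which beats every power of $1-t^2$ once $C_\star$ is taken large enough. Concretely I set $\delta:=(1-t^2)^{1/4}$ and split $\int\|f^l(y)-f^l(tx)\|^2\,dp^{t,x}(y)$ over $\{\|y-tx\|\le\delta\}$ and its complement.

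The first and main ingredient is a local regularity estimate for $f^l$. Writing $r=e^{g}$ with $g(y)=-u(\|y\|^2)+\tfrac12\|y\|^2+a(y)$, the Fa\`a di Bruno identity gives $f^l=\nabla^l r/r=\sum_{\pi\in\Pi_l}\prod_{B\in\pi}\nabla^{|B|}g$, so $f^l$ is a fixed polynomial in the derivatives $\nabla^j g$ with $j\le l\le\lfloor\beta\rfloor$. Under \eqref{eq:hyp14} and for $C_\star$ large, the ball $B^d(tx,\delta)$ is contained in $B^d(0,1-C^{-1}\log(\epsilon^{-1})^{-K})$ (because $\delta\ll\log(\epsilon^{-1})^{-K}$), so on it $1-\|y\|^2\geq C^{-1}\log(\epsilon^{-1})^{-K}$; then Assumption~\ref{assum:u} and the chain rule bound $\|\nabla^j(u(\|\cdot\|^2))\|$ on $B^d(tx,\delta)$ by $C\log(\epsilon^{-1})^{C_2}$ for $j\le\lfloor\beta\rfloor$, with $\nabla^{\lfloor\beta\rfloor}(u(\|\cdot\|^2))$ moreover Lipschitz there with constant $C\log(\epsilon^{-1})^{C_2}$. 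Combining this with $\|\nabla^j a\|_\infty\le K$ and the $(\beta-\lfloor\beta\rfloor,\theta)$‑Hölder bound on $\nabla^{\lfloor\beta\rfloor}a$ coming from $a\in\mathcal{H}^{\beta,\theta}_K$ (and $\|y-tx\|\le C\|y-tx\|^{\beta-\lfloor\beta\rfloor}\log((\|y-tx\|\wedge\tfrac12)^{-1})^{-\theta}$ for $\|y-tx\|$ small), the multiplicative structure of $f^l$ yields
\[
\|f^l(y)-f^l(tx)\|\le C\log(\epsilon^{-1})^{C_3}\,\|y-tx\|^{\beta-\lfloor\beta\rfloor}\log\big((\|y-tx\|\wedge\tfrac12)^{-1}\big)^{-\theta}\qquad\text{for }\|y-tx\|\le\delta .
\]

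On the near region I use, exactly as in \eqref{align:boundbelowq} and the proof of Proposition~\ref{prop:concentHtheo2}, that $p^{t,x}(y)\le C\varphi^{t,x}(y)$ there (the ratio $r(y)/Q_tr(x)$ is $O(1)$ since $\max_{B^d(tx,\delta)}u^{(1)}\cdot\delta\le C$); substituting $y=tx+(1-t^2)^{1/2}z$, $z\sim\gamma_d$, turns the integral into $C\log(\epsilon^{-1})^{C_3}(1-t^2)^{\beta-\lfloor\beta\rfloor}\int_{\|z\|\le(1-t^2)^{-1/4}}\|z\|^{2(\beta-\lfloor\beta\rfloor)}\log(((1-t^2)^{1/2}\|z\|\wedge\tfrac12)^{-1})^{-2\theta}\,d\gamma_d(z)$, and since $\|z\|\le(1-t^2)^{-1/4}$ forces $\log(((1-t^2)^{1/2}\|z\|\wedge\tfrac12)^{-1})\ge\tfrac14\log((1-t^2)^{-1})$, this is $\le C\log(\epsilon^{-1})^{C_3}(1-t^2)^{\beta-\lfloor\beta\rfloor}\log((1-t^2)^{-1})^{-2\theta}$, the claimed bound. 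On the far region I bound $\|f^l(y)-f^l(tx)\|^2\le 2\|f^l(y)\|^2+2\|f^l(tx)\|^2$, where $\|f^l(tx)\|\le C\log(\epsilon^{-1})^{C_2}$ by the localisation remark after Proposition~\ref{prop:loca3}, while Assumption~\ref{assum:u} and the same expansion give $\|f^l(y)\|\le C(1-\|y\|^2)^{-l(K+1)}$ and hence $\|f^l(y)\|^2 r(y)\le C(1-\|y\|^2)^{-2l(K+1)}e^{-(1-\|y\|^2)^{-1/K}}\le C$; the far‑region integral is thus at most $C\log(\epsilon^{-1})^{C_2}Q_tr(x)^{-1}\exp(-\tfrac12(1-t^2)^{-1/2})$, and since $Q_tr(x)^{-1}\le\exp(C\log(\epsilon^{-1})^{C_2})$ by \eqref{align:boundbelowq} while $(1-t^2)^{-1/2}\ge C_\star^{1/2}\log(\epsilon^{-1})^{C_\star/2}$, taking $C_\star$ larger than every exponent appearing makes this term $\le C(1-t^2)^{\beta-\lfloor\beta\rfloor}$ (using $e^{-\frac14(1-t^2)^{-1/2}}\le C(1-t^2)^{\beta-\lfloor\beta\rfloor}$), which is absorbed into the right‑hand side.

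The hard part is the first ingredient: tracking exactly how the Hölder norm of $f^l$ on $B^d(tx,\delta)$ depends on $\log(\epsilon^{-1})$ through the boundary blow‑up of $u$ (propagated by the chain rule and by the multiplicative structure of $\nabla^l r/r$), and then coordinating the exponent $C_\star$ in \eqref{eq:hyp14} with all the resulting powers of $\log(\epsilon^{-1})$ so that both the inclusion $B^d(tx,\delta)\subset B^d(0,1-C^{-1}\log(\epsilon^{-1})^{-K})$ and the negligibility of the far‑region term hold simultaneously — this is precisely the bookkeeping recorded by the ``$0<C_\star<C_2$'' in the statement.
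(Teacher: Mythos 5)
Your proof is correct, but it is organized differently from the paper's, and the difference is worth recording. The paper first splits the integral according to whether $\|y\|\le 1-\delta$ or $\|y\|\ge 1-\delta$, with $\delta=C_\star^{-1}\log(\epsilon^{-1})^{-C_\star}$, and then, inside the "away from the boundary" region, passes to polar coordinates around $tx$ and performs a second split at scaled radius $\rho=(1-t^2)^{-1/4}$. You instead make a single split on $\|y-tx\|\lessgtr(1-t^2)^{1/4}$. What makes this one-step decomposition work is the observation, implicit in the paper's treatment of the $\{\|y\|\ge1-\delta\}$ piece but used by you over the whole far region at once, that $\|f^l(y)\|^2 r(y)$ is bounded uniformly on $B^d(0,1)$: by Assumption~\ref{assum:u}, $f^l$ blows up at most polynomially in $(1-\|y\|^2)^{-1}$, while $r(y)\le e^K e^{-(1-\|y\|^2)^{-1/K}}$ decays super-polynomially, so the product stays bounded. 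That uniform bound, together with $\|f^l(tx)\|\le C\log(\epsilon^{-1})^{C_2}$ from the localisation (Proposition~\ref{prop:loca3}) and the Gaussian tail estimate $\int\varphi^{t,x}\mathds{1}_{\|y-tx\|>(1-t^2)^{1/4}}\le C e^{-\frac14(1-t^2)^{-1/2}}$, absorbs both of the paper's "intermediate" and "near-boundary" cases in a single stroke. On the near region your argument is essentially that of Proposition~\ref{prop:concentHtheo2} (mean-value bound on $u$, then the pointwise domination $p^{t,x}\le C\varphi^{t,x}$), followed by a direct Gaussian substitution instead of polar coordinates — same content, slightly cleaner bookkeeping. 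The net effect is a shorter argument that trades the paper's two-level split and polar-coordinate computation for one structural observation about $\|f^l\|^2 r$; both require the same final coordination of $C_\star$ against the powers of $\log(\epsilon^{-1})$ coming from $u$'s blow-up and the lower bound on $Q_tr(x)$, which you correctly identify as the delicate part.
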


\begin{proof}Let $C^\star>0$ be the constant given by Proposition \ref{prop:loca3};
taking $\delta=C_\star^{-1}\log(\epsilon^{-1})^{-C_\star}$, we provide separate bounds on 
    \begin{align*}
     \int \|f^l(y)-f^l(tx)\|^2\mathds{1}_{\|y\|\leq 1-\delta}dp^{t,x}(y)        
\end{align*}   
and
\begin{align*}  
     \int \|f^l(y)-f^l(tx)\|^2\mathds{1}_{\|y\|\geq 1-\delta}dp^{t,x}(y).
\end{align*}
For the first term,  as for all $l\in \{0,...,\lfloor \beta\rfloor\}$ and $y\in B^d(0,1-\delta)$, we have $\|u^{(l+1)}(\|y\|^2)\|\leq C \delta^{-(K+\lfloor \beta \rfloor+1)}$ and $\|\nabla^la(y)-\nabla^la(tx)\|\leq K\|y-tx\|^{\beta-\lfloor \beta \rfloor }\log((\|y-tx\|\wedge 1/2)^{-1})^{-\theta}$, we deduce that
\begin{align*}
    \int \|f^l(y)-f^l(tx)\|^2\mathds{1}_{\|y\|\leq 1-\delta}dp^{t,x}(y) & \leq \int \frac{\delta^{-2(K+\lfloor \beta \rfloor+1)}\|y-tx\|^{2(\beta-\lfloor \beta \rfloor )}}{\log((\|y-tx\|\wedge 1/2)^{-1})^{2\theta}}\mathds{1}_{\|y\|\leq 1-\delta}dp^{t,x}(y).
\end{align*}
Let $\phi:\mathbb{R}^d\rightarrow \mathbb{R}^d$ be the $C^1$ diffeomorphism of the polar change of coordinates, we have 
\begin{align*}
    \int&\frac{\|y-tx\|^{2(\beta-\lfloor \beta \rfloor )}}{\log((\|y-tx\|\wedge 1/2)^{-1})^{2\theta}}dp^{t,x}(y)\\
     = &\int_0^\infty\int_0^\pi... \int_0^{2\pi} \frac{\rho^{2(\beta-\lfloor \beta \rfloor) }}{\log((\rho\wedge 1/2)^{-1})^{2\theta}}\frac{r(tx +\phi(\rho,\theta_1,...,\theta_{d-1}))}{Qtr(x)} \frac{e^{-\frac{\rho^2}{2(1-t^2)}}}{(2\pi(1-t^2))^{d/2}}|\text{det}(\nabla \phi(\rho,\theta_1,...,\theta_{d-1}))|d\rho d\theta_1...d\theta_{d-1}\\
    \leq & \frac{C}{Qtr(x)}\int_0^{\infty}\int_0^\pi... \int_0^{2\pi} \frac{\rho^{2(\beta-\lfloor \beta \rfloor) +d-1 }}{\log((\rho\wedge 1/2)^{-1})^{2\theta}}r(tx +\phi(\rho,\theta_1,...,\theta_{d-1}))\frac{e^{-\frac{\rho^2}{2(1-t^2)}}}{(1-t^2)^{d/2}}d\rho d\theta_1...d\theta_{d-1}\\
    \leq & \frac{(1-t^2)^{\beta-\lfloor \beta \rfloor}}{Qtr(x)}\int_0^{\infty}\int_0^\pi... \int_0^{2\pi} \frac{\rho^{2(\beta-\lfloor \beta \rfloor) +d-1 }}{\log((\rho(1-t^2)^{1/2}\wedge 1/2)^{-1})^{2\theta}}r(tx +\phi(\rho(1-t^2)^{1/2},\theta_1,...,\theta_{d-1}))e^{-\frac{\rho^2}{2}}d\rho d\theta_1...d\theta_{d-1}.
\end{align*}
Let $\gamma=(1-t^2)^{-1/4}$, we have
\begin{align*}
\int_0^{\infty}&\int_0^\pi... \int_0^{2\pi} \frac{\rho^{2(\beta-\lfloor \beta \rfloor) +d-1 }}{\log((\rho(1-t^2)^{1/2}\wedge 1/2)^{-1})^{2\theta}}r(tx +\phi(\rho(1-t^2)^{1/2},\theta_1,...,\theta_{d-1}))e^{-\frac{\rho^2}{2}}\mathds{1}_{\{\rho\leq \gamma\}}d\rho d\theta_1...d\theta_{d-1}\nonumber\\
= & \int_0^{\infty}\int_0^\pi... \int_0^{2\pi} \frac{\rho^{2(\beta-\lfloor \beta \rfloor) +d-1 }}{(\log((\rho)^{-1})+\log((1-t^2)^{-1/2}))^{2\theta}}r(tx +\phi(\rho(1-t^2)^{1/2},\theta_1,...,\theta_{d-1}))e^{-\frac{\rho^2}{2}}\mathds{1}_{\{\rho\leq \gamma\}}d\rho d\theta_1...d\theta_{d-1}\\
\leq & \frac{C}{\log((1-t^2)^{-1/2})^{2\theta}} \int_0^{\infty}\int_0^\pi... \int_0^{2\pi} \rho^{2(\beta-\lfloor \beta \rfloor) +d-1 }r(tx +\phi(\rho(1-t^2)^{1/2},\theta_1,...,\theta_{d-1}))e^{-\frac{\rho^2}{2}}\mathds{1}_{\{\rho\leq \gamma\}}d\rho d\theta_1...d\theta_{d-1}.
\end{align*}

On the other hand, 
using \eqref{align:boundbelowq} for $x$ and $t$ satisfying \eqref{eq:hyp14} we have 
\begin{align*}
  &\frac{1}{Q_tr(x)} \int_0^{\infty}\int_0^\pi... \int_0^{2\pi} \rho^{2(\beta-\lfloor \beta \rfloor) +d-1 }r(tx +\phi(\rho(1-t^2)^{1/2},\theta_1,...,\theta_{d-1}))e^{-\frac{\rho^2}{2}}\mathds{1}_{\{\rho\leq \gamma\}}d\rho d\theta_1...d\theta_{d-1}  \\
  &\leq C\int_0^{\infty}\int_0^\pi... \int_0^{2\pi} \rho^{2(\beta-\lfloor \beta \rfloor) +d-1 }\exp\left(\max_{z\in B^d(tx,(1-t^2)^{1/2})}u(\|z\|^2)-\min_{w\in B^d(tx,\gamma(1-t^2)^{1/2})}u(\|w\|^2)\right)\\
  &\quad \quad \times e^{-\frac{\rho^2}{2}}\mathds{1}_{\{\rho\leq \gamma\}}d\rho d\theta_1...d\theta_{d-1}
  \\
  &\leq C\int_0^{\infty}\int_0^\pi... \int_0^{2\pi} \rho^{2(\beta-\lfloor \beta \rfloor) +d-1 }\exp\left(\max_{z\in B^d(tx,(1-t^2)^{1/4})}u^{(1)}(\|z\|^2)C(1-t^2)^{1/4}\right)e^{-\frac{\rho^2}{2}}\mathds{1}_{\{\rho\leq \gamma\}}d\rho d\theta_1...d\theta_{d-1}
  \\
  &\leq C\int_0^{\infty}\int_0^\pi... \int_0^{2\pi} \rho^{2(\beta-\lfloor \beta \rfloor) +d-1 }e^{-\frac{\rho^2}{2}}\mathds{1}_{\{\rho\leq \gamma\}}d\rho d\theta_1...d\theta_{d-1}\\
  &\leq C,
\end{align*}
where we used that $\|tx\|^2\leq 1-C_2^{-1}\log(\epsilon^{-1}+1)^{-K}$ to obtain
\begin{align*}
\max_{z\in B^d(tx,(1-t^2)^{1/4})}u^{(1)}(\|z\|^2)&\leq C\Big(1-(\|tx\|+(1-t^2)^{1/4})^2\Big)^{-(K+1)}\\
& \leq C\log(\epsilon^{-1})^C\\
& \leq \left(C^\star \log(\epsilon^{-1})^{C^\star}\right)^{1/4}\\
&\leq (1-t^2)^{-1/4},
\end{align*}
for $C^\star>0$ large enough.
On the other hand,
\begin{align*}
    &\frac{1}{Q_tr(x)} \int_0^{\infty}\int_0^\pi... \int_0^{2\pi} \rho^{2(\beta-\lfloor \beta \rfloor) +d-1 }r(tx +\phi(\rho(1-t^2)^{1/2},\theta_1,...,\theta_{d-1}))e^{-\frac{\rho^2}{2}}\mathds{1}_{\{\rho> \gamma\}}d\rho d\theta_1...d\theta_{d-1}\\
    &\leq C\frac{e^{-\frac{\gamma^2}{2}}}{Q_tr(x)}\\
    & \leq C\exp\left(-\frac{(1-t^2)^{-1/2}}{2}\right)\exp\left(\max_{z\in B^d(0,\|tx\|+(1-t^2)^{1/2})}u^{(1)}(\|z\|^2)\right)\\
    & \leq C\exp\left(-\frac{(1-t^2)^{-1/2}}{2}+(1-(\|tx\|+(1-t^2)^{1/2})^2)^{-K+1}\right)\\ 
     & \leq C\exp\left(-\frac{(1-t^2)^{-1/2}}{4}\right).
\end{align*}
We can therefore conclude that
\begin{align*}
     \int \|f^l(y)-f^l(tx)\|^2\mathds{1}_{\|y\|\leq 1-\delta}dp^{t,x}(y) \leq C\delta^{-2(K+\lfloor\beta\rfloor +1)}(1-t^2)^{\beta-\lfloor \beta \rfloor}\log((1-t^2)^{-1})^{-2\theta}.     
\end{align*}   
Now, for $\xi=(1-t^2)^{1/4}$ we have
\begin{align*}
    \int \|f^l(y)-f^l(tx)\|^2\mathds{1}_{\|y\|\geq 1-\delta}dp^{t,x}(y)&= \int \|f^l(y)-f^l(tx)\|^2\mathds{1}_{\|y\|\geq 1-\delta}\mathds{1}_{\|y-tx\|\geq \xi}dp^{t,x}(y)\\
   & \leq  C\int \|f^l(y)-f^l(tx)\|^2\frac{\exp(-u(\|y\|^2))}{Qtr(x)(1-t^2)^{d/2}}\exp(-\frac{\xi^2}{1-t^2})d\lambda^d(y)\\
    &\leq C\frac{\exp(-\frac{\xi^2}{1-t^2})}{Q_tr(x)(1-t^2)^{d/2}}\int \|f^l(y)-f^l(tx)\|^2\exp(-u(\|y\|^2)/2)d\lambda^d(y)\\
    &\leq C\frac{\exp(-\frac{\xi^2}{1-t^2})}{Q_tr(x)(1-t^2)^{d/2}}\\
    & \leq C\exp(-\frac{1}{4(1-t^2)^{1/2}}).
\end{align*}

\end{proof}

Using Proposition \ref{prop:concentration3} we can give an estimate on the term $\int \|f^l(y)-\xi\|^2dp^{t,x}(y)$ in Proposition \ref{prop:firstboundongradsth2}. Taking $\xi=f^l(tx)$ with $x$ and $t$ satisfying
\begin{equation*}
1-t^2\leq C^{-1}_\star\log(\epsilon^{-1})^{-C_\star} \text{ and } \|tx\|^2\leq 1-C^{-1}_2\log(\epsilon^{-1}+1)^{-K},
\end{equation*}  
with given by $C^\star,C_2>0$ given Proposition \ref{prop:concentration3} we  obtain that 
\begin{equation}\label{eq:concentratioint}
\int \|f^l(y)-f^l(tx)\|^2dp^{t,x}(y)\leq C\log(\epsilon^{-1})^{C_3}\log((1-t^2)^{-1})^{-2\theta}.    
\end{equation}
To obtain estimates when $1-t^2\geq C^{-1}_\star\log(\epsilon^{-1})^{-C_\star}$, we first establish a bound on$\|X_t(x)\|$ in this setting.
\begin{proposition}\label{prop:localfortsmall}
    For $x\in B^d(0,\log(\epsilon^{-1}))$ and $t\in [0,1-C_\star^{-1}\log(\epsilon^{-1})^{-C_\star}]$ with $C^\star>0$ given by Proposition \ref{prop:loca3}, we have $$\|X_t(x)\|\leq C\log(\epsilon^{-1})^{C_2}.$$
\end{proposition}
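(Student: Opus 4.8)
The plan is to reduce the bound on $\|X_t(x)\|$ to a one–line Grönwall estimate, using crucially that $p$ (hence $p^{t,x}$) is supported on the unit ball. Recall from Lemma \ref{lemma:le5} that, under the assumptions of Theorem \ref{theo:thetheo2}, the flow $t\mapsto X_t(x)$ is well defined on $[0,1)$ with $\partial_t X_t(x)=V(t,X_t(x))$ and $X_0(x)=x$. The first step is to bound the velocity field crudely: since $\mathrm{supp}(p^{t,y})\subset B^d(0,1)$, the barycentre $\int z\,dp^{t,y}(z)$ lies in $B^d(0,1)$, so for all $t\in[0,1)$ and $y\in\mathbb R^d$,
\[
\|V(t,y)\|=\Big\|\frac{1}{1-t^2}\int\big(z-ty\big)\,dp^{t,y}(z)\Big\|\le \frac{1+t\|y\|}{1-t^2}\le \frac{1+\|y\|}{1-t^2},
\]
the case $t=0$ being covered by $V(0,y)=\int z\,dp(z)$.

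Next I would apply Grönwall's lemma. Setting $g(t):=\|X_t(x)\|$, the velocity bound along the trajectory gives $g'(t)\le \frac{1+g(t)}{1-t^2}$ for a.e.\ $t$, with $g(0)=\|x\|$. Hence
\[
1+g(t)\le\big(1+\|x\|\big)\exp\Big(\int_0^t\frac{ds}{1-s^2}\Big)=\big(1+\|x\|\big)\sqrt{\tfrac{1+t}{1-t}}\le \big(1+\|x\|\big)\sqrt{\tfrac{2}{1-t}}.
\]

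Finally, for $t\le 1-C_\star^{-1}\log(\epsilon^{-1})^{-C_\star}$ one has $1-t\ge \tfrac12(1-t^2)\ge \tfrac{1}{2C_\star}\log(\epsilon^{-1})^{-C_\star}$, so that $\sqrt{2/(1-t)}\le 2\sqrt{C_\star}\,\log(\epsilon^{-1})^{C_\star/2}$; combining this with $\|x\|\le\log(\epsilon^{-1})$ yields $\|X_t(x)\|\le C\log(\epsilon^{-1})^{C_2}$, as claimed. There is no genuine obstacle here — the only points requiring care are that the velocity estimate is invoked along the (absolutely continuous) trajectory $t\mapsto X_t(x)$, and that the logarithmic divergence of $\int_0^t(1-s^2)^{-1}ds$ as $t\to1$ is exactly what produces a polynomial‑in‑$\log(\epsilon^{-1})$ growth rather than anything worse.
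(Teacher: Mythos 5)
Your proof is correct, and it takes a mildly different route from the paper. The paper computes $\partial_t\|X_t(x)\|^2 = 2\langle X_t(x),V(t,X_t(x))\rangle$ and keeps the sign structure: after expanding $V$, the term $\langle X_t(x),-tX_t(x)\rangle=-t\|X_t(x)\|^2\le0$ is dropped and the compact support of $p^{t,\cdot}$ gives $\partial_t\|X_t(x)\|^2\le C(1-t^2)^{-1}\|X_t(x)\|$; a nonlinear Grönwall inequality then yields the \emph{additive} bound $\|X_t(x)\|\le\|x\|+C\int_0^t(1-s^2)^{-1}\,ds$, which grows only like $\|x\|+C\log\log(\epsilon^{-1})$. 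You instead bound the velocity field uniformly, $\|V(t,y)\|\le(1+\|y\|)/(1-t^2)$, and apply ordinary multiplicative Grönwall, getting $1+\|X_t(x)\|\le(1+\|x\|)\sqrt{(1+t)/(1-t)}$. This is a slightly weaker (multiplicative rather than additive) estimate, but it is more elementary — no nonlinear Grönwall and no need to track the favourable sign of the $-t\|X_t(x)\|^2$ contribution — and it still closes the argument, since the extra factor is only a power of $\log(\epsilon^{-1})$ over the admissible range of $t$. Both proofs use the same key fact, namely that $\int z\,dp^{t,y}(z)\in B^d(0,1)$, and both land within the tolerance $C\log(\epsilon^{-1})^{C_2}$ allowed by the statement.
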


\begin{proof}
We have 
\begin{align*}
  \partial_t \|X_t(x)\|^2&=2\left\langle X_t(x), V(t,X_t(x))\right\rangle\\
  &=2\left\langle X_t(x), \frac{1}{1-t^2} \int (y-tX_t(x))dp^{t,X_t(x)}(y)\right\rangle\\
  & \leq C\frac{1}{1-t^2} \|X_t(x)\|^{1/2}, 
\end{align*}
so using the non linear Gronwall's inequality \citep{dragomir2003some} we have 
$$ \|X_t(x)\|\leq \|x\| +\int_0^t \frac{1}{1-s^2}ds\leq  C\log(\epsilon^{-1})^{C_2}.$$
\end{proof}

\subsubsection{Bounding $\|\nabla^k X_1\|_\infty$ for $k\in\{2,...,\lfloor \beta \rfloor +1\}$}\label{sec:evertogether3}
Let us take $C^\star>0$ large enough so that  Propositions \ref{prop:loca3}, \ref{prop:concentHtheo2} and \ref{prop:firstboundongradsth2} hold. We first treat the case $1-t^2\geq C^{-1}_\star\log(\epsilon^{-1})^{-C_\star}$, i.e. $t$ is not too close to $1$. From the Faa Di Bruno formula we have 
\begin{align}\label{align:gradvtpetit}
    \nabla^{k} V(t,x)&=\frac{1}{t}\sum_{\pi \in \Pi_{k+1}} \frac{(-1)^{|\pi|+1}}{(Q_{t} r(x))^{|\pi|}} \prod_{B\in \pi} \nabla^{|B|} Q_{t} r(x)\nonumber \\
    & = \frac{1}{t}\sum_{\pi \in \Pi_{k+1}} (-1)^{|\pi|+1} \prod_{B\in \pi} \frac{1}{Q_{t} r(x)} \int \nabla^{|B|}_x\varphi^{t,x}(y)r(y)d\lambda^d(y)\nonumber\\
    & = \frac{t^{k}}{(1-t^2)^{k+1}}\sum_{\pi \in \Pi_{k+1}} (-1)^{|\pi|+1} \prod_{B\in \pi} \int F_{|B|}(y-tx)dp^{t,x}(y),
\end{align}
with $$ F_{|B|}(z):=(1-t^2)^{|B|}\exp(\frac{\|z\|^2}{1-t^2})\nabla^{|B|} \exp(-\frac{\|z\|^2}{1-t^2}).$$

Now, as $p^{t,x}$ has support in $B^d(0,1)$ for all $t\in [0,1)$ and $x\in \mathbb{R}^d$, we deduce from \eqref{align:gradvtpetit} and Proposition \ref{prop:localfortsmall} that 

\begin{align*}\|\nabla^{k} V(t,X_t(x))\|&=
    \frac{t^{k}}{(1-t^2)^{k+1}}\|\sum_{\pi \in \Pi_{k+1}} (-1)^{|\pi|+1} \prod_{B\in \pi} \int F_{|B|}(y-tX_t(x))dp^{t,X_t(x)}(y)\|\\
    & \leq \frac{C}{(1-t^2)^{k+1}}\sum_{\pi \in \Pi_{k+1}}  \prod_{B\in \pi} \int \sum_{i=1}^{|B|}(\|y\|^i+\|X_t(x)\|^i)dp^{t,X_t(x)}(y)\\
    & \leq  C\log(\epsilon^{-1})^{C_2}.
\end{align*}

Let us now consider the case $1-t^2\leq C^{-1}_\star\log(\epsilon^{-1})^{-C_\star}$. We are going to give estimates on different terms of the bound given by Lemma \ref{lemma:lemma1}. Let us take $l,m\in \{1,...,k-1\}$, from Proposition \ref{prop:firstboundongradsth2} we have  
$$ \|\int f^{l}(y)\nabla^2_x dp^{t,x}(y) \|
    \leq \frac{C}{1-t^2} \left(\int \|f^l(y)-f^l(tx)\|^2dp^{t,x}(y)\right)^{1/2}$$
    and 
$$ \|\int f^{l}(y)\nabla_x p^{t,x}(y)d\lambda^d(y)\| \|\int f^{m}(y)\nabla_x p^{t,x}(y)d\lambda^d(y) \|
    \leq \frac{C}{1-t^2} \int \|f^l(y)-f^l(tx)\|^2dp^{t,x}(y).$$

Therefore, from \eqref{eq:concentratioint} and lemma \ref{lemma:lemma1} we conclude that     

\begin{equation}
     \|\nabla^{k} V(t,X_t(x))\|\leq  C\log(\epsilon^{-1})^{C_2}\frac{1}{(1-t^2)\log((1-t^2)^{-1})^\theta}.
\end{equation}
We deduce that for all $t\in [0,1)$,
we have 
$$\int_0^t\|\nabla^{k} V(s,X_s(x))\|ds\leq  C\log(\epsilon^{-1})^{C_2},$$
so from Lemma \ref{lemma:addlemme} it concludes the bound on $\|\nabla^k X_t\|_\infty$ for $k\in\{2,...,\lfloor \beta \rfloor +1\}$. Using Lemma \ref{lemma:convergenceofmap}, we can then establish the bound on the derivatives of the transport map for Theorem \ref{theo:thetheo2} with the next result.

\begin{proposition}
    Under the assumptions of Theorem \ref{theo:thetheo}, the limit Langevin transport map $T$ satisfies that for all $\epsilon\in (0,1)$, the restriction of $T$ to $B^d(0,\log(\epsilon^{-1}))$ belongs to $ \mathcal{H}_{C\log(\epsilon^{-1})^{C_2}}^{\beta+1}\Big(B^d(0,\log(\epsilon^{-1})),B^d(0,1)\Big)$.
\end{proposition}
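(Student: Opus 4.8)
The plan is to package the estimates of Section~\ref{sec:prooftheo2} into a bound on all derivatives of $X_t$ up to order $\lfloor\beta\rfloor+1$ over the ball $B^d(0,\log(\epsilon^{-1}))$ that is uniform in $t\in[0,1)$ and of size $C\log(\epsilon^{-1})^{C_2}$, and then to transfer these bounds to the limit map $T$ of Corollary~\ref{coro:onasue}. Fix $\epsilon\in(0,1/2)$ and take $C_\star$ large enough that Propositions~\ref{prop:loca3}, \ref{prop:concentHtheo2}, \ref{prop:firstboundongradsth2}, \ref{prop:concentration3} and~\ref{prop:localfortsmall} apply (the hypothesis $\|x\|^2\le\log(\epsilon^{-1})$ of Proposition~\ref{prop:loca3} and the hypothesis $x\in B^d(0,\log(\epsilon^{-1}))$ of Proposition~\ref{prop:localfortsmall} differ by a square, but replacing $\epsilon$ by a suitable power of itself makes them consistent and only enlarges the exponent $C_2$, so we may freely assume $x\in B^d(0,\log(\epsilon^{-1}))$ throughout). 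Fixing such an $x$, Proposition~\ref{prop:localfortsmall} bounds $\|X_t(x)\|$ by a logarithmic power of $\epsilon$ for $t$ below the threshold $1-C_\star^{-1}\log(\epsilon^{-1})^{-C_\star}$, while Proposition~\ref{prop:loca3} gives $\|tX_t(x)\|^2\le1-C_2^{-1}\log(\epsilon^{-1}+1)^{-K}$ above it; in particular $X_t(x)$ never approaches $\partial B^d(0,1)$, the hypotheses~\eqref{eq:hyp12}--\eqref{eq:hyp14} of Propositions~\ref{prop:concentHtheo2}--\ref{prop:concentration3} hold along the whole trajectory, and by Assumption~\ref{assum:u} every $\|f^l(tX_t(x))\|$ with $l\le\lfloor\beta\rfloor$ is $\le C\log(\epsilon^{-1})^{C_2}$.

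Next I would collect the velocity estimates exactly as in Section~\ref{sec:evertogether3}. Proposition~\ref{prop:boundimpossiblebuty} gives $\int_0^1\lambda_{\max}(\nabla V(s,X_s(x)))\,ds\le C$. For $2\le k\le\lfloor\beta\rfloor+1$, splitting the time integral at the threshold and using, on $[0,1-C_\star^{-1}\log(\epsilon^{-1})^{-C_\star}]$, the Faa di Bruno expansion of $\nabla^kV$ from Section~\ref{sec:evertogether3} together with Proposition~\ref{prop:localfortsmall} (the factor $(1-t^2)^{-(k+1)}$ being harmless since the endpoint stays bounded away from $1$), and on $[1-C_\star^{-1}\log(\epsilon^{-1})^{-C_\star},1)$ Lemma~\ref{lemma:lemma1} with Propositions~\ref{prop:firstboundongradsth2} and~\ref{prop:concentration3}, one gets
\[
\int_0^1\|\nabla^kV(s,X_s(x))\|\,ds\ \le\ C\log(\epsilon^{-1})^{C_2},
\]
the integral near $t=1$ converging thanks to the extra factor $\log((1-t^2)^{-1})^{-\theta}$ with $\theta>1$. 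Feeding the last two displays into Lemma~\ref{lemma:addlemme} — whose output depends exponentially only on $\int_0^1\lambda_{\max}$, which is $O(1)$ here, and polynomially on $\sum_{j=2}^k\int_0^1\|\nabla^jV(s,X_s(x))\|\,ds$ — yields $\|\nabla^kX_t(x)w^k\|\le C\log(\epsilon^{-1})^{C_2}$ for all $t\in[0,1)$, $w\in\mathbb{S}^{d-1}$ and $k\in\{1,\dots,\lfloor\beta\rfloor+1\}$; a polarization identity for symmetric multilinear forms upgrades this to $\|\nabla^kX_t\|_{L^\infty(B^d(0,\log(\epsilon^{-1})))}\le C\log(\epsilon^{-1})^{C_2}$.

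For the fractional part I would reproduce, in the compact setting and with the localization above substituting for the global control of $r$, the argument that bounds $\|\nabla^{\lfloor\beta\rfloor}X_1\|_{\mathcal{H}^{\beta-\lfloor\beta\rfloor}}$ in the proof of Theorem~\ref{theo:thetheo} (Appendix Section~\ref{sec:holdregulastderiv}): one differences the Faa di Bruno/Gronwall identities in the base point and invokes the $(\beta-\lfloor\beta\rfloor)$-Hölder regularity of the derivatives of $r$ on the region swept by $X_t(x)$, obtaining that the order-$(\beta-\lfloor\beta\rfloor)$ Hölder seminorm of $\nabla^{\lfloor\beta\rfloor+1}X_t$ on $B^d(0,\log(\epsilon^{-1}))$ is $\le C\log(\epsilon^{-1})^{C_2}$, uniformly in $t<1$. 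Combined with the previous paragraph this gives $X_t|_{B^d(0,\log(\epsilon^{-1}))}\in\mathcal{H}^{\beta+1}_{C\log(\epsilon^{-1})^{C_2}}(B^d(0,\log(\epsilon^{-1})),B^d(0,1))$ for every $t<1$. Since $(X_t)_{\#}\gamma_d=\mu_{1-t}\to p$ in distribution, along the subsequence $t_k\to1$ producing the limit map $T=\lim_k X_{t_k}$ of Corollary~\ref{coro:onasue} the family $(X_{t_k}|_{B^d(0,\log(\epsilon^{-1}))})_k$ is equibounded in $C^{\lfloor\beta\rfloor+1}$; by Arzel\`a--Ascoli and the lower semicontinuity of the $\mathcal{H}^{\beta+1}$-ball under local uniform convergence of the derivatives up to order $\lfloor\beta\rfloor$, the bound passes to $T$, i.e. $T|_{B^d(0,\log(\epsilon^{-1}))}\in\mathcal{H}^{\beta+1}_{C\log(\epsilon^{-1})^{C_2}}(B^d(0,\log(\epsilon^{-1})),B^d(0,1))$, which is the claim (consistently with the global regularity $T\in\mathcal{H}^1_C$).

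The genuine obstacle is the localization step and the bookkeeping of the logarithmic factors it forces. Unlike in Theorem~\ref{theo:thetheo}, the derivatives of $r=p/\gamma_d$ diverge polynomially as $\|x\|\uparrow1$ (Assumption~\ref{assum:u}), so each bound on $\nabla^kV(t,X_t(x))$ is meaningful only once $X_t(x)$ is known to stay at distance $\gtrsim\log(\epsilon^{-1})^{-C_2}$ from $\partial B^d(0,1)$; Propositions~\ref{prop:loca3}--\ref{prop:localfortsmall} are exactly what supplies this, at the price of the non-global $\log(\epsilon^{-1})^{C_2}$-sized constants, and tracking how those propagate through Lemmas~\ref{lemma:lemma1} and~\ref{lemma:addlemme} and through the fractional-order step is where the actual work lies.
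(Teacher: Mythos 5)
Your proof is correct and follows the paper's implicit approach: the proposition has no dedicated proof in the paper — it is a summary statement at the end of Section~\ref{sec:evertogether3}, whose justification is the accumulated work of Sections~\ref{sec:prooftheo2} and \ref{sec:castheo1append} (localization via Propositions~\ref{prop:loca3} and~\ref{prop:localfortsmall}, the velocity bounds, Lemma~\ref{lemma:addlemme}, the fractional-order estimates) together with the compactness Lemma~\ref{lemma:convergenceofmap} to pass from $X_t$ to the limit map $T$. You reconstruct exactly that chain.

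Two of your side remarks deserve highlighting because the paper elides them. First, you correctly flag the mismatch between the hypothesis $\|x\|^2\le\log(\epsilon^{-1})$ of Proposition~\ref{prop:loca3} and the ball $B^d(0,\log(\epsilon^{-1}))$ in both Proposition~\ref{prop:localfortsmall} and the final statement; your fix (rescale $\epsilon$, which only inflates $C_2$) is the right one, even though the substitution $\log(\tilde\epsilon^{-1})=\log(\epsilon^{-1})^2$ is not literally a fixed power of $\epsilon$ — the point is it stays polylogarithmic. Second, and more substantively, you notice that a coarse application of Lemma~\ref{lemma:addlemme} with $A = C\log(\epsilon^{-1})^{C_2}$ would be useless, since its constant grows \emph{exponentially} in $A$; the argument only closes because, as you say, the exponential dependence is on $\int_0^1\lambda_{\max}(\nabla V)$ — which is $O(1)$ by Proposition~\ref{prop:boundimpossiblebuty} — while the dependence on $\sum_{j\ge2}\int_0^1\|\nabla^jV\|$ is merely polynomial through the induction. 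This is implicit in the paper's use of the lemma but is worth stating, as you did, to keep the final constant of size $\log(\epsilon^{-1})^{C_2}$ rather than $\epsilon^{-C}$.
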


\section{Estimates on  $\|\nabla^{\lfloor \beta \rfloor+1} X_1\|_{\mathcal{H}^{\beta-\lfloor \beta \rfloor}}$}\label{sec:holdregulastderiv}
In this section we give the second part of the proof of Theorems \ref{theo:thetheo} and \ref{theo:thetheo2} which deals with the Hölder regularity of the $(\lfloor \beta \rfloor+1)$-derivative of the transport map. The proof's ideas are similar to the ones of Section \ref{sec:nablak}.
\subsection{General reasoning for the two cases} 
This section is dedicated to the results applying both to the proof of Theorems \ref{theo:thetheo} and \ref{theo:thetheo2}.

\subsubsection{Gronwall}
Let $(X_t)_{t\in [0,1)}$ be the solution of \eqref{eq:PDE} for $p$ satisfying either the assumptions of Theorems \ref{theo:thetheo} or \ref{theo:thetheo2}.
Take $k:=\lfloor \beta \rfloor +1$, $x_1,x_2\in \mathbb{R}^d$, $w\in \mathbb{S}^{d-1}$ and define $\alpha_t:=\|\left(\nabla^k X_t(x_1)-\nabla^k X_t(x_2)\right)w^k\|^2$, we have 
\begin{align*}
    \partial_t \left(\nabla^k X_t(x_1)-\nabla^k X_t(x_2)\right)  = &\sum_{\pi \in \Pi_{k}} \nabla^{|\pi|}V(t,X_t(x_1)) \prod_{B\in \pi} \nabla^{|B|} X_t(x_1)-\nabla^{|\pi|}V(t,X_t(x_2)) \prod_{B\in \pi} \nabla^{|B|} X_t(x_2)\\
     = & \sum_{\pi \in \Pi_{k}} \nabla^{|\pi|}V(t,X_t(x_1))\left( \prod_{B\in \pi} \nabla^{|B|} X_t(x_1)-\prod_{B\in \pi} \nabla^{|B|} X_t(x_2)\right)\\
    & + \sum_{\pi \in \Pi_{k}} \left(\nabla^{|\pi|}V(t,X_t(x_1)) -\nabla^{|\pi|}V(t,X_t(x_2))\right) \prod_{B\in \pi} \nabla^{|B|} X_t(x_2).
\end{align*}
As
$$\partial_t \alpha_t = 2\Big\langle\left(\nabla^k X_t(x_1)-\nabla^k X_t(x_2) \right)w^k,\partial_t \left(\nabla^k X_t(x_1)-\nabla^k X_t(x_2)\right)w^k\Big\rangle,$$
we obtain
\begin{align*}
    \partial_t \alpha_t \leq & 2\big(\lambda_1(t)+\lambda_2(t)\big)\alpha_t^{1/2}+2\lambda_3(t) \alpha_t,
\end{align*}
with
$$\lambda_1(t)=\sum_{\pi \in \Pi_{k}} \|\nabla^{|\pi|}V(t,X_t(x_1)) -\nabla^{|\pi|}V(t,X_t(x_2))\| \prod_{B\in \pi} \nabla^{|B|}\| X_t(x_2)\|,$$
$$\lambda_2(t)=\sum_{\pi \in \Pi_{k},|\pi|>1} \|\nabla^{|\pi|}V(t,X_t(x_1))\|\| \prod_{B\in \pi} \nabla^{|B|} X_t(x_1)-\prod_{B\in \pi} \nabla^{|B|} X_t(x_2)\|$$
and
$$\lambda_3(t)=\lambda_{\max}(\nabla V(t,X_t(x_1))).$$

Then, we deduce from the non linear Gronwall's inequality (Theorem 21 in \cite{dragomir2003some}) that 
\begin{equation}\label{eq:thegronwallholder}
\alpha_t^{1/2}\leq \int_0^t (\lambda_1(s)+\lambda_2(s))\exp\left(\int_s^t \lambda_3(z)dz\right)ds
\end{equation}
Let $\epsilon \in (0,1/2)$, we shall distinguish the cases of Theorems \ref{theo:thetheo} or \ref{theo:thetheo2} by taking $C_\epsilon:=C$ if we treat the case of Theorems \ref{theo:thetheo}  and $C_\epsilon:=C\log(\epsilon^{-1})^{C_2}$ if we treat the case of Theorems \ref{theo:thetheo2}. Likewise, we will consider that $x_1,x_2\in B^d(0,\log(\epsilon^{-1}))$ if we treat the case of Theorems \ref{theo:thetheo2}.

From Sections \ref{sec:nablak} and \ref{sec:prooftheo2} we have that for all $l \in \{1,...,k\}$, $$\|\nabla^{l}V(t,X_t(x_1))\|\leq C_\epsilon \Big((1-t^2)\log((1-t^2)^{-1})^\theta\Big)^{-1}$$ and $$\|\nabla^{l-1} X_t(x_1)-\nabla^{l-1} X_t(x_2)\|\leq C_\epsilon \|x_1-x_2\|,$$
so we deduce that for all $s\in [0,1)$
$$\lambda_2(s)\leq C_\epsilon \Big((1-s^2)\log((1-s^2)^{-1})^\theta\Big)^{-1}\|x_1-x_2\|$$
and
$$\int_s^t \lambda_3(z)dz\leq C_\epsilon.$$
Furthermore, we have 
$$\|\nabla^{l-1} X_t(x_2)\|\leq C_\epsilon$$
so we deduce that for all $s\in [0,1)$

and $$\lambda_1(s)\leq C_\epsilon \|\nabla^{k}V(s,X_s(x_1)) -\nabla^{k}V(s,X_s(x_2))\|+C_\epsilon\|x_1-x_2\|.$$
Therefore, using \eqref{eq:thegronwallholder} we finally obtain
\begin{equation}\label{eq:thegronwallholdersimple}
\alpha_t^{1/2}\leq C_\epsilon \int_0^t \|\nabla^{k}V(s,X_s(x_1)) -\nabla^{k}V(s,X_s(x_2))\|ds+C_\epsilon\|x_1-x_2\|.
\end{equation}
Let us distinguish two different cases.

\paragraph*{The case $k=1$.} We have
\begin{align*}
\nabla V(s,x) &= \frac{1}{1-s^2} \nabla \int(y-sx)dp^{s,x}(y)\\
& = \frac{1}{1-s^2} \left(-\text{Id} + \frac{s}{1-s^2}\int(y-sx)\otimes \nabla_x p^{t,x}(y)d\lambda^d(y)\right),
\end{align*}
so taking 
\begin{equation}
    R_0(s,X_s(x_1),X_s(x_2)) = \|\int\Big((y-sX_s(x_1))\nabla_x p^{s,X_s(x_1)}(y)-(y-sX_s(x_2))\nabla_x p^{s,X_s(x_2)}(y)\Big)d\lambda^d(y)\|
\end{equation}
and using \eqref{eq:thegronwallholdersimple} we deduce the following result.

\begin{proposition}\label{prop:rodebut} Let $x_1,x_2\in \mathbb{R}^d$
    such that for all $s\in [0,1)$ we have
    $$\frac{s}{1-s^2}R_0(s,X_s(x_1),X_s(x_2))\leq C_\epsilon\frac{\|X_s(x_2)-X_s(x_1)\|^{\beta-\lfloor \beta \rfloor}}{\log((1-s^2)^{-1})^\theta}.$$
    Then, for all $t\in [0,1)$ and  $w\in \mathbb{S}^{d-1}$ we have
    $$\|\left(\nabla X_t(x_1)-\nabla X_t(x_2)\right)w\|\leq C_\epsilon \|x_2-x_1\|^{\beta-\lfloor \beta \rfloor}.$$
\end{proposition}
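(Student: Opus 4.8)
The plan is to feed the hypothesis directly into the Gronwall estimate \eqref{eq:thegronwallholdersimple} specialized to $k=1$, and then absorb the resulting time singularity using $\theta>1$. Recall from the computation just above the statement that
\[
\nabla V(s,x) = \frac{1}{1-s^2}\left(-\text{Id} + \frac{s}{1-s^2}\int (y-sx)\otimes \nabla_x p^{s,x}(y)\,d\lambda^d(y)\right),
\]
so that the $-\text{Id}$ term cancels in a difference and, with $R_0$ as defined in the statement,
\[
\|\nabla V(s,X_s(x_1)) - \nabla V(s,X_s(x_2))\| = \frac{1}{1-s^2}\left(\frac{s}{1-s^2}\,R_0(s,X_s(x_1),X_s(x_2))\right).
\]
Applying the hypothesis and then the Lipschitz bound $X_s\in\mathcal{H}^1_{C_\epsilon}$ established in Sections \ref{sec:nablak} and \ref{sec:prooftheo2} (which gives $\|X_s(x_2)-X_s(x_1)\|\leq C_\epsilon\|x_1-x_2\|$) yields
\[
\|\nabla V(s,X_s(x_1)) - \nabla V(s,X_s(x_2))\| \leq \frac{C_\epsilon\,\|x_1-x_2\|^{\beta-\lfloor\beta\rfloor}}{(1-s^2)\,\log\big(((1-s^2)\wedge \tfrac12)^{-1}\big)^{\theta}}.
\]

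Next I would integrate this over $s\in[0,t]$. Keeping the $\cdot\wedge\tfrac12$ inside the logarithm, the integrand is bounded on $[0,1/2]$, and near $s=1$ the change of variables $u=1-s$ reduces $\int^1 \frac{ds}{(1-s^2)\log(((1-s^2)\wedge 1/2)^{-1})^\theta}$ to a convergent integral of the form $\int_0 u^{-1}\log(u^{-1})^{-\theta}\,du$, convergent precisely because $\theta>1$. Hence $\int_0^t \|\nabla V(s,X_s(x_1)) - \nabla V(s,X_s(x_2))\|\,ds \leq C_\epsilon\,\|x_1-x_2\|^{\beta-\lfloor\beta\rfloor}$, and plugging this into \eqref{eq:thegronwallholdersimple} gives, for all $t\in[0,1)$ and $w\in\mathbb{S}^{d-1}$,
\[
\|(\nabla X_t(x_1)-\nabla X_t(x_2))w\| = \alpha_t^{1/2} \leq C_\epsilon\,\|x_1-x_2\|^{\beta-\lfloor\beta\rfloor} + C_\epsilon\,\|x_1-x_2\|.
\]

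Finally I would remove the linear term by a case distinction: if $\|x_1-x_2\|\leq 1$ then $\|x_1-x_2\|\leq\|x_1-x_2\|^{\beta-\lfloor\beta\rfloor}$ since $\beta-\lfloor\beta\rfloor\in[0,1)$ in the case $k=1$, so the bound collapses to $C_\epsilon\|x_1-x_2\|^{\beta-\lfloor\beta\rfloor}$; if $\|x_1-x_2\|\geq 1$ one simply uses the boundedness $\|\nabla X_t\|_\infty\leq C_\epsilon$ proved earlier to get $\|(\nabla X_t(x_1)-\nabla X_t(x_2))w\|\leq 2C_\epsilon\leq 2C_\epsilon\|x_1-x_2\|^{\beta-\lfloor\beta\rfloor}$. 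This closes the argument. There is no real obstacle at this level: the substantive content — the estimate $\frac{s}{1-s^2}R_0\leq C_\epsilon\|X_s(x_2)-X_s(x_1)\|^{\beta-\lfloor\beta\rfloor}/\log((1-s^2)^{-1})^{\theta}$ — is exactly the standing hypothesis here and is the object of the subsequent propositions; the only delicate bookkeeping point is retaining the truncation $\cdot\wedge\tfrac12$ in the logarithm so that the time integral genuinely converges, which is where $\theta>1$ is used.
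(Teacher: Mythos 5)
Your proof is correct and reconstructs exactly the argument the paper intends: the paper itself only says ``using \eqref{eq:thegronwallholdersimple} we deduce the following result,'' and your write-up supplies the missing steps — the identification $\|\nabla V(s,X_s(x_1))-\nabla V(s,X_s(x_2))\| = \tfrac{1}{1-s^2}\cdot\tfrac{s}{1-s^2}\,R_0$ from the displayed formula for $\nabla V$, the substitution of the hypothesis, the Lipschitz control $\|X_s(x_1)-X_s(x_2)\|\leq C_\epsilon\|x_1-x_2\|$, the time integration, and the absorption of the extra $C_\epsilon\|x_1-x_2\|$ term by the two cases $\|x_1-x_2\|\lessgtr 1$.

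The one point worth stating more explicitly is the status of the truncation $(\cdot)\wedge\tfrac12$ you insert into the logarithm. As literally written, the hypothesis $\frac{s}{1-s^2}R_0\leq C_\epsilon\|X_s(x_2)-X_s(x_1)\|^{\beta-\lfloor\beta\rfloor}/\log((1-s^2)^{-1})^\theta$ has a right-hand side that blows up as $s\to 0$ (the log vanishes there), so it is vacuous near $s=0$ and by itself does not control $\int_0^{1/2}\|\nabla V(s,X_s(x_1))-\nabla V(s,X_s(x_2))\|\,ds$. Your fix — reading the log as $\log\big(((1-s^2)\wedge\tfrac12)^{-1}\big)^\theta$, consistent with the paper's Hölder-norm definition and with Proposition \ref{prop:boundnabalvgauss} — is the right one, but it is technically a strengthening of the stated hypothesis rather than a consequence of it; an equivalent repair, which stays entirely within what the paper has already established, is to use the hypothesis only on $[1/2,t]$ and on $[0,1/2]$ bound $\|\nabla V(s,X_s(x_1))-\nabla V(s,X_s(x_2))\|\leq C\|X_s(x_1)-X_s(x_2)\|\leq C_\epsilon\|x_1-x_2\|$ directly from smoothness of $V$ away from $s=1$, after which the $\|x_1-x_2\|\lessgtr1$ dichotomy you already perform closes the gap. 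Either way the conclusion holds, and your reading of where $\theta>1$ enters (convergence of $\int u^{-1}\log(u^{-1})^{-\theta}\,du$ near $u=0$) is exactly right.

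One stylistic caution: the paper's displayed expression for $\nabla V(s,x)$ just above the proposition appears to carry a bookkeeping slip (direct differentiation gives $\frac{1}{1-s^2}\big(-s\,\mathrm{Id}+\int(y-sx)\otimes\nabla_x p^{s,x}(y)\,d\lambda^d(y)\big)$, without the extra $\tfrac{s}{1-s^2}$ prefactor). You have faithfully reproduced the paper's version, and the $\mathrm{Id}$ term drops out in the difference so it is immaterial for $R_0$, but the exact power of $1-s^2$ relating $R_0$ to $\nabla V(s,X_s(x_1))-\nabla V(s,X_s(x_2))$ depends on which formula one trusts. This does not affect the convergence argument (both versions leave an integrable singularity once the $s\to0$ end is handled as above), but if you were to present this proof standalone you would want to rederive the relation rather than quote the display.
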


\paragraph*{The case $k\geq2$.} Recall from \eqref{align:nablakplusdeux} that we have  
\begin{align*}
    \nabla^{k} V(s,x)&=s^{k-2}\sum_{\pi \in \Pi_{k-1}} (-1)^{|\pi|+1} \sum_{B\in \pi}\Biggl( \int f^{|B|}(y)\nabla^2_x dp^{s,x}(y) \prod_{C\in \pi\setminus \{B\}}\int f^{|C|}(y) dp^{s,x}(y)\nonumber\\
    & + \int f^{|B|}(y) \nabla_x dp^{s,x}(y)\sum_{C\in \pi\setminus \{B\}}\int f^{|C|}(y) \nabla_x dp^{s,x}(y)\prod_{D\in \pi\setminus \{B,C\}}\int f^{|D|}(y)  dp^{s,x}(y)\Biggl),
\end{align*}
with 
$f^l(y):=\nabla^l r(y)/r(y)$. Then, we deduce that
\begin{align}\label{align:vkx1moinsvkx2}
    &\|\nabla^{k} V(s,X_s(x_1))-\nabla^{k} V(s,X_s(x_2))\|\leq C\sum_{i=1}^3 R_i(s,X_s(x_1),X_s(x_2))
\end{align}
with
\begin{align*}
    R_1&(s,X_s(x_1),X_s(x_2))\\
    := & \sum_{l=1}^{k-1} \|\int f^{l}(y)(\nabla^2_x dp^{s,X_s(x_1)}(y)-\nabla^2_x dp^{s,X_s(x_2)}(y))\| \prod_{j=1}^{k-2}\left(\|\int f^{j}(y) dp^{s,X_s(x_1)}(y)\|\vee 1\right)^{k-2},
\end{align*}
\begin{align*}
R_2(s,X_s(x_1),X_s(x_2)):= &  \sum_{l=1}^{k-1} \|\int f^{l}(y) (\nabla_x dp^{s,X_s(x_1)}(y)-\nabla_x dp^{s,X_s(x_2)}(y))\|\\
    & \times \sum_{j=1}^{k-2}\|\int f^{j}(y) \nabla_x dp^{s,X_s(x_1)}(y)\|\prod_{i=1}^{k-2}\left(\|\int f^{i}(y)  dp^{s,X_s(x_1)}(y)\|\nonumber\vee 1 \right)^{k-2},
\end{align*}
\begin{align*}
    R_3&(s,X_s(x_1),X_s(x_2))\\
    := & \sum_{j=1}^{k-2} \|\int f^{j}(y)  (dp^{s,X_s(x_1)}(y)-dp^{s,X_s(x_2)}(y))\|\prod_{i=1}^{k-3}\left(\|\int f^{i}(y)  dp^{s,X_s(x_1)}(y)\|\vee 1\right)^{k-3}\\
    & \times\Biggl(\sum_{l=1}^{k-1}  \|\int f^{l}(y)\nabla^2_x dp^{s,X_s(x_2)}(y)\|+\|\int f^{l}(y)\nabla_x dp^{s,X_s(x_2)}(y)\|\sum_{r=1}^{k-1}\|\int f^{r}(y)\nabla_x dp^{s,X_s(x_2)}(y)\|\Biggl).
\end{align*}
Using \eqref{eq:thegronwallholdersimple} we deduce the following result.

\begin{proposition}\label{prop:boundrjdebut} Let $k\geq 2$, $x_1,x_2\in \mathbb{R}^d$
    such that for all $s\in [0,1)$ and $i\in \{1,2,3\}$ we have
$$R_i(s,X_s(x_1),X_s(x_2))\leq \frac{C_\epsilon}{(1-s^2)\log((1-s^2)^{-1})^\theta} \|X_s(x_2)-X_s(x_1)\|^{\beta-\lfloor \beta \rfloor},$$
    Then, for all $t\in [0,1)$ and  $w\in \mathbb{S}^{d-1}$ we have
    $$\|\left(\nabla^k X_t(x_1)-\nabla^k X_t(x_2)\right)w^k\|\leq C_\epsilon \|x_2-x_1\|^{\beta-\lfloor \beta \rfloor}.$$
\end{proposition}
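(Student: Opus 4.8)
The plan is to feed the hypothesis on $R_1,R_2,R_3$ into the Gronwall bound \eqref{eq:thegronwallholdersimple}, which has already been arranged so that the only term still needing control is the time integral of $\|\nabla^{k}V(s,X_s(x_1))-\nabla^{k}V(s,X_s(x_2))\|$. First I would dispose of the case $\|x_1-x_2\|>1/2$: there the uniform bounds $\|\nabla^{k}X_t(x_i)\|\le C_\epsilon$ obtained in Sections \ref{sec:nablak} and \ref{sec:prooftheo2} give $\|(\nabla^{k}X_t(x_1)-\nabla^{k}X_t(x_2))w^{k}\|\le 2C_\epsilon$, while $\|x_1-x_2\|^{\beta-\lfloor\beta\rfloor}\ge(1/2)^{\beta-\lfloor\beta\rfloor}\ge 1/2$, so the claimed inequality holds after enlarging $C_\epsilon$. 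Hence from now on I may assume $\|x_1-x_2\|\le 1/2$.

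Next I would combine \eqref{align:vkx1moinsvkx2} with the hypothesis to get, for every $s\in[0,1)$,
\[
\|\nabla^{k}V(s,X_s(x_1))-\nabla^{k}V(s,X_s(x_2))\|\ \le\ \frac{C_\epsilon}{(1-s^2)\,\log\big(((1-s^2)\wedge 1/2)^{-1}\big)^{\theta}}\,\|X_s(x_2)-X_s(x_1)\|^{\beta-\lfloor\beta\rfloor}.
\]
Since the flow maps $(X_s)_{s\in[0,1)}$ are $C_\epsilon$-Lipschitz uniformly in $s$ (again Sections \ref{sec:nablak} and \ref{sec:prooftheo2}) and $\beta-\lfloor\beta\rfloor\in[0,1)$, the last factor is bounded by $C_\epsilon\|x_1-x_2\|^{\beta-\lfloor\beta\rfloor}$. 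Inserting this into \eqref{eq:thegronwallholdersimple}, and using $\|x_1-x_2\|\le\|x_1-x_2\|^{\beta-\lfloor\beta\rfloor}$ (valid since $\|x_1-x_2\|\le 1/2$) to absorb the additive term, I would arrive at
\[
\|\big(\nabla^{k}X_t(x_1)-\nabla^{k}X_t(x_2)\big)w^{k}\|\ \le\ C_\epsilon\,\|x_1-x_2\|^{\beta-\lfloor\beta\rfloor}\Bigg(1+\int_0^1\frac{ds}{(1-s^2)\,\log\big(((1-s^2)\wedge 1/2)^{-1}\big)^{\theta}}\Bigg)
\]
for all $t\in[0,1)$ and $w\in\mathbb{S}^{d-1}$.

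The single point requiring care — and the reason the weak-regularity exponent $\theta>1$ is assumed throughout — is the finiteness of this time integral. After the substitution $u=1-s$, near $u=0$ the integrand is comparable to $u^{-1}\log(u^{-1})^{-\theta}$, whose primitive $\log(u^{-1})^{1-\theta}/(1-\theta)$ stays bounded as $u\to0$ precisely because $\theta>1$; for $s$ bounded away from $1$ the truncation $\wedge 1/2$ keeps the integrand bounded. This delivers the asserted estimate $\|(\nabla^{k}X_t(x_1)-\nabla^{k}X_t(x_2))w^{k}\|\le C_\epsilon\|x_1-x_2\|^{\beta-\lfloor\beta\rfloor}$. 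I do not expect a genuine obstacle in this argument: all the analytic content has been front-loaded into the bounds on $R_1,R_2,R_3$ and into the Lipschitz and boundedness estimates for the lower-order derivatives, so what remains is only a short Gronwall-plus-integration computation.
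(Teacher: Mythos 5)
Your proof is correct and is essentially the argument the paper has in mind: the paper states this proposition immediately after \eqref{eq:thegronwallholdersimple} and \eqref{align:vkx1moinsvkx2} without further justification, because the content has indeed been front-loaded into those two displays. Your chain — dispose of $\|x_1-x_2\|>1/2$ via the uniform bounds on $\nabla^k X_t$, then use \eqref{align:vkx1moinsvkx2} to turn the hypothesis into a pointwise-in-$s$ bound on $\|\nabla^k V(s,X_s(x_1))-\nabla^k V(s,X_s(x_2))\|$, replace $\|X_s(x_2)-X_s(x_1)\|^{\beta-\lfloor\beta\rfloor}$ by $C_\epsilon\|x_1-x_2\|^{\beta-\lfloor\beta\rfloor}$ using the Lipschitz bound on $X_s$, absorb the additive $C_\epsilon\|x_1-x_2\|$ term since $\|x_1-x_2\|\le 1/2$ and $\beta-\lfloor\beta\rfloor<1$, and finally integrate in $s$ using $\theta>1$ — is exactly the intended route.

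One small point worth flagging: you correctly observe that the time integral $\int_0^1 (1-s^2)^{-1}\log(((1-s^2)\wedge 1/2)^{-1})^{-\theta}\,ds$ is finite precisely because of the $\wedge\,1/2$ truncation inside the logarithm (without it, the integrand blows up near $s=0$ where $\log((1-s^2)^{-1})\to 0$). The hypothesis in the proposition, as written in the paper, omits this truncation, but all the concrete bounds proved elsewhere (e.g.\ Proposition~\ref{prop:boundnabalvgauss} and Section~\ref{sec:bounddifftermi}) carry it, so your silent repair is the right reading. You have made explicit a minor gap in the paper's statement rather than introduced one in your proof.
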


\subsubsection{Bound on $\|\nabla^{\lfloor \beta \rfloor+1} V(t,X_t(x_1))-\nabla^{\lfloor \beta \rfloor+1} V(t,X_t(x_2))\|$ for $(1-t^2)^{1/2}\leq \| X_t(x_1)- X_t(x_2)\|$ }\label{sec:boundnablavx1x2}
Let us give general estimates on $(R_j)_{j=0,1,2,3}$ that we will then bound differently for each of the cases in the following sections. We shall distinguish whether $\| X_t(x_1)- X_t(x_2)\|\geq (1-t^2)^{1/2}$ or not as when it is the case, we do not have to compare $p^{t,X_t(x_1)}$ and $p^{t,X_t(x_2)}$ as they each concentrate the mass at a scale $(1-t^2)^{1/2}$. 

Let us take $\theta_1:= X_t(x_1)$, $\theta_2:=X_t(x_2)$ and $t\in [0,1)$ such that $(1-t^2)^{1/2}\leq \|\theta_1-\theta_2\|$. For the term $R_0$, we have
\begin{align}
R_0&(t,\theta_1,\theta_2) = \|\int\Big((y-t\theta_2)\nabla_x p^{t,\theta_2}(y)-(y-t\theta_2)\nabla_x p^{t,\theta_2}(y)\Big)d\lambda^d(y)\|\nonumber\\
         \leq & \sum_{i=1}^2 \|\int(y-t\theta_i)\nabla_x p^{t,\theta_i}(y)-(1-t^2)\text{Id}\|\label{eq:roboundtpetit2}\\
         \leq & \frac{t}{1-t^2}\sum_{i=1}^2 \|\int(y-t\theta_i)\left(y-\int zdp^{t,\theta_i}(z)\right)d p^{t,\theta_i}(y)-(1-t^2)\text{Id}\|\nonumber\\
         \leq & \frac{t}{1-t^2}\sum_{i=1}^2 \|\int(y-t\theta_i)^{\otimes2}d p^{t,\theta_i}(y)-(1-t^2)\text{Id}\|+\|\int(y-t\theta_i)\left(t\theta_i-\int zdp^{t,\theta_i}(z)\right)d p^{t,\theta_i}(y)\|\nonumber\\
         \leq & \frac{t}{1-t^2}\sum_{i=1}^2\|\int\left(y-t\theta_i\right)^{\otimes2}\left( \frac{r(y)}{\int r(z)\varphi^{t,\theta_i}(z)d\lambda^d(z)}-1\right)\varphi^{t,\theta_i}(y)d\lambda^d(y)\|+\|\int(y-t\theta_i)d p^{t,\theta_i}(y)\|^2\label{eq:roboundtpetit}.
\end{align}

Now, for the terms $(R_j)_{j=1,2,3}$,
we have from Proposition \ref{prop:firstboundongradsth1} and \ref{prop:firstboundongradsth2} that  for all $f\in \mathbb{L}^2$, $t\in[ 0,1)$, $i\in \{1,2\}$ and $\xi_1,\xi_2\in \mathbb{R}^d$,
\begin{align}\label{align:x1x2grand}
    \|\int f(y)\big(\nabla^i_x p^{t,\theta_1}(y)-\nabla^i_x p^{t,\theta_2}(y)\big)d\lambda^d(y)\|& \leq \sum_{j=1}^2 \|\int f(y)\nabla^i_x p^{t,\theta_j}(y)\|d\lambda^d(y)\nonumber\\
    & \leq \frac{C}{(1-t^2)^{i/2}}\sum_{j=1}^2\left(\int \|f(y)-\xi_i\|^2dp^{t,\theta_j}(y)\right)^{1/2}.
\end{align}
In the case of Theorem \ref{theo:thetheo2}, we have from Proposition \ref{prop:loca3} that if $1-t^2\leq C_\star^{-1}\log(\epsilon^{-1})^{-C_\star}$, then 
$$\|t\theta_1\|^2+\|t\theta_2\|^2  \leq 1-C^{-1}\log(\epsilon^{-1}+1)^{-K},$$
which gives in particular that for all $m\in 1,...,k$,
$$\|\int f^{m}(y) dp^{s,\theta_1}(y)\|\leq C_\epsilon.$$
Therefore, whether we are in the case of Theorem \ref{theo:thetheo} or \ref{theo:thetheo2}, for $t$ large enough and $\xi_1,...,\xi_{k-1}\in \mathbb{R}^d$, we have
\begin{align*}
    R_1(s,\theta_1,\theta_2) & \leq \frac{C}{1-t^2}\sum_{j=1}^2\sum_{l=1}^{k-1}\left(\int \|f^l(y)-\xi_l\|^2dp^{t,\theta_j}(y)\right)^{1/2} \prod_{m=1}^{k-2}\left(\|\int f^{m}(y) dp^{s,\theta_1}(y)\|\vee 1\right)\\
    & \leq \frac{C_\epsilon}{1-t^2}\sum_{j=1}^2\sum_{l=1}^{k-1}\left(\int \|f^l(y)-\xi_l\|^2dp^{t,\theta_j},(y)\right)^{1/2}.
\end{align*}
Likewise,
\begin{align*}
R_2(s,\theta_1,\theta_2)\leq & \frac{C_\epsilon}{1-t^2}\sum_{j=1}^2  \sum_{l=1}^{k-1}\left(\int \|f^l(y)-\xi_l\|^2dp^{t,\theta_j}(y)\right)^{1/2} \sum_{j=1}^{k-2}\left(\int \|f^j(y)-\xi_l\|^2dp^{t,\theta_1}(y)\right)^{1/2}
\end{align*}
and
\begin{align*}
    R_3(s,\theta_1,\theta_2)\leq & \frac{C_\epsilon}{1-t^2} \sum_{j=1}^{k-2} \|\int f^{j}(y)  (dp^{s,\theta_1}(y)-dp^{s,\theta_2}(y))\|\sum_{l=1}^{k-1}\left(\int \|f^l(y)-\xi_l\|^2dp^{t,\theta_2}(y)\right)^{1/2} \\
    & \times \Biggl(1+\sum_{r=1}^{k-1}\left(\int \|f^r(y)-\xi_r\|^2dp^{t,\theta_2}(y)\right)^{1/2}\Biggl).
\end{align*}

\subsubsection{Bound on $\|\nabla^{\lfloor \beta \rfloor+1} V(t,X_t(x_1))-\nabla^{\lfloor \beta \rfloor+1} V(t,X_t(x_2))\|$ for $(1-t^2)^{1/2}> \| X_t(x_1)- X_t(x_2)\|$ }\label{sec:boundnablavx1x22}

Let us give general estimates on $(R_j)_{j=0,1,2,3}$ that we will then bound differently for each of the cases in the following sections. Let $f\in L^2(\mathbb{R}^d)$, $\xi\in \mathbb{R}^d$ and take $\theta_1:= X_t(x_1)$, $\theta_2:=X_t(x_2)$ such that $(1-t^2)^{1/2}> \| X_t(x_1)- X_t(x_2)\|$.

For the term $R_0$ we have 
\begin{align}\label{align:roboundtgrand}  R_0&(t,\theta_1,\theta_2)\nonumber\\ =& \|\int\Big((y-t\theta_1)\nabla_x p^{t,\theta_1}(y)-(y-t\theta_2)\nabla_x p^{t,\theta_2}(y)\Big)d\lambda^d(y)\|\nonumber\\
    \leq & \|\theta_1-\theta_2\|\|\int \nabla_x p^{t,\theta_1}(y)d\lambda^d(y)\|\nonumber\\
    & + \|\int(y-t\theta_2)\Big(\nabla_x p^{t,\theta_1}(y)-\nabla_x p^{t,\theta_2}(y)\Big)d\lambda^d(y)\|\nonumber\\
    \leq & \frac{t}{1-t^2}\|\theta_1-\theta_2\|\|\int H(y,\theta_1) dp^{t,\theta_1}(y)\|\nonumber\\
    & + \frac{t}{1-t^2}\|\int(y-t\theta_2)\Big((H(y,\theta_1)-H(y,\theta_2))p^{t,\theta_1}(y)+ H(y,\theta_2)(p^{t,\theta_1}(y)-p^{t,\theta_2}(y))\Big)d\lambda^d(y)\|\nonumber\\
    \leq & \frac{t}{1-t^2}\sum_{i=1}^2\|\int H(y,\theta_i) dp^{t,\theta_i}(y)\|\left(\|\theta_1-\theta_2\|+\|\int w (p^{t,\theta_1}(w)-p^{t,\theta_2}(w))d\lambda^d(w)\|\right)\nonumber\\
    & + \frac{t}{1-t^2}\|\int (y-t\theta_2)H(y,\theta_2)(p^{t,\theta_1}(y)-p^{t,\theta_2}(y))d\lambda^d(y)\|.
\end{align}

Now, in order to provide a bound on $R_2$, let us start by deriving the term 
$$\int f(y)\big(\nabla_x p^{t,\theta_1}(y)-\nabla_x p^{t,\theta_2}(y)\big)d\lambda^d(y),$$
recalling the notation from Section \ref{sec:nablak}
$$H(y,x):=y-\int wp^{t,x}(w)d\lambda^d(w).$$
We have
\begin{align*}
 &\frac{1-t^2}{t}\int f(y)\big(\nabla_x p^{t,\theta_1}(y)-\nabla_x p^{t,\theta_2}(y)\big)d\lambda^d(y)=\int f(y)\left(H(y,\theta_1)p^{t,\theta_1}(y)- H(y,\theta_2)p^{t,\theta_2}(y)\right)d\lambda^d(y)\\
 & =\int (f(y)-\xi)\Big((H(y,\theta_1)-H(y,\theta_2))p^{t,\theta_1}(y)+ H(y,\theta_2)(p^{t,\theta_1}(y)-p^{t,\theta_2}(y))\Big)d\lambda^d(y),
\end{align*}
so 
\begin{align}\label{align:boundR2}
    \frac{1-t^2}{t}&\|\int f(y)\big(\nabla_x p^{t,\theta_1}(y)-\nabla_x p^{t,\theta_2}(y)\big)d\lambda^d(y)\|\nonumber\\
    \leq & \left(\int \|f(y)-\xi\|^2dp^{t,\theta_1}(y)\right)^{1/2}\left(\int \|H(y,\theta_1)-H(y,\theta_2)\|^{2}dp^{t,\theta_1}(y)\right)^{1/2}\nonumber\\
    & + \|\int (f(y)-\xi)H(y,\theta_2)(p^{t,\theta_1}(y)-p^{t,\theta_2}(y))d\lambda^d(y)\|\nonumber\\
    \leq & \left(\int \|f(y)-\xi\|^2dp^{t,\theta_1}(y)\right)^{1/2}\|\int w (p^{t,\theta_1}(w)-p^{t,\theta_2}(w))d\lambda^d(w)\|\nonumber\\
    & + \|\int (f(y)-\xi)H(y,\theta_2)(p^{t,\theta_1}(y)-p^{t,\theta_2}(y))d\lambda^d(y)\|.
\end{align}

In order to provide a bound on
$R_1$, let us now derive the term 
$$\int f(y)\big(\nabla_x^2 p^{t,\theta_1}(y)-\nabla_x^2 p^{t,\theta_2}(y)\big)d\lambda^d(y).$$
We have
\begin{align*}
\frac{(1-t^2)^2}{t^2}&\int f(y)\big(\nabla^2_x p^{t,\theta_1}(y)-\nabla^2_x p^{t,\theta_2}(y)\big)d\lambda^d(y)\\
     = &\int f(y)\Biggl( \left(H(y,\theta_1)^{\otimes 2}- \int H(w,\theta_1)^{\otimes 2} dp^{t,\theta_1}(w)\right)p^{t,\theta_1}(y)\\
    & - \left(H(y,\theta_2)^{\otimes 2}-\int H(w,\theta_2)^{\otimes 2} dp^{t,\theta_2}(w)\right)p^{t,\theta_2}(y)\Biggl)d\lambda^d(y)\\
     = & A_1 +A_2,
\end{align*}
with 
\begin{align*}
    A_1 =&\int f(y)\Biggl( H(y,\theta_1)^{\otimes 2}- \int H(w,\theta_1)^{\otimes 2} dp^{t,\theta_1}(w)\\
    & - \left(H(y,\theta_2)^{\otimes 2}-\int H(w,\theta_2)^{\otimes 2} dp^{t,\theta_1}(w)\right)\Biggl)dp^{t,\theta_1}(y)
\end{align*}
and 
\begin{align*}
    A_2 =& \int f(y)\Biggl(\left(H(y,\theta_2)^{\otimes 2}-\int H(w,\theta_2)^{\otimes 2} dp^{t,\theta_1}(w)\right)p^{t,\theta_1}(y)\\
    &-\left(H(y,\theta_2)^{\otimes 2}-\int H(w,\theta_2)^{\otimes 2} dp^{t,\theta_2}(w)\right)p^{t,\theta_2}(y)\Biggl)d\lambda^d(y).
\end{align*}
Then applying Propositions \ref{prop:concentHtheo1} or \ref{prop:concentHtheo2} whether we are in the setting of Theorem \ref{theo:thetheo} or \ref{theo:thetheo2}, we get 
\begin{align}\label{eq:premiertermlipgrand}
    \|&A_1\| \leq  \left(\int \|f(y)-\xi\|^2dp^{t,\theta_1}(y)\right)^{1/2}\nonumber\\
    & \times\sum_{ij} \Biggl(\int |H(y,\theta_1)_{ij}^{\otimes2}-H(y,\theta_2)_{ij}^{\otimes2} -\int (H(y,\theta_1)_{ij}^{\otimes2}-H(y,\theta_2)_{ij}^{\otimes2})dp^{t,\theta_1}(w)|^2dp^{t,\theta_1}(y)\Biggl)^{1/2}\nonumber\\
    \leq & C(1-t^2)^{1/2}\left(\int \|f(y)-\xi\|^2dp^{t,\theta_1}(y)\right)^{1/2}\nonumber\\
    & \times \sum_{ij}\left(\int (|H(y,\theta_1)_i-H(y,\theta_2)_i|^2+|H(y,\theta_1)_j-H(y,\theta_2)_j|^2)dp^{t,\theta_1}(y)\right)^{1/2}\nonumber\\
    \leq & C(1-t^2)^{1/2}\left(\int \|f(y)-\xi\|^2dp^{t,\theta_1}(y)\right)^{1/2} \|\int z(p^{t,\theta_1}(z)-p^{t,\theta_2}(z))d\lambda^d(z)\|.
\end{align}

On the other hand
\begin{align*}
    A_2= & \int f(y)\Biggl(\left(H(y,\theta_2)^{\otimes 2}-\int H(w,\theta_2)^{\otimes 2} dp^{t,\theta_1}(w)\right)p^{t,\theta_1}(y)\\
    &-\left(H(y,\theta_2)^{\otimes 2}-\int H(w,\theta_2)^{\otimes 2} dp^{t,\theta_2}(w)\right)p^{t,\theta_2}(y)\Biggl)d\lambda^d(y)\\
    = & \int \left((f(y)-f(y+t(\theta_2-\theta_1)\right)\left(H(y,\theta_2)^{\otimes 2}-\int H(w,\theta_2)^{\otimes 2} dp^{t,\theta_1}(w)\right)dp^{t,\theta_1}(y)\\
    + & \int f(y+t(\theta_2-\theta_1)\left(H(y,\theta_2)^{\otimes 2}-\int H(w,\theta_2)^{\otimes 2} dp^{t,\theta_1}(w)\right)dp^{t,\theta_1}(y)\\
    &- \int f(y)\left(H(y,\theta_2)^{\otimes 2}-\int H(w,\theta_2)^{\otimes 2} dp^{t,\theta_2}(w)\right)dp^{t,\theta_2}(y)\\
    & = B_1+B_2,
\end{align*}
with 
\begin{align*}
    B_1 =\int \left((f(y)-f(y+t(\theta_2-\theta_1)\right)\left(H(y,\theta_2)^{\otimes 2}-\int H(w,\theta_2)^{\otimes 2} dp^{t,\theta_1}(w)\right)dp^{t,\theta_1}(y)
\end{align*}
and 
\begin{align*}
    B_2 =& \int (f(y)-\xi)\Biggl(\left( H(y-t(\theta_2-\theta_1),\theta_2)^{\otimes 2} -\int H(w,\theta_2)^{\otimes 2} dp^{t,\theta_1}(w)\right)p^{t,\theta_1}(y-t(\theta_2-\theta_1))\\
     & - \left(H(y,\theta_2)^{\otimes 2}-\int H(w,\theta_2)^{\otimes 2} dp^{t,\theta_2}(w)\right)p^{t,\theta_2}(y)\Biggl)d\lambda^d(y).
\end{align*}

Now from Proposition \ref{prop:firstboundongradsth1} and \ref{prop:firstboundongradsth2} we have
\begin{align}\label{align:B1}
    \|B_1\|  
     \leq& \left(\int \|f(y)-f(y+t(\theta_2-\theta_1))\|^2dp^{t,\theta_1}(y)\right)^{1/2}\nonumber\\
     & \times \sum_{ij}\left(\int|H(y,\theta_2)_{ij}^{\otimes2}-\int H(y,\theta_2)_{ij}^{\otimes2} dp^{t,\theta_1}(w)|^2dp^{t,\theta_1}(y)\right)^{1/2}\nonumber\\
     \leq& C (1-t^2)^{1/2}\left(\int \|f(y)-f(y+t(\theta_2-\theta_1))\|^2dp^{t,\theta_1}(y)\right)^{1/2}\nonumber\\
     & \times \sum_{ij}\left(\int(|H(y,\theta_2)_i|^2+|H(y,\theta_2)_j|^2)dp^{t,\theta_1}(y)\right)^{1/2}\nonumber\\
     \leq& C (1-t^2)^{1/2}\left(\int \|f(y)-f(y+t(\theta_2-\theta_1))\|^2dp^{t,\theta_1}(y)\right)^{1/2}\nonumber\\
     & \times \sum_{ij}\left(\int(|H(y,\theta_1)_i|^2+|H(y,\theta_1)_j|^2+\|\int w (p^{t,\theta_1}(w)-p^{t,\theta_2}(w))d\lambda^d(w)\|^2)dp^{t,\theta_1}(y)\right)^{1/2}\nonumber\\
     \leq& C (1-t^2)\left(\int \|f(y)-f(y+t(\theta_2-\theta_1))\|^2dp^{t,\theta_1}(y)\right)^{1/2}\nonumber\\
     &+ C (1-t^2)^{1/2}\left(\int \|f(y)-f(y+t(\theta_2-\theta_1))\|^2dp^{t,\theta_1}(y)\right)^{1/2}\nonumber\\
     &\times \|\int w (p^{t,\theta_1}(w)-p^{t,\theta_2}(w))d\lambda^d(w)\|.
\end{align}
On the other hand,
\begin{align*}
    B_2 =& \int (f(y)-\xi)\Biggl(\left( H(y-t(\theta_2-\theta_1),\theta_2)^{\otimes 2} -\int H(w,\theta_2)^{\otimes 2} dp^{t,\theta_1}(w)\right)p^{t,\theta_1}(y-t(\theta_2-\theta_1))\\
     & - \left(H(y,\theta_2)^{\otimes 2}-\int H(w,\theta_2)^{\otimes 2} dp^{t,\theta_2}(w)\right)p^{t,\theta_2}(y)\Biggl)d\lambda^d(y)\\
     =& \int (f(y)-\xi)\left( H(y-t(\theta_2-\theta_1),\theta_2)^{\otimes 2} -\int H(w,\theta_2)^{\otimes 2} dp^{t,\theta_1}(w)\right)p^{t,\theta_1}(y-t(\theta_2-\theta_1))d\lambda^d(y)\\
     &+ \int (f(y)-\xi)\left( H(y,\theta_2)^{\otimes 2} -H(y,\theta_2)^{\otimes 2}\right)p^{t,\theta_1}(y-t(\theta_2-\theta_1))d\lambda^d(y)\\
      &+ \int (f(y)-\xi)\left( \int H(w,\theta_2)^{\otimes 2} dp^{t,\theta_2}(w) -\int H(w,\theta_2)^{\otimes 2} dp^{t,\theta_2}(w)\right)p^{t,\theta_1}(y-t(\theta_2-\theta_1))d\lambda^d(y)\\
     & -  \int (f(y)-\xi)\left(H(y,\theta_2)^{\otimes 2}-\int H(w,\theta_2)^{\otimes 2} dp^{t,\theta_2}(w)\right)p^{t,\theta_2}(y)d\lambda^d(y)\\
      =& \int (f(y-\xi)\left(H(y-t(\theta_2-\theta_1)),\theta_2)^{\otimes 2}-H(y,\theta_2)^{\otimes 2}\right)dp^{t,\theta_1}(y-t(\theta_2-\theta_1)))\\
      & +\int (f(y)-\xi)\int H(w,\theta_2)^{\otimes 2}(dp^{t,\theta_2}(w)- dp^{t,\theta_1}(w))\Biggl)dp^{t,\theta_1}(y-t(\theta_2-\theta_1)))\\
     & + \int (f(y)-\xi)\left(H(y,\theta_2)^{\otimes 2}-\int H(w,\theta_2)^{\otimes 2} dp^{t,\theta_2}(w)\right)(p^{t,\theta_1}(y-t(\theta_2-\theta_1)))-p^{t,\theta_2}(y))d\lambda^d(y)\\
     =&D_1+D_2+D_3,
\end{align*}
with
\begin{align*}
    D_1=\int (f(y+t(\theta_2-\theta_1))-\xi)\left(H(y,\theta_2)^{\otimes 2}-H(y+t(\theta_2-\theta_1),\theta_2)^{\otimes 2}\right)dp^{t,\theta_1}(y),
\end{align*}
\begin{align*}
    D_2=\int (f(y+t(\theta_2-\theta_1))-\xi)\int H(w,\theta_2)^{\otimes 2}(dp^{t,\theta_1}(w)- dp^{t,\theta_1}(w))dp^{t,\theta_1}(y),
\end{align*}
\begin{align*}
    D_3=&\int (f(y)-\xi)\left(H(y,\theta_2)^{\otimes 2}-\int H(w,\theta_2)^{\otimes 2} dp^{t,\theta_2}(w)\right)(p^{t,\theta_1}(y-t(\theta_2-\theta_1)))-p^{t,\theta_1}(y))d\lambda^d(y)\\
     & + \int (f(y)-\xi)\left(H(y,\theta_2)^{\otimes 2}-\int H(w,\theta_2)^{\otimes 2} dp^{t,\theta_2}(w)\right)(p^{t,\theta_1}(y)-p^{t,\theta_2}(y))d\lambda^d(y).
\end{align*}

We have
\begin{align*}
    \Big(  H&(y+t(\theta_2-\theta_1),\theta_2)^{\otimes 2} - H(y,\theta_2)^{\otimes 2}\Big)_{i,j}\\
     =&\left(y_i+t(\theta_2-\theta_1)_i-\int w_i dp^{t,\theta_2}(w)\right)\left(y_j+t(\theta_2-\theta_1)_j-\int w_j dp^{t,\theta_2}(w)\right)\\
    & - \left(y_i-\int w_i dp^{t,\theta_2}(w)\right)\left(y_j-\int w_j dp^{t,\theta_2}(w)\right)\\
     = &t(\theta_2-\theta_1)_i\left(y_j-\int w_j dp^{t,\theta_2}(w)+y_i-\int w_i dp^{t,\theta_2}(w)+t(\theta_2-\theta_1)_i\right),
\end{align*}
so 
\begin{align}\label{align:H2lipplust}
    &\|H(y-t(\theta_2-\theta_1),\theta_2)^{\otimes 2} - H(y,\theta_2)^{\otimes 2}\|\leq C\|\theta_2-\theta_1\|\left(\|y-\int w dp^{t,\theta_2}(w)\|+\|\theta_2-\theta_1\|\right),
\end{align}
which gives 
\begin{align}\label{align:D1}
    \|D_1\|&\leq C\left(\int \|f(y+t(\theta_2-\theta_1))-\xi\|^2dp^{t,\theta_1}(y)\right)^{1/2}\nonumber\\
    &\times \|\theta_2-\theta_1\|\Biggl(\left(\int \|y-\int w dp^{t,\theta_2}(w)\|^2dp^{t,\theta_1}(y)\right)^{1/2}+ \|\theta_1-\theta_2\|\Biggl)\nonumber\\
    &\leq C\|\theta_2-\theta_1\|(1-t^2)^{1/2}\left(\int \|f(y+t(\theta_2-\theta_1))-\xi\|^2dp^{t,\theta_1}(y)\right)^{1/2},
\end{align}
recalling that $\|\theta_1-\theta_2\|<(1-t^2)^{1/2}$.
Furthermore,
\begin{align}\label{align:D2}
    \|D_2\|
    \leq & \left(\int \|f(y+t(\theta_2-\theta_1))-\xi\|^2dp^{t,\theta_1}(y)\right)^{1/2}\|\int H(w,\theta_2)^{\otimes 2}(dp^{t,\theta_1}(w)- dp^{t,\theta_1}(w))\|
\end{align}
and
\begin{align}\label{align:D3}
    \|D_3\|\leq &C \|\int (f(y)-\xi)\left(H(y,\theta_2)^{\otimes 2}-\int H(w,\theta_2)^{\otimes 2} dp^{t,\theta_2}(w)\right)(p^{t,\theta_1}(y)-p^{t,\theta_2}(y))d\lambda^d(y)\|.
\end{align}

From the previous derivations, we deduce that we have to bound the quantity
\begin{equation}\label{eq:leggeneral}
\|\int g(y) (p^{t,\theta_1}(y)-p^{t,\theta_2}(y))d\lambda^d(z)\|,
\end{equation}
for $g(y)$ being either $y,H(y,\theta_2)^{\otimes 2},(f(y)-\xi)\left(H(y,\theta_2)^{\otimes 2}-\int H(w,\theta_2)^{\otimes 2} dp^{t,\theta_2}(w)\right)$ or $(f(y)-\xi)H(y,\theta_2)$.
Let us give a general bound on \eqref{eq:leggeneral} writing $g(y)=(l(y)-\zeta)h(y)$.

\begin{proposition}\label{prop:generalboundlhp}
Let $l,h\in L^2_p(\mathbb{R}^d)$.
Then for $\theta_1,\theta_2\in \mathbb{R}^d$ with $\|\theta_1-\theta_2\|<1$, we have
\begin{align*}
   \|\int & (l(y)-\zeta)h(y)(p^{t,\theta_1}(y)-p^{t,\theta_2}(y))d\lambda^d(y)\|\\
   \leq & C \|\int(l(y)-l(y+t(\theta_2-\theta_1))h(y)dp^{t,\theta_1}(y)\|+ |\frac{Q_t(\theta_2)}{Q_t(\theta_1)}-1|\|\int (l(y)-\zeta)h(y-t(\theta_2-\theta_1))dp^{t,\theta_2}(y)\|\\
   &+\|\int (l(y)-\zeta)h(y-t(\theta_2-\theta_1))\frac{r(y-t(\theta_2-\theta_1))-r(y)}{r(y)}dp^{t,\theta_2}(y)\|\\
    & + \|\int (l(y)-\zeta)\left(h(y+t(\theta_2-\theta_1))-h(y)\right)dp^{t,\theta_2}(y)\|.
\end{align*}
\end{proposition}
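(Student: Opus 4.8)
The idea is to decompose the difference $p^{t,\theta_1}(y)-p^{t,\theta_2}(y)$ by first performing the translation $y \mapsto y - t(\theta_2-\theta_1)$ that maps the Gaussian factor $\varphi^{t,\theta_1}$ to $\varphi^{t,\theta_2}$, and then tracking the mismatch between the two densities of $p^{t,x}$ that remains after this change of variables. Concretely, recall $p^{t,x}(y) = \varphi^{t,x}(y)r(y)/Q_t r(x)$ and $\varphi^{t,\theta_1}(y) = \varphi^{t,\theta_2}(y + t(\theta_2-\theta_1))$ (both Gaussians with the same covariance, centered at $t\theta_1$ and $t\theta_2$ respectively). So after substituting $y \to y - t(\theta_2-\theta_1)$ in the $\theta_1$-integral, the Gaussian weights align and only the factors $r$ and the normalizing constants $Q_t r$ differ.

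The first step is to write, for a test function $G$,
\begin{align*}
\int G(y)\,p^{t,\theta_1}(y)\,d\lambda^d(y) &= \int G(y + t(\theta_2-\theta_1))\,p^{t,\theta_1}(y+t(\theta_2-\theta_1))\,d\lambda^d(y)\\
&= \frac{1}{Q_t r(\theta_1)}\int G(y+t(\theta_2-\theta_1))\,r(y+t(\theta_2-\theta_1))\,\varphi^{t,\theta_2}(y)\,d\lambda^d(y).
\end{align*}
Applying this with $G(y) = (l(y)-\zeta)h(y)$, then subtracting $\int (l(y)-\zeta)h(y)\,p^{t,\theta_2}(y)\,d\lambda^d(y)$ and inserting $r(y+t(\theta_2-\theta_1)) = r(y)\cdot\frac{r(y+t(\theta_2-\theta_1))}{r(y)}$, one gets a telescoping identity with three groups of terms: (i) the mismatch in the $l$-increment, giving $\int(l(y)-l(y+t(\theta_2-\theta_1)))h(y)\,dp^{t,\theta_1}(y)$ after undoing the shift; (ii) the mismatch in $h$, giving $\int(l(y)-\zeta)(h(y+t(\theta_2-\theta_1))-h(y))\,dp^{t,\theta_2}(y)$; (iii) the mismatch in the density factor $\frac{r(y+t(\theta_2-\theta_1))-r(y)}{r(y)}$ applied to the shifted integrand, and the mismatch in the ratio of normalizing constants $\frac{Q_t r(\theta_2)}{Q_t r(\theta_1)}-1$. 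Grouping these four resulting pieces, with appropriate re-centering of the dummy variable so that every integral is against either $p^{t,\theta_1}$ or $p^{t,\theta_2}$, and bounding the $Q_t(\theta_2)/Q_t(\theta_1)$ prefactor (which is $O(1)$, or $C_\epsilon$ in the compact setting) by a constant before extracting its deviation from $1$, yields exactly the four claimed terms up to the universal constant $C$.

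The routine care needed is bookkeeping: one must consistently decide whether to shift the $\theta_1$-integral or the $\theta_2$-integral in each term, and keep the arguments of $l$, $h$, and $r$ aligned so that after the substitution every term is an integral against one of the two probability measures $p^{t,\theta_i}$ (so that Cauchy--Schwarz against $p^{t,\theta_i}$ can later be applied in the calling propositions). The only genuinely delicate point is the treatment of the normalizing-constant term: one writes $1 - \frac{r(y+t(\theta_2-\theta_1))Q_t r(\theta_2)}{r(y)Q_t r(\theta_1)}$ and splits it as $\big(1-\frac{Q_t r(\theta_2)}{Q_t r(\theta_1)}\big) + \frac{Q_t r(\theta_2)}{Q_t r(\theta_1)}\big(1 - \frac{r(y+t(\theta_2-\theta_1))}{r(y)}\big)$, then uses $\frac{Q_t r(\theta_2)}{Q_t r(\theta_1)} \leq C$ (or $C_\epsilon$). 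Since this boundedness of the ratio $Q_t r(\theta_2)/Q_t r(\theta_1)$ follows from $r = e^a$ with $a \in \mathcal{H}^{\beta,\theta}_K$ bounded in the setting of Theorem~\ref{theo:thetheo} (and from Proposition~\ref{prop:loca3} / \eqref{align:boundbelowq} in the setting of Theorem~\ref{theo:thetheo2}), no new difficulty arises; the proposition is then just an exact algebraic rearrangement followed by the triangle inequality. I expect this to be the main (modest) obstacle — ensuring the decomposition is tight enough that no term is lost and that each surviving term is of a form the subsequent sections can estimate by the concentration bounds of Propositions~\ref{prop:concentHtheo1}--\ref{prop:concentHtheo2} and the Hölder control on increments of $r$ and $f^l$.
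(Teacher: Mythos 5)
Your decomposition—shift variables so the Gaussian factors $\varphi^{t,\theta_1}$ and $\varphi^{t,\theta_2}$ align, then telescope the remaining mismatch into contributions from the $l$-increment, the $h$-increment, the $r$-ratio, and the normalizing-constant ratio, finishing with the triangle inequality and boundedness of $Q_t r(\theta_2)/Q_t r(\theta_1)$—is exactly the paper's proof. One small slip: your displayed substitution should shift by $-t(\theta_2-\theta_1)$, not $+t(\theta_2-\theta_1)$, so that $\varphi^{t,\theta_1}(y - t(\theta_2-\theta_1)) = \varphi^{t,\theta_2}(y)$ (consistent with the identity $\varphi^{t,\theta_1}(y) = \varphi^{t,\theta_2}(y + t(\theta_2-\theta_1))$ you correctly state just above); with that sign corrected the argument is the paper's.
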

\begin{proof}
    We have
\begin{align*}
   \int & (l(y)-\zeta)h(y)(p^{t,\theta_1}(y)-p^{t,\theta_2}(y))d\lambda^d(y)\\
   = &\int (l(y-t(\theta_2-\theta_1))-\zeta)h(y-t(\theta_2-\theta_1))p^{t,\theta_1}(y-t(\theta_2-\theta_1)d\lambda^d(y)\\
   & - \int (l(y)-\zeta)h(y)p^{t,\theta_2}(y))d\lambda^d(y)\\
    = & \int (l(y-t(\theta_2-\theta_1))-l(y))h(y-t(\theta_2-\theta_1))r(y-t(\theta_2-\theta_1))\frac{\varphi^{t,\theta_2}(y)}{Q_t(\theta_1)}d\lambda^d(y)\\
    & + \int (l(y)-\zeta)h(y-t(\theta_2-\theta_1))\left(\frac{r(y-t(\theta_2-\theta_1))}{Q_t(\theta_1)}-\frac{r(y)}{Q_t(\theta_2)} \right)\varphi^{t,\theta_2}(y)d\lambda^d(y)\\
    & + \int (l(y)-\zeta)\left(h(y-t(\theta_2-\theta_1))-h(y)\right)dp^{t,\theta_2}(y)\\
    = & \int (l(y-t(\theta_2-\theta_1))-l(y))h(y-t(\theta_2-\theta_1))r(y-t(\theta_2-\theta_1))\frac{\varphi^{t,\theta_2}(y)}{Q_t(\theta_1)}d\lambda^d(y)\\
    & + \int (l(y)-\zeta)h(y-t(\theta_2-\theta_1))\left(r(y-t(\theta_2-\theta_1))-r(y)\right)\frac{\varphi^{t,\theta_2}(y)}{Q_t(\theta_1)}d\lambda^d(y)\\
    & + \left(\frac{Q_t(\theta_2)}{Q_t(\theta_1)}-1 \right)\int (l(y)-\zeta)h(y-t(\theta_2-\theta_1))dp^{t,\theta_2}(y)\\
    & + \int (l(y)-\zeta)\left(h(y-t(\theta_2-\theta_1))-h(y)\right)dp^{t,\theta_2}(y)
\end{align*}
so
\begin{align*}
    \int & (l(y)-\zeta)h(y)(p^{t,\theta_1}(y)-p^{t,\theta_2}(y))d\lambda^d(y)\\
    = & \int (l(y)-l(y+t(\theta_2-\theta_1)))h(y)dp^{t,\theta_1}(y)+ \frac{Q_t(\theta_2)}{Q_t(\theta_1)}\int (l(y)-\zeta)h(y-t(\theta_2-\theta_1))\frac{r(y-t(\theta_2-\theta_1))-r(y)}{r(y)}dp^{t,\theta_2}(y)\\
    & + \left(\frac{Q_t(\theta_2)}{Q_t(\theta_1)}-1 \right)\int (l(y)-\zeta)h(y-t(\theta_2-\theta_1))dp^{t,\theta_2}(y)\\
    &+ \int (l(y)-\zeta)\left(h(y+t(\theta_2-\theta_1))-h(y)\right)dp^{t,\theta_2}(y).
\end{align*}
\end{proof}

\subsection{Proof of the bound on $\|\nabla^{\lfloor \beta \rfloor+1} X_1\|_{\mathcal{H}^{\beta-\lfloor \beta \rfloor}}$ for Theorem \ref{theo:thetheo2}}\label{sec:castheo1append}
\subsubsection{Integration against $p^{t,\theta_1}-p^{t,\theta_2}$ and mass concentration}
In this section we focus on the case $t\in [0,1)$ and $\theta_1,\theta_2\in \mathbb{R}^d$ satisfying 
\begin{equation}\label{eq:onremetlacontrainte}
1-t^2\leq C_\star^{-1}\log(\epsilon^{-1})^{-C_\star} \text{ and } \|t\theta_1\|^2+\|t\theta_2\|^2\leq 1-C_2^{-1}\log(\epsilon^{-1}+1)^{-K},
\end{equation}
with $C^\star,C_2>0$ given by Proposition \ref{prop:concentration3}. Let us first derive a general bound on the integration against $p^{t,\theta_1}-p^{t,\theta_2}$.

\begin{proposition}\label{prop:lipptx3}
Let $l,h\in L^2_p(\mathbb{R}^d)$ with
$h \in \mathcal{H}_{1}^{1}(B^d(0,1))$.
Then, for $t\in [0,1),z\in \mathbb{R}^d$, $\theta_1,\theta_2\in \mathbb{R}^d$ satisfying \eqref{eq:onremetlacontrainte}, in the case $\beta\geq 1$ we have 
    \begin{align*}
        &\|\int (l(y)-l(z))h(y)(p^{t,\theta_1}(y)-p^{t,\theta_2}(y))d\lambda^d(y)\|\\
         \leq &C_\epsilon \left(\int\|l(y)-l(y+t(\theta_2-\theta_1))\|^2dp^{t,\theta_1}(y)\right)^{1/2}\sum_{i=1}^2\left(\int\|h(y)\|^2p^{t,\theta_i}(y) \right)^{1/2}\\
        & +C_\epsilon\|\theta_1-\theta_2\|\left(\left(\int\|l(y)-l(z)\|^2dp^{t,\theta_2}(y) \right)^{1/2}+\exp(-\frac{(1-t^2)^{-1/4}}{8})\right)
    \end{align*}
    and in the case $\beta\in (0,1)$, $l(y)-l(z)=y-t\theta_1$, $h(y)=H(y,\theta_2)$ we have
    \begin{align*}
        &\|\int (y-t\theta_1)\left(H(y,\theta_2)\right)(p^{t,\theta_1}(y)-p^{t,\theta_2}(y))d\lambda^d(y)\|\\
         \leq &C\|\theta_1-\theta_2\|\left(\|\int H(y,\theta_2)dp^{t,\theta_2}(y) \|+\|\int \left(y-t\theta_1 \right)dp^{t,\theta_2}(y)\|\right)\\
        & +C_\epsilon\frac{\|\theta_1-\theta_2\|^{\beta}}{\log(\|\theta_1-\theta_2\|^{-1})^{\theta}}\left(\left(\int\|y-t\theta_2\|^2dp^{t,\theta_2}(y) \right)^{1/2}+\exp(-\frac{(1-t^2)^{-1/4}}{8})\right)\left(\int\|H(y,\theta_2)\|^2p^{t,\theta_1}(y) \right)^{1/2}.
    \end{align*}
\end{proposition}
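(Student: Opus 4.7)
My plan is to deduce Proposition \ref{prop:lipptx3} as a direct consequence of Proposition \ref{prop:generalboundlhp}, which already rewrites $\int (l(y)-\zeta) h(y) (p^{t,\theta_1}-p^{t,\theta_2})(y)\, d\lambda^d(y)$ as a sum of four terms, call them $\mathrm{I}$--$\mathrm{IV}$, each of which involves integration against only a single $p^{t,\theta_i}$. Under the localization hypothesis \eqref{eq:onremetlacontrainte}, Proposition \ref{prop:loca3} (together with Assumption \ref{assum:u}) guarantees that both concentration points $t\theta_i$ stay at a quantitative distance from $\partial B^d(0,1)$, so that on a Gaussian bulk of radius $(1-t^2)^{1/4}$ around $t\theta_i$ the density $r$ and its first derivatives are controlled by $C_\epsilon=C\log(\epsilon^{-1})^{C_2}$. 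This is the only regime where the shifted argument $y\pm t(\theta_2-\theta_1)$ is both well-defined and tractable.

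Term $\mathrm{I}=\int (l(y)-l(y+t(\theta_2-\theta_1)))\, h(y)\, dp^{t,\theta_1}(y)$ is handled by Cauchy--Schwarz and directly gives the first line of the target bound. For term $\mathrm{IV}=\int (l(y)-\zeta)(h(y+t(\theta_2-\theta_1))-h(y))\, dp^{t,\theta_2}(y)$, I split the domain according to whether $y+t(\theta_2-\theta_1)$ lies in the bulk around $t\theta_2$: on the good set, $h\in \mathcal{H}^1_1(B^d(0,1))$ gives the pointwise increment bound $\|h(y+t(\theta_2-\theta_1))-h(y)\|\leq \|\theta_1-\theta_2\|$, while the complementary region receives exponentially small mass $\exp(-(1-t^2)^{-1/4}/8)$ from the underlying Gaussian factor $\varphi^{t,\theta_2}$, precisely matching the residual term in the stated bound.

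For terms $\mathrm{II}$ and $\mathrm{III}$, the crucial ingredient is to estimate $|Q_t r(\theta_2)/Q_t r(\theta_1)-1|$ and $|r(y-t(\theta_2-\theta_1))/r(y)-1|$. Writing $\log r(y)=-u(\|y\|^2)+a(y)$ and differentiating, the localization gives a uniform bound $u^{(1)}(\|y\|^2)\leq C_\epsilon$ on the bulk, so that in the case $\beta\geq 1$ both ratios are $C_\epsilon\|\theta_1-\theta_2\|$ pointwise on the bulk, producing the prefactor $C_\epsilon\|\theta_1-\theta_2\|$ after another Cauchy--Schwarz; the complementary region is again swallowed by the $\exp(-(1-t^2)^{-1/4}/8)$ tail. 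In the case $\beta\in(0,1)$ with the specific choice $l(y)-\zeta=y-t\theta_1$, I use a first-order Taylor expansion of $-u(\|\cdot\|^2)$ along the direction $t(\theta_2-\theta_1)$, absorb the linear part into the term $\|\int (y-t\theta_1)\, dp^{t,\theta_2}\|$ displayed in the bound, and only the H\"older piece $|a(y-t(\theta_2-\theta_1))-a(y)|\leq K\|\theta_1-\theta_2\|^{\beta}\log(\|\theta_1-\theta_2\|^{-1})^{-\theta}$ contributes to the final factor $\|\theta_1-\theta_2\|^\beta/\log(\|\theta_1-\theta_2\|^{-1})^{\theta}$. Term $\mathrm{I}$ in this case becomes $t(\theta_2-\theta_1)\int H(y,\theta_2)\, dp^{t,\theta_1}(y)$, whose rewriting via the cancellation $\int H(y,\theta_2)\, dp^{t,\theta_2}=0$ yields the first parenthesis $\|\int H(y,\theta_2)\, dp^{t,\theta_2}\|+\|\int(y-t\theta_1)\, dp^{t,\theta_2}\|$.

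The main obstacle, as throughout Section \ref{sec:prooftheo2}, is the boundary behaviour of $r$: since $u$ diverges at $\partial B^d(0,1)$, one cannot hope for uniform Lipschitz control of $r$, and the translation $y\mapsto y-t(\theta_2-\theta_1)$ may push $y$ outside the bulk where $r$ is well-controlled (or even outside a region where $\log r$ is Lipschitz at all). The device I use throughout is the systematic splitting of every integral with the indicator $\mathds{1}_{\{\|y-t\theta_i\|\leq (1-t^2)^{1/4}\}}$, so that the bulk is treated by the local Lipschitz/H\"older estimates extracted from Assumption \ref{assum:u} combined with $a\in \mathcal{H}^{\beta,\theta}_K$, while the complementary tail is absorbed by the Gaussian factor of $p^{t,\theta_i}$; the constant $C_\epsilon$ coming from the localization keeps tally of the worst multiplicative loss in each step and yields the stated inequality.
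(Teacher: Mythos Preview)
Your approach is essentially the paper's own: reduce to the four terms of Proposition~\ref{prop:generalboundlhp}, control the two ratio quantities $|Q_t r(\theta_2)/Q_t r(\theta_1)-1|$ and $|r(y-t(\theta_2-\theta_1))/r(y)-1|$ via the factorisation $\log r=-u(\|\cdot\|^2)+a$ together with the bulk/tail split $\mathds{1}_{\{\|y-t\theta_i\|\leq (1-t^2)^{1/4}\}}$, and finish with Cauchy--Schwarz. The paper carries this out with the indicator $\chi=\mathds{1}_{\{\beta<1\}}$ so that both regimes are treated simultaneously: the $u$-contribution is always Lipschitz (hence $\|\theta_1-\theta_2\|$) on the bulk thanks to the localisation \eqref{eq:onremetlacontrainte}, while the $a$-contribution is bounded by $\|\theta_1-\theta_2\|^{\chi\beta+1-\chi}\log(\|\theta_1-\theta_2\|^{-1})^{-\chi\theta}$.

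Your account of the $\beta<1$ case is slightly garbled, though not wrong in spirit. There is no need for a ``first-order Taylor expansion of $-u$ and absorption of the linear part'': the paper simply bounds $|r(y-t(\theta_2-\theta_1))/r(y)-1|$ by the sum of the Lipschitz piece (from $\psi=\exp(-u(\|\cdot\|^2))$) and the H\"older piece (from $a$), and since $\beta<1$ the latter dominates. Also, your sentence about Term~$\mathrm{I}$ ``via the cancellation $\int H(y,\theta_2)\,dp^{t,\theta_2}=0$ yielding the first parenthesis'' is off: Term~$\mathrm{I}$ involves integration against $p^{t,\theta_1}$, not $p^{t,\theta_2}$, and the displayed term $\|\int H(y,\theta_2)\,dp^{t,\theta_2}\|$ vanishes identically, so nothing is being ``yielded'' there. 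The first line of the $\beta<1$ bound actually comes from Terms~$\mathrm{I}$ and~$\mathrm{IV}$ directly (for the specific affine choices of $l$ and $h$, both collapse to $t(\theta_2-\theta_1)$ times a single integral), without any cancellation trick. These are cosmetic points; the skeleton of your argument matches the paper.
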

\begin{proof} We are going to bound the different terms of Proposition \ref{prop:generalboundlhp}.
Without loss of generality, we can suppose that $\|\theta_1\|\geq \|\theta_2\|$. Let $\alpha=(1-t^2)^{-1/4}$, $\psi(z)=\exp(-u(\|z\|^2))$ and $\chi = \mathds{1}_{\{\beta<1\}}$, we have 
\begin{align*}
    |&Q_{t}r(\theta_2)-Q_{t}
    r(\theta_1)|\\
    &=|\int \left(r(t\theta_1+\sqrt{1-t^2}z)-r(t\theta_2+\sqrt{1-t^2}z)\right)\gamma_d(z)d\lambda^d(z)|\\
    &=|\int \exp(-u(\|t\theta_1+\sqrt{1-t^2}z\|^2))(\exp(a(t\theta_1+\sqrt{1-t^2}z))-\exp(a(t\theta_1+\sqrt{1-t^2}z))\gamma_d(z)d\lambda^d(z)|\\
    & +\int\exp(a(t\theta_1+\sqrt{1-t^2}z))(\exp(-u(\|t\theta_1+\sqrt{1-t^2}z\|^2))-\exp(-u(\|t\theta_2+\sqrt{1-t^2}z\|^2))\gamma_d(z)d\lambda^d(z)|\\
    &\leq C\|\theta_1-\theta_2\|^{\chi\beta+1-\chi}\log(\|\theta_1-\theta_2\|^{-1})^{-\chi\theta} \int \exp(-u(\|t\theta_1+\sqrt{1-t^2}z\|^2))\gamma_d(z)d\lambda^d(z)\\
    & +C \int \int_0^1\|\nabla \psi\left(t\theta_1+\sqrt{1-t^2}z+st(\theta_2-\theta_1)\right)(\theta_2-\theta_1)\|ds\left(\mathds{1}_{\{\|z\|\leq \alpha\}}+\mathds{1}_{\{\|z\|> \alpha\}}\right)\gamma_d(z)d\lambda^d(z)\\
     & \leq  C\|\theta_1-\theta_2\|^{\chi\beta+1-\chi}\log(\|\theta_1-\theta_2\|^{-1})^{-\chi\theta}Q_tr(\theta_1) + C\|\theta_2-\theta_1\|\|\nabla \psi\|_{\mathcal{H}^0(B^d(t\theta_1,\|\theta_1-\theta_2\|+\sqrt{1-t^2}\alpha)}\\
     & +C\|\theta_2-\theta_1\| e^{-\frac{\alpha^2}{4}}\int\int_0^1 \|\nabla \psi\left(t\theta_1+\sqrt{1-t^2}z+st(\theta_2-\theta_1)\right)\|\exp(-\|z\|^2/4)dsd\lambda^d(z).
\end{align*}
We then obtain
\begin{align*}
    |Q_{t}r(\theta_2)-Q_{t}
    r(\theta_1)|\leq &C\|\theta_1-\theta_2\|^{\chi\beta+1-\chi}\log(\|\theta_1-\theta_2\|^{-1})^{-\chi\theta}Q_tr(\theta_1)\\
    & + C \|\theta_2-\theta_1\|\left(\|\nabla \psi\|_{\mathcal{H}^0(B^d(t\theta_1,\|\theta_1-\theta_2\|+(1-t^2)^{1/4}))}+e^{-\frac{1}{4(1-t^2)^{1/2}}}\right),
\end{align*}
which gives 
\begin{align*}
    |\frac{Q_{t}r(\theta_2)}{Q_{t}
    r(\theta_1)}-1|\leq &C\|\theta_1-\theta_2\|^{\chi\beta+1-\chi}\log(\|\theta_1-\theta_2\|^{-1})^{-\chi\theta}\\
    & + C\frac{\|\theta_2-\theta_1\|}{Q_{t}
    r(\theta_1)}\left(\|\nabla \psi\|_{\mathcal{H}^0(B^d(t\theta_1,\|\theta_1-\theta_2\|+(1-t^2)^{1/4}))}+e^{-\frac{1}{4(1-t^2)^{1/2}}}\right),
\end{align*}
so for $\theta_1$ in the case \eqref{eq:onremetlacontrainte}, we deduce that
\begin{align*}
    |\frac{Q_{t}r(\theta_2)}{Q_{t}
    r(\theta_1)}-1|\leq C_\epsilon\|\theta_1-\theta_2\|^{\chi\beta+1-\chi}\log(\|\theta_1-\theta_2\|^{-1})^{-\chi\theta}.
\end{align*}

On the other hand 
\begin{align*}
    |\frac{r(y-t(\theta_2-\theta_1))}{r(y)}-1| & \leq C \frac{\|\nabla \psi\|_{\mathcal{H}^0(B^d(y,t\|\theta_1-\theta_2\|))}}{r(y)}\|\theta_2-\theta_1\|+C\|\theta_1-\theta_2\|^{\chi\beta+1-\chi}\log(\|\theta_1-\theta_2\|^{-1})^{-\chi\theta},
\end{align*}
so for $\theta_2$ in the case \eqref{eq:onremetlacontrainte}, we deduce that
\begin{align*}
    &\|\int (l(y)-l(z))h(y + t(\theta_2 -\theta_1))\frac{r(y-t(\theta_2-\theta_1))-r(y)}{r(y)}dp^{t,\theta_2}(y)\|\\
    &\leq C\|\theta_1-\theta_2\| \int \|l(y)-l(z)\|\left(C_\epsilon +\frac{\|\nabla \psi\|_{\mathcal{H}^0(B^d(y,t\|\theta_1-\theta_2\|))}}{r(y)}\mathds{1}_{\{\|y-t\theta_2\|\geq (1-t^2)^{1/4}\}}+\|\theta_1-\theta_2\|^{\chi\beta-\chi}\right)\\
    &\times\|h(y + t(\theta_2 -\theta_1))\|dp^{t,\theta_2}(y)\\
    &\leq C_\epsilon\|\theta_1-\theta_2\|^{\chi\beta+1-\chi}\log(\|\theta_1-\theta_2\|^{-1})^{-\chi\theta}\left(\left(\int\|l(y)-l(z)\|^2dp^{t,\theta_2}(y) \right)^{1/2}+\frac{\exp(-(1-t^2)^{-1/4}/4)}{Q_tr(\theta_2)}\right)\\
&\times\left(\int\|h(y)\|^2p^{t,\theta_1}(y) \right)^{1/2}
    \\
    &\leq C_\epsilon\|\theta_1-\theta_2\|^{\chi\beta+1-\chi}\log(\|\theta_1-\theta_2\|^{-1})^{-\chi\theta}\left(\left(\int\|l(y)-l(z)\|^2dp^{t,\theta_2}(y) \right)^{1/2}+\exp(-(1-t^2)^{-1/4}/8)\right)\\
&\times\left(\int\|h(y)\|^2p^{t,\theta_1}(y) \right)^{1/2}.
\end{align*}
Using Proposition \ref{prop:generalboundlhp} and Cauchy-Schwarz we get the result.
\end{proof}

Let us now bound the terms
$\int\|f(y)-f(y+t(\theta_2-\theta_1))\|^2dp^{t,\theta_1}(y)$ and $\int\|f(y)-f(z)\|^2dp^{t,\theta_2}(y)$ for $f=f^l$ with $l\in \{1,...,\lfloor \beta \rfloor\}$ and $z=t\theta_i$, $i\in \{1,2\}$. From Proposition \ref{prop:concentration3}, for $t\in [0,1)$ and $\theta_1,\theta_2\in \mathbb{R}^d$ satisfying 
\eqref{eq:onremetlacontrainte} we have 
\begin{equation}\label{align:concegenephase2} 
    \int\|f^l(y)-f^l(t\theta_i)\|^2dp^{t,\theta_i}(y)\leq C_\epsilon(1-t^2)^{\beta-\lfloor \beta \rfloor}\log((1-t^2)^{-1})^{-2\theta}.
\end{equation}

Furthermore,
\begin{align}\label{align:unautreboundparmit}
    &\left(\int \|f^l(y)-f^l(y+t(\theta_2-\theta_1))\|^2dp^{t,\theta_1}(y)\right)^{1/2}\nonumber\\
    & = \left(\int \left(\frac{\|f^l(y)-f^l(y+t(\theta_2-\theta_1))\|}{\|\theta_2-\theta_1\|^{\beta-\lfloor \beta \rfloor}\log(\|\theta_2-\theta_1\|^{-1})^{-\theta}}\right)^2\|\theta_2-\theta_1\|^{2(\beta-\lfloor \beta \rfloor)}\log(\|\theta_2-\theta_1\|^{-1})^{-2\theta}dp^{t,\theta_1}(y)\right)^{1/2}\nonumber\\
    & \leq C \|\theta_2-\theta_1\|^{\beta-\lfloor \beta \rfloor}\log(\|\theta_2-\theta_1\|^{-1})^{-\theta}\nonumber\\
    & \times \left(\int C_\epsilon\left(1+\mathds{1}_{y\notin B^d(0,1-(C^\star)^{-1}\log(\epsilon^{-1})^{-C^\star})}\int_0^1 \nabla^{l+1}u((\|y\|+st(\theta_1-\theta_2))^2) ds\right)^2dp^{t,\theta_1}(y)\right)^{1/2}\nonumber\\
    & \leq C_\epsilon\|\theta_2-\theta_1\|^{\beta-\lfloor \beta \rfloor}\log(\|\theta_2-\theta_1\|^{-1})^{-\theta}.
\end{align}

\subsubsection{Bounds on the different terms}\label{sec:bounondiff3}
Let $f:=f^{l}$ with $l\in \{0,...,\lfloor \beta \rfloor\}$ and take $t\in [0,1)$ satisfying $1-t^2\leq C_\star^{-1}\log(\epsilon^{-1})^{-C_\star}$, with $C^\star>0$ given by Proposition \ref{prop:concentration3}. We give estimates on $(R_j)_{j=0,...,3}$ recalling that the case $R_0$ corresponds to the case $k:=\lfloor \beta \rfloor +1= 1$ and the case  $(R_j)_{j=1,2,3}$ corresponds to the case $k\geq 2$. We first focus on the setting $t\in [0,1)$ such that $(1-t^2)^{1/2}\leq \|\theta_1-\theta_2\|$. 

For the terms $(R_j)_{j=1,2,3}$ we use the bound \eqref{align:x1x2grand}. Taking $\xi_m=f(t\theta_m)$ with $m\in \{1,2\}$ in Proposition \ref{prop:concentration3} we obtain for $i\in\{1,2\}$,
\begin{align*}
     \|\int f(y)\big(\nabla^i_x p^{t,\theta_1}(y)-\nabla^i_x p^{t,\theta_2}(y)\big)d\lambda^d(y)\|& \leq \frac{C}{(1-t^2)^{i/2}}\sum_{m=1}^2\left(\int \|f(y)-f(t\theta_m)\|^2dp^{t,\theta_m}(y)\right)^{1/2}\\
     & \leq \frac{C_\epsilon}{(1-t^2)^{i/2}}(1-t^2)^{(\beta-\lfloor \beta \rfloor)/2}\log((1-t^2)^{-1})^{-\theta}\\
     & \leq \frac{C_\epsilon}{(1-t^2)^{i/2}}\|\theta_1-\theta_2\|^{\beta-\lfloor \beta \rfloor}\log((1-t^2)^{-1})^{-\theta}.
\end{align*}
Using this result we get
\begin{align*}
    R_1(t,\theta_1,\theta_2)= & \sum_{l=1}^{k-1} \|\int f^{l}(y)(\nabla^2_x dp^{t,\theta_1}(y)-\nabla^2_x dp^{t,\theta_2}(y))\| \prod_{j=1}^{k-2}\left(\|\int f^{j}(y) dp^{t,\theta_1}(y)\|\vee 1\right)^{k-2}\\
    & \leq \frac{C_\epsilon}{1-t^2}\|\theta_1-\theta_2\|^{\beta-\lfloor \beta \rfloor}\log((1-t^2)^{-1})^{-\theta},
\end{align*}

\begin{align*}
    R_2(t,\theta_1,\theta_2)= &  \sum_{l=1}^{k-1} \|\int f^{l}(y) (\nabla_x dp^{t,\theta_1}(y)-\nabla_x dp^{t,\theta_2}(y))\|\\
    & \times \sum_{j=1}^{k-2}\|\int f^{j}(y) \nabla_x dp^{t,\theta_1}(y)\|\prod_{i=1}^{k-2}\left(\|\int f^{i}(y)  dp^{t,\theta_1}(y)\|\nonumber\vee 1 \right)^{k-2}\\
    \leq & \frac{C_\epsilon}{(1-t^2)^{1/2}}\|\theta_1-\theta_2\|^{\beta-\lfloor \beta \rfloor}\log((1-t^2)^{-1})^{-\theta}\frac{1}{1-t^2}\sum_{j=1}^{k-2}\|\int f^{j}(y) \left(y-\int z dp^{t,\theta_1}(z)\right)dp^{t,\theta_1}(y)\|\\
    \leq & \frac{C_\epsilon}{1-t^2}\|\theta_1-\theta_2\|^{\beta-\lfloor \beta \rfloor}\log((1-t^2)^{-1})^{-\theta}
\end{align*}
and using Proposition \ref{prop:lipptx3} and \eqref{align:unautreboundparmit},
\begin{align*}
    R_3(s,\theta_1,\theta_2)= & \sum_{j=1}^{k-2} \|\int f^{j}(y)  (dp^{s,\theta_1}(y)-dp^{s,\theta_2}(y))\|\prod_{i=1}^{k-3}\left(\|\int f^{i}(y)  dp^{s,X_s(x_1)}(y)\|\vee 1\right)^{k-3}\\
    & \times\Biggl(\sum_{l=1}^{k-1}  \|\int f^{l}(y)\nabla^2_x dp^{s,\theta_2}(y)\|+\|\int f^{l}(y)\nabla_x dp^{s,X_s(x_2)}(y)\|\sum_{r=1}^{k-1}\|\int f^{r}(y)\nabla_x dp^{s,\theta_2}(y)\|\Biggl)\\
    \leq & C_\epsilon\|\theta_1-\theta_2\|^{\beta-\lfloor \beta \rfloor}\log((1-t^2)^{-1})^{-\theta}\frac{C_\epsilon}{1-t^2}.
\end{align*}

For the term $R_0$, we use the bound \eqref{eq:roboundtpetit2} and the decomposition on $\nabla V$ derived in \eqref{align:decompnabla1}. We have
\begin{align*}
    \|\nabla V(t,\theta_i)\| \leq & \frac{t}{(1-t^2)^2}\|\int\left(y-t\theta_i\right)^{\otimes2}\left( \frac{r(y)}{\int r(z)\varphi^{t,\theta_i}(z)d\lambda^d(z)}-1\right)\varphi^{t,\theta_i}(y)d\lambda^d(y)\|\nonumber\\
         &+\frac{t}{(1-t^2)^2}\|\int(y-t\theta_i)d p^{t,\theta_i}(y)\|^2.
\end{align*}
On one hand we have
\begin{align*}
    \|\int(y-t\theta_i)d p^{t,\theta_i}(y)\|^2 = &\|\int(y-t\theta_i)\left(\frac{r(y)}{Q_tr(x)}-\frac{r(t\theta_i)}{Q_tr(x)}\right)\varphi^{t,\theta_i}(y)d\lambda^d(y)\|^2\\
    = & \frac{1}{Q_tr(x)^2}\|\int(y-t\theta_i)\left(r(y)-r(\theta_i)\right)\varphi^{t,\theta_i}(y)\mathds{1}_{\{\|y-t\theta_i\|\leq \delta\}}d\lambda^d(y)\|^2
    \\
     & +\frac{1}{Q_tr(x)^2}\|\int(y-t\theta_i)\left(r(y)-r(\theta_i)\right)\varphi^{t,\theta_i}(y)\mathds{1}_{\{\|y-t\theta_i\|> \delta\}}d\lambda^d(y)\|^2,
\end{align*}
with $\delta=(1-t^2)^{1/2}\log((1-t^2)^6)^{1/2}$. Furthermore,

\begin{align*}
    |r(y)-r(t\theta_i)|&=|\frac{p(y)}{\gamma_d(y)}-\frac{p(t\theta_i)}{\gamma_d(t\theta_i)}|=|\frac{p(y)-p(t\theta_i)}{\gamma_d(y)}+p(\theta_i)(\frac{1}{\gamma_d(y)}-\frac{1}{\gamma_d(t\theta_i)})|\\
    & \leq C |p(y)-p(t\theta_i)|+p(\theta_i)\|y-t\theta_i\|.
\end{align*}
Then, from Proposition \ref{prop:loca3}, in the case
\begin{equation*}
1-t^2\leq C_\star^{-1}\log(\epsilon^{-1})^{-C_\star} \text{ and } \|x\|^2\leq \log(\epsilon^{-1}),
\end{equation*} 
we have 
$$\|t\theta_i\|\leq 1-C^{-1}\log(\epsilon^{-1}+1)^{-K},$$
so for $y\in B^d(0,1)$ such that $\|y-t\theta_i\|\leq \delta$, we have
\begin{align*}
    |p(y)-p(t\theta_i)|= & \exp(-u(\|y\|^2)+a(y))-\exp(-u(\|t\theta_i)\|^2)+a(t\theta_i)))\\
    \leq & C|\exp(-u(\|y\|^2))-\exp(-u(\|t\theta_i)\|^2)|+\exp(-u(\|t\theta_i)\|^2)|a(t\theta_i)-a(y)|\\
    \leq & C\log(\epsilon^{-1})^{C_2}\exp(-u(\|t\theta_i)\|^2)\|y-t\theta_i\|\\
     &+C\exp(-u(\|t\theta_i)\|^2)\|y-t\theta_i\|^{\beta-\lfloor \beta \rfloor}\log(\|y-t\theta_i\|^{-1})^{-\theta}\\
    \leq & \exp(-u(\|t\theta_i)\|^2)(C\log(\epsilon^{-1})^{C_2}\delta+C\delta^{\beta-\lfloor \beta \rfloor}\log(\delta^{-1})^{-\theta}).
\end{align*}
Then, we deduce that
\begin{align*}
    \frac{1}{Q_tr(x)^2}&\|\int(y-t\theta_i)\left(r(y)-r(\theta_i)\right)\varphi^{t,\theta_i}(y)\mathds{1}_{\{\|y-t\theta_i\|\leq \delta\}}d\lambda^d(y)\|^2 \\\leq  & C\log(\epsilon^{-1})^{C_2} \left(\int \delta^{\beta-\lfloor \beta \rfloor+1}\log(\delta^{-1})^{-\theta})\varphi^{t,\theta_i}(y)\mathds{1}_{\{\|y-t\theta_i\|\leq \delta\}}d\lambda^d(y)\right)^2\\
    \leq & C\log(\epsilon^{-1})^{C_2} \delta^{2\beta-\lfloor \beta \rfloor+1}\log(\delta^{-1})^{-\theta}= C\log(\epsilon^{-1})^{C_2} (1-t^2)^{\beta-\lfloor \beta \rfloor+1}\log((1-t^2)^{-1})^{-\theta+1}\\
     \leq & C\log(\epsilon^{-1})^{C_2} (1-t^2)^{1+(\beta-\lfloor \beta \rfloor)/2}\log((1-t^2)^{-1})^{-\theta}.
\end{align*}
On the other hand, we have
\begin{align*}
    \frac{1}{Q_tr(x)^2}&\|\int(y-t\theta_i)\left(r(y)-r(\theta_i)\right)\varphi^{t,\theta_i}(y)\mathds{1}_{\{\|y-t\theta_i\|> \delta\}}d\lambda^d(y)\|^2\\
    \leq &C\log(\epsilon^{-1})^{C_2} \int\|y-t\theta_i\|\varphi^{t,\theta_i}(y)\mathds{1}_{\{\|y-t\theta_i\|> \delta\}}d\lambda^d(y)\\
     \leq & C\log(\epsilon^{-1})^{C_2} (1-t^2)^{3/2}.
\end{align*}
The same way, we can show that
\begin{align*}
    \|&\int\left(y-t\theta_i\right)^{\otimes2}\left( \frac{r(y)}{\int r(z)\varphi^{t,\theta_i}(z)d\lambda^d(z)}-1\right)\varphi^{t,\theta_i}(y)d\lambda^d(y)\|\\
    &\leq C\log(\epsilon^{-1})^{C_2}(1-t^2)^{1+(\beta-\lfloor \beta \rfloor)/2}\log((1-t^2)^{-1})^{-\theta}.
\end{align*}
We then conclude that
\begin{align}\label{align:estimatenablatgrand}
    \|\nabla V(t,\theta_i)\|\leq C\log(\epsilon^{-1})^{C_2} \frac{t}{(1-t^2)^{1-(\beta-\lfloor \beta \rfloor)/2}}\log\Big(\big((1-t^2)\wedge 1/2\big)^{-1}\Big)^{-\theta}.
\end{align}
Therefore, using \eqref{eq:roboundtpetit2} we obtain
\begin{align*}
    R_0&(t,\theta_1,\theta_2)\leq C_\epsilon (1-t^2)\|\theta_1-\theta_2\|^{\beta-\lfloor \beta \rfloor}\log((1-t^2)^{-1})^{-\theta}.
\end{align*}

Let us now treat the case $\|\theta_1-\theta_2\|< (1-t^2)^{1/2}$. First, for the term $R_0$, we get from \eqref{align:roboundtgrand}  
and Proposition \ref{prop:lipptx3} that
\begin{align*}
    R_0(t,\theta_1,\theta_2)     \leq & \frac{t}{1-t^2}\|\int H(y,\theta_1) dp^{t,\theta_1}(y)\|\left(\|\theta_1-\theta_2\|+\|\int w (p^{t,\theta_1}(w)-p^{t,\theta_2}(w))d\lambda^d(w)\|\right)\\
    & + \frac{t}{1-t^2}\|\int (y-t\theta_1)H(y,\theta_2)(p^{t,\theta_1}(y)-p^{t,\theta_2}(y))d\lambda^d(y)\|\\
    \leq & C_\epsilon(1-t^2)^{1/2+(\beta-\lfloor \beta\rfloor)/2}(\|\theta_1-\theta_2\|+\|\theta_1-\theta_2\|^{\beta-\lfloor \beta\rfloor}\log(\|\theta_1-\theta_2\|^{-1})^{-\theta}(1-t^2)^{1/2})\\
    &+ C\|\theta_1-\theta_2\|(1-t^2)^{1/2+(\beta-\lfloor \beta\rfloor)/2}\\
    & +C_\epsilon(1-t^2)^{1/2}\|\theta_1-\theta_2\|^{\beta-\lfloor \beta\rfloor}\log(\|\theta_1-\theta_2\|^{-1})^{-\theta}(1-t^2)^{1/2}\\
    \leq & C_\epsilon(1-t^2)\|\theta_1-\theta_2\|^{\beta-\lfloor \beta\rfloor}\log(\|\theta_1-\theta_2\|^{-1})^{-\theta}.
\end{align*}
Let us now focus on $(R_{j})_{j=1,2,3}$.
From \eqref{eq:premiertermlipgrand}, \eqref{align:concegenephase2} and Proposition \ref{prop:lipptx3} we have that
\begin{align*}
    \|A_1\|\leq& C(1-t^2)^{1/2}\left(\int \|f(y)-f(t\theta_1)\|^2dp^{t,\theta_1}(y)\right)^{1/2} \|\int z(p^{t,\theta_1}(z)-p^{t,\theta_2}(z))d\lambda^d(z)\|\\
    \leq & C_\epsilon (1-t^2)^{\frac{1+\beta-\lfloor \beta \rfloor}{2}}\log((1-t^2)^{-1})^{-\theta} \|\theta_1-\theta_2\|\\
    \leq & C_\epsilon (1-t^2)^{\frac{1+\beta-\lfloor \beta \rfloor}{2}}\log((1-t^2)^{-1})^{-\theta} \|\theta_1-\theta_2\|^{\beta-\lfloor \beta \rfloor}(1-t^2)^{\frac{1-(\beta-\lfloor \beta \rfloor)}{2}}\\
    \leq & C_\epsilon(1-t^2)\log((1-t^2)^{-1})^{-\theta}\|\theta_1-\theta_2\|^{\beta-\lfloor \beta \rfloor}.
\end{align*}
From \eqref{align:B1} and \eqref{align:unautreboundparmit} we have
\begin{align*}
    \|B_1\|  \leq &C (1-t^2)\left(\int \|f(y)-f(y+t(\theta_2-\theta_1))\|^2dp^{t,\theta_1}(y)\right)^{1/2}\nonumber\\
     &+ C (1-t^2)^{1/2}\left(\int \|f(y)-f(y+t(\theta_2-\theta_1))\|^2dp^{t,\theta_1}(y)\right)^{1/2}\|\int w (p^{t,\theta_1}(w)-p^{t,\theta_2}(w))d\lambda^d(w)\|\\
    \leq &C_\epsilon\|\theta_2-\theta_1\|^{\beta-\lfloor \beta \rfloor}\log(\|\theta_2-\theta_1\|^{-1})^{-2} (1-t^2)\\
    &+ C_\epsilon\|\theta_2-\theta_1\|^{\beta-\lfloor \beta \rfloor}\log(\|\theta_2-\theta_1\|^{-1})^{-2} (1-t^2)^{1/2}\|\theta_2-\theta_1\|\\
    \leq &C_\epsilon\|\theta_2-\theta_1\|^{\beta-\lfloor \beta \rfloor}\log((1-t^2)^{-1})^{-2} (1-t^2).
\end{align*}

From \eqref{align:D1} and \eqref{align:concegenephase2} we have 
\begin{align*}
    \|D_1\|&\leq C\|\theta_2-\theta_1\|(1-t^2)^{1/2}\left(\int \|f(y+t(\theta_2-\theta_1))-f(t\theta_1)\|^2dp^{t,\theta_1}(y)\right)^{1/2}\\
    &\leq C_\epsilon\|\theta_2-\theta_1\|(1-t^2)^{(1+\beta-\lfloor \beta \rfloor)/2}\log((1-t^2)^{-1})^{-\theta}\\
    &\leq C_\epsilon\|\theta_2-\theta_1\|^{\beta-\lfloor \beta \rfloor}(1-t^2)\log((1-t^2)^{-1})^{-\theta}.
\end{align*}

From \eqref{align:D2}, \eqref{align:concegenephase2} and Proposition \ref{prop:lipptx3}  we have 
\begin{align*}
    \|D_2\|
    \leq & \left(\int \|f(y+t(\theta_2-\theta_1))-f(t\theta_1)\|^2dp^{t,\theta_1}(y)\right)^{1/2}\|\int H(w,\theta_2)^{\otimes 2}(dp^{t,\theta_1}(w)- dp^{t,\theta_1}(w))\|\\
    \leq & C_\epsilon(1-t^2)^{(\beta-\lfloor \beta \rfloor)/2}\log((1-t^2)^{-1})^{-\theta}\|\int \left(w-\int zdp^{t,\theta_1}(z)\right) H(w,\theta_2)(dp^{t,\theta_1}(w)- dp^{t,\theta_1}(w))\|\\
    \leq & C_\epsilon(1-t^2)^{(\beta-\lfloor \beta \rfloor)/2}\log((1-t^2)^{-1})^{-\theta}\|\theta_2-\theta_1\|(1-t^2)^{1/2}
    \\
    \leq & C_\epsilon(1-t^2)\log((1-t^2)^{-1})^{-\theta}\|\theta_2-\theta_1\|^{\beta-\lfloor \beta \rfloor}.
\end{align*}

From \eqref{align:D3} and Proposition \ref{prop:generalboundlhp} we have
\begin{align*}
    \|D_3\|\leq &C \|\int (f(y)-f(t\theta_1))\left(H(y,\theta_2)^{\otimes 2}-\int H(w,\theta_2)^{\otimes 2} dp^{t,\theta_2}(w)\right)(p^{t,\theta_1}(y)-p^{t,\theta_2}(y))d\lambda^d(y)\|\\
    \leq & C \sum_{i=1}^2\left(\int\|H(y,\theta_2)^{\otimes 2}-\int H(w,\theta_2)^{\otimes 2} dp^{t,\theta_2}(w)\|^2dp^{t,\theta_i}(y) \right)^{1/2}\\
    & \times \Biggl(\left(\int\|l(y)-l(y+t(\theta_2-\theta_1))\|^2dp^{t,\theta_1}(y)\right)^{1/2}+ |\frac{Q_t(\theta_2)}{Q_t(\theta_1)}-1|\left(\int \|f(y)-f(t\theta_1)\|^2dp^{t,\theta_2}(y)\right)^{1/2}\Biggl)\\
   + & \|\int (f(y)-\zeta)\left(H(y-t(\theta_2-\theta_1),\theta_2)^{\otimes 2}-\int H(w,\theta_2)^{\otimes 2} dp^{t,\theta_2}(w)\right)\frac{r(y-t(\theta_2-\theta_1))-r(y)}{r(y)}dp^{t,\theta_2}(y)\|\\
    & + C\left(\int \|f(y)-f(t\theta_1)\|^2 dp^{t,\theta_2}(y)\right)^{1/2}\left(\int\|H(y,\theta_2)^{\otimes 2}-H(y-t(\theta_2-\theta_1),\theta_2)^{\otimes 2}\|^2p^{t,\theta_2}(y) \right)^{1/2}.
\end{align*}
Now applying Branscamp-Lieb inequality we get
\begin{align*}
    & \sum_{i=1}^2\left(\int\|H(y,\theta_2)^{\otimes 2}-\int H(w,\theta_2)^{\otimes 2} dp^{t,\theta_2}(w)\|^2dp^{t,\theta_i}(y) \right)^{1/2}\\
    \leq & C(1-t^2)+C(1-t^2)^{1/2}\left(\int\|H(y,\theta_2)\|^2dp^{t,\theta_1}(y) \right)^{1/2}
    \\
    \leq & C(1-t^2)+C(1-t^2)^{1/2}\|\int w (p^{t,\theta_1}(w)-p^{t,\theta_2}(w))d\lambda^d(w)\|\\
    \leq & C(1-t^2)+C(1-t^2)^{1/2}\|\theta_1-\theta_2\|\\
    \leq & C(1-t^2).
\end{align*}
Furthermore, using \eqref{align:H2lipplust} we get
\begin{align*}
& \left(\int\|H(y,\theta_2)^{\otimes 2}-H(y-t(\theta_2-\theta_1),\theta_2)^{\otimes 2}\|^2p^{t,\theta_2}(y) \right)^{1/2}\\
& \leq C\|\theta_2-\theta_1\|\left(\|y-\int w dp^{t,\theta_2}(w)\|+\|\theta_2-\theta_1\|\right)\\
&\leq C(1-t^2)^{1/2}\|\theta_2-\theta_1\|.
\end{align*}
Then using Proposition \ref{prop:lipptx3} and \eqref{align:unautreboundparmit} we obtain 
\begin{align*}
    \|D_3\|\leq & (1-t^2)\Big(C_\epsilon\|\theta_2-\theta_1\|^{\beta-\lfloor \beta \rfloor}\log(\|\theta_2-\theta_1\|^{-1})^{-2}+ C_\epsilon\|\theta_2-\theta_1\|^{\beta-\lfloor \beta \rfloor}\log((1-t^2)^{-1})^{-2}\Big)\\
    & +C_\epsilon\|\theta_2-\theta_1\|^{\beta-\lfloor \beta \rfloor}\log((1-t^2)^{-1})^{-2}(1-t^2)\\
    & +C_\epsilon(1-t^2)^{(1+\beta-\lfloor \beta \rfloor)/2}\|\theta_2-\theta_1\|\log((1-t^2)^{-1})^{-2}\\
    & \leq C_\epsilon(1-t^2)\log((1-t^2)^{-1})^{-2}\|\theta_2-\theta_1\|^{\beta-\lfloor \beta \rfloor}.
\end{align*}
Therefore, we can conclude that 
\begin{align*}
    R_1(t,\theta_1,\theta_2) & = \sum_{l=1}^{k}\|\int f^{l}(y)(\nabla^2_x dp^{t,\theta_1}(y)-\nabla^2_x dp^{t,\theta_2}(y))\| \prod_{j=0}^{k+1}\left(\|\int f^{j}(y) dp^{t,\theta_1}(y)\|\vee 1\right)\\
    & \leq  C_\epsilon(1-t^2)^{-1}\log((1-t^2)^{-1})^{-\theta}\|\theta_2-\theta_1\|^{\beta-\lfloor \beta \rfloor}.
\end{align*}
From \eqref{align:boundR2}, Propositions \ref{prop:lipptx3} and \ref{prop:firstboundongradsth2} we have 
\begin{align*}
     R_2(t,\theta_1,\theta_2)  = & \sum_{l=1}^{k} \|\int f^{l}(y) (\nabla_x dp^{t,\theta_1}(y)-\nabla_x dp^{t,\theta_2})\|\sum_{j=1}^{k+1}\|\int f^{j}(y) \nabla_x dp^{t,\theta_1}(y)\|\prod_{i=0}^{k+1}\left(\|\int f^{i}(y)  dp^{t,\theta_1}(y)\|\vee 1\right)\\
      \leq  &C (1-t^2)^{-3/2}\log((1-t^2)^{-1})^{-2}\sum_{l=1}^{k}\left(\int \|f^l(y)-f^l(t\theta_1)\|^2dp^{t,\theta_1}(y)\right)^{1/2}\|\int w (dp^{t,\theta_1}(w)-dp^{t,\theta_2}(w))\|\\
    & + \|\int (f^l(y)-f^l(t\theta_1))H(y,\theta_2)(p^{t,\theta_1}(y)-p^{t,\theta_2}(y))d\lambda^d(y)\|\\
    &\leq  C_\epsilon(1-t^2)^{-3/2}\log((1-t^2)^{-1})^{-2}(1-t^2)^{(\beta-\lfloor \beta \rfloor)/2} \|\theta_1-\theta_2\|\\
    & +  C_\epsilon(1-t^2)^{-3/2}\log((1-t^2)^{-1})^{-2}\Big((1-t^2)^{1/2} \|\theta_1-\theta_2\|^{\beta-\lfloor \beta \rfloor}+(1-t^2)^{(\beta-\lfloor \beta \rfloor)/2} \|\theta_1-\theta_2\|\Big)\\
    & \leq  C_\epsilon(1-t^2)^{-1}\log((1-t^2)^{-1})^{-2}\|\theta_2-\theta_1\|^{\beta-\lfloor \beta \rfloor}.
\end{align*}

Finally using Propositions \ref{prop:firstboundongradsth2} and \ref{prop:lipptx3} we obtain
\begin{align*}
     R_3(t,\theta_1,\theta_2)  = &\sum_{j=0}^{k} \|\int f^{j}(y)  (dp^{t,\theta_1}(y)-dp^{t,\theta_2}(y))\|\prod_{i=0}^{k+1}\left(\|\int f^{i}(y)  dp^{t,\theta_1}(y)\|\vee 1\right)\\
    & \times\Biggl(\sum_{l=1}^{k}  \|\int f^{l}(y)\nabla^2_x dp^{t,\theta_2}(y)\|+\|\int f^{l}(y)\nabla_x dp^{t,\theta_2}(y)\|\sum_{i=1}^{k}\|\int f^{i}(y)\nabla_x dp^{t,\theta_2}(y)\|\Biggl)\\
    \leq & C_\epsilon \|\theta_2-\theta_1\|^{\beta-\lfloor \beta \rfloor}\log(\|\theta_2-\theta_1\|^{-1})^{-\theta}(1-t^2)^{-1}
    \\
    \leq & C_\epsilon \|\theta_2-\theta_1\|^{\beta-\lfloor \beta \rfloor}\log((1-t^2)^{-1})^{-\theta}(1-t^2)^{-1}.
\end{align*}

\subsubsection{Putting everything together}
Let us take $C^\star>0$ large enough so that  Propositions \ref{prop:loca3}, \ref{prop:concentHtheo2} and \ref{prop:firstboundongradsth2} hold. We first treat the case  $1-t^2\geq C^{-1}_\star\log(\epsilon^{-1})^{-C_\star}$.
Writing
$$ F_{|B|}(z):=(1-t^2)^{|B|}\exp(\frac{\|z\|^2}{1-t^2})\nabla^{|B|} \exp(-\frac{\|z\|^2}{1-t^2})$$
like in Section \ref{sec:evertogether3}, we have 
\begin{align*}
   & \|\nabla^{k} V(t,X_t(x_1))-\nabla^{k} V(t,X_t(x_2))\|\\
     = &\|\frac{t^{k}}{(1-t^2)^{k+1}}\sum_{\pi \in \Pi_{k+1}} (-1)^{|\pi|+1} \left(\prod_{B\in \pi} \int F_{|B|}(y-tX_t(x_1))dp^{t,X_t(x_1)}(y)-\prod_{B\in \pi} \int F_{|B|}(y-tX_t(x_2))dp^{t,X_t(x_2)}(y)\right)\|\\
     \leq& C_\epsilon \sum_{l=1}^{k+1}\|\int F_{l}(y-tX_t(x_1))dp^{t,X_t(x_1)}(y)-\int F_{l}(y-tX_t(x_2))dp^{t,X_t(x_2)}(y)\|\\
     & \times \sum_{r=1}^2\prod_{j=1}^{k+1} \left(\|\int F_{j}(y-tX_t(x_r))dp^{t,X_t(x_r)}(y)\|\vee 1\right)^{k+1}\\
     & \leq C_\epsilon \sum_{l=1}^{k+1}\|\int F_{l}(y-tX_t(x_1))dp^{t,X_t(x_1)}(y)-\int F_{l}(y-tX_t(x_2))dp^{t,X_t(x_2)}(y)\|\\
     & \times \sum_{r=1}^2\prod_{j=1}^{k+1}\left(\int \sum_{i=1}^{j}(\|y\|^i+\|X_t(x_r)\|^i)dp^{t,X_t(x_r)}(y)\vee 1\right)^{k+1}\\
     &\leq C_\epsilon \sum_{l=1}^{k+1}\|\int F_{l}(y-tX_t(x_1))dp^{t,X_t(x_1)}(y)-\int F_{l}(y-tX_t(x_2))dp^{t,X_t(x_2)}(y)\|,
\end{align*}
as from Proposition \ref{prop:localfortsmall} we have that $\|X_t(x_1)\|+\|X_t(x_2)\|\leq C_\epsilon$. Furthermore, from Proposition \ref{prop:lipptx3} we obtain
\begin{align*}
    &\|\int F_{l}(y-tX_t(x_1))dp^{t,X_t(x_1)}(y)-\int F_{l}(y-tX_t(x_2))dp^{t,X_t(x_2)}(y)\|\\
    & \leq  \|\int (F_{l}(y-tX_t(x_1))-F_{l}(y-tX_t(x_2)))dp^{t,X_t(x_1)}(y)\| +\|\int F_{l}(y-tX_t(x_2))(dp^{t,X_t(x_1)}(y)-dp^{t,X_t(x_2)}(y))\|\\
    & \leq C_\epsilon \|x_1-x_2\|.
\end{align*}
Let us now consider the case $1-t^2\leq C^{-1}_\star\log(\epsilon^{-1})^{-C_\star}$. From Proposition \ref{prop:loca3} we have $\|tX_t(x_1)\|^2+\|tX_t(x_2)\|^2 \leq 1-C^{-1}\log(\epsilon^{-1}+1)^{-K}$ so putting together the bounds on $(R_{j})_{j=0,...,3}$ from Section \ref{sec:bounondiff3} we finally obtain for all $t\in [0,1)$, 
$$\sum_{j=1}^3 R_j(t,X_t(x_1),X_t(x_2))\leq \frac{C_\epsilon}{(1-t^2)\log((1-t^2)^{-1})^\theta} \|X_t(x_2)-X_t(x_1)\|^{\beta-\lfloor \beta \rfloor}$$
and 
$$\frac{t}{1-t^2}R_0(t,X_t(x_1),X_t(x_2))\leq C_\epsilon\frac{\|X_t(x_2)-X_t(x_1)\|^{\beta-\lfloor \beta \rfloor}}{\log((1-t^2)^{-1})^\theta},$$
so from Propositions \ref{prop:rodebut} and \ref{prop:boundrjdebut}, this concludes the proof of Theorem \ref{theo:thetheo2}.

\subsection{Proof of the bound on $\|\nabla^{\lfloor \beta \rfloor+1} X_1\|_{\mathcal{H}^{\beta-\lfloor \beta \rfloor}}$ for Theorem \ref{theo:thetheo}}
This section follows the same steps as Section \ref{sec:castheo1append}.
\subsubsection{Integration against $p^{t,\theta_1}-p^{t,\theta_2}$ and mass concentration}
Let us first derive a general bound on the integration against $p^{t,\theta_1}-p^{t,\theta_2}$.
\begin{proposition}\label{prop:lipptx33}
    Let $\delta_1,\delta_2\in [0,1)$, $\lambda_1\geq 0$ and $l\in \mathcal{H}_1^{\delta_1,\lambda_1}(\mathbb{R}^d)$, $h\in \mathcal{H}_1^{\delta_2}(\mathbb{R}^d)$, $z\in \mathbb{R}^d$. For $t\in [0,1)$ and $\theta_1,\theta_2\in \mathbb{R}^d$, in the case $\beta\geq 1$ we have 
    \begin{align*}
        \|&\int (l(y)-l(z))h(y)(p^{t,\theta_1}(y)-p^{t,\theta_2}(y))d\lambda^d(y)\|\\
        & \leq C \sum_{i=1}^2\left(\|\theta_1-\theta_2\|^{\delta_1}\log(\|\theta_1-\theta_2\|^{-1})^{-\lambda_1}\Biggl(\int\|h(y)\|^2p^{t,\theta_i}(y) \right)^{1/2}+\|\theta_1-\theta_2\|^{\delta_2}\left(\int\|l(y)-l(z)\|^2p^{t,\theta_i}(y) \right)^{1/2}\Biggl).
    \end{align*}
    and in the case $\beta<1$, $l(y)-l(z)=y-t\theta_1$, $h(y)=H(y,\theta_2)$ we have
    \begin{align*}
        &\|\int (y-t\theta_1)H(y,\theta_2)(p^{t,\theta_1}(y)-p^{t,\theta_2}(y))d\lambda^d(y)\|\\
         \leq &C\|\theta_1-\theta_2\|\left(\|\int H(y,\theta_2)dp^{t,\theta_2}(y) \|+\|\int \left(y-t\theta_1 \right)dp^{t,\theta_2}(y)\|\right)\\
        & +C\frac{\|\theta_1-\theta_2\|^{\beta}}{\log(\|\theta_1-\theta_2\|^{-1})^{\theta}}\left(\int\|y-t\theta_2\|^2dp^{t,\theta_2}(y) \right)^{1/2}\left(\int\|H(y,\theta_2)\|^2p^{t,\theta_1}(y) \right)^{1/2}.
    \end{align*}
\end{proposition}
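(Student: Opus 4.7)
The plan is to mirror the proof of Proposition \ref{prop:lipptx3} but streamlined for the Gaussian setting of Theorem \ref{theo:thetheo}, where $r = \exp(a)$ with $a \in \mathcal{H}^{\beta,\theta}_K(\mathbb{R}^d,\mathbb{R})$ is globally Hölder continuous with uniformly bounded norm. The starting point is to apply the decomposition of Proposition \ref{prop:generalboundlhp} with $\zeta = l(z)$, which produces four terms to estimate.

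For the first and fourth terms, I would apply Cauchy-Schwarz together with the assumed Hölder regularities: the bound $\|l(y)-l(y+t(\theta_2-\theta_1))\| \leq C\|\theta_1-\theta_2\|^{\delta_1}\log(\|\theta_1-\theta_2\|^{-1})^{-\lambda_1}$ from $l \in \mathcal{H}_1^{\delta_1,\lambda_1}$ gives the first summand in the stated inequality, and $\|h(y+t(\theta_2-\theta_1))-h(y)\| \leq C\|\theta_1-\theta_2\|^{\delta_2}$ from $h \in \mathcal{H}_1^{\delta_2}$ gives the second.

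For the second and third terms, the key estimates are on $|Q_t r(\theta_2)/Q_t r(\theta_1) - 1|$ and $|r(y-t(\theta_2-\theta_1))/r(y)-1|$. Writing these as $|\exp(a(\cdot) - a(\cdot')) - 1|$ and using that $a$ is globally bounded (since $a \in \mathcal{H}^{\beta,\theta}_K$), I would Taylor-expand to obtain $|r(y-t(\theta_2-\theta_1))/r(y)-1| \leq C\|\theta_1-\theta_2\|^{\beta-\lfloor\beta\rfloor}\log(\|\theta_1-\theta_2\|^{-1})^{-\theta}$ uniformly in $y$, and the analogous bound for the ratio of $Q_t r$. Combined with Cauchy--Schwarz, these terms are absorbed into the first summand (via the $l$-dependent factor, which admits the smaller exponent $\delta_1 \leq 1$) when $\delta_1 \leq \beta - \lfloor \beta \rfloor$, or into the second summand otherwise; in either direction the bound matches the claimed form.

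The case $\beta < 1$ with $l(y)-l(z) = y - t\theta_1$ and $h(y) = H(y,\theta_2)$ requires a slightly more delicate accounting because both $l$ and $h$ are Lipschitz, so the first term of Proposition \ref{prop:generalboundlhp} gives the linear $\|\theta_1-\theta_2\|$ contribution, the fourth term gives a similar $\|\theta_1-\theta_2\|$ contribution weighted by $\|\int(y-t\theta_1)dp^{t,\theta_2}(y)\|$, while the Hölder factor $\|\theta_1-\theta_2\|^{\beta}\log(\cdot)^{-\theta}$ must come entirely from terms two and three via the regularity of $r$. The main obstacle, compared to Proposition \ref{prop:lipptx3}, is that we no longer have the localization provided by Proposition \ref{prop:loca3} to ensure $r$ is bounded away from zero on the effective support of $p^{t,\theta}$. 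However, this difficulty disappears in the present setting: because $\|a\|_\infty \leq K$ globally, the ratio estimates hold uniformly on $\mathbb{R}^d$ without any localization argument, so no cutoff at scale $(1-t^2)^{1/4}$ is required and the proof becomes cleaner than its compact counterpart.
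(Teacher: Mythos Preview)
Your approach is essentially the paper's: apply Proposition~\ref{prop:generalboundlhp}, bound the four terms via Cauchy--Schwarz and the H\"older regularity of $l$, $h$, and $r=e^a$. The paper's proof is in fact even terser than your outline.

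There is, however, one imprecision that matters. For the ratio estimates you write
\[
\Big|\tfrac{r(y-t(\theta_2-\theta_1))}{r(y)}-1\Big|\;\leq\; C\|\theta_1-\theta_2\|^{\beta-\lfloor\beta\rfloor}\log(\|\theta_1-\theta_2\|^{-1})^{-\theta},
\]
i.e.\ you use only the $(\beta-\lfloor\beta\rfloor)$-H\"older regularity of $a$. When $\beta\geq 1$ this is strictly weaker than available: $a\in\mathcal{H}^{\beta,\theta}_K$ with $\beta\geq 1$ gives $\nabla a$ bounded, so $r=e^a$ is globally Lipschitz and the ratio bound is $C\|\theta_1-\theta_2\|$. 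The paper makes this distinction explicit via $\chi=\mathds{1}_{\{\beta<1\}}$, obtaining exponent $\chi\beta+1-\chi$. This is not cosmetic: with exponent $\beta-\lfloor\beta\rfloor$ your absorption argument (``into the first summand when $\delta_1\leq\beta-\lfloor\beta\rfloor$, or into the second summand otherwise'') fails whenever both $\delta_1$ and $\delta_2$ exceed $\beta-\lfloor\beta\rfloor$, which the statement allows. With the Lipschitz exponent $1>\delta_2$, the second and third terms of Proposition~\ref{prop:generalboundlhp} are immediately dominated by $\|\theta_1-\theta_2\|^{\delta_2}(\int\|l-l(z)\|^2 dp^{t,\theta_i})^{1/2}$ for any $\delta_2\in[0,1)$. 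The fix is a one-line change to your ratio estimate.
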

\begin{proof}
We are going to bound the different terms of Proposition \ref{prop:generalboundlhp}. Let $\chi = \mathds{1}_{\{\beta<1\}}$, we have 
\begin{align*}
    |&Q_{t}r(\theta_2)-Q_{t}
    r(\theta_1)|\\
    &=|\int \left(r(t\theta_1+\sqrt{1-t^2}z)-r(t\theta_2+\sqrt{1-t^2}z)\right)\gamma_d(z)d\lambda^d(z)|\\
     & \leq C\|\theta_2-\theta_1\|^{\chi\beta+1-\chi},
\end{align*}
which gives 
\begin{align*}
    |\frac{Q_{t}r(\theta_2)}{Q_{t}
    r(\theta_1)}-1|
    & \leq C\|\theta_2-\theta_1\|^{\chi\beta+1-\chi}.
\end{align*}
Likewise, we have
\begin{align*}
    |\frac{r(y-t(\theta_2-\theta_1))}{r(y)}-1| & \leq C\|\theta_2-\theta_1\|^{\chi\beta+1-\chi},
\end{align*}
so using Proposition \ref{prop:generalboundlhp} and Cauchy-Schwarz we get the results.
\end{proof}
Let us now bound the terms
$\int\|f(y)-f(y+t(\theta_2-\theta_1))\|^2dp^{t,\theta_1}(y)$ and $\int\|f(y)-f(z)\|^2dp^{t,\theta_2}(y)$ for $f=f^l$ with $l\in \{1,...,\lfloor \beta \rfloor\}$ and $z=t\theta_2$. We have 
\begin{align}\label{align:allerundernierp}
    \left(\int \|f^l(y)-f^l(y+t(\theta_2-\theta_1))\|^2dp^{t,\theta_1}(y)\right)^{1/2}\leq C \|\theta_2-\theta_1\|^{\beta-\lfloor \beta \rfloor}\log(\|\theta_2-\theta_1\|)^{-\theta}
\end{align}
and 
\begin{align}\label{align:concentrationcas1easy}
    &\left(\int \|f^l(y)-f^l(t\theta_2)\|^2dp^{t,\theta_2}(y)\right)^{1/2}\nonumber\\ & \leq \left(\int \|y-t\theta_2\|^{2(\beta-\lfloor \beta \rfloor)}\log(\|y-t\theta_2\|^{-1})^{-2\theta}dp^{t,\theta_2}(y)\right)^{1/2}\nonumber\\
    & \leq C (1-t^2)^{\beta-\lfloor \beta \rfloor}\log((1-t^2)^{-1})^{-\theta}.
\end{align}

\subsubsection{Bounds on the different terms}\label{sec:bounddifftermi}
Let $f:=f^{l}$ with $l\in \{0,...,\lfloor \beta \rfloor\}$. We give estimates on $(R_j)_{j=0,...,3}$ recalling that the case $R_0$ corresponds to the case $\lfloor \beta \rfloor +1= 1$ and the case  $(R_j)_{j=1,2,3}$ corresponds to the case $\lfloor \beta \rfloor +1\geq 2$. We first focus on the case $t\in [0,1)$ such that $(1-t^2)^{1/2}\leq \|\theta_1-\theta_2\|$. 

For the terms $(R_j)_{j=1,2,3}$ we use the bound \eqref{align:x1x2grand}. Taking $\xi_m=f(t\theta_m)$ with $m\in \{1,2\}$ in Proposition \ref{prop:concentration1} we obtain for $i\in\{1,2\}$,
\begin{align*}
     \|\int f(y)\big(\nabla^i_x p^{t,\theta_1}(y)-\nabla^i_x p^{t,\theta_2}(y)\big)d\lambda^d(y)\|& \leq \frac{C}{(1-t^2)^{i/2}}\sum_{m=1}^2\left(\int \|f(y)-f(\xi_m)\|^2dp^{t,\theta_m}(y)\right)^{1/2}\\
     & \leq \frac{C}{(1-t^2)^{i/2}}(1-t^2)^{(\beta-\lfloor \beta \rfloor)/2}\log((1-t^2)^{-1})^{-\theta}\\
     & \leq \frac{C}{(1-t^2)^{i/2}}\|\theta_1-\theta_2\|^{\beta-\lfloor \beta \rfloor}\log((1-t^2)^{-1})^{-\theta}.
\end{align*}
Using this result we get
\begin{align*}
    R_1(t,\theta_1,\theta_2)= & \sum_{l=1}^{k-1} \|\int f^{l}(y)(\nabla^2_x dp^{t,\theta_1}(y)-\nabla^2_x dp^{t,\theta_2}(y))\| \prod_{j=1}^{k-2}\left(\|\int f^{j}(y) dp^{t,\theta_1}(y)\|\vee 1\right)^{k-2}\\
    & \leq \frac{C}{1-t^2}\|\theta_1-\theta_2\|^{\beta-\lfloor \beta \rfloor}\log((1-t^2)^{-1})^{-\theta},
\end{align*}

\begin{align*}
    R_2(t,\theta_1,\theta_2)= &  \sum_{l=1}^{k-1} \|\int f^{l}(y) (\nabla_x dp^{t,\theta_1}(y)-\nabla_x dp^{t,\theta_2}(y))\|\\
    & \times \sum_{j=1}^{k-2}\|\int f^{j}(y) \nabla_x dp^{t,\theta_1}(y)\|\prod_{i=1}^{k-2}\left(\|\int f^{i}(y)  dp^{t,\theta_1}(y)\|\nonumber\vee 1 \right)^{k-2}\\
    \leq & \frac{C}{(1-t^2)^{1/2}}\|\theta_1-\theta_2\|^{\beta-\lfloor \beta \rfloor}\log((1-t^2)^{-1})^{-\theta}\frac{1}{1-t^2}\sum_{j=1}^{k-2}\|\int f^{j}(y) \left(y-\int z dp^{t,\theta_1}(z)\right)dp^{t,\theta_1}(y)\|\\
    \leq & \frac{C}{1-t^2}\|\theta_1-\theta_2\|^{\beta-\lfloor \beta \rfloor}\log((1-t^2)^{-1})^{-\theta}
\end{align*}
and using Proposition \ref{prop:lipptx33} and \eqref{align:concentrationcas1easy},
\begin{align*}
    R_3(s,\theta_1,\theta_2)= & \sum_{j=1}^{k-2} \|\int f^{j}(y)  (dp^{s,\theta_1}(y)-dp^{s,\theta_2}(y))\|\prod_{i=1}^{k-3}\left(\|\int f^{i}(y)  dp^{s,X_s(x_1)}(y)\|\vee 1\right)^{k-3}\\
    & \times\Biggl(\sum_{l=1}^{k-1}  \|\int f^{l}(y)\nabla^2_x dp^{s,\theta_2}(y)\|+\|\int f^{l}(y)\nabla_x dp^{s,X_s(x_2)}(y)\|\sum_{r=1}^{k-1}\|\int f^{r}(y)\nabla_x dp^{s,\theta_2}(y)\|\Biggl)\\
    \leq & \frac{C}{1-t^2}\|\theta_1-\theta_2\|^{\beta-\lfloor \beta \rfloor}\log((1-t^2)^{-1})^{-\theta}.
\end{align*}

For the term $R_0$, we use the bound \eqref{eq:roboundtpetit}. On one hand, we have 
\begin{align*}
\|\int(y-t\theta_i)d p^{t,\theta_i}(y)\|^2 & \leq C\|\int(y-t\theta_i)(r(y)-r(t\theta_i))\varphi^{t,\theta_i}(y)d\lambda^d(y)\|^2\\
     \leq &C \left(\int\|y-t\theta_i\|^{1+\beta}\log(\|y-t\theta_i\|^{-1})^{-\theta}\varphi^{t,\theta_i}(y)d\lambda^d(y)\right)^2\\
     \leq & C(1-t^2)^{1+\beta}\log((1-t^2)^{-1})^{-2\theta}.
\end{align*}
On the other hand, we have 
\begin{align*}
\|&\int\left(y-t\theta_i\right)^{\otimes2}\left( \frac{r(y)}{\int r(z)\varphi^{t,\theta_i}(z)d\lambda^d(z)}-1\right)\varphi^{t,\theta_i}(y)d\lambda^d(y)\|\\
      \leq & C \|\int\left(y-t\theta_i\right)^{\otimes2}\left( r(y)-r(t\theta_i)\right)\varphi^{t,\theta_i}(y)d\lambda^d(y)\|\nonumber\\
      & + C \| \int (r(t\theta_i)-r(z))\varphi^{t,\theta_i}(z)d\lambda^d(z)\| \|\int\left(y-t\theta_i\right)^{\otimes2}\varphi^{t,\theta_i}(y)d\lambda^d(y)\|
      \nonumber\\
    \leq & C \|\int\|y-t\theta_i\|^{2+\beta}\log(\|y-t\theta_i\|^{-1})^{-\theta}\varphi^{t,\theta_i}(y)d\lambda^d(y)\|\\
    & +C\| \int \|y-t\theta_i\|^\beta\log(\|y-t\theta_i\|^{-1})^{-\theta}\varphi^{t,\theta_i}(z)d\lambda^d(z)\| \|\int\|y-t\theta_i\|^2\varphi^{t,\theta_i}(y)d\lambda^d(y)\|\\
    \leq & C(1-t^2)^{1+\beta/2}\log((1-t^2)^{-1})^{-\theta}.
\end{align*}

We can then conclude that
\begin{align}\label{align:r0boundreu}
R_0(t,\theta_1,\theta_2)&\leq C(1-t^2)^{1+\beta/2}\log((1-t^2)^{-1})^{-\theta}\\
    & \leq C(1-t^2)\|\theta_1-\theta_2\|^{\beta}\log((1-t^2)^{-1})^{-\theta}\nonumber.
\end{align}

Let us now treat the case $\|\theta_1-\theta_2\|< (1-t^2)^{1/2}$. For the term $R_0$, we have using \eqref{align:roboundtgrand} and Proposition \ref{prop:lipptx33} that
\begin{align*}
R_0(t,\theta_1,\theta_2)\leq & \frac{t}{1-t^2}\sum_{i=1}^2\|\int H(y,\theta_i) dp^{t,\theta_i}(y)\|\left(\|\theta_1-\theta_2\|+\|\int w (p^{t,\theta_1}(w)-p^{t,\theta_2}(w))d\lambda^d(w)\|\right)\nonumber\\
    & + \frac{t}{1-t^2}\|\int (y-t\theta_2)H(y,\theta_2)(p^{t,\theta_1}(y)-p^{t,\theta_2}(y))d\lambda^d(y)\|\\
    \leq & C(1-t^2)^{1/2+\beta/2}(\|\theta_1-\theta_2\|+\|\theta_1-\theta_2\|^{\beta-\lfloor \beta\rfloor}\log(\|\theta_1-\theta_2\|^{-1})^{-\theta}(1-t^2)^{1/2})\\
    &+ C\|\theta_1-\theta_2\|(1-t^2)^{1/2+\beta/2} +C(1-t^2)^{1/2}\|\theta_1-\theta_2\|^{\beta-\lfloor \beta\rfloor}\log(\|\theta_1-\theta_2\|^{-1})^{-\theta}(1-t^2)^{1/2}\\
    \leq & C(1-t^2)\|\theta_1-\theta_2\|^{\beta-\lfloor \beta\rfloor}\log(\|\theta_1-\theta_2\|^{-1})^{-\theta}.
\end{align*}

Let us now focus on $(R_{j})_{j=1,2,3}$. From \eqref{eq:premiertermlipgrand}, \eqref{align:concentrationcas1easy} and Proposition \ref{prop:lipptx33} we have that
\begin{align*}
    \|A_1\|\leq& C(1-t^2)^{1/2}\left(\int \|f(y)-f(t\theta_1)\|^2dp^{t,\theta_1}(y)\right)^{1/2} \|\int z(p^{t,\theta_1}(z)-p^{t,\theta_2}(z))d\lambda^d(z)\|\\
    \leq & C(1-t^2)\log((1-t^2)^{-1})^{-\theta}\|\theta_1-\theta_2\|^{\beta-\lfloor \beta \rfloor}.
\end{align*}
From \eqref{align:B1} we have
\begin{align*}
    \|B_1\| \leq & C (1-t^2)\left(\int \|f(y)-f(y+t(\theta_2-\theta_1))\|^2dp^{t,\theta_1}(y)\right)^{1/2}\nonumber\\
     &+ C (1-t^2)^{1/2}\left(\int \|f(y)-f(y+t(\theta_2-\theta_1))\|^2dp^{t,\theta_1}(y)\right)^{1/2}\|\int w (p^{t,\theta_1}(w)-p^{t,\theta_2}(w))d\lambda^d(w)\|\\
    \leq & C\|\theta_2-\theta_1\|^{\beta-\lfloor \beta \rfloor}\log((1-t^2)^{-1})^{-\theta} (1-t^2).
\end{align*}

From \eqref{align:D1} and \eqref{align:allerundernierp} we have 
\begin{align*}
    \|D_1\|&\leq C\|\theta_2-\theta_1\|(1-t^2)^{1/2}\left(\int \|f(y+t(\theta_2-\theta_1))-f(t\theta_1)\|^2dp^{t,\theta_1}(y)\right)^{1/2}\\
    &\leq C\|\theta_2-\theta_1\|^{\beta-\lfloor \beta \rfloor}(1-t^2)\log((1-t^2)^{-1})^{-\theta}.
\end{align*}

From \eqref{align:D2} and Proposition \ref{prop:lipptx33} we have 
\begin{align*}
    \|D_2\|
    \leq & \left(\int \|f(y+t(\theta_2-\theta_1))-f(t\theta_1)\|^2dp^{t,\theta_1}(y)\right)^{1/2}\|\int H(w,\theta_2)^{\otimes 2}(dp^{t,\theta_1}(w)- dp^{t,\theta_2}(w))\|\\
    \leq & C(1-t^2)\log((1-t^2)^{-1})^{-\theta}\|\theta_2-\theta_1\|^{\beta-\lfloor \beta \rfloor}.
\end{align*}

From \eqref{align:D3} and Proposition \ref{prop:lipptx33} we have
\begin{align*}
    \|D_3\|\leq &C \|\int (f(y)-f(t\theta_1)\left(H(y,\theta_2)^{\otimes 2}-\int H(w,\theta_2)^{\otimes 2} dp^{t,\theta_2}(w)\right)(p^{t,\theta_1}(y)-p^{t,\theta_2}(y))d\lambda^d(y)\|\\
    \leq & C(1-t^2)\log((1-t^2)^{-1})^{-\theta}\|\theta_2-\theta_1\|^{\beta-\lfloor \beta \rfloor}.
\end{align*}
Therefore, we can conclude that 
\begin{align*}
    R_1(t,\theta_1,\theta_2) & = \sum_{l=1}^{k}\|\int f^{l}(y)(\nabla^2_x dp^{t,\theta_1}(y)-\nabla^2_x dp^{t,\theta_2}(y))\| \prod_{j=0}^{k+1}\left(\|\int f^{j}(y) dp^{t,\theta_1}(y)\|\vee 1\right)\\
    & \leq  C(1-t^2)^{-1}\log((1-t^2)^{-1})^{-\theta}\|\theta_2-\theta_1\|^{\beta-\lfloor \beta \rfloor}.
\end{align*}
From \eqref{align:boundR2} we have 
\begin{align*}
     R_2(t,\theta_1,\theta_2)  = & \sum_{l=1}^{k} \|\int f^{l}(y) (\nabla_x dp^{t,\theta_1}(y)-\nabla_x dp^{t,\theta_2})\|\sum_{j=1}^{k+1}\|\int f^{j}(y) \nabla_x dp^{t,\theta_1}(y)\|\prod_{i=0}^{k+1}\left(\|\int f^{i}(y)  dp^{t,\theta_1}(y)\|\vee 1\right)\\
      \leq  & \left(\int \|f(y)-f(t\theta_1)\|^2dp^{t,\theta_1}(y)\right)^2\|\int w (p^{t,\theta_1}(w)-p^{t,\theta_2}(w))d\lambda^d(w)\|(1-t^2)^{1/2}\log((1-t^2)^{-1})^{-\theta}\\
    & + \|\int (f(y)-f(t\theta_1))H(y,\theta_2)(p^{t,\theta_1}(y)-p^{t,\theta_2}(y))d\lambda^d(y)\|(1-t^2)^{1/2}\log((1-t^2)^{-1})^{-\theta}\\
    &\leq  C(1-t^2)^{-1/2}\log((1-t^2)^{-1})^{-\theta}(1-t^2)^{(\beta-\lfloor \beta \rfloor)/2} \|\theta_1-\theta_2\|\\
    & \leq  C(1-t^2)^{-1}\log((1-t^2)^{-1})^{-\theta}\|\theta_2-\theta_1\|^{\beta-\lfloor \beta \rfloor}.
\end{align*}

Finally
\begin{align*}
     R_3(t,\theta_1,\theta_2)  = &\sum_{j=0}^{k} \|\int f^{j}(y)  (dp^{t,\theta_1}(y)-dp^{t,\theta_2}(y))\|\prod_{i=0}^{k+1}\left(\|\int f^{i}(y)  dp^{t,\theta_1}(y)\|\vee 1\right)\\
    & \times\Biggl(\sum_{l=1}^{k}  \|\int f^{l}(y)\nabla^2_x dp^{t,\theta_2}(y)\|+\|\int f^{l}(y)\nabla_x dp^{t,\theta_2}(y)\|\sum_{i=1}^{k}\|\int f^{i}(y)\nabla_x dp^{t,\theta_2}(y)\|\Biggl)\\
    \leq & C \|\theta_2-\theta_1\|^{\beta-\lfloor \beta \rfloor}\log((1-t^2)^{-1})^{-\theta}(1-t^2)^{-1}.
\end{align*}

Putting together the bounds on $R_1,R_2,R_3$ we finally obtain for all $t\in [0,1)$ 
$$\sum_{j=1}^3 R_j(t,X_t(x_1),X_t(x_2))\leq \frac{C}{(1-t^2)\log((1-t^2)^{-1})^\theta} \|X_t(x_2)-X_t(x_1)\|^{\beta-\lfloor \beta \rfloor}$$
and 
$$\frac{t}{1-t^2}R_0(t,X_t(x_1),X_t(x_2))\leq C\frac{\|X_t(x_2)-X_t(x_1)\|^{\beta-\lfloor \beta \rfloor}}{\log((1-t^2)^{-1})^\theta},$$
so from Propositions \ref{prop:rodebut} and \ref{prop:boundrjdebut}, this concludes the proof of Theorem \ref{theo:thetheo}.

\subsection{Additional proofs}
\subsubsection{$C^{\beta+1}$ diffeomorphism between deformations of the Gaussian}\label{sec:diffeog}
This section is dedicated to the proof of Corollary \ref{theo:diffeo}.

\begin{proof}Let us take $p(x):=\exp\left(-\frac{\|x\|^2}{2}+a(x)\right)$. We first show that the inverse of the Langevin transport map from the Gaussian to $p$ is also $(\beta+1)$-Hölder. Let $(X_t)_{t\in [0,1]}$ be the flow of diffeomorphism (solution to \eqref{eq:PDE}) such that $(X_1)_{\# \gamma_d =p}$. 
Let $w\in \mathbb{S}^{d-1}$ and $\alpha_t=\|\nabla X_t(x) w\|^2$, we have
\begin{align*}
    \partial_t \alpha_t &= 2\left\langle \nabla X_t(x)w, \nabla V(t,X_t(x)) \nabla X_t(x)w\right\rangle\\
    & \geq 2 \lambda(t)\alpha_t,
\end{align*}
with $\lambda(t) = \min_{u\in \mathbb{S}^{d-1}} \left\langle  u, \nabla V(t,X_t(x)) u\right\rangle$. Then
\begin{align*}
    (\partial_t \alpha_t + (-2 \lambda(t))\alpha_t)\exp(-\int_0^t 2\lambda(s)ds)\geq 0
\end{align*}
so 
\begin{align*}
    \partial_t (\alpha_t\exp(-\int_0^t 2\lambda(s)ds))\geq 0
\end{align*}
which gives
\begin{align*}
    \alpha_t\exp(-\int_0^t 2\lambda(s)ds)\geq \alpha_0,
\end{align*}
so finally as $\alpha_0=1$ we have 
\begin{align*}
    \alpha_t\geq \exp(\int_0^t 2\lambda(s)ds).
\end{align*}
Now, from \eqref{align:r0boundreu} we have that in the case $\beta\in (0,1)$,
\begin{align*}
    \|\nabla V(t,x)\|&\leq C(1-t^2)^{\beta/2-1}\log((1-t^2)^{-1})^{-\theta},
\end{align*}
and in the case $\beta\geq 1$, using Proposition \ref{prop:concentHtheo1} we get
\begin{align*}
    \|\nabla V(t,x)\| & = \frac{t}{1-t^2}\|\int \nabla a(y)\left(y-\int z dp^{t,x}(z)\right)dp^{t,x}(y)\|\\
    & \leq C\frac{1}{1-t^2}\left(\int \|y-\int z dp^{t,x}(z)\|^2dp^{t,x}(y)\right)^{1/2}\\
    & \leq C\frac{1}{(1-t^2)^{1/2}}.
\end{align*}
We then deduce that
$$\alpha_t\geq \exp(-C\int_0^t \left(\frac{\mathds{1}_{\{\beta\geq1\}}}{(1-s^2)^{1/2}}+\frac{\mathds{1}_{\{\beta\in (0,1)\}}}{(1-s^2)^{1-\beta/2}}\right)ds) \geq C^{-1}.$$
Therefore, we obtain that the map $T$ from Theorem \ref{theo:thetheo} satisfies $\inf_{x\in \mathbb{R}^{d}}\min_{w\in \mathbb{S}^{d-1}} \|\nabla T^{-1}(x)w \|\geq C^{-1}$ so by the Faa Di Bruno formula we obtain that $\nabla T^{-1}\in \mathcal{H}^{\beta}_C   (\mathbb{R}^d)$.

Let us now take two density probabilities
$$p(x)=\exp\left(-\frac{1}{2}\|\Sigma_p^{-1/2}x\|^2+a(x)\right) \quad \text{ and } \quad q(x)=\exp\left(-\frac{1}{2}\|\Sigma_q^{-1/2}x\|^2+b(x)\right).$$

Then for $p^{'}=(\Sigma_p^{-1/2})_{\#} p$ and $q^{'}=(\Sigma_q^{-1/2})_{\#} q$, we have that $p^{'}$ and $q^{'}$ satisfy the assumptions of Theorem \ref{theo:thetheo} so there exist $T_p,T_q$ such that $\nabla T_p,\nabla T_p^{-1},\nabla T_q,\nabla T_q^{-1}\in \mathcal{H}^{\beta}_C   (\mathbb{R}^d)$ and $(T_p)_{\#}\gamma_d =p^{'}$ and $(T_q)_{\#}\gamma_d =q^{'}$.

Therefore, the map $T=\Sigma_q^{1/2}\circ T_q\circ T^{-1}_p\circ \Sigma_p^{-1/2}$ satisfies $\nabla T,\nabla T^{-1}\in  \mathcal{H}^{\beta}_C  (\mathbb{R}^d)$ and $T_{\# }p=q$. The general case for non-centered Gaussian distributions follows the same way.
\end{proof}

\end{document}